% ------------------------------------------------------------------------
% AMS-LaTeX Paper ********************************************************
% ------------------------------------------------------------------------
%%%%%%%%%%%%%%%%%%%%%%%%%%%%%%%%%%%%%%%%%%%%%%%%%%%%%%%%%%%%%%%%%%%%%%%%%%

%\documentclass[12pt]{amsart}
\documentclass{amsart}

\usepackage[all]{xypic}
\usepackage{tikz}
\usetikzlibrary{arrows}
\usepackage{epsf}
\usepackage{verbatim} 
\usepackage{amsmath}
\usepackage{amsfonts}
\usepackage{amssymb}
\usepackage{mathrsfs}
\usepackage{amsthm}
\usepackage{newlfont}
%\usepackage{ntheorem}

\begin{comment}
\addtolength{\oddsidemargin}{-.8in}
\addtolength{\evensidemargin}{-.8in}
\addtolength{\textwidth}{1.6in}
\addtolength{\topmargin}{-.6in}
\addtolength{\textheight}{.5in}
\end{comment}

%\usepackage{euscript}
%\usepackage{eufrak}
\vfuzz2pt % Don't report over-full v-boxes if over-edge is small

% THEOREM Environments ---------------------------------------------------
 \newtheorem{thm}{Theorem}
 
 \newtheorem{cor}[thm]{Corollary}
 
  \newtheorem{conj}[thm]{Conjecture}
 
 \newtheorem{lem}[thm]{Lemma}
 
 \newtheorem{prop}[thm]{Proposition}
 
 \theoremstyle{definition}
 \newtheorem{defn}[thm]{Definition}
 
 \theoremstyle{definition}

 \theoremstyle{remark}
 \newtheorem{rem}[thm]{Remark}
 
 \theoremstyle{definition}
 \newtheorem{example}[thm]{Example}

 \numberwithin{thm}{section}
 \numberwithin{equation}{section}

% MATH -------------------------------------------------------------------

 \font \cyr=wncyr10
 \define\Sh{\hbox{\cyr Sh}}

% Abbreviations
 
 \newcommand{\Mat}{\mathrm{Mat}}
 \newcommand{\JL}{\mathrm{JL}}
 
 \newcommand{\an}{\mathrm{an}}
 \newcommand{\ab}{\mathrm{ab}}
 \newcommand{\rig}{\mathrm{rig}}
 \newcommand{\Hom}{\mathrm{Hom}}
 
 \newcommand{\Spec}{\mathrm{Spec}}
 \newcommand{\Frob}{\mathrm{Frob}}

 \newcommand{\Aut}{\mathrm{Aut}}
 
 \newcommand{\End}{\mathrm{End}}
 \newcommand{\Pic}{\mathrm{Pic}}
 
 \newcommand{\ord}{\mathrm{ord}}

 \newcommand{\Gal}{\mathrm{Gal}}
 \newcommand{\GL}{\mathrm{GL}}
 \newcommand{\PGL}{\mathrm{PGL}}
 
 \newcommand{\sep}{\mathrm{sep}}

 \newcommand{\rank}{\mathrm{rank}}
 \newcommand{\un}{\mathrm{nr}}

 \newcommand{\tor}{\mathrm{tor}}
 
 \newcommand{\Stab}{\mathrm{Stab}}
 
 \newcommand{\new}{\mathrm{new}}
 \newcommand{\old}{\mathrm{old}}

 \newcommand{\Tr}{\mathrm{Tr}}

 \renewcommand{\mod}{\mathrm{mod}}
 
% Frankfurt
 
 \newcommand{\fl}{\mathfrak l}
 \newcommand{\fb}{\mathfrak b}
 
 \newcommand{\fp}{\mathfrak p}
 \newcommand{\fq}{\mathfrak q}
 \newcommand{\fr}{\mathfrak r}
 \newcommand{\fn}{\mathfrak n}
 \newcommand{\fm}{\mathfrak m}
 \newcommand{\fd}{\mathfrak d}

 \newcommand{\fE}{\mathfrak E}
 \newcommand{\fM}{\mathfrak M}
 \newcommand{\fI}{\mathfrak I}
% Calligraphic
 \newcommand{\cO}{\mathcal{O}}
 
 \newcommand{\cM}{\mathcal{M}}
 \renewcommand{\cH}{\mathcal{H}}

 \newcommand{\cE}{\mathcal{E}}
 
 \newcommand{\cC}{\mathcal{C}}

 \newcommand{\cG}{\mathcal{G}}
 \newcommand{\cJ}{\mathcal{J}}
 \newcommand{\cI}{\mathcal{I}}
 \newcommand{\cT}{\mathcal{T}}
 
 \newcommand{\cS}{\mathcal{S}}
 \renewcommand{\cL}{\mathcal{L}}
 
 \renewcommand{\cD}{\mathcal{D}}

%Script
\newcommand{\sT}{\mathscr{T}}

\newcommand{\sD}{\mathscr{D}}

%BB
 \newcommand{\gm}{\mathbb{G}}

 \newcommand{\C}{\mathbb{C}}
 \newcommand{\F}{\mathbb{F}}
 \newcommand{\Q}{\mathbb{Q}}
 \newcommand{\W}{\mathbb{W}}
 \newcommand{\Z}{\mathbb{Z}}
 \newcommand{\A}{\mathbb{A}}
 
 \newcommand{\p}{\mathbb{P}}
 \newcommand{\T}{\mathbb{T}}
 
 \newcommand{\N}{\mathbb{N}}
%Symbols
 \newcommand{\Nr}{\mathrm{Nr}}
 \newcommand{\disc}{\mathrm{disc}}

 \newcommand{\Ell}{\mathcal{E}\ell\ell}
 \newcommand{\eps}{\varepsilon}
 \newcommand{\G}{\Gamma}
 \newcommand{\To}{\longrightarrow}
 \newcommand{\bs}{\setminus}
 \newcommand{\Fi}{F_\infty}
 \newcommand{\bD}{\overline{\Delta}}

 \newcommand{\bG}{\overline{\Gamma}}

% ----------------------------------------------------------------------
\begin{document}

\title[The Eisenstein ideal and Jacquet-Langlands isogeny]
{The Eisenstein ideal and Jacquet-Langlands isogeny over function fields}

\author{Mihran Papikian}
\address{Department of Mathematics, Pennsylvania State University, University Park, PA 16802, U.S.A.}
\email{papikian@psu.edu}
\author{Fu-Tsun Wei}
\address{Institute of Mathematics, Academia Sinica, 6F, Astronomy-Mathematics Building, No. 1, Sec. 4, Roosevelt Road, Taipei 10617, Taiwan}
\email{ftwei@math.sinica.edu.tw}

\thanks{The first author was supported in part by the Simons Foundation.} 

\thanks{The second author was partially supported by National Science Council and Max Planck Institute for Mathematics.}

\subjclass[2010]{11G09, 11G18, 11F12}

\keywords{Drinfeld modular curves; Cuspidal divisor group; Shimura subgroup; Eisenstein ideal; Jacquet-Langlands isogeny}

%\date{\today}

% ----------------------------------------------------------------------

\begin{abstract}
Let $\fp$ and $\fq$ be two distinct prime ideals of $\mathbb{F}_q[T]$. 
We use the Eisenstein ideal of the Hecke algebra of the Drinfeld modular curve $X_0(\fp\fq)$ 
to compare the rational torsion subgroup of the Jacobian $J_0(\fp\fq)$ 
with its subgroup generated by the cuspidal divisors, and to produce explicit 
examples of Jacquet-Langlands isogenies. Our results are stronger than what 
is currently known about the analogues of these problems over $\Q$.  
\end{abstract}

\maketitle

\tableofcontents

%--------------------------------------------------------------------

\section{Introduction} 

\subsection{Motivation} Let $\F_q$ be a finite field with $q$ 
elements, where $q$ is a power of a prime number $p$. Let $A=\F_q[T]$ 
be the ring of polynomials in indeterminate $T$ with coefficients 
in $\F_q$, and $F=\F_q(T)$ the field of fractions of $A$. 
The degree map $\deg: F\to \Z\cup \{-\infty\}$, which associates 
to a non-zero polynomial its degree in $T$ and $\deg(0)=-\infty$, defines 
a norm on $F$ by $|a|:=q^{\deg(a)}$. The corresponding place of $F$ 
is usually called the \textit{place at infinity}, and is denoted by $\infty$. 
%it plays a role similar to the archimedean place of $\Q$. 
We also define a norm and degree on the ideals of $A$ by $|\fn|:=\#(A/\fn)$ and $\deg(\fn):=\log_q|\fn|$. 
Let $\Fi$ denote the completion of $F$ at $\infty$, and $\C_\infty$ denote the 
completion of an algebraic closure of $\Fi$. Let $\Omega:=\C_\infty - \Fi$ be the \textit{Drinfeld half-plane}.  

Let $\fn\lhd A$ be a non-zero ideal. The level-$\fn$ \textit{Hecke congruence subgroup} of $\GL_2(A)$  
$$
\G_0(\fn):=\left\{\begin{pmatrix} a & b \\  c & d\end{pmatrix}\in \GL_2(A)\ \bigg|\ c\equiv 0\ \mod\  \fn \right\}  
$$
plays a central role in this paper. This group acts on $\Omega$ via linear fractional transformations. 
Drinfeld proved in \cite{Drinfeld} that the quotient $\G_0(\fn)\bs \Omega$ 
is the space of $\C_\infty$-points of an affine curve $Y_0(\fn)$ defined over $F$,  
which is a moduli space of rank-$2$ Drinfeld modules 
(we give a more formal discussion of Drinfeld modules and their moduli schemes in Section \ref{sDMC}).  
The unique smooth projective curve over $F$ containing $Y_0(\fn)$ as an 
open subvariety is denoted by $X_0(\fn)$. The \textit{cusps} of $X_0(\fn)$ are the 
finitely many points of the complement of $Y_0(\fn)$ in $X_0(\fn)$; 
the cusps generate a finite subgroup $\cC(\fn)$ of the Jacobian variety $J_0(\fn)$ of $X_0(\fn)$, 
called the \textit{cuspidal divisor group}. 
By the Lang-N\'eron theorem, the group of $F$-rational points of $J_0(\fn)$ is 
finitely generated, in particular, its torsion subgroup $\cT(\fn):=J_0(\fn)(F)_\tor$ is finite. 
It is known that when $\fn$ is square-free $\cC(\fn)\subseteq \cT(\fn)$. 

For a square-free ideal $\fn\lhd A$ divisible by an even number of primes,  
let $D$ be the division quaternion algebra over $F$ with discriminant $\fn$. 
The group of units $\G^\fn$ of a maximal $A$-order in $D$  
acts on $\Omega$, and the quotient $\G^\fn\bs \Omega$ 
is the space of $\C_\infty$-points of a smooth projective curve $X^\fn$ defined over $F$; this curve  
is a moduli space of $\sD$-elliptic sheaves introduced in \cite{LRS}. 
Let $J^\fn$ be the Jacobian variety of $X^\fn$. 

The analogy between $X_0(\fn)$ 
and the classical modular curves $X_0(N)$ over $\Q$ classifying elliptic curves with 
$\G_0(N)$-structures is well-known and has been extensively studied over the last 35 years. 
Similarly, the modular curves $X^\fn$ are the function field analogues of Shimura curves $X^N$
parametrizing abelian surfaces equipped with an action of the indefinite quaternion algebra over $\Q$ 
with discriminant $N$. 

\vspace{0.1in}

Let $\T(\fn)$ be the $\Z$-algebra generated by the Hecke operators 
$T_\fm$, $\fm\lhd A$, acting on the group $\cH_0(\sT, \Z)^{\G_0(\fn)}$  
of $\Z$-valued $\G_0(\fn)$-invariant 
cuspidal harmonic cochains on the Bruhat-Tits tree $\sT$ of $\PGL_2(\Fi)$. 
The \textit{Eisentein ideal} $\fE(\fn)$ of $\T(\fn)$ is the ideal generated by the elements $T_\fp-|\fp|-1$, 
where $\fp \nmid \fn$ is prime. 
In this paper we study the Eisenstein ideal in the case when $\fn=\fp\fq$ is a product of two distinct 
primes, with the goal of applying this theory to two important 
arithmetic problem: 1) comparing $\cT(\fn)$ with $\cC(\fn)$, and 2) constructing 
explicit homomorphisms $J_0(\fn)\to J^{\fn}$. 
Our proofs use the rigid-analytic uniformizations of $J_0(\fn)$ and $J^\fn$ over $\Fi$. 
It seems that the existence of actual geometric fibres at $\infty$ allows one to prove stronger 
results than what is currently known about either of these problems in the classical setting; 
this is specific to function fields since the analogue of $\infty$ for $\Q$ is the archimedean place.

\vspace{0.1in}

Our initial motivation for studying $\fE(\fp\fq)$ came from an attempt to prove a function field 
analogue of Ogg's conjecture \cite{Ogg} about the so-called Jacquet-Langlands isogenies. 
We briefly recall what this is about. A geometric consequence of the Jacquet-Langlands correspondence \cite{JL}
is the existence of Hecke-equivariant $\Q$-rational isogenies 
between the new quotient $J_0(N)^\new$ of $J_0(N)$ and the Jacobian $J^N$ of $X^N$; see \cite{RibetIsogeny}. 
(Here $N$ is a square-free integer with an even number of prime factors.)
The proof of the existence of aforementioned isogenies relies on Faltings' isogeny theorem, 
so provides no information about them beyond the existence. It is a major open problem in this area to 
make the isogenies more canonical (cf. \cite{Helm}). In \cite{Ogg},  
Ogg made several predictions about the kernel of an isogeny $J_0(N)^\new\to J^N$
when $N=pp'$ is a product of two distinct primes and $p=2,3,5,7,13$.  
As far as the authors are aware, Ogg's conjecture remains open except for the special cases when $J^N$ has dimension $1$ 
($N=14, 15, 21, 33, 34$) or dimension $2$ 
($N=26, 38, 58$). In these cases, $J^N$ and $J_0(N)^\new$ are either elliptic curves or, up to isogeny, decompose into a product 
of two elliptic curves given by explicit Weierstrass equations.  One can then find an isogeny $J_0(N)^\new\to J^N$ by 
studying the isogenies between these elliptic curves; see the proof of Theorem 3.1 in \cite{GoRo}. 
This argument does not generalize to $J^N$ of dimension $\geq 3$ because 
they contain absolutely simple abelian varieties of dimension $\geq 2$, 
and one's hold on such abelian varieties is decidedly more fleeting. 

Now returning to the setting of function fields, let $\fn\lhd A$ be a square-free ideal 
with an even number of prime factors.  The global Jacquet-Langlands correspondence over $F$, combined 
with the main results in \cite{Drinfeld} and \cite{LRS}, and Zarhin's isogeny theorem, 
implies the existence of a Hecke-equivariant $F$-rational isogeny $J_0(\fn)^\new\to J^\fn$. %; see cf. $\S$\ref{ssGJL}. 
In Section \ref{sJL}, by studying the groups of connected components of the N\'eron models of 
$J_0(\fn)$ and $J^{\fn}$, we propose a function field analogue of Ogg's conjecture (see Conjecture \ref{conjJL}). 
This conjecture predicts that, when $\fn=\fp\fq$ is a product of two distinct primes with $\deg(\fp)\leq 2$, 
there is a Jacquet-Langlands isogeny whose 
kernel comes from cuspidal divisors and is isomorphic to a specific abelian group.  
%In the case when $\deg(\fn)=3$, the conjecture says that there is an isogeny $J_0(\fn)\to J^{\fn}$ 
%whose kernel is a specific cyclic subgroup of $\cC(\fn)$ of order $q^2+1$; both Jacobians in this case have dimension $q$. 
Our approach to proving this conjecture starts with the  
observation that $\cC(\fn)$ is annihilated by the Eisenstein ideal $\fE(\fn)$ acting on $J_0(\fn)$, so 
we first try to show that there is a Jacquet-Langlands isogeny whose kernel is annihilated by $\fE(\fn)$, and 
then try to describe the kernel of the Eisenstein ideal $J[\fE(\fn)]$ in $J_0(\fn)$ explicitly enough to pin down the kernel of 
the isogeny. 
This naturally leads to the study of $J[\fE(\fn)]$ for composite $\fn$.  
On the other hand, $J[\fE(\fn)]$ also 
%carries a lot of subtle arithmetic information. In particular, it 
plays an important role in the  
analysis of $\cT(\fn)$, as was first demonstrated by Mazur  
in his seminal paper \cite{Mazur} in the case of classical modular Jacobian $J_0(p)$ of prime level.
These two applications of the theory of the Eisenstein ideal constitute the main theme of this paper. 

%------------------------------------------------------------

\subsection{Main results} 
The \textit{Shimura subgroup} $\cS(\fn)$ of $J_0(\fn)$ is the kernel of the 
homomorphism $J_0(\fn)\to J_1(\fn)$ induced by the natural morphism $X_1(\fn)\to X_0(\fn)$ of 
modular curves (see Section \ref{sSS}). 

Assume $\fp\lhd A$ is prime. Define $N(\fp)=\frac{|\fp|-1}{q-1}$ if $\deg(\fp)$ is odd, and define 
$N(\fp)=\frac{|\fp|-1}{q^2-1}$, otherwise. In \cite{Pal}, P\'al developed a theory of the Eisenstein ideal $\fE(\fp)$ 
in parallel with Mazur's paper \cite{Mazur}. 
In particular, he showed that $J[\fE(\fp)]$ is everywhere unramified of order $N(\fp)^2$, 
and is essentially generated by $\cC(\fp)$ and $\cS(\fp)$, both of which are cyclic of order $N(\fp)$. 
Moreover, $\cC(\fp)=\cT(\fp)$ and $\cS(\fp)$ is the largest $\mu$-type subgroup scheme of $J_0(\fp)$. 
These results are the analogues of some of the deepest results from \cite{Mazur}, whose proof first 
establishes that the completion of the Hecke algebra $\T(\fp)$ at any maximal ideal in the support of $\fE(\fp)$ 
is Gorenstein. 

\vspace{0.1in}

As we will see in Section \ref{sKEI}, even in the simplest composite level case
the kernel of the Eisenstein ideal $J[\fE(\fn)]$ has properties quite different from its prime level counterpart. 
For example, $J[\fE(\fn)]$ can be ramified, generally $\cS(\fn)$ has smaller order than $\cC(\fn)$, neither 
of these groups is cyclic, and $\cS(\fn)$ is not the largest $\mu$-type subgroup scheme of $J_0(\fn)$. 

First, we discuss our results about $\cC(\fn)$, $\cS(\fn)$, and $\cT(\fn)$: 

\begin{thm}\label{thmMAIN1}\hfill
\begin{enumerate}
\item We give a complete description of $\cC(\fp\fq)$ as an abelian group; see Theorem \ref{thm6.10}. 
\item For an arbitrary square-free $\fn$ we show that the group scheme $\cS(\fn)$ is $\mu$-type, 
and therefore annihilated by $\fE(\fn)$, and we give a complete description of $\cS(\fn)$ 
as an abelian group; see Proposition \ref{propSisEis} and Theorem \ref{thmSSS}. 
\item If $\ell\neq p$ is a prime number which does not divide $(q-1)\mathrm{gcd}(|\fp|+1, |\fq|+1)$, 
then the $\ell$-primary subgroups of $\cC(\fp\fq)$ and $\cT(\fp\fq)$ are equal; see Theorem \ref{thm8.3}. 
%We also prove that if $\deg(\fp)\leq 2$, then $\cT(\fp\fq)_p=0$; see Lemma \ref{lem8.2}. 
\end{enumerate}
\end{thm}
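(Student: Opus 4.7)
The goal is the equality $\cC(\fp\fq)[\ell^\infty]=\cT(\fp\fq)[\ell^\infty]$. The inclusion $\cC(\fp\fq)[\ell^\infty]\subseteq\cT(\fp\fq)[\ell^\infty]$ is immediate from the fact already quoted in the introduction that $\cC(\fn)\subseteq\cT(\fn)$ for square-free $\fn$. The plan for the reverse inclusion is to split the argument into two independent pieces: first, that $\cT(\fp\fq)[\ell^\infty]$ is annihilated by the Eisenstein ideal $\fE(\fp\fq)$; and second, that under the stated hypothesis on $\ell$ the $F$-rational $\ell$-primary part of $J_0(\fp\fq)[\fE(\fp\fq)]$ is exactly $\cC(\fp\fq)[\ell^\infty]$.

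The first piece is a standard Eichler--Shimura-type argument, adapted to the Drinfeld setting. For any prime $\fl\neq\fp,\fq$ the curve $X_0(\fp\fq)$ has good reduction at $\fl$, and since $\ell\neq p$ the reduction map is injective on $\cT(\fp\fq)[\ell^\infty]$. On the special fibre the Hecke operator satisfies $T_\fl=\Frob_\fl+|\fl|\cdot\Frob_\fl^\vee$ with $\Frob_\fl\circ\Frob_\fl^\vee=[|\fl|]$; since an $F$-rational point reduces to a $\Frob_\fl$-fixed point, this gives $T_\fl P=(|\fl|+1)P$ for every $P\in\cT(\fp\fq)[\ell^\infty]$. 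As $\fl$ varies over primes coprime to $\fp\fq$, the elements $T_\fl-|\fl|-1$ generate $\fE(\fp\fq)$, so $\cT(\fp\fq)[\ell^\infty]\subseteq J_0(\fp\fq)[\fE(\fp\fq)]$.

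The second piece uses the structural analysis of $J_0(\fp\fq)[\fE(\fp\fq)]$ developed in Section \ref{sKEI}. By parts (1) and (2) of Theorem \ref{thmMAIN1}, both $\cC(\fp\fq)$ and $\cS(\fp\fq)$ lie in $J_0(\fp\fq)[\fE(\fp\fq)]$, and as emphasised in the introduction this containment need not be an equality: additional contributions arise from the old-form Eisenstein pieces associated to the degeneracy maps $X_0(\fp\fq)\to X_0(\fp),X_0(\fq)$. The two factors in the numerical hypothesis rule out $F$-rational $\ell$-primary contributions outside $\cC(\fp\fq)$: the factor $q-1$ handles the Shimura subgroup, since $\cS(\fp\fq)$ being $\mu$-type forces $\cS(\fp\fq)(F)\subseteq \mu_\infty(F)=\F_q^\times$ to have order dividing $q-1$; and the factor $\gcd(|\fp|+1,|\fq|+1)$ handles the old-form Eisenstein pieces, whose exponents are controlled at the prime levels by $N(\fp)\mid |\fp|+1$ and $N(\fq)\mid |\fq|+1$, so that a common $\ell$-primary rational contribution from the two old pieces would force $\ell$ to divide their gcd. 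Combining the two pieces produces the theorem.

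The main obstacle is the second piece. Unlike the prime-level case treated by P\'al in parallel with \cite{Mazur}, for composite level the Eisenstein ideal need not be a maximal ideal, the completion of $\T(\fp\fq)$ at an Eisenstein maximal ideal need not be Gorenstein, and $J_0(\fp\fq)[\fE(\fp\fq)]$ can be ramified. The strategy is to work through the rigid-analytic uniformization of $J_0(\fp\fq)$ over $\Fi$, converting the question into a combinatorial one about a lattice in an algebraic torus on which both the Hecke and the Galois actions can be made explicit, and to extract from that picture the description of $F$-rational points of $J_0(\fp\fq)[\fE(\fp\fq)]$ needed here.
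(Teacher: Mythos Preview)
Your first piece (Eichler--Shimura giving $\cT(\fp\fq)_\ell\subseteq J_0(\fp\fq)[\fE(\fp\fq)]$) is correct and matches the paper. The gap is in your second piece.

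Your proposed direct analysis of the $F$-rational $\ell$-primary part of $J_0(\fp\fq)[\fE(\fp\fq)]$ does not work as stated, and the paper does not proceed this way. Two specific problems: first, your claim that $N(\fp)\mid |\fp|+1$ is false --- $N(\fp)$ divides $|\fp|-1$, not $|\fp|+1$ --- so the heuristic that ``a common $\ell$-primary rational contribution from the two old pieces would force $\ell\mid\gcd(|\fp|+1,|\fq|+1)$'' has no basis. Second, the decomposition of $J_0(\fp\fq)[\fE(\fp\fq)]$ into $\cC+\cS+\text{(old pieces)}$ you invoke is not established anywhere; indeed the paper stresses that for composite level the structure of $J[\fE]$ is genuinely complicated (it can be ramified, and Lemma~\ref{thm9.2} describing its shape is proved \emph{using} Theorem~\ref{thm8.3}, not the other way around).

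What the paper actually does is bypass $J[\fE]$ entirely. Since $J_0(\fp\fq)$ has split toric reduction at $\infty$ and $\ell\nmid q(q-1)$, the specialization map injects $\cT(\fp\fq)_\ell$ into the component group $\Phi_\infty$, and hence (via Theorem~\ref{thmCGinf}) into the purely combinatorial space $\cE_{00}(\fp\fq,\Z/\ell^r\Z)$ of cuspidal Eisenstein harmonic cochains. The factor $s_{\fp,\fq}=\gcd(|\fp|+1,|\fq|+1)$ enters not through old-form torsion but through the Atkin--Lehner decomposition (Theorem~\ref{thmAL}): when one tries to write a cochain whose Fourier coefficients vanish away from $\fp,\fq$ as $\psi_1|B_\fp+\psi_2|B_\fq$ with $\psi_i$ of lower level, the operator $U_\fp+W_\fp$ acts on $B_\fp$-pullbacks by $|\fp|+1$, so the decomposition only goes through after multiplying by $|\fp|+1$ (or symmetrically $|\fq|+1$). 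Under $\ell\nmid s_{\fp,\fq}$ this yields $\cE_{00}(\fp\fq,\Z/\ell^r\Z)=\cE'_{00}(\fp\fq,\Z/\ell^r\Z)\subseteq\cE'_0(\fp\fq,\Z/\ell^r\Z)$, and the latter is computed explicitly (Lemma~\ref{lem3.4}) and matches $\cC(\fp\fq)_\ell$ on the nose. The chain $\cC_\ell\hookrightarrow\cT_\ell\hookrightarrow\cE_{00}$ with matching orders forces equality.
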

Usually, many of the primes dividing the order of $\cC(\fp\fq)$ 
satisfy the condition in (3), so, aside from a relatively small explicit set of primes, 
we can determine the $\ell$-primary subgroup $\cT(\fp\fq)_\ell$ of $\cT(\fp\fq)$. 
For example, (1) and (3) imply that if $\ell$ does not divide $(|\fp|^2-1)(|\fq|^2-1)$, then $\cT(\fp\fq)_\ell=0$. 
The most advantageous case for applying (3) is when $\deg(\fq)=\deg(\fp)+1$, since then $\mathrm{gcd}(|\fp|+1, |\fq|+1)$ 
divides $q-1$. In particular, if $q=2$ and $\deg(\fq)=\deg(\fp)+1$, then we conclude that 
the odd part of $\cT(\fp\fq)$ coincides with $\cC(\fp\fq)$. 
These results are qualitatively stronger than what is currently known about 
the rational torsion subgroup $J_0(N)(\Q)_\tor$ of classical modular Jacobians of composite square-free levels (cf. \cite{CL}). 

\begin{proof}[Outline of the Proof of Theorem \ref{thmMAIN1}] 
Although it was known that $\cC(\fn)$ is finite for any $\fn$ (see Theorem \ref{thm6.1}), there were no general 
results about its structure, besides the prime level case $\fn=\fp$. 
The curve $X_0(\fp)$ has two cusps, so $\cC(\fp)$ is cyclic; its  
order was computed by Gekeler in \cite{Uber}. 
The first obvious difference between the prime level and the 
composite level $\fn=\fp\fq$ is that $X_0(\fp\fq)$ has $4$ cusps, so $\cC(\fp\fq)$ 
is usually not cyclic and is generated by $3$ elements. 
To prove the result mentioned in (1), i.e., to compute the group structure of $\cC(\fp\fq)$, we follow the strategy in \cite{Uber},   
but the calculations become much more complicated. The idea is to use Drinfeld 
discriminant function to obtain upper bounds on the orders of cuspidal divisors,  
and then use canonical specializations of $\cC(\fp\fq)$ into the 
component groups of $J_0(\fp\fq)$ at $\fp$ and $\fq$ to obtain lower bounds on these orders. 
%In fact, the intermediate result describing the structure of component groups (Theorem \ref{thmCG}) 
%might be of independent interest; this is the function field analogue of the appendix in \cite{Mazur}. 

To deduce the group structure of $\cS(\fn)$ mentioned in (2) we use the rigid-analytic uniformizations 
of $J_0(\fn)$ and $J_1(\fn)$ over $\Fi$, and the ``changing levels'' result from \cite{GR}, 
to reduce the problem to a calculation with finite groups.  

The proof of (3) is similar to the proof of Theorem 7.19 in \cite{Pal}, 
although there are some important differences, too. Suppose $\ell$ is a prime that 
does not divide $q(q-1)$. Since $J_0(\fp\fq)$ has split toric reduction at $\infty$, 
the $\ell$-primary subgroup $\cT(\fp\fq)_\ell$ 
maps injectively into the component group $\Phi_\infty$ of $J_0(\fp\fq)$ at $\infty$. 
Using the Eichler-Shimura relations, one shows 
that the image of $\cT(\fp\fq)_\ell$ in $\Phi_\infty$ can be identified with a subspace of 
$\cH_0(\sT, \Z)^{\G_0(\fp\fq)}\otimes \Z/\ell^n\Z$ annihilated by the Eisenstein ideal $\fE(\fp\fq)$ for 
any sufficiently large $n\in \N$. Denote by $\cE_{00}(\fp\fq, \Z/\ell^n\Z)$ the subspace of 
$\cH_0(\sT, \Z)^{\G_0(\fp\fq)}\otimes \Z/\ell^n\Z$ annihilated by $\fE(\fp\fq)$. 
Then we have the inclusions 
$$
\cC(\fp\fq)_\ell\hookrightarrow \cT(\fp\fq)_\ell \hookrightarrow \cE_{00}(\fp\fq, \Z/\ell^n\Z). 
$$
The space $\cE_{00}(\fp\fq, \Z/\ell^n\Z)$ contains the 
reductions modulo $\ell^n$ of certain Eisenstein series. We prove that if $\ell$ does not divide 
$q(q-1)\mathrm{gcd}(|\fp|+1, |\fq|+1)$, then the whole $\cE_{00}(\fp\fq, \Z/\ell^n\Z)$ 
is generated by the reductions of these Eisenstein series (see Theorem \ref{thm3.9} and Lemma \ref{lem3.4}). This allows us to compute 
$\cE_{00}(\fp\fq, \Z/\ell^n\Z)$. It turns out that $\cE_{00}(\fp\fq, \Z/\ell^n\Z)\cong \cC(\fp\fq)_\ell$, 
and consequently $\cC(\fp\fq)_\ell=\cT(\fp\fq)_\ell$. To prove Theorem \ref{thm3.9}, we first prove
a version of the key Theorem 1 in the famous paper by Atkin and Lehner \cite{AL} 
for $\Z/\ell^n\Z$-valued harmonic cochains (see Theorem \ref{thmAL}). The fact that we need to work with $\Z/\ell^n\Z$ 
rather than $\C$ leads to technical difficulties, which results in the restriction $\ell\nmid q(q-1)\mathrm{gcd}(|\fp|+1, |\fq|+1)$. 
Note that in our definition the Hecke algebra $\T(\fp\fq)$ 
includes the operators $U_\fp$ and $U_\fq$. This is important since we need to deal systematically 
with ``old'' forms of level $\fp$ and $\fq$. The smaller algebra $\T(\fp\fq)^0$ 
generated by the Hecke operators $T_\fm$ with $\fm$ coprime to $\fp\fq$   
used by P\'al in \cite{Pal} and \cite{PalIJNT} is not sufficient for getting a handle on $\cE_{00}(\fp\fq, \Z/\ell^n\Z)$. 
\end{proof}

Now we concentrate on the case where we investigate the Jacquet-Langlands isogenies. 
We fix two primes $x$ and $y$ of $A$ of degree $1$ and $2$, respectively. 
This differs from our usual $\mathfrak{Fraktur}$ notation for ideals of $A$. This is 
done primarily to make it easy for the reader to distinguish the theorems which assume that the level 
is $xy$. 
Several sections in the paper are titled ``Special case'' and deal exclusively with the case $\fp\fq=xy$. 
Note that $X_0(\fp\fq)$ has genus $0$ if $\fp$ and $\fq$ are distinct primes with $\deg(\fp\fq)\leq 2$. 
The genus of $X_0(xy)$ is $q$, so this curve is the simplest example of a Drinfeld modular curve 
of composite level and positive genus. Also, by a theorem of Schweizer \cite{SchweizerHE}, $X_0(\fp\fq)$ is hyperelliptic 
if and only if $\fp=x$ and $\fq=y$, so one can think of this case as the hyperelliptic case.  

The cusps of $X_0(xy)$ can be naturally labelled $[x], [y], [1], [\infty]$; see Lemma \ref{lemCusps}. 
Let $c_x$ and $c_y$ denote the classes of divisors $[x]-[\infty]$ and $[y]-[\infty]$ in $J_0(xy)$. 
First, we show that (see Theorem \ref{thmCT})
$$
\cT(xy)=\cC(xy)=\langle c_x \rangle\oplus \langle c_y \rangle\cong \Z/(q+1)\Z\oplus \Z/(q^2+1)\Z. 
$$
The reason we can prove this stronger result compared to (3) of  
Theorem \ref{thmMAIN1} is that we can compute $\cE_{00}(xy, \Z/\ell^n\Z)$ without any restrictions 
on $\ell$, and we can deal with the $2$-primary torsion $\cT(xy)_2$ using the fact that $X_0(xy)$ is hyperelliptic.  

To simplify the notation, for the rest of this section denote $\T=\T(xy)$, $\fE=\fE(xy)$, $\cH:=\cH_0(\sT, \Z)^{\G_0(xy)}$, 
$\cH':=\cH(\sT, \Z)^{\G^{xy}}$, where this last group is the group of $\Z$-valued $\G^{xy}$-invariant 
harmonic cochains on $\sT$. We show that (see Corollary \ref{corT/E})
$$
\T/\fE\cong \Z/(q^2+1)(q+1)\Z,
$$
so the residue characteristic of any maximal ideal of $\T$ containing $\fE$ divides $(q^2+1)(q+1)$. 
The Jacquet-Langlands correspondence over $F$ implies that there is an isomorphism 
$\cH\otimes \Q\cong \cH'\otimes \Q$ which is compatible with the action of $\T$. 

\begin{thm}[See Theorems \ref{thmJL1} and \ref{thmJL2}]\label{thmMAIN2}\hfill 
\begin{enumerate}
\item If $\cH\cong \cH'$ as $\T$-modules, then there is an isogeny $J_0(xy)\to J^{xy}$ defined over $F$ 
whose kernel is cyclic of order $q^2+1$ and is annihilated by $\fE$. 
\item If $\cH\cong \cH'$ as $\T$-modules and for every prime 
$\ell|(q^2+1)$ the completion of $\T\otimes\Z_\ell$ at $\fM=(\fE, \ell)$ is Gorenstein, then 
there is an isogeny $J_0(xy)\to J^{xy}$ whose kernel is $\langle c_y \rangle\cong \Z/(q^2+1)\Z$.  
\end{enumerate}
\end{thm}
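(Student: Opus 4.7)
My plan is to use the rigid-analytic uniformizations of $J_0(xy)$ and $J^{xy}$ at $\infty$. Since both Jacobians have split multiplicative reduction there, we have short exact sequences
\[
0 \to \cH \xrightarrow{j} \Hom(\cH, \C_\infty^\times) \to J_0(xy)(\C_\infty) \to 0
\]
and an analogous one for $J^{xy}$ with $\cH'$ and $j'$ in place of $\cH$ and $j$; here $j,j'$ are the Hecke-equivariant monodromy pairings. A $\T$-module isomorphism $\iota:\cH \xrightarrow{\sim}\cH'$ identifies the ambient tori, and, after rescaling $\iota$ to ensure a containment of lattices, yields an analytic isogeny $\phi:J_0(xy)\to J^{xy}$ whose kernel is the cokernel of $j(\cH)\hookrightarrow \iota^* j'(\cH')$. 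Descent of $\phi$ to $F$ would then follow from Hecke- and Galois-equivariance, as in \cite{GR}.

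For Part (1), I would compute $\ker\phi$ as a $\T$-module by comparing $j$ and $j'$ directly. Since both pairings are Hecke-equivariant, their ratio factors through $\T/\fE\cong \Z/(q^2+1)(q+1)\Z$ (Corollary \ref{corT/E}). The pairing $j$ for $J_0(xy)$ encodes the full cuspidal group $\cC(xy)\cong \Z/(q+1)\Z\oplus\Z/(q^2+1)\Z$, whereas $j'$ registers only the component-group contribution at $\infty$ of $J^{xy}$. A direct combinatorial calculation on the quotient $\G^{xy}\backslash \sT$ should show that this contribution carries the factor $q^2+1$ but not $q+1$, so the cokernel is cyclic of order $q^2+1$ and annihilated by $\fE$.

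For Part (2), the Gorenstein hypothesis on $\T_\fM := (\T\otimes \Z_\ell)_\fM$ for $\fM=(\fE,\ell)$ and every $\ell \mid q^2+1$ implies that the localized module $\cH_\fM$ is free of rank one over $\T_\fM$. Consequently, the $\fM$-primary part of $J_0(xy)[\fE^\infty]$ is cyclic. Since $c_y$ generates a cyclic $\fE$-torsion subgroup of order $q^2+1$ by Theorem \ref{thmCT}, and $\ker\phi$ is cyclic $\fE$-torsion of the same order from Part (1), a local comparison at each $\ell\mid q^2+1$ combined with the Chinese remainder theorem forces $\ker\phi = \langle c_y\rangle$.

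The main obstacle will be the precise comparison of the two monodromy pairings and, in Part (2), the identification of $\ker\phi$ with $\langle c_y\rangle$ rather than with another cyclic subgroup of the same order (e.g.\ $\langle c_x+c_y\rangle$ when $\gcd(q+1,q^2+1)$ is non-trivial). The Gorenstein hypothesis is what pins the kernel down on the nose. On the analytic side, matching $j$ and $j'$ amounts to a detailed study of the Atkin-Lehner involutions $W_x, W_y$ together with the Eisenstein eigenvalues on the Bruhat-Tits tree quotients $\G_0(xy)\backslash\sT$ and $\G^{xy}\backslash\sT$, and a careful tracking of the cuspidal versus non-cuspidal contributions.
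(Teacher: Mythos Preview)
Your overall framework---rigid-analytic uniformization at $\infty$ and comparing lattices in a common torus---is exactly the paper's approach, but both parts have genuine gaps.

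\textbf{Part (1).} Two factual slips: the cokernel of the monodromy map $j$ is the component group $\Phi_\infty$, which is \emph{cyclic} of order $(q+1)(q^2+1)$, not the cuspidal group $\cC(xy)$ (same order, but $\cC(xy)$ is non-cyclic); and the calculation on $\G^{xy}\backslash\sT$ yields $\Phi_\infty'\cong\Z/(q+1)\Z$, the opposite of what you state. More seriously, your ``rescaling $\iota$ to ensure a containment of lattices'' is where the real work lies and does not come for free. The paper's key move is to use the perfect pairing $\T\times\cH\to\Z$ (Proposition~\ref{propT(xy)}) to identify the uniformizing torus with $\T\otimes\C_\infty^\times$, and then to prove separately (Lemmas~\ref{lem5.5} and~\ref{lemEisPhi'}) that both $\Phi_\infty$ and $\Phi_\infty'$ are annihilated by $\fE$. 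This identifies the two lattices with ideals $\fE\subset\fE'$ of $\T$, the containment being automatic once $\T/\fE'\cong\Phi_\infty'\cong\Z/(q+1)\Z$ is known; the kernel is then $\fE'/\fE\cong\Z/(q^2+1)\Z$, and Gerritzen's theorem gives the isogeny.

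\textbf{Part (2).} Your claim that the $\fM$-primary part of $J[\fE]$ is cyclic is false. Under the Gorenstein hypothesis, Proposition~\ref{prop9.5} gives an exact sequence
\[
0\longrightarrow(\Z_\ell/N\Z_\ell)^\ast\longrightarrow J_\ell[\fE]\longrightarrow\Z_\ell/N\Z_\ell\longrightarrow 0,
\]
so $J_\ell[\fE]$ has order the square of the $\ell$-part of $q^2+1$ and contains several $G_F$-stable cyclic subgroups of the right order. Thus ``cyclic of the same order'' does not force $\ker\phi=\langle c_y\rangle$; the Gorenstein hypothesis alone is not what pins it down. The paper instead shows, for each odd $\ell\mid q^2+1$, that the $I_y$-invariants of $J_\ell[\fE]$ are exactly $\langle c_y\rangle_\ell$ (this is the ramified nature of $J_\ell[\fE]$ at $y$ discussed after Proposition~\ref{prop9.5}), so the $F$-rational kernel lands in $\cC[2]+\langle c_y\rangle$. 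When $q$ is odd this still leaves three candidates, and the paper eliminates the wrong ones by comparing the component groups $\Phi_y$ and $\Phi_y'$ via \cite[Thm.~4.3]{PapikianJNT}. None of this arithmetic at the prime $y$ appears in your sketch.
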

\begin{rem}
An isogeny $J_0(xy)\to J^{xy}$ with kernel $\langle c_y \rangle$ does not respect 
the canonical principal polarizations on the Jacobians since $\langle c_y \rangle$ is not 
a maximal isotropic subgroup of $J_0(xy)$ with respect to the Weil pairing. 
\end{rem}
\begin{proof}[Outline of the Proof of Theorem \ref{thmMAIN2}] Both $J_0(xy)$ and $J^{xy}$ have rigid-analytic uniformization 
over $\Fi$. The assumption that $\cH$ and $\cH'$ are isomorphic $\T$-modules allows us to 
identify the uniformizing tori of both Jacobians with $\T\otimes\C_\infty^\times$. Next, we show 
that the groups of connected components of the N\'eron models of $J_0(xy)$ and $J^{xy}$ at $\infty$ 
are annihilated by $\fE$. This allows us to identify the uniformizing lattices of the Jacobians with ideals in $\T$. 
These two observations, combined with a theorem of Gerritzen, imply (1). 
If in addition we assume that $\T_\fM$ is Gorenstein, then we get an explicit description of the kernel of the Eisenstein ideal 
from which (2) follows. 
\end{proof}

Proving that the assumptions in Theorem \ref{thmMAIN2} hold seems difficult. First, even though 
$\cH\otimes \Q$ and $\cH'\otimes \Q$ are isomorphic $\T$-modules, the integral isomorphism 
is much more subtle. It is related to a classical problem about the conjugacy classes of matrices in $\Mat_n(\Z)$; cf. \cite{LM}. 
Second, when $\ell|(q^2+1)$ the kernel of $\fM$ in $J_0(xy)$ is ramified, and Mazur's 
Eisenstein descent arguments for proving $\T_\fM$ is Gorenstein do not work in this 
ramified situation. (Both versions of Mazur's descent discussed in \cite[$\S\S$10,11]{Pal} rely on subtle 
arithmetic properties of $J_0(\fp)$ which are valid only for prime level.)

Nevertheless, both assumptions in Theorem \ref{thmMAIN2} can be verified computationally;  
Section \ref{sComputations} is devoted to these calculations. We were able to check 
the assumptions for several cases for each prime $q\leq 7$. In particular, 
we were able to go beyond dimension $2$, which is currently the only dimension 
where the Ogg's conjecture is known to be true over $\Q$.  
Section \ref{sComputations} is also of independent interest since it provides an 
algorithm for computing the action of Hecke operators on $\cH'$; this 
should be useful in other arithmetic problems dealing with $X^{xy}$. 
(An algorithm for computing the Hecke action on $\cH$ was already known from the 
work of Gekeler; see Remark \ref{rem10.2}.)

\begin{comment}
As a final remark we note that the proofs of main results in this paper extensively 
use rigid-analytic uniformizations over $\Fi$. This is specific to function fields 
since the analogue of $\infty$ for $\Q$ is the archimedean place. 

For example, if $f(X)\in \Z[X]$ is a monic irreducible polynomial of degree $n$, then 
the conjugacy classes of matrices in $\Mat_n(\Z)$ with characteristic polynomial $f(X)$ are in 
bijection with the $\Z[\alpha]$-ideal classes in $\Q(\alpha)$, where $\alpha$ is a root of $f(X)$ 
(although all these matrices are conjugate in $\Mat_n(\Q)$). 
\end{comment}

%-------------------------------------------------------

\subsection{Notation} %As in the introduction, let $q$ be a power of prime $p$, $A=\F_q[T]$, $F=\F_q(T)$ and $\infty=1/T$. 
Besides $\infty$, the other places of $F$ are in bijection with non-zero prime ideals of $A$.  
Given a place $v$ of $F$, we denote by $F_v$ the completion of $F$ at $v$, by $\cO_v$ 
the ring of integers of $F_v$, and by $\F_v$ the residue field of $\cO_v$. The 
valuation $\ord_v: F_v\to \Z$ is assumed to be normalized by $\ord_v(\pi_v)=1$, where 
$\pi_v$ is a uniformizer of $\cO_v$. The normalized absolute value on $\Fi$ is denoted by $|\cdot|$. 

Given a field $K$, we denote by $\bar{K}$ an algebraic closure of $K$ and $K^\sep$ 
a separable closure in $\bar{K}$. 
The absolute Galois group $\Gal(K^\sep/K)$ is denoted by $G_K$. 
Moreover, $F_v^\un$ and $\cO_v^\un$ 
will denote the maximal unramified extension of $F_v$ and its ring of integers, respectively. 

Let $R$ be a commutative ring with identity. We denote by $R^\times$ the group of 
multiplicative units of $R$. Let $\Mat_n(R)$ be the ring of $n\times n$ matrices over $R$, 
$\GL_n(R)$ the group of matrices whose determinant is in $R^\times$, and $Z(R)\cong R^\times$ 
the subgroup of  $\GL_n(R)$ consisting of scalar matrices. 

If $X$ is a scheme over a base $S$ and $S'\to S$ any base change, $X_{S'}$ 
denotes the pullback of $X$ to $S'$. If $S'=\Spec(R)$ is affine, 
we may also denote this scheme by $X_R$. By $X(S')$ we mean the $S'$-rational points 
of the $S$-scheme $X$, and again, if $S'=\Spec(R)$, we may also denote this set by $X(R)$. 

Given a commutative finite flat group scheme $G$ over a base $S$ (or just an 
abelian group $G$, or a ring $G$) and an integer $n$, $G[n]$ is the kernel of multiplication by 
$n$ in $G$, and $G_\ell$ is the maximal $\ell$-primary subgroup of $G$. The Cartier dual of $G$ is denoted by $G^\ast$.  

Given an ideal $\fn\lhd A$, by abuse of notation, we denote by the same symbol the unique 
monic polynomial in $A$ generating $\fn$. It will always be clear from the context 
in which capacity $\fn$ is used; for example, if $\fn$ appears in a matrix, column vector, or a 
polynomial equation, then the monic polynomial is implied. 
The prime ideals $\fp\lhd A$ are always assumed to be non-zero.

%--------------------------------------------------------------------

\section{Harmonic cochains and Hecke operators}\label{sAL} 

\subsection{Harmonic cochains} \label{sec2.1}

Let $G$ be an oriented connected graph in the sense of Definition 1 of $\S$2.1 in \cite{SerreT}. 
We denote by $V(G)$ and $E(G)$ its set of vertices and edges, respectively. 
For an edge $e\in E(G)$, let $o(e)$, $t(e)\in V(G)$ and $\bar{e}\in E(G)$ be its 
origin, terminus and inversely oriented edge, respectively. In particular, $t(\bar{e})=o(e)$ 
and $o(\bar{e})=t(e)$. We will assume that 
for any $v\in V(G)$ the number of edges with $t(e)=v$ is finite,  
and $t(e)\neq o(e)$ for any $e\in E(G)$ (i.e., $G$ has no loops).  A \textit{path} in $G$ is a 
sequence of edges $\{e_i\}_{i\in I}$ indexed by the set $I$ where $I=\Z$, $I=\N$ or 
$I=\{1,\dots, m\}$ for some $m\in \N$ such that $t(e_i)=o(e_{i+1})$ for every 
$i, i+1\in I$.  We say that the path is \textit{without backtracking} if $e_i\neq \bar{e}_{i+1}$ 
for every $i, i+1\in I$. We say that the path without backtracking $\{e_i\}_{i\in \N}$ is a \textit{half-line} 
if for every vertex $v$ of $G$ there is at most one index $n\in \N$ 
such that $v=o(e_n)$. 

Let $\G$ be a group acting on a graph $G$, i.e., $\G$ acts via
automorphisms. We say that $\G$ acts with \textit{inversion} if there is
$\gamma\in \G$ and $e\in E(G)$ such that $\gamma e=\bar{e}$. If
$\G$ acts without inversion, then we have a natural quotient graph
$\G\bs G$ such that $V(\G\bs G)=\G\bs V(G)$ and
$E(\G\bs G)=\G\bs E(G)$, cf. \cite[p. 25]{SerreT}.

\begin{defn}\label{defnHarmG}
Fix a commutative ring $R$ with identity. An $R$-valued \textit{harmonic cochain} on $G$ is a 
function $f: E(G)\to R$ that satisfies 
\begin{itemize}
\item[(i)] $$f(e)+f(\bar{e})=0\quad \text{for all $e\in E(G)$},$$
\item[(ii)] 
$$\sum_{\substack{e\in E(G) \\ t(e)=v}} f(e)=0\quad \text{for all $v\in V(G)$}.$$
\end{itemize}
Denote by $\cH(G, R)$ the group of $R$-valued harmonic cochains on $G$. 
%, and by $\cH_0(G, R)$ its subgroup of compactly supported functions, i.e., functions 
%which have value $0$ on all but finitely many edges of $G$.  
\end{defn}

The most important graphs in this paper are the Bruhat-Tits tree $\sT$ of $\PGL_2(\Fi)$, and the 
quotients of $\sT$. We recall the definition and introduce some notation for later use. 
Fix a uniformizer $\pi_\infty$ of $\Fi$. 
The sets of vertices $V(\sT)$ and edges $E(\sT)$ are the cosets $\GL_2(\Fi)/Z(\Fi)\GL_2(\cO_\infty)$ 
and $\GL_2(\Fi)/Z(\Fi)\cI_\infty$, respectively, where $\cI_\infty$ is the Iwahori group:
$$
\cI_\infty=\left\{\begin{pmatrix} a & b\\ c & d\end{pmatrix}\in \GL_2(\cO_\infty)\ \bigg|\ c\in \pi_\infty\cO_\infty\right\}. 
$$
The matrix $\begin{pmatrix} 0 & 1\\ \pi_\infty & 0\end{pmatrix}$ 
normalizes $\cI_\infty$, so the multiplication from the right by this matrix on $\GL_2(\Fi)$ 
induces an involution on $E(\sT)$; this involution is $e\mapsto \bar{e}$. 
The matrices 
\begin{equation}\label{eq-setM}
E(\sT)^+=\left\{\begin{pmatrix} \pi_\infty^k & u \\ 0 & 1\end{pmatrix}\ \bigg|\
\begin{matrix} k\in \Z\\ u\in \Fi,\ u\ \mod\ \pi_\infty^k\cO_\infty\end{matrix}\right\}
\end{equation}
are in distinct left cosets of $\cI_\infty Z(\Fi)$, and there is a disjoint decomposition  (cf.\ \cite[(1.6)]{Improper})
$$
E(\sT)=E(\sT)^+\bigsqcup E(\sT)^+\begin{pmatrix} 0 & 1\\ \pi_\infty & 0\end{pmatrix}. 
$$
We call the edges in $E(\sT)^+$ \textit{positively oriented}. 

The group $\GL_2(\Fi)$ naturally acts on $E(\sT)$ by left multiplication. 
This induces an action on the group of $R$-valued functions on $E(\sT)$: 
for a function $f$ on $E(\sT)$ and $\gamma\in \GL_2(\Fi)$ we define the function $f|\gamma$ on $E(\sT)$ by 
$(f|\gamma)(e)=f(\gamma e)$. 
It is clear from the definition that $f|\gamma$ is harmonic if $f$ is harmonic, and 
for any $\gamma, \sigma\in \GL_2(\Fi)$ we have $(f|\gamma)|\sigma=f|(\gamma\sigma)$. 

Let $\G$ be a subgroup of $\GL_2(\Fi)$ which acts on $\sT$ without inversions. 
Denote by $\cH(\sT, R)^\G$ the subgroup of $\G$-invariant harmonic cochains, i.e.,  
$f|\gamma=f$ for all $\gamma\in \G$.
It is clear that $f\in \cH(\sT, R)^\G$ defines a function $f'$ on the quotient graph $\G\bs\sT$, and 
$f$ itself can be uniquely recovered from this function: If $e\in E(\sT)$ maps to $\tilde{e}\in E(\G\bs \sT)$ under the quotient map, 
then $f(e)=f'(\tilde{e})$. The conditions of harmonicity (i) and (ii) can be formulated 
in terms of $f'$ as follows. Since $\G$ acts without inversion, (i) is equivalent to 
\begin{itemize}
\item[(i$'$)] 
$$
f'(\tilde{e})+f'(\bar{\tilde{e}})=0\quad \text{for all $\tilde{e}\in E(\G\bs\sT)$}.
$$
\end{itemize}
Let $v\in V(\sT)$ and $\tilde{v}\in V(\G\bs\sT)$ be its image. The stabilizer group 
$$\G_v=\{\gamma\in \G\ |\ \gamma v=v\}$$  acts on the set $\{e\in E(\sT)\ |\ t(e)=v\}$, 
and the orbits correspond to $$\{\tilde{e}\in E(\G\bs \sT)\ |\ t(\tilde{e})=\tilde{v}\}.$$ 
Let $\G_e:=\{\gamma\in \G\ |\ \gamma e=e\}$; clearly $\G_e$ is a subgroup of $\G_{t(e)}$. 
The \textit{weight} of $e$ 
$$
w(e):=[\G_{t(e)}:\G_e]
$$
is the length of the orbit corresponding to $e$. Since $w(e)$ depends only on its image $\tilde{e}$ in $\G\bs \sT$, 
we can define $w(\tilde{e}):=w(e)$. Note that 
$\sum_{t(\tilde{e})=\tilde{v}} w(\tilde{e}) = q+1$. 
We stress that, in general, $w(e)$ depends on the orientation, i.e., $w(e)\neq w(\bar{e})$. 
With this notation, condition (ii) is equivalent to 
\begin{itemize}
\item[(ii$'$)] 
$$
\sum_{\substack{\tilde{e}\in E(\G\bs\sT) \\ t(\tilde{e})=\tilde{v}}} w(\tilde{e})f'(\tilde{e})=0\quad 
\text{for all $\tilde{v}\in V(\G\bs\sT)$},
$$
\end{itemize}
cf. \cite[(3.1)]{GR}. 

\begin{defn}\label{defnHarmG0} The group of $R$-valued \textit{cuspidal 
harmonic cochains} for $\G$, denoted $\cH_0(\sT, R)^\G$, is the 
subgroup of $\cH(\sT, R)^\G$ consisting of functions 
which have compact support as functions on $\G\bs\sT$, i.e., functions 
which have value $0$ on all but finitely many edges of $\G\bs\sT$.  
Let $\cH_{00}(\sT, R)^\G$ 
denote the image of $\cH_0(\sT, \Z)^\G\otimes R$ in $\cH_0(\sT, R)^\G$. 
\end{defn}

\begin{defn}\label{defnQG} It is known that the quotient 
graph $\G_0(\fn)\bs \sT$ is the edge disjoint union 
$$
\G_0(\fn)\bs \sT = (\G_0(\fn)\bs \sT)^0\cup \bigcup_{s\in \G_0(\fn)\bs \p^1(F)} h_s
$$
of a finite graph $(\G_0(\fn)\bs \sT)^0$ with a finite number of half-lines $h_s$, called \textit{cusps}; 
cf. Theorem 2 on page 106 of \cite{SerreT}. 
The cusps are in bijection with the orbits of the natural action of $\G_0(\fn)$ on $\p^1(F)$; cf. Remark 2 
on page 110 of \cite{SerreT}.
\end{defn}

To simplify the notation, we put 
$$
\cH(\fn, R):=\cH(\sT, R)^{\G_0(\fn)}$$ 
$$
\cH_0(\fn, R):=\cH_0(\sT, R)^{\G_0(\fn)} 
$$
$$
\cH_{00}(\fn, R)\text{ the image of }\cH_0(\fn, \Z)\otimes R \text{ in } \cH_0(\fn, R). 
$$
One can show that $\cH_0(\fn, \Z)$ and $\cH(\fn, \Z)$ are finitely generated free 
$\Z$-modules of rank $g(\fn)$ and  $g(\fn)+c(\fn)-1$, respectively, where $g(\fn)$ 
is the genus of $X_0(\fn)$ and $c(\fn)$ is the number of cusps. 

From the above description it is clear that $f$ is in $\cH_0(\fn, R)$ if and only if 
it eventually vanishes on each $h_s$. It is also clear that if $R$ is flat over $\Z$, then $\cH_0(\fn, R)=\cH_{00}(\fn, R)$. 
On the other hand, it is easy to construct examples where this equality does not hold. 

\begin{example}\label{example1} The quotient graph $\GL_2(A)\bs \sT$ is a half-line depicted in Figure \ref{Fig1}. 
\begin{figure}%[h]
\begin{tikzpicture}[->, >=stealth', semithick, node distance=1.5cm, inner sep=.5mm, vertex/.style={circle, fill=black}]

\node[vertex] (0) [label=below:$v_0$]{};
  \node[vertex] (1) [right of=0, label=below:$v_1$] {}; 
  \node[vertex] (2) [right of=1, label=below:$v_2$] {};
  \node[vertex] (3) [right of=2, label=below:$v_3$] {};
  \node[] (4) [right of=3] {};

\path[]
    (0) edge  (1) (1) edge (2) (2) edge (3) (3) edge[dashed] (4);   
\end{tikzpicture}
\caption{$\GL_2(A)\bs \sT$}\label{Fig1}
\end{figure}
Denote the edge with origin $v_i$ and terminus $v_{i+1}$ by $e_i$. The stabilizers 
of vertices and edges of $\GL_2(A)\bs \sT$ are well-known, cf. \cite[p. 691]{GN}. 
From this one computes $w(e_i) = q$ for all $i$, $w(\bar{e}_0)=q+1$, and $w(\bar{e}_i)=1$ for $i\geq 1$. 
Therefore, if $\varphi\in \cH(1, R)$, then 
$\varphi(e_i)=q^i\alpha$ $(i\geq 0)$ for some fixed $\alpha\in R[q+1]$. Now it is clear that 
$\cH(1,R)=R[q+1]$ and $\cH_0(1,R)=\cH_{00}(1,R)=0$. 
\end{example} 

\begin{example}\label{examplex}
\begin{figure}%[h]
\begin{tikzpicture}[->, >=stealth', semithick, node distance=1.5cm, inner sep=.5mm, vertex/.style={circle, fill=black}]

\node[vertex] (0) [label=above:$v_0$] {};
  \node[vertex] (1) [right of=0, label=above:$v_1$] {};
  \node[vertex] (2) [right of=1, label=above:$v_2$] {};
  \node[vertex] (3) [right of=2, label=above:$v_3$] {};
  \node[] (4) [right of=3] {};
   \node[vertex] (-1) [below of=1, label=below:$v_{-1}$] {};
  \node[vertex] (-2) [below of=2, label=below:$v_{-2}$] {};
  \node[vertex] (-3) [below of=3, label=below:$v_{-3}$] {};
  \node[] (-4) [below of=4] {};
  
  \path[]
  (-4) edge[dashed]  (-3) (-3) edge (-2) (-2) edge (-1) (-1) edge (0) (0) edge (1) (1)edge (2) (2)edge (3) (3)edge[dashed] (4);   
  
\end{tikzpicture}
\caption{$\G_0(x)\bs \sT$}\label{Fig2}
\end{figure}
The graph of $\G_0(x)\bs\cT$ is given in Figure \ref{Fig2}, where the vertex $v_i$ ($i\in \Z$) 
is the image of $\begin{pmatrix} T^{i} & 0\\ 0 & 1\end{pmatrix}\in V(\sT)$; the positive orientation 
is induced from $E(\sT)^+$.  
Denote by $e_i$ the edge with origin $v_{i-1}$ and terminus $v_{i}$. Since 
$\begin{pmatrix} 0 & 1\\ 1 & 0\end{pmatrix}v_{-i}=v_i$ and the stabilizers of $v_i$ ($i\geq 0$) in 
$\GL_2(A)$ are well-known (cf. \cite[p. 691]{GN}), one easily computes  
$$
w(e_i) = 
\begin{cases}
q & \text{if $i\geq 0$}\\
1 & \text{if $i\leq -1$}
\end{cases}
\qquad 
w(\bar{e}_i) = 
\begin{cases}
1 & \text{if $i\geq -1$}\\
q & \text{if $i\leq -2$}
\end{cases}
$$
Suppose $\varphi\in \cH(x, R)$ and denote $\alpha=\varphi(e_{-1})$. 
Since $w(e_i)\varphi(e_i)=w(\bar{e}_{i+1})\varphi(e_{i+1})$, we get 
$$
\varphi(e_i) = 
\begin{cases}
\alpha q^{i+1} & \text{if $i\geq -1$}\\
\alpha & \text{if $i=-2$}\\
\alpha q^{-i-3} & \text{if $i\leq -3$}. 
\end{cases}
$$
We conclude that $\cH(x, R)=R$, $\cH_0(x, R)=R_p$, and $\cH_{00}(x, R)=0$. (Recall that 
$R_p$ denotes the $p$-primary subgroup of $R$.)
\end{example}

\begin{figure}%[h]
\begin{tikzpicture}[->, >=stealth', semithick, node distance=1.5cm, inner sep=.5mm, vertex/.style={circle, fill=black}]

\node[vertex] (0) [label=above:$v_0$] {};
  \node[vertex] (1) [right of=0, label=above:$v_1$] {};
  \node[vertex] (2) [right of=1, label=above:$v_2$] {};
  \node[vertex] (3) [right of=2, label=above:$v_3$] {};
  \node[] (4) [right of=3] {};
   \node[vertex] (-1) [below of=1, label=below:$v_{-1}$] {};
  \node[vertex] (-2) [below of=2, label=below:$v_{-2}$] {};
  \node[vertex] (-3) [below of=3, label=below:$v_{-3}$] {};
  \node[vertex] (u) [below of=0, label=below:$u$] {};
  \node[] (-4) [below of=4] {};
  
  \path[]
  (-4) edge[dashed]  (-3) (-3) edge (-2) (-2) edge (-1) (-1) edge (0) (0) edge (1) (1)edge (2) (2)edge (3) (3)edge[dashed] (4) (-1) edge (u);   
  
\end{tikzpicture}
\caption{$\G_0(y)\bs \sT$}\label{Fig3}
\end{figure}
\begin{example}\label{exampley}

The graph $\G_0(y)\bs \sT$ 
is given in Figure \ref{Fig3}, where $v_i$ is the image of $\begin{pmatrix} T^{i} & 0\\ 0 & 1\end{pmatrix}\in V(\sT)$ and 
$u$ is the image of $\begin{pmatrix} T^{-2} & T^{-1} \\ 0 & 1\end{pmatrix}$.  
We denote the edge with 
origin $v_{i-1}$ and terminus $v_{i}$ by $e_i$, and the edge with terminus $u$ by $e_u$.  
One computes 
$$
w(e_i) = 
\begin{cases}
q & \text{if $i\geq 0$}\\
1 & \text{if $i\leq -1$}
\end{cases}
\qquad 
w(\bar{e}_i) = 
\begin{cases}
1 & \text{if $i\geq 0$}\\
q & \text{if $i\leq -1$}
\end{cases}
$$
$$
w(e_u)=q+1, \qquad w(\bar{e}_u)=q-1. 
$$
Let $\varphi\in \cH(y, R)$. Denote $\varphi(e_{0})=\alpha$ and $\varphi(e_u)=\beta$. Then $(q+1)\beta=0$ and 
$$
\varphi(e_i) = 
\begin{cases}
\alpha q^{i} & \text{if $i\geq 0$}\\
q^{-i-1}(\alpha+(q-1)\beta) & \text{if $i\leq -1$}. 
\end{cases}
$$
This implies that $\cH(y, R)\cong R\oplus R[q+1]$. 
For $\varphi$ to be cuspidal we must have $q^n \alpha=0$ and $q^n(q-1)\beta=0$ for some $n\geq 1$. 
Thus, $\alpha\in R_p$ and $\beta\in R[2]$ (resp. $\beta=0$) if $p$ is odd (resp. $2$). We 
get an isomorphism $\cH_0(y, R)\cong R_p\oplus R[2]$ if $p$ is odd and 
$\cH_0(y, R)\cong R_2$ if $p=2$. Note that $\cH_{00}(y, R)=0$. 
\end{example}

\begin{lem}\label{lem1.3} The following holds:
\begin{enumerate}
\item If $\fn\lhd A$ has a prime divisor of odd degree, assume 
$q(q-1)\in R^\times$. Otherwise, assume $q(q^2-1)\in R^\times$. Then 
$\cH_0(\fn, R)=\cH_{00}(\fn, R)$.
\item If $\fn=\fp$ is prime and $q(q-1)\in R^\times$, then $\cH_0(\fn, R)=\cH_{00}(\fn, R)$.
\end{enumerate} 
\end{lem}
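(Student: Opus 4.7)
My plan is to reduce the lemma to a Smith-normal-form computation for the integer harmonicity matrix on the finite part of $\G_0(\fn)\bs\sT$. First I observe that under any of the stated hypotheses $q\in R^\times$, and claim this forces every $f\in\cH_0(\fn,R)$ to vanish identically on each cusp half-line $h_s$ of $\G_0(\fn)\bs\sT$, not merely eventually. At an interior vertex of $h_s$ only two edges meet, so the harmonicity condition (ii$'$) gives the recursion $f(e_{i+1})=(w(e_i)/w(\bar{e}_{i+1}))\,f(e_i)$; the stabilizer description at cuspidal ends (cf.\ Examples \ref{example1}--\ref{exampley}) shows that the weight pattern stabilizes to $(w(e_i),w(\bar{e}_{i+1}))=(q,1)$ at the far end of $h_s$, so $f(e_i)$ grows geometrically by a factor of $q$. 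Cuspidality forces $q^N f(e_{i_0})=0$ for $N\gg 0$, and the invertibility of $q$ then yields $f(e_{i_0})=0$; back-propagating across the finitely many non-generic edges near the attachment point, whose intervening weights divide $q(q\pm 1)$ and are units of $R$ by the hypothesis, gives $f\equiv 0$ on all of $h_s$.

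After this reduction, $f$ is determined by its restriction to the finite subgraph $G^0:=(\G_0(\fn)\bs\sT)^0$, and belonging to $\cH_0(\fn,R)$ is equivalent to satisfying (ii$'$) at every vertex of $G^0$, with the omitted cusp edge contributing $0$. Writing $H^0:\Z^{E(G^0)^+}\to\Z^{V(G^0)}$ for the corresponding integer matrix, one has $\cH_0(\fn,R)=\ker(H^0\otimes R)$, while $\cH_{00}(\fn,R)$ is the image of $\ker(H^0)\otimes R$ in this kernel. Putting $H^0$ in Smith normal form with nonzero elementary divisors $d_1\,|\,\cdots\,|\,d_r$ yields
$$
\cH_0(\fn,R)/\cH_{00}(\fn,R)\;\cong\;\bigoplus_{i=1}^{r}R[d_i],
$$
so the lemma reduces to showing that every nonzero $d_i$ is a unit of $R$ under the hypothesis.

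The entries of $H^0$ are integer weights $w(e)\in\{1,2,\ldots,q+1\}$ that sum to $q+1$ at each vertex. I plan to show that the elementary divisors of $H^0$ divide a power of $q(q-1)$ in the setups of part (2) or the odd-degree case of part (1), and only a power of $q(q^2-1)$ in the remaining case. The distinction arises from the ``short'' vertices of the quotient graph: Example \ref{exampley} exhibits a vertex $u$ whose unique adjacent edge has weight $q+1$, producing $R[q+1]$-torsion in the cochain space a priori. The needed combinatorial facts are: (i) a prime divisor of odd degree in $\fn$ suppresses such short vertices (as in Example \ref{examplex}), so the weights appearing in $H^0$ lie in $\{1,q,q-1\}$; and (ii) when $\fn=\fp$ is prime, any short vertex that appears is attached to the spine so that its $R[q+1]$-contribution to a cuspidal cochain is annihilated by a $(q-1)$-factor from the cuspidality constraint at another cusp, exactly as in Example \ref{exampley} where the coefficient $\beta$ is forced to vanish under $q(q-1)\in R^\times$.

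The main obstacle is the last combinatorial step: precisely describing the short vertices of $\G_0(\fn)\bs\sT$ for arbitrary square-free $\fn$ and tracking how the $R[q+1]$-torsion propagates through the harmonicity equations. This amounts to extending the explicit weight computations of Examples \ref{examplex}--\ref{exampley} to composite levels. For prime level (part (2)) the cancellation mechanism from Example \ref{exampley} should generalize via the uniform description of $\G_0(\fp)\bs\sT$ (cf.\ \cite{GN}); for composite $\fn$ in part (1), one expects to argue by comparison with the $\G_0(\fp)$-invariant theory for a prime divisor $\fp\mid\fn$, controlling the extra weight data introduced when passing from $\fp$ to $\fn$.
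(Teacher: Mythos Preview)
Your Smith-normal-form reduction is sound: after zeroing out the cusp half-lines (which only needs $q\in R^\times$), one has $\cH_0(\fn,R)/\cH_{00}(\fn,R)\cong\bigoplus_i R[d_i]$ where the $d_i$ are the nonzero elementary divisors of the weighted harmonicity matrix $H^0$ on the finite core. The gap is the next step. You argue that the $d_i$ are units because the \emph{entries} of $H^0$ lie in a controlled set of weights, but entries do not bound elementary divisors: a matrix with entries in $\{1,2\}$ can have elementary divisor $3$. Your claim (i) that the weights lie in $\{1,q,q-1\}$ is also stronger than what \cite{GN} gives (they can be arbitrary divisors of $(q-1)q^m$), though this is secondary. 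For part~(2), your mechanism is mis-located: after step~1 the cuspidality constraints are spent, and what forces $\beta=0$ in Example~\ref{exampley} under $q(q-1)\in R^\times$ is the harmonicity relation at the core vertex $v_{-1}$, which carries the weight $w(\bar e_u)=q-1$, not a residual cusp condition.

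The paper closes this gap by a different route that avoids computing elementary divisors. The controlling invariant is $n(\tilde e)=\#\G_e/(q-1)$, not the weight $w(e)=n(t(e))/n(e)$. By \cite{GN}, each $n(\tilde v)$ either divides $(q-1)q^m$ or equals $q+1$, the latter occurring only when every prime factor of $\fn$ has even degree (and, for $\fn=\fp$ prime, at a \emph{unique} vertex $v_0$). Since $\G_e\subset\G_{t(e)}\cap\G_{o(e)}$, one gets $n(\tilde e)\in R^\times$ for every edge under either hypothesis. The map $\varphi\mapsto\varphi^\dag$, $\varphi^\dag(\tilde e)=n(\tilde e)\varphi(\tilde e)$, then carries unweighted harmonic cochains on $\G\bs\sT$ into $\cH_0(\fn,R)$, and is known to be an isomorphism over $\Z$. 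Using a basis $\varphi_i$ with $\varphi_i(e_j)=\delta_{ij}$ for edges $e_1,\dots,e_g$ whose removal leaves a tree, one subtracts $\sum_i(\psi(e_i)/n(e_i))\varphi_i^\dag$ from any $\psi\in\cH_0(\fn,R)$ to obtain a cochain supported on a finite tree, and prunes it to zero from the leaves because $w(e)=n(t(e))/n(e)\in R^\times$ whenever $t(e)\neq v_0$. For part~(2) one simply never chooses $v_0$ as the leaf. This tree-pruning is precisely the argument that shows your $d_i$ are units; your framework is compatible with it, but you need this structural input rather than an entry count.
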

\begin{proof} Our proof relies on the results in \cite{GN}, and is partly motivated by the proof  of Theorem 3.3 in \textit{loc. cit.} 
Let $\G:=\G_0(\fn)$. By 1.11 and 2.10 in \cite{GN}, 
the stabilizer $\G_v$ for any $v\in V(\sT)$ is finite, contains the scalar matrices $Z(\F_q)$, and $n(v):=\#\G_v/\F_q^\times$ 
either divides $(q-1)q^m$ for some $m\geq 0$, or is equal to $q+1$. 
Moreover, $n(v)=q+1$ is possible only if all prime divisors of $\fn$ have even degrees. 
Overall, we see that our assumptions in (1) imply that $n(v)$ is invertible in $R$ for any 
$v\in V(\sT)$. Since the stabilizer $\G_e$ of any $e\in V(\sT)$ is a subgroup of $\G_{t(e)}$ containing $Z(\F_q)$, 
we also have $n(e):=\#\G_e/\F_q^\times\in R^\times$. Note that $n(e)$ does not depend on the orientation of 
$e$ and depends only on its image $\tilde{e}$ 
in $\G\bs\sT$, so we can define $n(\tilde{e})=n(e)$.  

Let $\cH_0(\G\bs\sT,R)$ be the subgroup of $\cH(\G\bs\sT,R)$ consisting of compactly supported 
harmonic cochains on $\G \bs\sT$. 
There is an injective homomorphism 
\begin{align}\label{eqHomHarm}
\cH_0(\G\bs\sT, R) &\to \cH_0(\fn, R)\\
\nonumber\varphi &\mapsto \varphi^\dag
\end{align}
defined by $\varphi^\dag(\tilde{e})=n(\tilde{e})\varphi(\tilde{e})$. Indeed, since $n(\tilde{e})$ does not depend on the 
orientation of $e$, $\varphi^\dag$ clearly satisfies (i$'$). As for (ii$'$), we have 
$$
\sum_{\substack{\tilde{e}\in E(\G\bs\sT) \\ t(\tilde{e})=\tilde{v}}} w(\tilde{e})\varphi^\dag(\tilde{e})
=\sum_{\substack{\tilde{e}\in E(\G\bs\sT) \\ t(\tilde{e})=\tilde{v}}} \frac{n(\tilde{v})}{n(\tilde{e})}n(\tilde{e})\varphi(\tilde{e})
=n(\tilde{v}) \sum_{\substack{\tilde{e}\in E(\G\bs\sT) \\ t(\tilde{e})=\tilde{v}}} \varphi(\tilde{e}) = 0. 
$$

The map (\ref{eqHomHarm}) is also defined over $\Z$, and by \cite[Thm. 3.3]{GN} gives an isomorphism 
$\cH_0(\G\bs\sT, \Z) \overset{\sim}{\To} \cH_0(\fn, \Z)$. Next, there is an isomorphism 
$$\cH_0(\G\bs\sT, R)\cong \cH_0(\G\bs\sT, \Z)\otimes_\Z R,$$ 
which follows, for example, by observing that $H_1(\G\bs\sT, R)\cong \cH_0(\G\bs\sT, R)$ and applying the 
universal coefficient theorem for simplicial homology.  Hence 
$$
\cH_0(\G\bs\sT, R)\cong \cH_0(\G\bs\sT, \Z)\otimes_\Z R \cong \cH_0(\fn, \Z)\otimes_\Z R. 
$$

Let $g=\rank_\Z \cH_0(\G\bs\sT, \Z)$. Thinking of 
the elements of $\cH_0(\G\bs\sT, \Z)$ as $1$-cycles, it is easy to show by induction on $g$ 
that one can choose $e_1, \dots, e_g\in E(\G\bs\sT)$ and a $\Z$-basis $\varphi_1, \dots, \varphi_g$ of 
$\cH_0(\G\bs\sT, \Z)$ such that $\G\bs\sT - \{e_1, \dots, e_g\}$ is a tree, and  
$\varphi_i(e_j)=\delta_{ij}=$(Kronecker's delta), $1\leq i, j\leq g$. By slight abuse of notation, denote 
the image of  $\varphi_i^\dag$ in $\cH_{00}(\fn, R)$ by the same symbol. 
Let $\psi\in \cH_0(\fn, R)$. Then 
$$
\psi':=\psi-\sum_{i=1}^g \frac{\psi(e_i)}{n(e_i)} \varphi_i^\dag
$$
is supported on a finite subtree $S$ of $\G\bs \sT$. Let $v\in V(S)$ be a vertex such that 
there is a unique $e\in E(S)$ with $t(e)=v$. Note that $w(e)\in R^\times$. Condition (ii$'$) gives $w(e)\psi'(e)=0$, 
so $\psi'(e)=0$. This process can be iterated to show that $\psi'=0$. This implies 
that the natural map $\cH_0(\fn, \Z)\otimes_\Z R\to \cH_0(\fn, R)$ is surjective, which is part (1). 

To prove part (2), we can assume that $\deg(\fp)$ is even. 
A consequence of 2.7 and 2.8 in \cite{GN} is that 
there is a unique $v_0\in V(\G\bs \sT)$ 
with $n(v_0)=q+1$ and a unique $e_0\in E(\G\bs \sT)$ with $o(e_0)=v_0$. 
For any 
other $v\in V(\G\bs \sT)$, $n(v)$ divides $(q-1)q^m$. 
Since the stabilizer of any edge $e\in E(\G\bs\sT)$ is a subgroup of the stabilizers of both $t(e)$ and $o(e)$, we 
have $n(e)\in R^\times$. After this observation, we can repeat the argument used to prove (1) 
to reduces the problem to showing that $\psi\in \cH_0(\fp, R)$ supported on a finite tree $S$ 
is identically $0$. We can always choose $v\in V(S)$ to be a vertex different from $v_0$ but such that 
there is a unique $e\in E(S)$ with $t(e)=v$. Since $w(e)$ is a unit in $R$, we can also 
finish as in part (1). 
\end{proof}

%\begin{rem} It is a result of Gekeler and Nonnengardt \cite[Thm. 3.3]{GN} that 
%(\ref{eqHomHarm}) is an isomorphism when $R=\Z$. 
%\end{rem}

The conclusion in Example \ref{exampley} that $\cH_0(y, R)\neq \cH_{00}(y, R)$ if $R[2]\neq 0$ is a 
special case of a general fact: 

\begin{lem}\label{lemRem11.9}
Assume $p$ is odd and invertible in $R$. Let $\fp\lhd A$ be
prime of even degree. 
If $R[2]\neq 0$, then $\cH_0(\fp, R)\neq \cH_{00}(\fp, R)$. 
\end{lem}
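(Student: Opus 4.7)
The plan is to identify a distinguished edge of $\G_0(\fp)\bs\sT$ at which evaluation separates $\cH_0(\fp,R)$ from $\cH_{00}(\fp,R)$. The setup mirrors the proof of Lemma~\ref{lem1.3}(2): since $\deg(\fp)$ is even, \S\S 2.7--2.8 of \cite{GN} give a unique vertex $v_0 \in V(\G_0(\fp)\bs\sT)$ with $n(v_0) = q+1$ and a unique edge $e_0 \in E(\G_0(\fp)\bs\sT)$ with $o(e_0) = v_0$; consequently $\bar e_0$ is the unique edge of $\G_0(\fp)\bs\sT$ terminating at $v_0$, with $n(\bar e_0) = 1$ and $w(\bar e_0) = q+1$. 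Since $q$ is odd, $2 \mid q+1$.

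First, I would show that every $\psi \in \cH_{00}(\fp, R)$ satisfies $\psi(\bar e_0) = 0$. Under the isomorphism $\cH_0(\fp,\Z)\cong\cH_0(\G_0(\fp)\bs\sT,\Z)$ of \cite[Thm.~3.3]{GN} tensored with $R$, any such $\psi$ has the form $\varphi^\dag$ for some $\varphi \in \cH_0(\G_0(\fp)\bs\sT, R)$. The ordinary harmonicity condition (ii) of Definition~\ref{defnHarmG} for $\varphi$ at $v_0$ forces $\varphi(\bar e_0) = 0$ (as $\bar e_0$ is the only edge of $\G_0(\fp)\bs\sT$ terminating at $v_0$), whence $\psi(\bar e_0) = n(\bar e_0)\varphi(\bar e_0) = 0$.

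Second, I would produce $\psi \in \cH_0(\fp, R)$ with $\psi(\bar e_0) \neq 0$. Pick $\beta \in R[2] \smallsetminus \{0\}$; since $2 \mid q+1$, $\beta \in R[q+1]$, so the constraint $(q+1)\psi(\bar e_0) = 0$ from (ii$'$) at $v_0$ is met by setting $\psi(\bar e_0) = \beta$. Mimicking Example~\ref{exampley}, I would then set $\psi(e_0) = -\beta$ and extend $\psi$ to the rest of $\G_0(\fp)\bs\sT$ with values in $R[2]$ satisfying harmonicity. At vertices other than $v_0$ the weights divide $(q-1)q^m$; since $q \in R^\times$, $q-1$ is even, and $2\beta = 0$, the resulting equations can be solved within $R[2]$. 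On each cusp half-line the weights $q$ and $1$ are units, so the recursion permits truncation to zero past a finite depth, producing a compactly supported (hence cuspidal) cochain.

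The hard part will be verifying that this propagation closes up consistently on the finite part of $\G_0(\fp)\bs\sT$. A cleaner alternative is a reduction to $R = \F_2$: the $\Z$-linear map $m_\beta\colon \F_2 \to R$, $1 \mapsto \beta$, induces a map $\cH_0(\fp, \F_2) \to \cH_0(\fp, R)$ sending $\cH_{00}(\fp, \F_2)$ into $\cH_{00}(\fp, R)$, so by the first step it suffices to show $\dim_{\F_2}\cH_0(\fp,\F_2) > g(\fp) = \rank_\Z\cH_0(\fp,\Z)$. This last inequality should follow from a weighted Euler-characteristic count on $\G_0(\fp)\bs\sT$ isolating the extra $\F_2$-dimension contributed by $v_0$, whose stabilizer index $q+1$ vanishes mod~$2$.
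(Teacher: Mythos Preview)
Your first step is fine and matches the paper. The second step, however, has a real error and a missing idea. The error: on a cusp half-line with unit weights, harmonicity gives $f(e_{i+1}) = (\text{unit})\cdot f(e_i)$, so a nonzero value propagates forever and \emph{cannot} be truncated; hence any cuspidal cochain vanishes on the cusps, and the construction must close up entirely inside $(\G_0(\fp)\bs\sT)^0$. The missing idea, which the paper supplies, is not to propagate over the whole finite graph but to support $f$ on a single path without backtracking $\{e_0,\ldots,e_m\}$ from $v_0$ to a vertex $v$ with $n(v)$ divisible by $q-1$ (such a vertex exists by \cite[Lem.~2.7]{GN}; take a shortest such path). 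Set $f(e_0)=\alpha\in R[2]\smallsetminus\{0\}$, propagate by $f(e_i)=\frac{w(e_{i-1})}{w(\bar e_i)}f(e_{i-1})$ (minimality makes each $w(\bar e_i)$ a unit in $R$), put $f(\bar e_j)=f(e_j)$ (which equals $-f(e_j)$ since $2\alpha=0$), and set $f=0$ off the path. Harmonicity holds at $v_0$ since $(q+1)\alpha=0$, and at the terminal vertex since $w(e_m)$ is divisible by $q-1$, hence even, so $w(e_m)f(e_m)=0$. This termination mechanism is exactly what Example~\ref{exampley} exhibits in miniature (there the path is the single edge $e_u$, running from the vertex with $n=q+1$ to one with $w(\bar e_u)=q-1$).

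Your alternative reduction to $\F_2$ also has a gap: knowing that $m_\beta$ sends $\cH_{00}$ into $\cH_{00}$ does not let you conclude $\cH_0(\fp,R)\neq\cH_{00}(\fp,R)$ from the strict inequality over $\F_2$; for that you would need $m_\beta^{-1}(\cH_{00}(\fp,R))\subseteq\cH_{00}(\fp,\F_2)$, which you have not argued. And the ``weighted Euler-characteristic count'' is left entirely unspecified --- carrying it out amounts to producing the explicit cochain anyway.
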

\begin{proof} Let $\G:=\G_0(\fp)$. As in Lemma \ref{lem1.3}, let $v_0$ 
be the unique vertex of $\G\bs \sT$ with $n(v_0)=q+1$, and let $e_0\in E(\G\bs \sT)$ 
be the unique edge with $o(e_0)=v_0$. Note that 
$w(\bar{e}_0)=q+1$. As we already mentioned 
in the proof of Lemma \ref{lem1.3}, for any 
other vertex $v$ in $\G\bs \sT$, $n(v)$ divides $(q-1)q^m$. 
Moreover, it is easy to see, for example by   
case (a) of Lemma 2.7 in \cite{GN}, that there is at least one vertex $v$ such that $n(v)$ is divisible by $q-1$. 
Consider all the paths without backtracking connecting $v_0$ to such a vertex, and  
fix a path of shortest length $\{e_0, e_1, \dots, e_m\}$.  Then 
$w(\bar{e}_i)$ ($1\leq i\leq m$) 
is invertible in $R$, but $w(e_m)$ is divisible by $q-1$. 
For a fixed non-zero $\alpha\in R[2]$, define $f$ on $E(\G\bs \sT)$ by 
$f(e_0)=\alpha$, $f(e_i)=\frac{w(e_{i-1})}{w(\bar{e}_i)}f(e_{i-1})$ $(1\leq i\leq m)$, 
$f(\bar{e}_j)=f(e_j)$ $(0\leq j\leq m)$, and $f(e)=0$ for all other edges. 
It is easy to see that $f\in \cH_0(\fp, R)$. On the other hand, any function $\varphi\in \cH_0(\fp, \Z)$ 
must be zero on $e_0$, since condition (ii$'$) for $v_0$ gives $(q+1)\varphi(\bar{e}_0)=0$. 
Therefore, $f\not\in \cH_{00}(\fp, R)$. 
\end{proof}

\begin{rem} The fact stated in Lemma \ref{lemRem11.9} is deduced in \cite{Pal} 
by different (algebro-geometric) methods. Our combinatorial proof 
seems to answer the question in Remark 11.9 in \cite{Pal}. 
\end{rem}

%----------------------------------------------------------

\subsection{Hecke operators and Atkin-Lehner involutions} 
Assume $\fn\lhd A$ is fixed. Given a non-zero ideal $\fm\lhd A$, define 
an $R$-linear transformation of the space of $R$-valued functions on $E(\sT)$ by 
$$
f|T_\fm=\sum f|\begin{pmatrix} a & b \\ 0 & d\end{pmatrix},
$$
where $f|\gamma$ for $\gamma \in \GL_2(F_\infty)$ is defined in Section \ref{sec2.1},
and the above sum is over $a,b,d\in A$ such that $a,d$ are monic, $(ad)=\fm$, $(a)+\fn=A$, and $\deg(b)< \deg(d)$. 
This transformation is the \textit{$\fm$-th Hecke operator}. Following a common convention, 
for a prime divisor $\fp$ of $\fn$ we often write $U_\fp$ instead of $T_\fp$. 

\begin{prop}\label{propTrecur} The Hecke operators preserve the spaces $\cH(\fn, R)$ and $\cH_0(\fn, R)$, and satisfy the 
recursive formulas: 
\begin{align*}
T_{\fm\fm'}&= T_\fm T_{\fm'}\quad \text{if}\quad  \fm+\fm'=A,\\
T_{\fp^i} &= T_{\fp^{i-1}}T_\fp-|\fp|T_{\fp^{i-2}}\quad \text{if}\quad  \fp\nmid \fn, \\
T_{\fp^i} &= T_\fp^i\quad \text{if}\quad  \fp| \fn. 
\end{align*}
\end{prop}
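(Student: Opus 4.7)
The plan is to realize $T_\fm$ as a double-coset Hecke operator, so that all three claims reduce to formal coset-bookkeeping over the principal ideal domain $A$, as in the classical modular setting. Write $\cR_\fm^\fn$ for the set of matrices $\bigl(\begin{smallmatrix}a & b\\ 0 & d\end{smallmatrix}\bigr)$ listed in the definition of $T_\fm$.

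\smallskip

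\noindent\textbf{Step 1 (preservation and invariance).} Introduce
$$
S_\fm^\fn = \left\{ g\in \Mat_2(A) : \det(g)\ \text{generates}\ \fm,\ g_{21}\in\fn,\ (g_{11})+\fn=A \right\},
$$
which is stable under both left and right multiplication by $\G_0(\fn)$. Using the Hermite normal form over $A$ (exploiting the fact that monic polynomials are unique generators of ideals), one shows that every element of $S_\fm^\fn$ has a unique factorisation $\gamma\sigma$ with $\gamma\in\G_0(\fn)$ and $\sigma\in\cR_\fm^\fn$. Right $\G_0(\fn)$-invariance of $f|T_\fm$ then follows: for $\delta\in\G_0(\fn)$, the map $\sigma\mapsto\sigma'$ determined by $\sigma\delta\in\G_0(\fn)\sigma'$ is a bijection of $\cR_\fm^\fn$, and invariance of $f$ under the $\G_0(\fn)$-factor yields $(f|T_\fm)|\delta=f|T_\fm$. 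Harmonicity passes through each $f|\sigma$ (since any $\sigma\in\GL_2(\Fi)$ acts on $\sT$ by graph automorphisms), and the compact-support condition is preserved because $T_\fm$ is a finite sum and the Hecke correspondence maps each half-line cusp $h_s$ into a finite union of cusps.

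\smallskip

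\noindent\textbf{Step 2 (multiplicativity).} For $\fm+\fm'=A$, the Chinese Remainder Theorem gives a bijection
$\cR_\fm^\fn\times \cR_{\fm'}^\fn \longleftrightarrow \cR_{\fm\fm'}^\fn$ sending $(\sigma,\sigma')$ to the unique upper-triangular representative of $\sigma\sigma'$ modulo $\G_0(\fn)$; the coprimality ensures that the conditions $(a)+\fn=A$ and the degree bound on $b$ transport cleanly. Applying this bijection under the sum defining $T_\fm T_{\fm'}$ yields $T_\fm T_{\fm'}=T_{\fm\fm'}$.

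\smallskip

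\noindent\textbf{Step 3 (prime-power recursion).} Expand $T_{\fp^{i-1}}T_\fp f$ as $\sum f|\sigma\sigma'$ with $\sigma\in\cR_{\fp^{i-1}}^\fn$, $\sigma'\in\cR_\fp^\fn$, and reduce each product to upper-triangular Hermite form modulo $\G_0(\fn)$. When $\fp\nmid\fn$, a direct enumeration shows that the non-degenerate products biject with $\cR_{\fp^i}^\fn$ (giving $T_{\fp^i}$), while exactly $|\fp|$ products acquire a scalar factor $\pi\cdot I$ (with $\pi$ a generator of $\fp$) times an element of $\cR_{\fp^{i-2}}^\fn$; since $Z(\Fi)$ acts trivially on $\sT$, each such product contributes $f|\tau$ for the corresponding $\tau\in\cR_{\fp^{i-2}}^\fn$, collectively producing $|\fp|T_{\fp^{i-2}}f$. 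Rearranging gives the stated recursion. When $\fp\mid\fn$, the condition $(a)+\fn=A$ forces $a\in A^\times$ in every matrix of $\cR_\fp^\fn$, so $\cR_\fp^\fn=\{\bigl(\begin{smallmatrix}1 & b\\ 0 & \pi\end{smallmatrix}\bigr):\deg b<\deg\fp\}$; iterated products of $i$ such matrices biject with $\cR_{\fp^i}^\fn$ without any scalar degeneracy, proving $T_{\fp^i}=T_\fp^i$ by induction.

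\smallskip

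The main obstacle is the coset analysis of Step 1 — verifying uniqueness and completeness of the Hermite representatives together with the two-sided $\G_0(\fn)$-stability of $S_\fm^\fn$. Once this coset picture is established, Steps 2 and 3 are essentially formal consequences; the only subtlety in Step 3 is pinpointing exactly where the coprimality condition $(a)+\fn=A$ either permits or forbids the scalar-reduction terms responsible for the $|\fp|T_{\fp^{i-2}}$ correction.
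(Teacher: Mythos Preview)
Your proposal is correct and follows precisely the classical double-coset argument the paper invokes (it cites Miyake, \S4.5, and says the group-theoretic proofs for classical modular forms carry over verbatim). The only slight imprecision is in Step~3, where ``exactly $|\fp|$ products acquire a scalar factor'' should be read as ``for each $\tau\in\cR_{\fp^{i-2}}^\fn$, exactly $|\fp|$ of the products reduce to $\G_0(\fn)\cdot\fp\tau$''; since your concluding count ``collectively producing $|\fp|T_{\fp^{i-2}}f$'' is correct, this is a wording issue rather than a mathematical gap.
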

\begin{proof} The group-theoretic proofs of the analogous statement for the Hecke operators 
acting on classical modular forms work also in this setting; cf. \cite[$\S$4.5]{Miyake}. 
\end{proof}

\begin{defn}\label{defHA}
Let $\T(\fn)$ be the commutative subalgebra of $\End_\Z(\cH_0(\fn, \Z))$ with the same unity element 
generated by all Hecke operators. Let $\T(\fn)^0$ to be the subalgebra of $\T(\fn)$ generated 
by the Hecke operators $T_\fm$ with $\fm$ coprime to $\fn$. 
\end{defn}

For every ideal $\fm$ dividing $\fn$ with 
$\mathrm{gcd}(\fm, \fn/\fm)=1$, let $W_\fm$ be any matrix in $\Mat_2(A)$ of the form 
\begin{equation}\label{ALmatrix}
\begin{pmatrix} a\fm & b \\ c\fn & d\fm \end{pmatrix} 
\end{equation}
such that $a,b,c,d, \in A$ and the ideal generated by $\det(W_\fm)$ in $A$ is $\fm$. 
It is not hard to check that for $f\in \cH(\fn, R)$, $f|W_\fm$ does not depend on the choice 
of the matrix for $W_\fm$ and  $f|W_\fm\in \cH(\fn, R)$. Moreover, 
as $R$-linear endomorphisms of $\cH(\fn, R)$, $W_\fm$'s satisfy 
\begin{equation}\label{eqWs}
W_{\fm_1}W_{\fm_2}=W_{\fm_3}, \quad \text{where} \quad \fm_3=
\frac{\fm_1\fm_2}{\mathrm{gcd}(\fm_1, \fm_2)^2}. 
\end{equation}
Therefore, the matrices $W_\fm$ acting on the $R$-module $\cH(\fn, R)$ 
generate an abelian group $\W\cong (\Z/2\Z)^s$, called the group of \textit{Atkin-Lehner involutions}, 
where $s$ is the number of prime divisors of $\fn$. The following proposition, whose 
proof we omit, follows from calculations similar to those in \cite[$\S$2]{AL}. 

\begin{prop}\label{prop3lem} Let 
$$
B_\fm=\begin{pmatrix} \fm & 0 \\  0 & 1\end{pmatrix}. 
$$
\begin{enumerate}
\item
If $\fn$ is coprime to $\fm$ and $f\in \cH(\fn, R)$, then 
$$(f|B_\fm)|W_\fm=f,$$ 
where $W_\fm$ is the Atkin-Lehner involution acting on $\cH(\fn\fm, R)$. $($Note 
that by Lemma \ref{lemAL2}, $f|B_\fm\in \cH(\fn\fm, R)$.$)$
\item Let $\fm|\fn$ with $\gcd(\fm, \fn/\fm)=1$, and 
$\fb$ be coprime to $\fm$. If $f\in \cH(\fn, R)$, then 
$$
(f|B_\fb)|W_\fm=(f|W_\fm)|B_\fb,
$$ 
where on the left hand-side $W_\fm$ denotes the Atkin-Lehner involution 
acting on $\cH(\fn\fb, R)$ and on the right hand-side $W_\fm$ denotes the  
involution acting on $\cH(\fn, R)$. 
\item Let $f\in \cH(\fn, R)$. If $\fq$ is a prime ideal which divides $\fn$ 
but does not divide $\fn/\fq$, then $f|(U_\fq+W_\fq)\in \cH(\fn/\fq, R)$. 
\end{enumerate}
\end{prop}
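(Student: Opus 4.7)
The governing principle in all three parts is that $f|\alpha$ for $f \in \cH(\fn, R)$ and $\alpha \in \GL_2(F)$ depends only on $\alpha$ modulo left multiplication by $\G_0(\fn)$ and by central scalars in $Z(F_\infty)$, since $f$ is invariant under both. My plan for each part is thus to exhibit a matrix identity of this form.

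For (1), I would take the representative $W_\fm = \begin{pmatrix} a\fm & b \\ c\fn\fm & d\fm \end{pmatrix}$ with $ad\fm - bc\fn \in \F_q^\times$ (solvable since $\gcd(\fm,\fn) = 1$) and multiply out to get $B_\fm W_\fm = \fm \cdot \begin{pmatrix} a\fm & b \\ c\fn & d \end{pmatrix}$, where the right factor visibly lies in $\G_0(\fn)$ and the scalar $\fm$ acts trivially on edges of $\sT$; hence $(f|B_\fm)|W_\fm = f|(B_\fm W_\fm) = f$.

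For (2), I would choose the concrete representatives $W_\fm^{(\fn)} = \begin{pmatrix} \fm & -1 \\ c\fn & d\fm \end{pmatrix}$ and $W_\fm^{(\fn\fb)} = \begin{pmatrix} \fm & -1 \\ c'\fn\fb & d'\fm \end{pmatrix}$ where $d\fm + c(\fn/\fm) = 1$ and $d'\fm + c'(\fn/\fm)\fb = 1$, both relations being solvable thanks to $\gcd(\fm,\fn/\fm) = \gcd(\fm,\fb) = 1$. Setting $M_1 := B_\fb W_\fm^{(\fn\fb)}$ and $M_2 := W_\fm^{(\fn)} B_\fb$, a direct calculation shows that $M_1 M_2^{-1}$ has entries in $A$, lower-left entry in $\fn A$, and determinant $1$ (the last equality being where both normalization conditions conspire), so it lies in $\G_0(\fn)$ and $f|M_1 = f|M_2$.

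The main obstacle will be (3), where the matrices appearing in $U_\fq + W_\fq$ all have non-unit determinant $\fq$, so the bare matrix identities of (1)--(2) need to be refined. My plan is to interpret $U_\fq + W_\fq$ as the trace operator associated to the index-$(|\fq|+1)$ inclusion $\G_0(\fn) \subset \G_0(\fn/\fq)$, where the index equals $|\fq|+1$ precisely because $\fq \nmid \fn/\fq$ and $\G_0(\fn)\backslash \G_0(\fn/\fq) \cong \p^1(A/\fq)$. Under this identification one should match the $|\fq|$ matrices $\alpha_b = \begin{pmatrix} 1 & b \\ 0 & \fq \end{pmatrix}$ appearing in $U_\fq$ with the affine cosets of $\p^1(A/\fq)$ and the single matrix $W_\fq$ with the coset at infinity. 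Equivalently, one verifies directly that right multiplication by any $\sigma \in \G_0(\fn/\fq)$ permutes the collection $\{\alpha_b\}_{b \in A/\fq} \cup \{W_\fq\}$ modulo left $\G_0(\fn)$-action and central scalars; the delicate bookkeeping here closely parallels the original Atkin--Lehner arguments in \cite{AL}.
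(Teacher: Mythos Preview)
Your proposal is correct and is precisely the matrix-manipulation approach of \cite[\S2]{AL} that the paper cites; the paper itself omits the proof entirely, so you have supplied exactly what the authors left to the reader. Your coset-permutation argument for part (3) is the right idea and matches Atkin--Lehner's Lemma~7, though note that $\{\alpha_b\}\cup\{W_\fq\}$ is not literally a set of coset representatives for $\G_0(\fn)\backslash\G_0(\fn/\fq)$ (the determinants are wrong); what you actually need---and what you correctly state at the end---is only that right multiplication by $\G_0(\fn/\fq)$ permutes this collection modulo left $\G_0(\fn)$, which suffices for invariance.
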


The vector space $\cH_0(\fn, \Q)$ is equipped with a natural (Petersson) inner product 
$$
\langle f, g\rangle =\sum_{e\in E(\G_0(\fn)\bs \sT)} n(e)^{-1} f(e)g(e), 
$$
where $n(e)$ is defined in the proof of Lemma \ref{lem1.3}. The Hecke operator 
$T_\fm$ is self-adjoint with respect to this inner product if $\fm$ is coprime to $\fn$; 
one can prove this by an argument similar to the proof of Lemma 13 in \cite{AL}.  

\begin{defn}\label{defnNewH} Let $\fm$ be a divisor of $\fn$ and $\fd$ be a divisor of $\fn/\fm$. By Lemma \ref{lemAL2}, 
the map $\varphi \mapsto \varphi| B_\fd$ gives an injective homomorphism 
$$
i_{\fd, \fm}: \cH_0(\fm, \Q)  \to \cH_0(\fn, \Q). 
$$
We denote the subspace generated by the images of all $i_{\fd, \fm}$ ($\fm\neq \fn$) by $\cH_0(\fn, \Q)^\old$. 
The orthogonal complement of $\cH_0(\fn, \Q)^\old$ with respect to the Petersson product is the \textit{new} subspace 
of $\cH_0(\fn, \Q)$, and will be denoted by $\cH_0(\fn, \Q)^\new$. 
The new subspace of $\cH_0(\fn, \Q)$ is invariant under the action $\T(\fn)$ (this again can be proven as in \cite{AL}). 
We denote by $\T(\fn)^\new$ the quotient of $\T(\fn)$ through which $\T(\fn)$ acts on $\cH_0(\fn, \Q)^\new$. 
\end{defn}

As we mentioned, the cusps of $\G_0(\fn)$ are in bijection with the orbits 
of the action of $\G_0(\fn)$ on 
$$
\p^1(F)=\p^1(A)=\left\{\begin{pmatrix} a \\ b\end{pmatrix}\ \big|\ a, b\in A, \gcd(a, b)=1, a \text{ is monic}\right\}, 
$$
where $\G_0(\fn)$ acts on $\p^1(F)$ from the left as on column vectors. We leave the proof 
of the following lemma to the reader. 

\begin{lem}\label{lemCusps} Assume $\fn$ is square-free. 
\begin{enumerate}
\item 
For $\fm|\fn$ let $[\fm]$ 
be the orbit of $\begin{pmatrix} 1 \\ \fm \end{pmatrix}$ under the action of $\G_0(\fn)$. 
Then $[\fm]\neq [\fm']$ if $\fm\neq \fm'$, and the set $\{[\fm]\ |\ \fm|\fn\}$ 
is the set of cusps of $\G_0(\fn)$. In particular, there are $2^s$ cusps, where $s$ 
is the number of prime divisors of $\fn$. 
\item Since $W_\fm$ normalizes $\G_0(\fn)$, 
it acts on the set of cusps of $\G_0(\fn)$. There is the formula 
$$
W_\fm [\fn] = [\fn/\fm]. 
$$
\end{enumerate}
\end{lem}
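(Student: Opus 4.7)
The plan is to parametrize the cusps of $\G_0(\fn)$ by the divisors of $\fn$ through the invariant $\gcd(b,\fn)$ attached to a representative $\begin{pmatrix} a\\b\end{pmatrix}\in\p^1(A)$, and then to compute the action of $W_\fm$ on $[\fn]$ using a convenient matrix representative.

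For part (1), I would argue in three steps. \emph{Invariance.} For $\gamma=\begin{pmatrix} p & q\\ r & s\end{pmatrix}\in\G_0(\fn)\subset\GL_2(A)$, the image $\gamma\begin{pmatrix} a\\b\end{pmatrix}=\begin{pmatrix} pa+qb\\ ra+sb\end{pmatrix}$ has coprime entries, since $\det\gamma\in\F_q^\times$ makes $\gamma^{-1}\in\GL_2(A)$; moreover $r\in\fn$ gives $ra+sb\equiv sb\pmod\fn$, and the relation $ps-qr\in\F_q^\times$ combined with $r\in\fn$ forces $s$ to be a unit modulo $\fn$, so $\gcd(ra+sb,\fn)=\gcd(b,\fn)$. \emph{Surjectivity.} Given $\begin{pmatrix} a\\b\end{pmatrix}\in\p^1(A)$ with $\fm:=\gcd(b,\fn)$, write $b=\fm b_0$; the square-freeness of $\fn$ yields $\gcd(b_0,\fn/\fm)=1$, and $\gcd(a,\fm b_0)=1$ follows from $\gcd(a,b)=1$. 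Since $\fm b_0$ and $\fn/\fm$ are coprime, the Chinese remainder theorem produces $s\in A$ with $s\equiv b_0\pmod{\fn/\fm}$ and $sa\equiv 1\pmod{\fm b_0}$. Setting $r=\fm(b_0-s)$, $q=(sa-1)/(\fm b_0)$, $p=a-q\fm$, the matrix $\gamma=\begin{pmatrix} p & q\\ r & s\end{pmatrix}$ satisfies $r\in\fn$, $ps-qr=1$, and $\gamma\begin{pmatrix} 1\\ \fm\end{pmatrix}=\begin{pmatrix} a\\ b\end{pmatrix}$. \emph{Bijection.} Since the invariant attached to $\begin{pmatrix} 1\\\fm\end{pmatrix}$ is $\fm$ itself, distinct divisors of $\fn$ correspond to distinct orbits; this gives the bijection between $\{\fm\mid\fn\}$ and the set of cusps, and in particular the count $2^s$.

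For part (2), I choose a convenient representative of $W_\fm$. By B\'ezout applied to the coprime pair $(\fm,\fn/\fm)$, pick $\alpha,\beta\in A$ with $\alpha\fm-\beta(\fn/\fm)=1$ and set $W_\fm=\begin{pmatrix} \alpha\fm & \beta\\ \fn & \fm\end{pmatrix}$, which has the shape \eqref{ALmatrix} and determinant generating $\fm$. A direct calculation yields
\[
W_\fm\begin{pmatrix} 1\\ \fn\end{pmatrix}=\fm\begin{pmatrix} \alpha+\beta(\fn/\fm)\\ (\fn/\fm)(1+\fm)\end{pmatrix}.
\]
The identity $\alpha\fm\equiv 1\pmod{\fn/\fm}$ makes the first entry of the right-hand column a unit mod $\fn/\fm$, so any common divisor of the two entries is coprime to $\fn/\fm$ and must divide $1+\fm$; combined with $\gcd(1+\fm,\fm)=1$, such a common divisor is coprime to $\fn$. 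Hence after normalization the second coordinate still has $\gcd$ with $\fn$ equal to $\fn/\fm$, identifying the image cusp as $[\fn/\fm]$.

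The main obstacle is the explicit construction in the surjectivity step of part (1): one must simultaneously arrange $r\in\fn$, $\det\gamma\in\F_q^\times$, and the prescribed image on a column vector. This is made possible precisely by the coprimality of $\fm b_0$ and $\fn/\fm$, which is exactly where the square-freeness hypothesis on $\fn$ enters essentially.
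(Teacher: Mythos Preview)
Your argument is correct. The paper does not supply a proof of this lemma---it states ``We leave the proof of the following lemma to the reader''---so there is nothing to compare against beyond noting that you have filled in exactly the kind of direct computation the authors had in mind: the invariant $\gcd(b,\fn)$ parametrizes the $\G_0(\fn)$-orbits, and a representative matrix for $W_\fm$ makes the formula $W_\fm[\fn]=[\fn/\fm]$ transparent. One very small remark: your surjectivity construction in part~(1) breaks down formally when $b=0$ (so $\fm=\fn$ and $b_0=0$, making ``mod $\fm b_0$'' degenerate), but that case is handled trivially by $\begin{pmatrix}1&0\\-\fn&1\end{pmatrix}\begin{pmatrix}1\\\fn\end{pmatrix}=\begin{pmatrix}1\\0\end{pmatrix}$.
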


The cusp $[\fn]$ is usually called the \textit{cusp at infinity}. We will denote it by $[\infty]$.  

%-------------------------------------------------------------

\subsection{Fourier expansion} 
An important observation in \cite{Pal} is that the theory of Fourier expansions 
of automorphic forms over function fields developed in \cite{Weil} works over more general rings than 
$\C$. Here we follow Gekeler's reinterpretation \cite{Improper} of  
Weil's adelic approach as analysis on the Bruhat-Tits tree, but we will extend \cite{Improper} to the 
setting of these more general rings. 

\begin{defn}
Following \cite{Pal} we say that $R$ is a \textit{coefficient ring} if $p\in R^\times$
and $R$ is a quotient of a discrete valuation ring $\tilde{R}$ which contains $p$-th roots of unity. Note 
that the image of the $p$-th roots of unity of $\tilde{R}$ in $R$ is exactly the set of $p$-th roots 
of unity of $R$. For example, any algebraically closed field of characteristic different from $p$ 
is a coefficient ring. 
\end{defn}

Let 
\begin{align*}
\eta: \Fi &\to R^\times \\ 
\sum a_i\pi_\infty^i &\mapsto \eta_0\Big(\text{Trace}_{\F_q/\F_p}(a_1)\Big)
\end{align*}
where $\eta_0: \F_p\to R^\times$ is a non-trivial additive character fixed once and for all.   
Let $f$ be an $R$-valued function on $E(\sT)$, which is invariant under the 
action of 
$$
\G_\infty:=\left\{\begin{pmatrix} a & b \\ 0 & d\end{pmatrix}\in \GL_2(A)\right\}, 
$$
and is alternating (i.e., satisfies $f(e)=-f(\bar{e})$ for all $e\in E(\sT)$). 
The \textit{constant Fourier coefficient} of $f$ is the $R$-valued function $f^0$ on $\pi_\infty^\Z$ defined by 
$$
f^0(\pi_\infty^k)= \begin{cases}
q^{1-k} \sum_{\substack{u\in (\pi_\infty)/(\pi_\infty^k)}}f\left(\begin{pmatrix}\pi_\infty^k & u \\ 0 &1\end{pmatrix}\right) 
& \text{if $k\geq 1$} \\ 
f\left(\begin{pmatrix}\pi_\infty^k & 0 \\ 0 &1\end{pmatrix}\right)& \text{if $k\leq 1$}.  
\end{cases}
$$
For a divisor $\fm$ on $F$, the \textit{$\fm$-th Fourier coefficient} $f^\ast(\fm)$ of $f$ is 
$$
f^\ast(\fm) = q^{-1-\deg(\fm)}\sum_{u\in (\pi_\infty)/(\pi_\infty^{2+\deg(\fm)})} 
f\left(\begin{pmatrix}\pi_\infty^{2+\deg(\fm)} & u \\ 0 &1\end{pmatrix}\right)\eta(-m u), 
$$ 
if $\fm$ is non-negative, and $f^\ast(\fm)=0$, otherwise; here $m\in A$ is the monic polynomial 
such that $\fm=\mathrm{div}(m)\cdot \infty^{\deg(\fm)}$. 

\begin{thm}
Let $f$ be an $R$-valued function on $E(\sT)$, which is 
$\G_\infty$-invariant and alternating.
Then
$$
f \left(\begin{pmatrix} \pi_\infty^k & y \\ 0 &1\end{pmatrix}\right) = f^0(\pi_\infty^k)+ 
\sum_{\substack{0\neq m\in A \\ \deg(m)\leq k-2}} f^\ast(\mathrm{div}(m)\cdot \infty^{k-2})\cdot \eta(my). 
$$
In particular, $f$ is uniquely determined by the functions $f^0$ and $f^\ast$. 
\end{thm}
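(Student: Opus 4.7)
The plan is to interpret $y \mapsto f\left(\begin{pmatrix}\pi_\infty^k & y \\ 0 & 1\end{pmatrix}\right)$, for each fixed $k$, as a function on a finite abelian group and apply Fourier inversion there. First I would invoke the alternating property together with the decomposition $E(\sT)=E(\sT)^+\sqcup E(\sT)^+\begin{pmatrix} 0 & 1 \\ \pi_\infty & 0\end{pmatrix}$ to reduce to determining $f$ on $E(\sT)^+$, where every edge has a unique representative $e_k(y):=\begin{pmatrix}\pi_\infty^k & y \\ 0 & 1\end{pmatrix}$ with $y\in F_\infty/\pi_\infty^k\cO_\infty$. Then the $\G_\infty$-invariance, applied with unipotent matrices $\begin{pmatrix}1 & b \\ 0 & 1\end{pmatrix}$ $(b\in A)$ and diagonal matrices $\begin{pmatrix} a & 0\\ 0 & d\end{pmatrix}$ $(a,d\in\F_q^\times)$, reveals that $y\mapsto f(e_k(y))$ factors through the quotient $F_\infty/(A+\pi_\infty^k\cO_\infty)$ and is in addition invariant under multiplication by $\F_q^\times$ on $y$.

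Next I would identify $F_\infty/(A+\pi_\infty^k\cO_\infty)$ with $(\pi_\infty)/(\pi_\infty^k)$ of order $q^{k-1}$, using the direct-sum decomposition $F_\infty=A\oplus \pi_\infty\cO_\infty$ of $\F_q$-vector spaces. Since $R$ is a coefficient ring containing $p$-th roots of unity and $p$ (hence $q^{k-1}$) is invertible in $R$, Fourier inversion on this finite $p$-group goes through over $R$. I would verify that its characters are precisely $\chi_m\colon u\mapsto \eta(mu)$ as $m$ runs through polynomials in $A$ of degree at most $k-2$: each $\chi_m$ is well-defined on the quotient because $m\pi_\infty^k\cO_\infty\subseteq \pi_\infty^2\cO_\infty$ kills any contribution to the coefficient of $\pi_\infty^1$; distinct such $m$ give distinct characters by non-degeneracy of the trace pairing on $\F_q$; and a count matches the $q^{k-1}$ characters of the group.

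Fourier inversion would then give, for $k\geq 2$,
\[
f(e_k(y))=\sum_{\substack{m\in A\\ \deg(m)\leq k-2}} c_m\,\eta(my),\qquad c_m=q^{1-k}\sum_{u\in(\pi_\infty)/(\pi_\infty^k)} f(e_k(u))\,\eta(-mu).
\]
Comparing directly with the definitions, $c_0=f^0(\pi_\infty^k)$ and, for monic $m$ with $\deg(m)\leq k-2$, $c_m=f^\ast(\mathrm{div}(m)\cdot\infty^{k-2})$. For non-monic $m=cm'$ $(c\in\F_q^\times$, $m'$ monic), the change of variables $u\mapsto cu$ combined with the $\F_q^\times$-invariance of $f$ gives $c_{cm'}=c_{m'}$, matching the convention in the theorem that $f^\ast(\mathrm{div}(m)\cdot\infty^{k-2})=f^\ast(\mathrm{div}(m')\cdot\infty^{k-2})$ coming from $\mathrm{div}(m)=\mathrm{div}(m')$. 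The remaining cases $k\leq 1$ are degenerate: the quotient group is trivial, the sum on the right-hand side is empty, and both sides reduce to $f^0(\pi_\infty^k)$.

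The uniqueness of $f$ then follows, since $f^0$ and $f^\ast$ determine $f$ on $E(\sT)^+$ via the formula and the alternating condition determines $f$ on the rest of $E(\sT)$. The only genuinely technical point is verifying that Fourier inversion remains valid over the coefficient ring $R$ instead of $\C$; this is where the hypothesis that $R$ is a coefficient ring is essential, guaranteeing both the invertibility of $q^{k-1}$ and the existence of the required $p$-power roots of unity that serve as values of the characters $\chi_m$.
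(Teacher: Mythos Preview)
Your argument is correct and is precisely the standard Fourier-inversion approach that the cited references \cite{Pal} and \cite{Improper} carry out; the paper itself does not give a proof beyond pointing to those sources, so your write-up effectively supplies the details behind the citation. Your handling of the $\F_q^\times$-invariance to match non-monic $m$ with their monic associates, and your remark that the coefficient-ring hypothesis is exactly what makes Fourier inversion over $R$ go through, are both on point.
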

\begin{proof} This follows from \cite[$\S$2]{Pal} and \cite[$\S$2]{Improper}. 
\end{proof}

\begin{lem}\label{lemFH} Assume $f$ is alternating and $\G_\infty$-invariant. Then $f$ is 
a harmonic cochain if and only if 
\begin{itemize}
\item[(i)] 
$f^0(\pi_\infty^k)=f^0(1)\cdot q^{-k}$ for any $k\in \Z$; 
\item[(ii)] $
f^\ast(\fm \infty^k)=f^\ast(\fm)\cdot q^{-k}$ for any non-negative divisor $\fm$ and $k \in \Z_{\geq 0}$. 
\end{itemize}
\end{lem}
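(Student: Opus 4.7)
The plan is to substitute the Fourier expansion provided by the preceding theorem into the vertex-sum axiom $\sum_{t(e)=v} f(e)=0$ and read off the resulting identities on $f^0$ and $f^\ast$. Since $f$ is already alternating, the first axiom of Definition \ref{defnHarmG} holds automatically, and only the vertex-sum axiom remains to be analyzed.

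Using $\G_\infty$-invariance, I would reduce to verifying the vertex sum at the ``axis'' vertices $v_k$ corresponding to $\begin{pmatrix} \pi_\infty^k & 0 \\ 0 & 1 \end{pmatrix}$ for $k\in \Z$, since every vertex of $\sT$ is $\G_\infty$-equivalent to a matrix of the form $\begin{pmatrix} \pi_\infty^k & y \\ 0 & 1 \end{pmatrix}$ and the vertex sum at the latter equals the vertex sum at $v_k$ by invariance. At $v_k$ there are exactly $q+1$ edges with terminus $v_k$: one ``incoming'' edge represented by $\begin{pmatrix} \pi_\infty^k & 0 \\ 0 & 1 \end{pmatrix}$ itself, and $q$ edges that are the inverses of $\begin{pmatrix} \pi_\infty^{k+1} & u\pi_\infty^k \\ 0 & 1 \end{pmatrix}$ for $u$ running over a set of coset representatives of $\F_q$ inside $\pi_\infty^k\cO_\infty/\pi_\infty^{k+1}\cO_\infty$. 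The alternating property introduces a sign on the $q$ reversed edges; the vertex-sum axiom at $v_k$ then takes the form $f\!\left(\begin{pmatrix}\pi_\infty^k & 0 \\ 0 & 1\end{pmatrix}\right) = \sum_{u \in \F_q} f\!\left(\begin{pmatrix}\pi_\infty^{k+1} & u\pi_\infty^k \\ 0 & 1\end{pmatrix}\right)$.

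Substituting the Fourier expansion into both sides and applying orthogonality of $\eta$ over $\F_q$ (so $\sum_{u\in\F_q}\eta(cu\pi_\infty^k)$ equals $q$ or $0$ according to whether the $\pi_\infty^1$-coefficient of $cu\pi_\infty^k$ vanishes) separates the identity into one recursion on the constant term of the form $f^0(\pi_\infty^k) = q\cdot f^0(\pi_\infty^{k+1})$, which iterates to (i), and a family of recursions on non-zero Fourier coefficients of the form $f^\ast(\fm\cdot\infty^{k-2}) = q\cdot f^\ast(\fm\cdot\infty^{k-1})$, which iterates to (ii). The converse is immediate by running the same computation in reverse.

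The main obstacle will be the combinatorial bookkeeping: enumerating the edges adjacent to each $v_k$, tracking their orientations via the alternating condition, and choosing coset representatives so that the character-sum orthogonality yields the cleanest decoupling. Once this enumeration is fixed, the two families of recursions are forced by a finite character-sum computation in $R$, which is legitimate because $p\in R^\times$ (as $R$ is a coefficient ring) makes $q$ invertible and ensures that the orthogonality identities take the expected values.
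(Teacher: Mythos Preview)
The paper does not prove this lemma; it simply cites Lemma~2.13 of Gekeler's paper \cite{Improper}. Your overall strategy---plug the Fourier expansion into the vertex-sum axiom and read off the two recursions---is exactly the right one, and is essentially what is done in that reference.

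There is, however, a genuine gap in your reduction step. You claim that the vertex sum at $\begin{pmatrix}\pi_\infty^k & y \\ 0 & 1\end{pmatrix}$ equals the vertex sum at $v_k=\begin{pmatrix}\pi_\infty^k & 0 \\ 0 & 1\end{pmatrix}$ ``by invariance''. This is false: these two vertices are in general \emph{not} $\G_\infty$-equivalent. The group $\G_\infty$ acts on the $y$-coordinate by $y\mapsto \alpha y+b$ with $\alpha\in\F_q^\times$ and $b\in A$, but elements of $A$ have $\ord_\infty\le 0$, so for instance $\begin{pmatrix}\pi_\infty^3 & \pi_\infty \\ 0 & 1\end{pmatrix}$ cannot be moved to $v_3$. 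If you only check the vertex sum at $v_k$ (i.e.\ at $y=0$), the Fourier expansion gives a single scalar identity in which the constant term and all the non-zero Fourier coefficients are summed together; you cannot separate out (i) from (ii). The fix is simple: carry out your computation at the \emph{general} vertex $\begin{pmatrix}\pi_\infty^k & y \\ 0 & 1\end{pmatrix}$, where the vertex-sum axiom reads
\[
f\!\begin{pmatrix}\pi_\infty^k & y \\ 0 & 1\end{pmatrix}
=\sum_{u\in\F_q} f\!\begin{pmatrix}\pi_\infty^{k+1} & y+u\pi_\infty^k \\ 0 & 1\end{pmatrix}.
\]
Substituting the Fourier expansion and summing over $u$ (your orthogonality step) yields an identity valid for all $y$; now linear independence of the characters $y\mapsto\eta(my)$ separates the constant term from each non-zero coefficient, giving the recursions $f^0(\pi_\infty^k)=q\,f^0(\pi_\infty^{k+1})$ and $f^\ast(\fm\infty^{k-2})=q\,f^\ast(\fm\infty^{k-1})$ individually. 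The converse then follows by the same identity read backwards, again at general $y$.
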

\begin{proof}
See Lemma 2.13 in \cite{Improper}. 
\end{proof}

\begin{lem}\label{thm3.3} For an ideal $\fm\lhd A$ and $f\in \cH(\fn, \Z)$ we have  
$$
(f|T_\fm)^\ast(\fr) = \sum_{\substack{a\ \mathrm{monic}\\ a|\gcd(\fm, \fr)\\  (a)+\fn=A}}\frac{|\fm|}{|a|}\cdot f^\ast\left(\frac{\fr \fm}{a^2}\right). 
$$
In particular, 
$$
(f|T_\fm)^\ast(1)=|\fm|f^\ast(\fm). 
$$
\end{lem}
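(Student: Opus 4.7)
The plan is a direct Fourier-theoretic computation, in the spirit of the tree-theoretic adelic framework of \cite{Improper} (and its adaptation to general coefficient rings in \cite{Pal}). First, I would substitute the definitions of $T_\fm$ and of the $\fr$-th Fourier coefficient to obtain
\begin{equation*}
(f|T_\fm)^\ast(\fr)= q^{-1-\deg\fr}\sum_{u}\sum_{(a,b,d)}f\!\left(\begin{pmatrix}a\pi_\infty^{2+\deg\fr}& au+b\\ 0 & d\end{pmatrix}\right)\eta(-ru),
\end{equation*}
with $(a,b,d)$ ranging over the admissible triples defining $T_\fm$ and $u$ over $(\pi_\infty)/(\pi_\infty^{2+\deg\fr})$. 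Writing each monic polynomial in $A$ as $\pi_\infty^{-\deg}$ times a unit in $\cO_\infty$, the $Z(\Fi)\cI_\infty$-invariance of $f$ lets me absorb the unit parts of $a$ and $d$ into the diagonal, and then $\G_\infty$-invariance on the left kills the polynomial part of the $(1,2)$-entry. The matrix becomes equivalent to
\begin{equation*}
\begin{pmatrix}\pi_\infty^{k'} & y\\0 & 1\end{pmatrix},\qquad k':=2+\deg(\fr\fm/a^2),
\end{equation*}
where $y\equiv(au+b)/d\pmod{A+\pi_\infty^{k'}\cO_\infty}$.

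Second, for each fixed admissible pair $(a,d)$ I would reorganize the $(b,u)$-sum as a sum over $y\in(\pi_\infty)/(\pi_\infty^{k'})$ together with a sum over the fiber of the $\F_q$-linear map $(b,u)\mapsto y$. A dimension count shows this map has source of $\F_q$-dimension $\deg d+1+\deg\fr$ and target of dimension $\deg d+1+\deg\fr-\deg a$, and is surjective (using the freedom to vary $b$ in $A/dA$), hence is $q^{\deg a}$-to-one. Under this change of variables the character $\eta(-ru)$ splits as a product of a fiber character on $\F_q^{\deg a}$ (depending on the $T$-coefficients of $r$ of degree $<\deg a$) and a character of $y$ of shape $\eta$ of a unit multiple of $(r/a)y$. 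By orthogonality of characters, the fiber sum vanishes unless the fiber character is trivial; a short check shows that triviality is equivalent to $a\mid r$ in $A$, i.e. $a\mid\gcd(\fm,\fr)$ since $a\mid\fm$ is automatic.

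Third, when $a\mid\gcd(\fm,\fr)$ the remaining $y$-sum collapses to $q^{1+\deg(\fr\fm/a^2)}f^\ast(\fr\fm/a^2)$ by the very definition of the Fourier coefficient, after invoking Lemma \ref{lemFH}(ii) where necessary to reconcile the $\pi_\infty$-exponent with $\deg(\fr\fm/a^2)$. Collecting all $q$-powers---the prefactor $q^{-1-\deg\fr}$, the fiber size $q^{\deg a}$, and the factor $q^{1+\deg(\fr\fm/a^2)}$ from the Fourier coefficient---produces exactly $|\fm|/|a|=q^{\deg\fm-\deg a}$ in front of $f^\ast(\fr\fm/a^2)$. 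Summing over admissible $a$ yields the stated formula; the special case $\fr=(1)$ is immediate since $\gcd(\fm,(1))=(1)$ forces $a=1$, giving $(f|T_\fm)^\ast(1)=|\fm|f^\ast(\fm)$. The main technical obstacle is the second step: tracking how the units $u_a,u_d$ entering from $a=\pi_\infty^{-\deg a}u_a$ and $d=\pi_\infty^{-\deg d}u_d$ propagate into the additive character, and verifying that the orthogonality condition on the $\F_q^{\deg a}$-fiber is exactly $a\mid r$.
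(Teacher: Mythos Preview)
Your approach is correct and is precisely the standard direct Fourier-theoretic computation one would carry out. The paper itself does not give a proof but simply cites Lemma~3.2 in \cite{Pal}; your outline unpacks what that cited proof does, so there is nothing to compare beyond noting that the paper defers the argument while you spell it out.
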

\begin{proof}
See Lemma 3.2 in \cite{Pal}. 
\end{proof}

\begin{lem}\label{lem2.12}
Assume $\fn$ is square-free. A harmonic cochain $f\in \cH(\fn, R)$ is cuspidal if and only if 
$(f|W)^0(1)=0$ for all $W\in \W$. 
\end{lem}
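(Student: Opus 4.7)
The plan is to use the Atkin--Lehner involutions to reduce the statement to a characterization of cuspidality at the single cusp at infinity. Square-freeness of $\fn$ ensures $\gcd(\fm,\fn/\fm)=1$ for each divisor $\fm\mid\fn$, so by the relation (\ref{eqWs}) the involutions $W_\fm$ normalize $\G_0(\fn)$ and hence act on $\cH(\fn,R)$. By Lemma \ref{lemCusps}(2), $W_\fm$ sends $[\infty]=[\fn]$ to $[\fn/\fm]$, and since $\fm\mapsto\fn/\fm$ is a bijection on divisors of $\fn$, as $W$ ranges over $\W$ the cusp $W[\infty]$ ranges over all cusps of $\G_0(\fn)$. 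Because $W_\fm$ permutes edges of $\sT$ compatibly with its action on cusps, $f$ vanishes on the half-line at $[\fn/\fm]$ if and only if $f|W_\fm$ vanishes on the half-line $h_\infty$ at infinity. Therefore $f$ is cuspidal if and only if for every $W\in\W$, $f|W$ eventually vanishes on $h_\infty$.

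This reduces the lemma to the following one-cusp claim: for $g\in\cH(\fn,R)$, $g$ vanishes eventually on $h_\infty$ if and only if $g^0(1)=0$. To establish this, I would invoke the Fourier expansion theorem together with Lemma \ref{lemFH}(i), which gives the decay $g^0(\pi_\infty^k)=g^0(1)q^{-k}$, so that $g^0\equiv 0$ as a function on $\pi_\infty^\Z$ is equivalent to $g^0(1)=0$. Next, one identifies the edges of $h_\infty$ in $\G_0(\fn)\bs\sT$: for $k$ sufficiently large (explicitly $k>\deg(\fn)$) they are the images of $e_k^0=\begin{pmatrix}\pi_\infty^k & 0\\ 0 & 1\end{pmatrix}$. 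A direct computation of the stabilizer $\G_0(\fn)_{v_k}$ (analogous to the case-by-case analysis used in Examples \ref{example1}--\ref{exampley}) shows that the forward edges at $v_k$ in the tree form a single $\G_0(\fn)$-orbit; combined with $\G_0(\fn)$-invariance this forces the value $g(e_k^0)$ to equal the averaged constant Fourier coefficient $g^0(\pi_\infty^k)=g^0(1)q^{-k}$. Hence the values of $g$ along $h_\infty$ form a geometric sequence with ratio $q^{-1}$ whose leading coefficient is $g^0(1)$, and vanishing on $h_\infty$ is equivalent to $g^0(1)=0$.

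The main obstacle is the precise combinatorial identification of how edges on the half-line $h_\infty$ in $\G_0(\fn)\bs\sT$ correspond to specific $\G_0(\fn)$-orbits in $E(\sT)$, and the verification that the non-constant Fourier terms $g^\ast(\mathrm{div}(m)\infty^{k-2})$ do not contribute to $g(e_k^0)$ for $k$ large (equivalently, that they are absorbed by the averaging implicit in the definition of $g^0(\pi_\infty^k)$). This is a purely local analysis at the cusp at infinity, which works uniformly in the coefficient ring $R$ because it only uses $\G_0(\fn)$-invariance and the harmonicity relations of Lemma \ref{lemFH}, not any flatness hypotheses on $R$.
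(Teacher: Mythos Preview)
Your reduction via the Atkin--Lehner group to the single cusp $[\infty]$ is exactly the paper's strategy, and that part is fine.

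The problem is in your handling of the one-cusp step. You parametrize the half-line $h_\infty$ by the edges $e_k^0=\begin{pmatrix}\pi_\infty^k & 0\\ 0 & 1\end{pmatrix}$ with $k>\deg(\fn)$, i.e.\ $k\to+\infty$. That is the wrong end: in the paper's conventions the cusp $[\infty]=[\fn]$ (the $\G_0(\fn)$-orbit of $\begin{pmatrix}1\\0\end{pmatrix}$) is reached as $k\to -\infty$; the positively oriented edges of $h_\infty$ are $e_k^0$ with $k\le 1$. The end $k\to+\infty$ lands on the cusp $[1]$, so your argument, if it went through, would be computing the wrong vanishing condition.

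Once the direction is fixed, the ``main obstacle'' you flag evaporates. For $k\le 1$ the \emph{definition} of the constant term reads
\[
g^0(\pi_\infty^k)=g\!\left(\begin{pmatrix}\pi_\infty^k & 0\\ 0 & 1\end{pmatrix}\right),
\]
with no averaging whatsoever, and the Fourier expansion has no non-constant terms in this range (the sum over $m$ with $\deg m\le k-2$ is empty). Then Lemma~\ref{lemFH}(i) gives $g(e_k^0)=q^{-k}g^0(1)$, and since $q\in R^\times$ (as $R$ is a coefficient ring) the values along $h_\infty$ vanish eventually if and only if $g^0(1)=0$. No stabilizer computation is needed. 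Your proposed stabilizer argument for large $k$ is also shakier than you suggest: the sum defining $g^0(\pi_\infty^k)$ for $k\ge 1$ runs over $q^{k-1}$ edges at level $k$ with \emph{different} origins, not over the $q$ edges emanating from a single vertex, so transitivity of $\G_0(\fn)_{v_k}$ on forward edges at one vertex does not by itself force all terms in that sum to coincide.
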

\begin{proof}
By definition, $f$ is cuspidal if and only if it vanishes on all but finitely many edges 
of each cusp $[\fm]$. The positively oriented edges of the cusp $[\infty]$ are given by 
the matrices $\begin{pmatrix}\pi_\infty^k & 0 \\ 0 & 1 \end{pmatrix}$, $k\leq 1$. By definition 
of $f^0$ and Lemma \ref{lemFH}, 
$$
f\left(\begin{pmatrix}\pi_\infty^k & 0 \\ 0 &1\end{pmatrix}\right)=f^0(\pi_\infty^k) =q^{-k}f^0(1). 
$$
Since $q$ is invertible in $R$, we see that $f$ eventually vanishes on $[\infty]$ if and only if $f^0(1)=0$. 
Next, by Lemma \ref{lemCusps}, $f$ vanishes on $[\fn/\fm]$ if and only if $f|W_\fm$ 
vanishes on $[\infty]$, which is equivalent to $(f|W_\fm)^0(1)=0$. 
\end{proof}

\begin{thm}\label{thm2.14}
If $R$ is a coefficient ring, then the bilinear $\T(\fn)\otimes R$-equivariant pairing 
\begin{align*}
(\T(\fn)\otimes R)\times \cH_{00}(\fn, R) \to R\\
T, f\mapsto (f|T)^\ast(1)
\end{align*}
is perfect. 
\end{thm}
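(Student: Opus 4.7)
The plan is to verify non-degeneracy of the pairing in each of its two variables, and then to upgrade the two resulting injections to a perfect pairing via a rank count. The essential computational input is Lemma~\ref{thm3.3}, which gives
\[
(f|T_\fm)^\ast(1)=|\fm|\cdot f^\ast(\fm),
\]
together with the observation that $|\fm|=q^{\deg\fm}$ is a unit in $R$, because $p\in R^\times$ for every coefficient ring $R$.

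First I would establish non-degeneracy in $f$. Suppose $f\in\cH_{00}(\fn,R)$ pairs to zero with every $T\in\T(\fn)\otimes R$. Specializing to $T=T_\fm$ for each non-zero ideal $\fm\lhd A$ and dividing by the unit $|\fm|$ yields $f^\ast(\fm)=0$, and Lemma~\ref{lemFH}(ii) extends the vanishing to $f^\ast(\fm\infty^k)=0$ for every non-negative divisor. Cuspidality of $f$ together with $q\in R^\times$ forces $f^0(1)=0$: by Lemma~\ref{lemFH}(i) one has $f^0(\pi_\infty^k)=q^{-k}f^0(1)$, and the left-hand side vanishes for $k$ sufficiently negative because $f$ eventually vanishes along the cusp containing the edges represented by $\begin{pmatrix}\pi_\infty^k & 0\\ 0 & 1\end{pmatrix}$; since $q^{-k}\in R^\times$, we deduce $f^0(1)=0$, whence $f^0(\pi_\infty^k)=0$ for all $k\in\Z$. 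The Fourier expansion of the preceding theorem then shows $f\equiv 0$ on $E(\sT)^+$, and the alternating property extends this to all of $E(\sT)$.

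Next I would establish non-degeneracy in $T$. Given $T\in\T(\fn)\otimes R$ such that $(f|T)^\ast(1)=0$ for every $f\in\cH_{00}(\fn,R)$, I substitute $f$ by $f|T_\fm\in\cH_{00}(\fn,R)$ and use commutativity of $\T(\fn)$ to get
\[
0=\bigl((f|T_\fm)\,|\,T\bigr)^\ast(1)=(f|T_\fm T)^\ast(1)=\bigl((f|T)\,|\,T_\fm\bigr)^\ast(1)=|\fm|\cdot(f|T)^\ast(\fm),
\]
so $(f|T)^\ast(\fm)=0$ for every $\fm$. Since $f|T\in\cH_{00}(\fn,R)$ is still cuspidal, applying the Fourier argument of the previous paragraph to $f|T$ in place of $f$ yields $f|T=0$ for every $f\in\cH_{00}(\fn,R)$. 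One then invokes faithfulness of the action of $\T(\fn)\otimes R$ on $\cH_{00}(\fn,R)$ to conclude $T=0$.

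The main obstacle I anticipate is precisely this last implication: passing from ``$T$ annihilates $\cH_{00}(\fn,R)$'' to ``$T=0$ in $\T(\fn)\otimes R$''. Over $\Z$ this is automatic because $\T(\fn)$ is by definition a subring of $\End_\Z(\cH_0(\fn,\Z))$, but tensoring with a general coefficient ring $R$ can fail to preserve this injectivity when $R$ is not $\Z$-flat. To sidestep the issue, I would first prove the theorem for the discrete valuation ring $\tilde R$ of which $R$ is a quotient, using that $\tilde R$ is $\Z$-torsion-free to identify $\cH_{00}(\fn,\tilde R)$ with $\cH_0(\fn,\Z)\otimes\tilde R$ and to guarantee the embedding $\T(\fn)\otimes\tilde R\hookrightarrow\End_{\tilde R}\bigl(\cH_{00}(\fn,\tilde R)\bigr)$; with both sides free $\tilde R$-modules of rank $g=g(\fn)$, the non-degeneracy proved above plus the rank count should force the pairing to be perfect over $\tilde R$. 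The general case would then follow by reducing the perfect pairing modulo the kernel of $\tilde R\twoheadrightarrow R$ and checking that $\T(\fn)\otimes R$ and $\cH_{00}(\fn,R)$ are recovered as the corresponding reductions.
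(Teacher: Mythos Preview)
Your non-degeneracy arguments are correct and indeed recover the content of Gekeler's Theorem~3.17 in \cite{Analytical}, which the paper simply cites.  The paper's own proof is a one-liner: that theorem asserts the pairing over $\Z$ becomes perfect after inverting $p$, and since any coefficient ring is a $\Z[p^{-1}]$-algebra, base change of a perfect pairing between free modules of the same finite rank is again perfect.

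There is, however, a genuine gap in your proposal.  The sentence ``with both sides free $\tilde R$-modules of rank $g=g(\fn)$, the non-degeneracy proved above plus the rank count should force the pairing to be perfect over $\tilde R$'' is false over a discrete valuation ring: the pairing $\Z_\ell\times\Z_\ell\to\Z_\ell$, $(a,b)\mapsto \ell ab$, is non-degenerate between free rank-$1$ modules yet not perfect.  Your two injections only tell you the Gram determinant is a nonzerodivisor in $\tilde R$, not a unit.

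The fix is to run your injectivity-in-$f$ argument over the \emph{residue field} $\kappa=\tilde R/\mathfrak m$ rather than over $\tilde R$ itself.  Since $\cH_0(\fn,\Z)$ is cut out of a finite-rank free $\Z$-module by $\Z$-linear equations, it is a direct summand there, so $\cH_{00}(\fn,\kappa)\cong\cH_0(\fn,\Z)\otimes\kappa$ has dimension exactly $g$; likewise $\dim_\kappa(\T(\fn)\otimes\kappa)=g$ since $\T(\fn)$ is $\Z$-free of rank $g$ (this rank equality follows from your two non-degeneracy arguments over $\Q$, where faithfulness is automatic).  Your Fourier-coefficient argument, which only needs $p\in\kappa^\times$, then gives an injection $\cH_{00}(\fn,\kappa)\hookrightarrow\Hom_\kappa(\T(\fn)\otimes\kappa,\kappa)$ between $\kappa$-vector spaces of equal dimension, hence an isomorphism.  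Thus the Gram determinant is a unit mod $\mathfrak m$, hence a unit in $\tilde R$, and the pairing is perfect over $\tilde R$; your reduction to $R$ then goes through as written.  Note that this detour through $\kappa$ also lets you bypass the faithfulness-of-$\T(\fn)\otimes R$ issue you flagged: only the injectivity in $f$ is needed.
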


\begin{proof} Theorem 3.17 in \cite{Analytical} says that the pairing 
\begin{align}\label{GPairing}
\T(\fn)\times \cH_{0}(\fn, \Z) \to \Z\\
\nonumber T, f\mapsto (f|T)^\ast(1)
\end{align}
is non-degenerate and becomes a perfect pairing after tensoring with $\Z[p^{-1}]$. Since $p$ 
is invertible in $R$ by assumption, the claim follows. 
\end{proof}

It is not known if in general the pairing (\ref{GPairing}) is perfect. 
This is in contrast to the situation over $\Q$ where the analogous pairing 
between the Hecke algebra and the space of weight-$2$ cusp forms on $\G_0(N)$ with 
integral Fourier expansions is perfect (cf. \cite[Thm. 2.2]{RibetModp}). 
This dichotomy comes from the formula $(f|T_\fm)^\ast(1)=|\fm|f^\ast(\fm)$;  
in the classical situation the first Fourier coefficient of $f|T_m$ is just the $m$th Fourier 
coefficient of $f$.

\begin{prop}\label{propT(xy)} In the special case $\fn=xy$, 
the pairing (\ref{GPairing}) 
$$
\T(xy)\times \cH_0(xy, \Z)\to \Z 
$$
is perfect. 
Moreover, as $\Z$-modules, 
$$
\T(xy)^0=\T(xy)\cong \Z\oplus \bigoplus_{\substack{\deg(\fp)=1\\ \fp\neq x}}\Z T_{\fp}.
$$ 
\end{prop}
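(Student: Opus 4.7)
My plan is to deduce both parts of the proposition simultaneously by identifying $\T(xy)$ with the $\Z$-dual of $\cH_0(xy,\Z)$ via an explicit Fourier-coefficient map, exploiting the total absence of oldforms at level $xy$. Examples \ref{example1}, \ref{examplex}, and \ref{exampley} give $\cH_0(1,\Z)=\cH_0(x,\Z)=\cH_0(y,\Z)=0$, so by Definition \ref{defnNewH} every element of $\cH_0(xy,\Z)$ is a newform; in particular, Proposition \ref{prop3lem}(3) forces $U_x=-W_x$ and $U_y=-W_y$ on $\cH_0(xy,\Z)$, so the operators in $\T(xy)\setminus\T(xy)^0$ are (up to sign) Atkin-Lehner involutions acting on an entirely new space.

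Enumerate the $q-1$ monic primes of $A$ of degree one different from $x$ as $\fp_1,\dots,\fp_{q-1}$, and consider the $\Z$-linear map
\[
\Psi\colon\cH_0(xy,\Z)\longrightarrow\Z^q,\qquad f\longmapsto\bigl(f^\ast(1),\,qf^\ast(\fp_1),\dots,qf^\ast(\fp_{q-1})\bigr).
\]
By Lemma \ref{thm3.3}, the coordinates of $\Psi(f)$ are precisely $\langle 1,f\rangle,\langle T_{\fp_1},f\rangle,\dots,\langle T_{\fp_{q-1}},f\rangle$ under the pairing $\langle T,f\rangle=(f|T)^\ast(1)$. Both source and target are free $\Z$-modules of rank $q=g(xy)$, and the non-degeneracy statement in the proof of Theorem \ref{thm2.14} realizes $\T(xy)$ as a full-rank sublattice of $\Hom_\Z(\cH_0(xy,\Z),\Z)$. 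Thus if $\Psi$ is a $\Z$-isomorphism, the $q$ functionals above form a $\Z$-basis of $\Hom_\Z(\cH_0(xy,\Z),\Z)$, and since $\T(xy)$ contains the $q$ elements $1,T_{\fp_1},\dots,T_{\fp_{q-1}}\in\T(xy)^0$ mapping to this basis, the inclusion $\T(xy)\hookrightarrow\Hom_\Z(\cH_0(xy,\Z),\Z)$ must be an equality. This simultaneously yields perfectness of the pairing and the identification
\[
\T(xy)\;=\;\T(xy)^0\;=\;\Z\cdot 1\,\oplus\,\bigoplus_{i=1}^{q-1}\Z\,T_{\fp_i}.
\]

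The main obstacle is therefore to prove that $\Psi$ is a $\Z$-linear isomorphism. I would attempt this by building an explicit $\Z$-basis of $\cH_0(xy,\Z)$ from a combinatorial description of the quotient graph $\G_0(xy)\bs\sT$, generalizing Examples \ref{examplex} and \ref{exampley} and using that $X_0(xy)$ has exactly the four cusps $[\infty],[x],[y],[1]$ of Lemma \ref{lemCusps}, then computing the Fourier coefficients of these basis elements via Lemma \ref{lemFH} and verifying that the resulting $q\times q$ matrix has determinant $\pm1$. The harmonicity conditions (i$'$), (ii$'$) together with the cuspidality criterion of Lemma \ref{lem2.12}, applied at all four cusps via $W_x,W_y$, should provide enough relations to propagate the initial data $f^\ast(1),f^\ast(\fp_1),\dots,f^\ast(\fp_{q-1})$ and pin down $f$ globally. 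The delicate point will be integrality rather than a mere $\Q$-isomorphism: Theorem \ref{thm2.14} only controls the discrepancy up to a power of $p$, so one must exploit features specific to $\fn=xy$—the hyperellipticity of $X_0(xy)$ furnished by Schweizer's theorem, the complete absence of oldforms, and the Atkin-Lehner identifications $U_x=-W_x$, $U_y=-W_y$—to eliminate any $p$-primary contribution to the cokernel of $\Psi$.
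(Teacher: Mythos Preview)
Your overall strategy is the same as the paper's: both proofs realize $\T(xy)$ as the full $\Z$-dual of $\cH_0(xy,\Z)$ by explicitly computing the pairing values $(f|T)^\ast(1)$ on a basis of $\cH_0(xy,\Z)$ built from the quotient graph $\G_0(xy)\backslash\sT$. The difference is one of execution. The paper actually carries out the computation you only outline: it draws the graph, exhibits the edges $b_u$ ($u\in\F_q$) whose values freely parametrize $\cH_0(xy,\Z)$, and proves the clean identity $(f|T_{x-\kappa})^\ast(1)=f(b_\kappa)$ for every $\kappa\in\F_q$. With the basis $f_v$ determined by $f_v(b_u)=\delta_{v,u}$, this gives $(f_v|T_{x-\kappa})^\ast(1)=\delta_{\kappa,v}$, so the $q$ operators $\{T_{x-\kappa}:\kappa\in\F_q\}$ map to the standard dual basis and surjectivity onto $\Hom(\cH_0(xy,\Z),\Z)$ is immediate. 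Only afterward does the paper pass to your basis $\{1,T_{\fp_1},\dots,T_{\fp_{q-1}}\}$ via the relation $\sum_{\kappa\in\F_q}T_{x-\kappa}=-1$, itself read off from the same graph computation (one finds $f^\ast(1)=-f(a_1)=-\sum_u f(b_u)$). Your map $\Psi$ is the composition of the paper's map with this unimodular change of basis, so the two arguments are equivalent; working with all of $\{T_{x-\kappa}\}$ first is what makes the matrix diagonal rather than merely unimodular.

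Your final paragraph overcomplicates matters. The graph computation is purely combinatorial and delivers integrality directly: once you have $(f_v|T_{x-\kappa})^\ast(1)=\delta_{\kappa,v}$ there is no $p$-primary cokernel to worry about, and neither hyperellipticity nor the identifications $U_x=-W_x$, $U_y=-W_y$ play any role. The only missing ingredient in your proposal is the actual description of $\G_0(xy)\backslash\sT$ and the Fourier-coefficient calculation on it; everything else you list is unnecessary.
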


\begin{proof}
\begin{figure}
\begin{tikzpicture}[->, >=stealth', semithick, node distance=1.5cm, inner sep=.5mm, vertex/.style={circle, fill=black}]

\node (1){$[x]$};
  \node[vertex] (2) [below right of=1] {};
  \node[vertex] (3) [below right of=2] {};
  \node[vertex] (4) [below of=3] {};
  \node[vertex] (5) [below of=4] {};
  \node[vertex] (6) [below left of=5] {};
  \node (7) [below left of=6] {$[\infty]$};
  \node (15) [right of=3] {}; 
  \node[vertex] (8) [right of=15] {};
  \node[vertex] (9) [above right of=8] {};
  \node (10) [above right of=9] {$[y]$};
  \node[vertex] (11) [below of=8] {};
  \node[vertex] (12) [below of=11] {};
  \node[vertex] (13) [below right of=12] {};
  \node (14) [below right of=13] {$[1]$};
  \node (16) [below of=15] {$\vdots\ b_u$};
  
  \path[]
  (1) edge[dashed]  (2) 
  (2) edge node [right] {$c_3$} (3)
   (3) edge node [left] {$a_2=b_0$} (4)
   (4) edge node [left] {$a_1$} (5)
   (5) edge node [left] {$c_1$} (6)
   (6) edge[dashed]  (7) 
    (10) edge[dashed]  (9) 
    (9) edge node [left] {$c_4$} (8)
   (8) edge node [right] {$a_3$} (11)
   (11) edge node [right] {$a_4$} (12)
   (12) edge node [below] {$a_5$} (5)
   (8) edge node [above] {$a_6$} (3)
   (14) edge[dashed]  (13)
  (13) edge node [right] {$c_2$} (12)
  (11) edge[bend right]  (4)
  (11) edge[bend left]  (4);
\end{tikzpicture}
\caption{$\G_0(xy)\bs \sT$}\label{Fig4}
\end{figure}
Take $\alpha_x, \ \beta_x \in \F_q$ such that $y = x^2+\alpha_x x + \beta_x$.
Let $\varpi_x:= x^{-1}$, which is also a uniformizer at $\infty$. 
The quotient graph $\Gamma_0(xy)\backslash \sT$ is depicted in Figure \ref{Fig4} 
with positively oriented edges 
$$c_1 = \begin{pmatrix} \varpi_x&0\\0&1\end{pmatrix}, \quad c_2 = \begin{pmatrix}\varpi_x^3&0\\0&1\end{pmatrix},
\quad c_3 = \begin{pmatrix} \varpi_x^4 & \varpi_x \\ 0&1\end{pmatrix}, \quad 
c_4 = \begin{pmatrix} \varpi_x^5 & y^{-1}\\0&1\end{pmatrix};$$
$$a_1 = \begin{pmatrix} \varpi_x^2 & \varpi_x \\ 0&1\end{pmatrix}, \quad a_2 
= \begin{pmatrix} \varpi_x^3 & \varpi_x \\ 0&1\end{pmatrix}, \quad a_3 
= \begin{pmatrix} \varpi_x^4 & y^{-1} \\ 0&1\end{pmatrix}, \quad a_4 = \begin{pmatrix} \varpi_x^3 & \varpi_x^2 \\0&1\end{pmatrix};$$
$$a_5 = \begin{pmatrix} \varpi_x^2 & 0 \\ 0&1\end{pmatrix}, \quad a_6 
= \begin{pmatrix} \varpi_x^4 & \varpi_x - \beta_x \varpi_x^3 \\0&1\end{pmatrix}; 
$$
$$
b_u 
= \begin{pmatrix} \varpi_x^3 & \varpi_x + u \varpi_x^2 \\0&1\end{pmatrix}, \quad u \in \F_q.
$$
Note that in this notation $a_2=b_0$.  A small calculation shows that 
$$w(a_1) = w(\bar{a}_2) = w(\bar{a}_3) = w(a_4) = q-1,$$
and the weights of all other edges in $(\G_0(xy)\bs \sT)^0$ are $1$. 

It is easy to see that the map 
\begin{align*}
\cH_0(xy, \Z) &\to \bigoplus_{u\in \F_q}\Z\\
f &\mapsto (f(b_u))_{u\in \F_q}
\end{align*}
is an isomorphism, so the harmonic cochains $f_v\in \cH_0(xy, \Z)$, $v\in \F_q$, defined 
by $f_v(b_u)=\delta_{v,u}$=(Kronecker's delta) form a $\Z$-basis. 
Let $f\in \cH_0(xy, \Z)$ and $\kappa\in \F_q$. By Lemma \ref{thm3.3}
$$
q(f|T_{x-\kappa})^\ast(1) =q^2f^\ast(x-\kappa)=
\sum_{w\in \varpi_x\cO_\infty/\varpi_x^{3}\cO_\infty}
f\left(\begin{pmatrix} \varpi_x^3 & w\\ 0 & 1\end{pmatrix}\right)
\eta\left(-(x-\kappa)w\right) 
$$
\begin{align*}
= &f\left(\begin{pmatrix} \varpi_x^3 & 0\\ 0 & 1\end{pmatrix}\right) +
\sum_{\beta\in \F_q^\times} f\left(\begin{pmatrix} \varpi_x^3 & \beta \varpi_x^2\\ 0 & 1\end{pmatrix}\right)
\eta\left(-(\varpi_x^{-1}-\kappa)\beta \varpi_x^2\right) \\
& +\sum_{u\in \F_q} \sum_{\beta\in \F_q^\times} 
f\left(\begin{pmatrix} \varpi_x^3 & \beta(\varpi_x+u \varpi_x^2)\\ 0 & 1\end{pmatrix}\right)\eta\left(-(\varpi_x^{-1}-\kappa)\beta(\varpi_x+u \varpi_x^2)\right). 
\end{align*}
Since the double class of $\begin{pmatrix} \varpi_x^3 & w\\ 0 & 1\end{pmatrix}$ does not change if $w$ is replaced by 
$\beta w$ ($\beta\in \F_q^\times$), 
$f\left(\begin{pmatrix} \varpi_x^3 & 0\\ 0 & 1\end{pmatrix}\right)=f(c_2)=0$, and 
$\sum_{\beta\in \F_q^\times}\eta(\beta\varpi_x)=-1$, the above sum reduces to 
$$
 -f(a_4)+\sum_{u\in \F_q} f(b_u)(q\delta_{u, \kappa}-1). 
$$
Using (ii$'$), 
$$
(q-1)f(a_1)+f(a_5)=0, \quad (q-1)f(a_4)+f(\bar{a}_5)=0, \quad f(a_1)=\sum_{u\in \F_q}f(b_u). 
$$
Therefore, $f(a_4)=-\sum_{u\in \F_q}f(b_u)$ and we get 
$$
(f|T_{x-\kappa})^\ast(1)= f(b_\kappa). 
$$
In particular, $(f_v|T_{x-\kappa})^\ast(1)=\delta_{\kappa, v}$. This implies that the homomorphism 
\begin{equation}\label{eqTHZ}
\T(xy)\to  \Hom(\cH_0(xy, \Z), \Z)
\end{equation}
induced by the pairing (\ref{GPairing}) is surjective. Comparing the ranks of both sides, we conclude that this map is  
in fact an isomorphism, which is equivalent to the pairing being perfect. 
Let $M$ be the $\Z$-submodule of $\T(xy)$ generated by $\{T_{x-\kappa}\ |\ \kappa\in \F_q\}$. 
The composition of $M\hookrightarrow \T(xy)$ with (\ref{eqTHZ}) gives a surjection $M\to \Hom(\cH_0(xy, \Z), \Z)$. 
This implies that $M=\T(xy)$ and $M\cong \bigoplus_{\kappa\in \F_q}\Z T_{x-\kappa}$. 

An easy consequence of the definitions is that $f^\ast(1)=-f(a_1)$, cf. \cite[(3.16)]{Analytical}. 
If we denote $S=\sum_{\kappa\in \F_q} T_{x-\kappa}$, then 
\begin{equation}\label{eqS=-1}
(f|S)^\ast(1) =\sum_{\kappa\in \F_q} f(b_\kappa)=f(a_1)=-f^\ast(1).  
\end{equation}
The non-degeneracy of the pairing implies that $S=-1$. Therefore 
$$
\T(xy)=\Z\oplus \bigoplus_{\kappa\in \F_q^\times}\Z T_{x-\kappa}\subseteq \T(xy)^0,  
$$
which implies $\T(xy)=\T(xy)^0$. 
\end{proof}

\begin{rem} In \cite{PW2}, we have extended the statement of Proposition \ref{propT(xy)} 
to arbitrary $\fn \lhd A$ of degree $3$. More precisely, we proved that the pairing (\ref{GPairing})  
is perfect if $\deg(\fn)=3$. Moreover,  if $\fn$ has degree $3$ but is not a product of 
three distinct primes of degree $1$, then $\T(\fn)=\T(\fn)^0$.
Finally, if $\fn$ is a product of three distinct primes of degree $1$, then $\T(\fn)/\T(\fn)^0$ is finite but non-zero. 
\end{rem}

\subsection{Atkin-Lehner method} For $b\in A$, let $S_b=\begin{pmatrix} 1 & b \\ 0 & 1\end{pmatrix}$. 
Define a linear operator $U_\fp$ on the space of $R$-valued functions on $E(\sT)$ by 
$$
f|U_\fp=\sum_{\substack{b\in A\\ \deg(b)<\deg(\fp)}} f|B_\fp^{-1}S_b. 
$$
Note that the action of $B_\fm^{-1}$ on functions on $E(\sT)$ is the same as the action of the matrix 
$\begin{pmatrix} 1 & 0 \\  0 & \fm\end{pmatrix}$ (since the diagonal matrices act trivially), so this 
operator agrees with the Hecke operator $U_\fp$ when restricted to $\cH(\fn, R)$ for any $\fn$ divisible by $\fp$. 

\begin{lem}\label{lem2.15} Let $\fp$ and $\fq$ be two distinct prime ideals of $A$. 
If $f\in\cH(\sT, R)^{\G_\infty}$, then 
\begin{align*}
(f|B_\fp)|U_\fp &=|\fp|\cdot f,\\
(f|B_\fp)|U_\fq &= (f|U_\fq)|B_\fp. 
\end{align*}
\end{lem}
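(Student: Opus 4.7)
The plan is to prove both identities by a direct matrix computation, using only the definition of $U_\fp$ and $U_\fq$ as sums of slash operators, together with the $\G_\infty$-invariance of $f$ and the coprimality $\gcd(\fp,\fq)=1$.

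For the first identity, I would simply unwind the definition:
\[
(f|B_\fp)|U_\fp \;=\; \sum_{\substack{b\in A\\ \deg(b)<\deg(\fp)}} f\bigl|\bigl(B_\fp B_\fp^{-1} S_b\bigr) \;=\; \sum_{\substack{b\in A\\ \deg(b)<\deg(\fp)}} f|S_b.
\]
Since $b\in A$, each matrix $S_b$ lies in $\G_\infty$, hence $f|S_b=f$ by hypothesis. There are exactly $q^{\deg(\fp)}=|\fp|$ admissible values of $b$, so the sum equals $|\fp|\cdot f$.

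For the second identity, I would compute the two relevant products of matrices explicitly. Working modulo the center (using the remark preceding the lemma that $B_\fm^{-1}$ acts as $\begin{pmatrix}1 & 0\\ 0 & \fm\end{pmatrix}$), one finds
\[
B_\fp B_\fq^{-1} S_b \;\equiv\; \begin{pmatrix} \fp/\fq & \fp b/\fq \\ 0 & 1\end{pmatrix}, \qquad B_\fq^{-1} S_b B_\fp \;\equiv\; \begin{pmatrix} \fp/\fq & b/\fq \\ 0 & 1\end{pmatrix}.
\]
These agree except in their upper-right entry. Now $\G_\infty$-invariance of $f$ (applied with $S_k$ for $k\in A$) implies that $f\bigl|\begin{pmatrix} a & c \\ 0 & 1\end{pmatrix}$ depends on $c$ only through its class in $F/A$. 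Consequently, each summand in $(f|B_\fp)|U_\fq$ and $(f|U_\fq)|B_\fp$ depends only on the class of the upper-right entry in $\fq^{-1}A/A\cong A/\fq$. Since $\fp$ is a unit modulo $\fq$, multiplication by $\fp$ is a bijection on $A/\fq$, and after reindexing $b\mapsto \fp b$ the two sums become identical.

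There is no serious obstacle here; the argument is essentially bookkeeping. The only delicate point is to keep careful track of the conventions for the action of $B_\fm^{-1}$ on functions and to confirm that the upper-right entries are really only relevant modulo $A$, which is precisely the content of the $\G_\infty$-invariance assumption.
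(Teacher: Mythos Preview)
Your proof is correct and follows essentially the same approach as the paper's: both arguments reduce the second identity to the observation that left-multiplication by a suitable $S_a\in\G_\infty$ converts $B_\fp B_\fq^{-1}S_b$ into $B_\fq^{-1}S_{b'}B_\fp$, where $b'\equiv \fp b\pmod\fq$, and that $b\mapsto b'$ is a bijection on residues modulo $\fq$ since $\fp$ is coprime to $\fq$. The only cosmetic difference is that the paper keeps the matrices with bottom-right entry $\fq$ and invokes the division algorithm explicitly, whereas you normalize to bottom-right entry $1$ and phrase the same reduction as ``the upper-right entry matters only modulo $A$.''
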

\begin{proof} 
We have 
$$
(f|B_\fp)|U_\fp= \sum_{\substack{b\in A\\ \deg(b)<\deg(\fp)}} (f|B_\fp)|B_\fp^{-1}S_b = 
\sum_{\substack{b\in A\\ \deg(b)<\deg(\fp)}} f|S_b.
$$
Since $S_b\in \G_\infty$, we have $f|S_b=f$ for all $b$, so the last sum is equal to $|\fp|f$. 
Next, for $b\in A$ representing a residue modulo $\fq$ we have 
$$
B_\fp B_\fq^{-1}S_b = \begin{pmatrix} \fp & b \fp \\ 0 & \fq \end{pmatrix}. 
$$ 
By the division algorithm there is $a\in A$ and $b'\in A$ with $\deg(b')<\deg(\fq)$ such that 
$b\fp= a\fq +b'$. Now 
$$
\begin{pmatrix} 1 & a \\ 0 & 1 \end{pmatrix}\begin{pmatrix} \fp & b \fp \\ 0 & \fq \end{pmatrix} = 
\begin{pmatrix} \fp & b' \\ 0 & \fq \end{pmatrix}=B_\fq^{-1}S_{b'} B_\fp. 
$$
As $b$ runs over the residues modulo $\fq$, $b'$ runs over the same set since $\fp\neq \fq$. Thus, using 
$\G_\infty$-invariance of $f$, we get $(f|B_\fp)|U_\fq = (f|U_\fq)|B_\fp$. 
\end{proof}

\begin{lem}\label{lem_new14}
For any non-zero ideal $\fm\lhd A$ and $f\in\cH(\sT, R)^{\G_\infty}$
$$
(f|B_\fm)^0(\pi_\infty^k)=f^0(\pi_\infty^{k-\deg(\fm)}), \quad (f|B_\fm)^\ast(\fn)=f^\ast(\fn/\fm). 
$$
\end{lem}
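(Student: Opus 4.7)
The plan is to expand both sides from the Fourier definitions, reduce the product $B_{\fm}\begin{pmatrix}\pi_\infty^k & u \\ 0 & 1\end{pmatrix}$ to canonical form modulo right multiplication by $Z(\Fi)\cI_\infty$, and then exploit the $\G_\infty$-invariance of $f$ to absorb ``polynomial shifts.'' Writing $\fm=u_{\fm}\pi_\infty^{-d}$ with $d=\deg(\fm)$ and $u_{\fm}\in\cO_\infty^\times$ yields the key identity $u_{\fm}^{-1}\pi_\infty^{d}=\fm^{-1}$ together with the coset equality
$$B_{\fm}\begin{pmatrix}\pi_\infty^k & u \\ 0 & 1\end{pmatrix}\equiv \begin{pmatrix}\pi_\infty^{k-d} & u' \\ 0 & 1\end{pmatrix}\pmod{Z(\Fi)\cI_\infty},\qquad u':=u\,u_{\fm}\pi_\infty^{-d},$$
verified by right-multiplying by $\mathrm{diag}(u_{\fm},1)\in\cI_\infty$. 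As $u$ ranges over $\pi_\infty\cO_\infty/\pi_\infty^k\cO_\infty$, the variable $u'$ ranges bijectively over $\pi_\infty^{1-d}\cO_\infty/\pi_\infty^{k-d}\cO_\infty$; using the decomposition $\Fi=A\oplus \pi_\infty\cO_\infty$, I then uniquely write $u'=p+v$ with $p\in A$ of degree less than $d$ (so $q^{d}$ choices for $p$) and $v$ in the appropriate quotient of $\pi_\infty\cO_\infty$.

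For the $f^0$ formula, $\G_\infty$-invariance of $f$ under $S_{p}$ shows that the values $f\left(\begin{pmatrix}\pi_\infty^{k-d} & p+v \\ 0 & 1\end{pmatrix}\right)$ are independent of $p\in A$, so the sum over $p$ contributes only the factor $q^{d}$; the remaining sum over $v$ matches the definition of $f^0(\pi_\infty^{k-d})$, and the numerology $q^{1-k}\cdot q^{d}\cdot q^{(k-d)-1}=1$ produces the claimed equality when $k>d$. The degenerate ranges $k\leq d$, where the $v$-quotient collapses and $u'$ is represented entirely by a polynomial, are handled by the same invariance argument and match the $k-d\leq 1$ branch of the definition of $f^0$.

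For the $f^\ast$ formula, the extra factor $\eta(-nu)$ becomes $\eta(-np/\fm)\,\eta(-nv/\fm)$ via $u=u'\fm^{-1}$ and $u'=p+v$. The key input is the character sum
$$\sum_{\substack{p\in A\\ \deg(p)<d}}\eta(-np/\fm)=\begin{cases}q^{d}, & \fm\mid n,\\ 0, & \fm\nmid n,\end{cases}$$
which I will prove by noting that $p\mapsto$ (coefficient of $\pi_\infty$ in $-np/\fm$) is an $\F_q$-linear functional on the $d$-dimensional space of polynomials of degree less than $d$; it vanishes identically precisely when $\fm\mid n$ (via a valuation check on the proper rational part of $n/\fm$), and otherwise surjects onto $\F_q$, so the sum vanishes by orthogonality of the non-trivial character $\eta_0\circ\Tr_{\F_q/\F_p}$. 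When $\fm\mid n$, setting $n':=n/\fm$, the residual $v$-sum reproduces $q^{1+\deg(n')}f^{\ast}(\fn/\fm)$ and the powers of $q$ cancel exactly; when $\fm\nmid n$ both sides vanish, as $\fn/\fm$ is no longer a non-negative divisor. The main technical step is this character-sum computation; once it is in place, the rest is index bookkeeping among $k$, $d$, and $\deg(\fn)$.
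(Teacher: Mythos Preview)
Your argument is correct and self-contained, whereas the paper does not actually prove this lemma but simply cites Proposition~2.10 of Gekeler, \emph{Improper Eisenstein series on Bruhat--Tits trees}. Your direct computation from the Fourier definitions is essentially what lies behind that reference. One small slip: to normalize $B_{\fm}\begin{pmatrix}\pi_\infty^k & u\\ 0 & 1\end{pmatrix}=\begin{pmatrix}u_{\fm}\pi_\infty^{k-d} & \fm u\\ 0 & 1\end{pmatrix}$ you should right-multiply by $\mathrm{diag}(u_{\fm}^{-1},1)$ rather than $\mathrm{diag}(u_{\fm},1)$; this does not affect the value $u'=\fm u$, so nothing downstream changes. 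It is also worth noting that your argument uses only $\G_\infty$-invariance (together with the alternating property implicit in the definitions of $f^0$ and $f^\ast$), not harmonicity, so it in fact establishes the identities under that weaker hypothesis.
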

\begin{proof}
See Proposition 2.10 in \cite{Improper}. 
\end{proof}

Given ideals $\fn, \fm\lhd A$, denote 
$$
\G_0(\fn,\fm)=\left\{\begin{pmatrix} a & b \\  c & d\end{pmatrix}\in \GL_2(A)\ \big|\ c\in \fn, b\in \fm \right\}. 
$$

\begin{lem}\label{lemAL2} If $f\in \cH(\fn, R)$, then $f|B_\fm$ is $\G_0(\fn\fm)$-invariant and 
$f|B_\fm^{-1}$ is $\G_0(\fn/\gcd(\fn, \fm), \fm)$-invariant.  
\end{lem}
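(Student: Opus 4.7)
The plan is to reduce both invariance statements to the $\G_0(\fn)$-invariance of $f$ itself via conjugation. The key identity is that for any $\sigma\in\GL_2(F_\infty)$ and $\gamma\in\GL_2(A)$,
$$
(f|\sigma)|\gamma \;=\; f|(\sigma\gamma) \;=\; \bigl(f|(\sigma\gamma\sigma^{-1})\bigr)\bigl|\sigma.
$$
Applying this with $\sigma=B_\fm^{\pm 1}$, it suffices to show that the conjugate $B_\fm^{\pm 1}\gamma B_\fm^{\mp 1}$ lies in $\G_0(\fn)\cdot Z(F)$, since then $f$ is fixed by this conjugate and $f|B_\fm^{\pm 1}$ is fixed by $\gamma$.

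For the first claim, given $\gamma=\begin{pmatrix} a & b \\ c & d\end{pmatrix}\in\G_0(\fn\fm)$, a direct computation gives
$$
B_\fm\,\gamma\, B_\fm^{-1} \;=\; \begin{pmatrix} a & \fm b \\ c/\fm & d\end{pmatrix}.
$$
Because $\fn\fm\mid c$, the entry $c/\fm$ lies in $\fn\subseteq A$. The determinant is unchanged and remains in $\F_q^\times$, so the conjugate lies in $\G_0(\fn)$, proving $\G_0(\fn\fm)$-invariance of $f|B_\fm$.

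For the second claim, set $\fd=\gcd(\fn,\fm)$ and let $\gamma=\begin{pmatrix} a & b \\ c & d\end{pmatrix}\in\G_0(\fn/\fd,\fm)$. Then
$$
B_\fm^{-1}\,\gamma\, B_\fm \;=\; \begin{pmatrix} a & b/\fm \\ c\fm & d\end{pmatrix}.
$$
Integrality of $b/\fm$ is immediate from $b\in\fm$. The one slightly delicate point, which I view as the main (if still routine) obstacle, is the verification that $c\fm\in\fn$: writing $\fn=\prod\fp^{n_\fp}$, $\fm=\prod\fp^{m_\fp}$, we have $\fn/\fd=\prod\fp^{\max(0,\,n_\fp-m_\fp)}$, so $c\in\fn/\fd$ gives $v_\fp(c)\geq n_\fp-m_\fp$ for every prime $\fp$, hence $v_\fp(c\fm)\geq n_\fp$ for all $\fp$, i.e.\ $\fn\mid c\fm$. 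Thus $B_\fm^{-1}\gamma B_\fm\in\G_0(\fn)$, and the second invariance statement follows by the same reduction. No other ingredients are needed beyond the definition of $f|\gamma$ and the $\G_0(\fn)$-invariance of $f$.
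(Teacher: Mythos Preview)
Your proof is correct and is precisely the ``straightforward manipulation with matrices'' that the paper alludes to without spelling out; the conjugation identities and the divisibility check $c\fm\in\fn$ are exactly what is needed.
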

\begin{proof} This follows from a straightforward manipulation with matrices. 
\end{proof}

\begin{thm}\label{thmAL} Let $\fp$ and $\fq$ be two distinct primes such that $\fp\fq$ 
divides $\fn$, and $\fp\fq$ is coprime to $\fn/\fp\fq$.  
Let $\varphi\in \cH(\fn, R)$. Assume $\varphi^\ast(\fm)=0$ unless $\fp$ or $\fq$ divides $\fm$. Then there 
exist $\psi_1\in \cH(\fn/\fp, R)$ and $\psi_2\in \cH(\fn/\fq, R)$ such that 
$$
s_{\fp, \fq}\cdot \varphi=\psi_1|B_{\fp}+\psi_2|B_{\fq},
$$
where $s_{\fp, \fq}=\gcd(|\fp|+1, |\fq|+1)$. 
\end{thm}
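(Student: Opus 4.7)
This is a function-field analogue of Atkin--Lehner's Theorem 1 in \cite{AL} for $R$-valued harmonic cochains; the factor $s_{\fp,\fq}$ arises because we work integrally rather than over a field of characteristic zero, where $|\fp|+1$ and $|\fq|+1$ are not generally invertible. The main tools are the trace operators $\Tr_\fp := U_\fp + W_\fp$ and $\Tr_\fq := U_\fq + W_\fq$, which map $\cH(\fn, R)$ into $\cH(\fn/\fp, R)$ and $\cH(\fn/\fq, R)$ respectively by Proposition \ref{prop3lem}(3), together with the level-raising maps $B_\fp$ and $B_\fq$. Combining Lemma \ref{lem2.15} with Proposition \ref{prop3lem}(1) yields the composition identity $(f|B_\fp)|\Tr_\fp = (|\fp|+1)f$ for $f \in \cH(\fn/\fp, R)$; combining Lemma \ref{lem2.15} with Proposition \ref{prop3lem}(2) shows that $\Tr_\fp$ commutes with $B_\fq$ on $\cH(\fn/\fq, R)$, and symmetrically for $\fp \leftrightarrow \fq$.

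Using Lemmas \ref{thm3.3} and \ref{lem_new14}, one then computes that for the given $\varphi$
\[
(\varphi|\Tr_\fp|B_\fp)^*(\fm) = \begin{cases} |\fp|\,\varphi^*(\fm) + (\varphi|W_\fp)^*(\fm/\fp) & \text{if } \fp \mid \fm,\\ 0 & \text{if } \fp \nmid \fm,\end{cases}
\]
and symmetrically for $\Tr_\fq|B_\fq$. The plan is to choose integers $u, v \in \Z$ with $u(|\fp|+1) + v(|\fq|+1) = s_{\fp,\fq}$ by Bezout's identity, and to construct $\psi_1 \in \cH(\fn/\fp, R)$ and $\psi_2 \in \cH(\fn/\fq, R)$ as appropriate $R$-linear combinations of the single traces $\varphi|\Tr_\fp$, $\varphi|\Tr_\fq$ and the iterated double trace $\varphi|\Tr_\fp|\Tr_\fq \in \cH(\fn/(\fp\fq), R)$, each lifted back by $B_\fp$ and $B_\fq$. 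The identity $\psi_1|B_\fp + \psi_2|B_\fq = s_{\fp,\fq}\cdot \varphi$ is then verified by comparing Fourier coefficients on both sides: the hypothesis $\varphi^*(\fm) = 0$ whenever $(\fm, \fp\fq) = 1$ exactly handles the case $\fp \nmid \fm$ and $\fq \nmid \fm$, while the relation $W_\fp W_\fq = W_{\fp\fq}$ from \eqref{eqWs} is used to match terms when both $\fp \mid \fm$ and $\fq \mid \fm$.

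The hard part will be controlling the Atkin--Lehner correction term $(\varphi|W_\fp)^*(\fm/\fp)$ appearing in the formula above: in contrast to the Hecke operators, $W_\fp$ admits no simple description at the level of individual Fourier coefficients, since it involves a nontrivial matrix of the form \eqref{ALmatrix} rather than an upper-triangular sum. The key observation is that the Fourier-coefficient hypothesis on $\varphi$ is precisely what forces the $W$-type contributions coming from $\Tr_\fp|B_\fp$ and $\Tr_\fq|B_\fq$ to cancel once one combines them with Bezout weights $u$ and $v$. This cancellation is what converts the \emph{a priori} two separate obstructions at $\fp$ and $\fq$, each of order $|\fp|+1$ and $|\fq|+1$, into a single residual factor of $s_{\fp,\fq} = \gcd(|\fp|+1, |\fq|+1)$, which becomes $1$ whenever $(|\fp|+1)$ and $(|\fq|+1)$ are coprime.
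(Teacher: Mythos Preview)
Your plan has a genuine gap at exactly the point you label ``the hard part.'' You assert that the Fourier-coefficient hypothesis on $\varphi$ forces the $W$-type contributions $(\varphi|W_\fp)^\ast(\fm/\fp)$ and $(\varphi|W_\fq)^\ast(\fm/\fq)$ to cancel when combined with Bezout weights, but you give no mechanism for this, and in fact there is none: the hypothesis constrains $\varphi^\ast$, not $(\varphi|W_\fp)^\ast$ or $(\varphi|W_\fq)^\ast$, and the Atkin--Lehner involutions do not admit a closed Fourier-coefficient formula that would let you relate the two. So verifying $\psi_1|B_\fp+\psi_2|B_\fq=s_{\fp,\fq}\varphi$ Fourier-coefficient-by-Fourier-coefficient is not feasible with the ingredients you list.

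The paper avoids this obstruction by inserting a preliminary step that does not involve $W$ at all. One first peels off the $\fq$-part via $\phi_2:=|\fq|^{-1}\varphi|U_\fq$, so that $\varphi_1:=\varphi-\phi_2|B_\fq$ satisfies $\varphi_1^\ast(\fm)=0$ whenever $\fq\mid\fm$. The original hypothesis then forces $\varphi_1^\ast(\fm)=0$ unless $\fp\mid\fm$, whence $\phi_1:=\varphi_1|B_\fp^{-1}$ is $\G_\infty$-invariant and hence lies in $\cH(\fn\fq/\fp,R)$ (by Lemma \ref{lemAL2} it is already $\G_0(\fn\fq/\fp,\fp)$-invariant). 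This gives an exact decomposition $\varphi=\phi_1|B_\fp+\phi_2|B_\fq$ at the \emph{wrong} levels $\fn\fq/\fp$ and $\fn$. Only now does one apply $\Tr_\fp=U_\fp+W_\fp$: since $(\phi_1|B_\fp)|\Tr_\fp=(|\fp|+1)\phi_1$ by your own identity, and $(\phi_2|B_\fq)|\Tr_\fp=(\phi_2|\Tr_\fp)|B_\fq$ by commutation, one gets $(|\fp|+1)\varphi=\psi_1|B_\fp+\psi_2|B_\fq$ with $\psi_1=\varphi|\Tr_\fp\in\cH(\fn/\fp,R)$ and $\psi_2$ verified to lie in $\cH(\fn/\fq,R)$. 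No Fourier-coefficient comparison involving $W_\fp$ is ever needed. Interchanging $\fp$ and $\fq$ gives $(|\fq|+1)\varphi$ in the same form, and then your Bezout step finishes it. The missing idea in your plan is precisely this auxiliary decomposition at shifted levels using $U_\fq$ and $B_\fp^{-1}$.
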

\begin{proof} 
Take $\phi_2:= |q|^{-1} \cdot \varphi|U_{\fq} \in \cH(\fn,R)$. We have
$$
(\phi_2)^0(\pi_\infty^k) = \varphi^0(\pi_\infty^{k+\deg(\fq)}),\quad \phi_2^\ast(\fm)=\varphi^\ast(\fm\fq).
$$ 
Let $\varphi_1:= \varphi - \phi_2|B_{\fq} \in \cH(\fn\fq,R)$. Then by Lemma \ref{lem_new14},
$$
(\varphi_1)^0(\pi_\infty^k) =0 ,\quad \varphi_1^\ast(\fm)=\varphi^\ast(\fm) \text{ if } \fq\nmid \fm, \quad 
\varphi_1^\ast(\fm)=0 \text{ if }\fq|\fm.
$$
Let $\phi_1:= \varphi_1|B_\fp^{-1}$, which is $\G_0(\fn\fq/\fp,\fp)$-invariant by Lemma \ref{lemAL2}.
In particular, $\varphi_1^\ast(\fm) = 0$ unless $\fp| \fm$, which implies that $\phi_1$ is $\G_\infty$-invariant.
Since $\G_\infty$ and $\G_0(\fn\fq/\fp,\fp)$ generates $\G_0(\fn\fq/\fp)$, we get $\phi_1 \in \cH(\fn\fq/\fp,R)$ with
$$
(\phi_1)^0(\pi_\infty^k) =0 ,\quad \phi_1^\ast(\fm)=\varphi^\ast(\fm\fp) \text{ if } \fq\nmid \fm, \quad 
\phi_1^\ast(\fm)=0 \text{ if }\fq|\fm,
$$
and
$$\varphi = \phi_1|B_\fp + \phi_2|B_\fq.$$

%Define $\G_\infty$-invariant functions $\phi_1$ and $\phi_2$ in terms of their  
%Fourier coefficients as follows: 

%By Lemma \ref{lemFH}, since $\varphi$ is harmonic, $\phi_1$ and $\phi_2$ are also harmonic. Next, 
%by Lemma \ref{lem_new14},  
%\begin{equation}\label{eqFD}
%\varphi=\phi_1|B_{\fp}+\phi_2|B_{\fq}. 
%\end{equation}
%The formula $(f|U_\fq)^\ast(\fm)=|\fq|f^\ast(\fm\fq)$ which results from Lemma \ref{thm3.3} continues to hold 
%if one only assumes that $f$ is $\G_\infty$-invariant, so using 
%Lemma \ref{lem2.15} we get $(\phi_1|B_\fp)|U_\fq=(\phi_1|U_\fq)|B_\fp =0$. Hence, again by Lemma \ref{lem2.15},  
%$$
%\varphi|U_\fq =\phi_2|B_\fq|U_\fq= |\fq|\cdot \phi_2. 
%$$
%Since $\varphi|U_\fq\in \cH(\fn, R)$ and $|\fq|$ is invertible in $R$, $\phi_2\in \cH(\fn, R)$. 
%By Lemma \ref{lemAL2}, $\phi_2|B_\fq\in \cH(\fn\fq, R)$. 
%Now (\ref{eqFD}) implies $\phi_1|B_\fp\in \cH(\fn\fq, R)$, and using Lemma \ref{lemAL2} once more, we conclude that 
%$\phi_1$ is $\G_0(\fn\fq/\fp, \fp)$-invariant. Since it is also $\G_\infty$-invariant by construction,  
%$\phi_1\in \cH(\fn\fq/\fp, R)$. 
By Proposition \ref{prop3lem}, $\psi_1:=\varphi|(U_\fp+W_\fp)\in \cH(\fn/\fp, R)$. 
Using Proposition \ref{prop3lem} and Lemma \ref{lem2.15}, 
$$
(\phi_1|B_\fp)|(U_\fp+W_\fp)=\phi_1|B_\fp|U_\fp+\phi_1|B_\fp|W_\fp=|\fp|\phi_1 + \phi_1=(|\fp|+1)\phi_1. 
$$
On the other hand, using the fact that $\phi_2\in \cH(\fn, R)$, we have 
$$
(\phi_2|B_\fq)|(U_\fp+W_\fp)=\phi_2|(U_\fp+W_\fp)|B_\fq. 
$$
If we denote $\psi:=\phi_2|(U_\fp+W_\fp)$, then we proved that 
$$
\psi_1=(|\fp|+1)\phi_1+\psi|B_\fq \in \cH(\fn/\fp, R). 
$$
Therefore,
$$
(|\fp|+1)\varphi = (|\fp|+1)\phi_1|B_\fp + (|\fp|+1)\phi_2|B_\fq
$$
$$
=((|\fp|+1)\phi_1+\psi|B_\fq)|B_\fp+((|\fp|+1)\phi_2 -\psi|B_\fp)|B_\fq=\psi_1|B_\fp+\psi_2|B_\fq,
$$
where $\psi_2:=(|\fp|+1)\phi_2 -\psi|B_\fp$. We already proved that $\psi_1\in \cH(\fn/\fp, R)$. 
Obviously $\psi_2|B_\fq\in \cH(\fn, R)$. 
By Lemma \ref{lemAL2}, $\psi_2$ is $\G_0(\fn/\fq, \fq)$-invariant. Since it is also $\G_\infty$-invariant, 
we conclude $\psi_2\in \cH(\fn/\fq, R)$. 

Finally, interchanging the roles of $\fp$ and $\fq$ we obtain $$(|\fq|+1)\varphi=\psi_1'|B_\fp+\psi_2'|B_\fq$$ 
with $\psi_1'\in \cH(\fn/\fp, R)$ and $\psi_2'\in \cH(\fn/\fq, R)$. This implies the claim of the theorem. 
\end{proof}

% ------------------------------------------------------------------------

\section{Eisenstein harmonic cochains}\label{sec3}

\subsection{Eisenstein series}\label{ssES} In this section $R$ always denotes 
a coefficient ring, in particular, $p$ is invertible in $R$. 
We say that a harmonic cochain $\varphi \in \cH(\fn,R)$ is \textit{Eisenstein} 
if $ \varphi |T_{\fp} = (|\fp|+1)\varphi$ for every prime ideal $\fp \lhd A$ not dividing $\fn$. 
It is clear that the Eisenstein harmonic cochains form an $R$-submodule of $\cH(\fn, R)$ 
which we denote by $\cE(\fn, R)$. 

The Drinfeld half-plane 
$$\Omega=\p^1(\C_\infty)-\p^1(\Fi)=\C_\infty-\Fi$$ 
has a natural structure of a smooth connected rigid-analytic space 
over $\Fi$; see \cite[$\S$1]{GR}. 
The group $\G_0(\fn)$ acts on $\Omega$ via linear 
fractional transformations:
$$
\begin{pmatrix} a & b \\ c & d \end{pmatrix}z=\frac{az+b}{cz+d}. 
$$
This action is discrete, so the quotient 
\begin{equation}\label{eqUnifY}
Y_0(\fn)(\C_\infty) = \G_0(\fn)\bs \Omega
\end{equation}
has a natural structure of a rigid-analytic curve over $\Fi$, 
which is in fact an affine algebraic curve; cf. \cite[Prop. 6.6]{Drinfeld}. 
If we denote $\overline{\Omega}=\Omega\cup \p^1(F)$, then $$X_0(\fn)(\C_\infty)= \G_0(\fn)\bs \overline{\Omega}$$ 
is the projective closure of $Y_0(\fn)$. The points $X_0(\fn)(\C_\infty)-Y_0(\fn)(\C_\infty)$ 
are called the \textit{cusps} of $X_0(\fn)$, and they are in natural bijection with the cusps 
of $\G_0(\fn)\bs \sT$ in Definition \ref{defnQG}. 

The Hecke operator $T_\fp$ induces a correspondence on $X_0(\fn)(\C_\infty)$ 
\begin{equation}\label{eqHC}
T_\fp: z\mapsto \sum_{\substack{b\in A\\ \deg(b)<\deg(\fp)}} \frac{z+b}{\fp}+\fp z\quad \mod\ \G_0(\fn); 
\end{equation}
$U_\fp(z)$ is given by the same sum but without the last summand. 

Let $\cO(\Omega)^\times$ be the group of nowhere vanishing holomorphic functions on $\Omega$. 
The group $\GL_2(\Fi)$ act on $\cO(\Omega)^\times$ via $(f|\gamma)(z)=f(\gamma z)$. 
To each $f\in \cO(\Omega)^\times$ van der Put associated a 
harmonic cochain $r(f)\in \cH(\sT, \Z)$ so that the sequence  
\begin{equation}\label{eqvdPut}
0\to \C_\infty^\times \to \cO(\Omega)^\times\xrightarrow{r} \cH(\cT, \Z)\to 0
\end{equation}
is exact and $\GL_2(\Fi)$-equivariant. As is explained in \cite{GR}, the map 
$r$ plays the role of a logarithmic derivation. 

\begin{lem}\label{lemPlog}
Assume $\fn$ is square-free and $f\in \cO(\Omega)^\times$ is $\G_0(\fn)$-invariant. 
Then $r(f)$ is Eisenstein. 
\end{lem}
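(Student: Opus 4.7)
The plan is to lift the desired identity $r(f)|T_\fp = (|\fp|+1)\,r(f)$ through the van der Put exact sequence (\ref{eqvdPut}). By the $\GL_2(\Fi)$-equivariance of $r$ and the definition of $T_\fp$ on harmonic cochains,
\[
r(f)|T_\fp \;=\; \sum_{\gamma} r(f|\gamma) \;=\; r\!\left(\prod_\gamma f|\gamma\right),
\]
where $\gamma$ runs through the coset representatives $\begin{pmatrix}\fp & 0\\ 0 & 1\end{pmatrix}$ and $\begin{pmatrix}1 & b\\ 0 & \fp\end{pmatrix}$, $b\in A/\fp$, defining $T_\fp$ (note $\fp\nmid\fn$). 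Since $\ker r = \C_\infty^\times$, it suffices to establish that
\[
\prod_\gamma (f|\gamma) \;=\; c\cdot f^{|\fp|+1}\qquad\text{for some }c\in\C_\infty^\times.
\]

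Being $\G_0(\fn)$-invariant and nowhere vanishing on $\Omega$, the function $f$ descends to an invertible regular function on $Y_0(\fn)(\C_\infty)=\G_0(\fn)\bs\Omega$, and thus extends to a meromorphic function on the projective curve $X_0(\fn)(\C_\infty)$ whose divisor is supported on the cusps. The same holds for $\prod_\gamma (f|\gamma)$: it is $\G_0(\fn)$-invariant because the above set of coset representatives is permuted, up to left multiplication by $\G_0(\fn)$, under right multiplication by $\G_0(\fn)$. Moreover, its divisor on $X_0(\fn)$ is the image of $\mathrm{div}(f)$ under the Hecke correspondence described in (\ref{eqHC}).

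The crux is to show that $T_\fp\cdot c = (|\fp|+1)\,c$ as a divisor, for every cusp $c$ of $X_0(\fn)$. For $c=[\infty]$ this is immediate from (\ref{eqHC}) since every coset representative fixes $\infty\in\p^1(F)$. For the remaining cusps I would invoke Lemma \ref{lemCusps}(2): because $\fn$ is square-free, the Atkin-Lehner group $\W$ acts transitively on the cusps, with $W_\fm[\infty]=[\fn/\fm]$. A direct matrix computation (using that the coset representatives for $T_\fp$ can be right-multiplied by any $W_\fm$ of the form (\ref{ALmatrix}) and redistributed back into the same set, since $\fp\nmid\fn$) shows that $T_\fp$ commutes with each $W_\fm$ on $\mathrm{Div}(X_0(\fn))$. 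Hence $T_\fp[\fn/\fm] = W_\fm T_\fp[\infty] = (|\fp|+1)[\fn/\fm]$. It follows that $\mathrm{div}\bigl(\prod_\gamma f|\gamma\bigr) = T_\fp(\mathrm{div}(f)) = (|\fp|+1)\mathrm{div}(f) = \mathrm{div}(f^{|\fp|+1})$ on $X_0(\fn)$, so the ratio $(\prod_\gamma f|\gamma)/f^{|\fp|+1}$ is a regular nowhere-vanishing function on a projective curve, hence a non-zero constant, as required.

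The main obstacle is the cusp-by-cusp statement $T_\fp c = (|\fp|+1)\,c$ at the level of \emph{divisors} rather than merely divisor classes (where it would follow from standard degree considerations). This is precisely the place where the square-freeness of $\fn$ enters the proof: it allows one to propagate the trivial case $c=[\infty]$ to all cusps by transitivity of the Atkin-Lehner action, avoiding any case-by-case analysis of how $T_\fp$ permutes cusps lying over the same prime of $A$.
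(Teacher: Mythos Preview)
Your overall strategy matches the paper's: show that $\prod_\gamma(f|\gamma)$ and $f^{|\fp|+1}$ have the same divisor on $X_0(\fn)$, hence differ by a nonzero constant, and apply $r$. Your treatment of the cusp identity $T_\fp[c]=(|\fp|+1)[c]$ via Atkin--Lehner transitivity is a legitimate alternative to the paper's direct computation. The paper instead writes down $T_\fp[\fd]$ explicitly and checks that each summand lies in the $\G_0(\fn)$-orbit of $\begin{pmatrix}1\\\fd\end{pmatrix}$ using the description of that orbit (which is where square-freeness enters there). Your route, reducing to $[\infty]$ and transporting by $W_\fm$, is cleaner and avoids the orbit bookkeeping; both arguments use square-freeness in the same essential way.

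There is, however, a genuine gap. You assert that $f$ ``descends to an invertible \emph{regular} function on $Y_0(\fn)(\C_\infty)$, and thus extends to a meromorphic function on $X_0(\fn)(\C_\infty)$''. This is not automatic. A $\G_0(\fn)$-invariant element of $\cO(\Omega)^\times$ descends only to an invertible \emph{rigid-analytic} function on $Y_0(\fn)^{\an}$; nothing a priori forces it to be algebraic, and on an affine rigid space there is no GAGA statement to upgrade analytic units to rational functions. One must rule out essential singularities at the cusps. The paper does exactly this: near each cusp it uses the uniformizer $t=e_A(z)^{-1}$ to view $f$ as a nowhere-vanishing holomorphic function on a punctured disc, and then invokes the non-archimedean Picard Big Theorem (van der Put) to conclude $f$ has at worst a pole there. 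Only after this step is $f$ known to be a rational function on $X_0(\fn)$ with cuspidal divisor, and only then does your divisor comparison make sense. You should insert this argument (or cite the relevant result) before proceeding to the Hecke-correspondence computation.
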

\begin{proof} 
%The idea of this proof is due to P\'al \cite[Lem. 6.2]{Pal}. 
Put
$$
e_\fn(z)=z\prod_{0\neq a\in \fn}\left(1-\frac{z}{a}\right) 
\quad\text{and}\quad \G_\infty^u=\left\{\begin{pmatrix} 1 & a \\ 0 & 1\end{pmatrix}\big| a\in \fn\right\}. 
$$
For $z\in \Omega$, let $|z|_i=\inf\{|z-s|\ |\ s\in \Fi\}$ be its ``imaginary'' absolute value.   
The subspace $\Omega_d=\{z\in \Omega\ |\ |z|_i\geq d\}$ of $\Omega$ is stable under $\G_\infty^u$, 
and for $d\gg 0$, the function $t(z)=e_A(z)^{-1}$ identifies $\G_\infty^u\bs \Omega_d$ 
with a small punctured disc $D_\eps^0=\{t\in \C_\infty\ |\ 0<|t|\leq \eps\}$. The function $f$ is $\G_\infty^u$-invariant, 
so can be considered as a holomorphic non-vanishing function on $D_\eps^0$. 
By the non-archimedean analogue of Picard's Big Theorem \cite[(1.3)]{vdPutEssential}, 
$f$ has at worst a pole at $t=0$, or equivalently, at the cusp $[\infty]$. Now let $[c]$ 
be any other cusp of $\G_0(\fn)$ and $\gamma\in \GL_2(A)$ be such that $\gamma[\infty]=[c]$. 
The function $f|\gamma\in \cO(\Omega)^\times$ is invariant under $\G'=\gamma^{-1}\G_0(\fn)\gamma$. 
The stabilizer of $[\infty]$ in $\G'$ contains $\G_\infty^u$, so the previous argument shows that 
$f|\gamma$ is meromorphic at $[\infty]$. Thus, $f$ is meromorphic at $[c]$.  
We conclude that $f$ descends to a rational function on 
$X_0(\fn)$ whose divisor is supported at the cusps. 

Now we use an idea from the proof of Lemma 6.2 in \cite{Pal}. The Hecke correspondence $T_\fp$ defines 
a map from the group of divisors on $X_0(\fn)$ supported at the cusps to itself (cf. (\ref{eqHC})): 
$$
T_\fp [\fd]=\begin{pmatrix} \fp & 0\\ 0 & 1\end{pmatrix} \begin{pmatrix} 1 \\ \fd\end{pmatrix} 
+\sum_{\substack{b\neq 0\\ \deg(b)<\deg(\fp)}} \begin{pmatrix} 1 & b\\ 0 & \fp\end{pmatrix} \begin{pmatrix} 1 \\ \fd\end{pmatrix} 
= \begin{pmatrix} \fp \\ \fd\end{pmatrix} +
\sum_{\substack{b\neq 0\\ \deg(b)<\deg(\fp)}} \begin{pmatrix} 1 +b\fd \\ \fp\fd\end{pmatrix}.
$$
The orbit of the cusp $[\fd]$ consists exactly of those $\begin{pmatrix} \alpha \\ \beta\end{pmatrix}\in \p^1(A)$ 
such that $\beta$ is divisible by $\fd$ and is coprime to $\fn/\fd$, so 
$
T_\fp [\fd] = (1+|\fp|) [\fd]$. 
Thus, $f|T_\fp$ and $f^{|\fp|+1}$, as rational functions on $X_0(\fn)$, have the 
same divisor. This implies that $(f|T_\fp)/f^{|\fp|+1}$ is a constant function. 
Applying $r$, we get 
$$
r(f|T_\fp) = (|\fp|+1)r(f). 
$$
Since $r$ is $\GL_2(\Fi)$-equivariant, $r(f|T_\fp)=r(f)|T_\fp$, which finishes the proof. 
\end{proof}

Lemma \ref{lemPlog} gives a natural source of $\Z$-valued Eisenstein harmonic cochains. 
Let $\Delta(z)$ be the Drinfeld discriminant function on $\Omega$ defined on page 183 of \cite{Discriminant}. This is 
a Drinfeld modular form of weight $(q^2-1)$ and type $0$ for $\GL_2(A)$, which 
vanishes nowhere on $\Omega$. Let 
$\Delta_\fn:=\Delta|B_\fn=\Delta(\fn z)$. By page 194 of \cite{Discriminant}, 
$\Delta/\Delta_\fn$ is a $\G_0(\fn)$-invariant function in $\cO(\Omega)^\times$. 
Hence $r(\Delta/\Delta_\fn)\in \cE(\fn, \Z)$. Define 
$$
\nu(\fn) =
\begin{cases}
1, & \text{if  $\deg(\fn)$ is even}\\ 
q+1, & \text{if  $\deg(\fn)$ is odd} 
\end{cases}
$$ 
and 
\begin{equation}\label{eqEn}
E_{\fn} = \frac{\nu(\fn)}{(q-1)(q^2-1)}r(\Delta/\Delta_{\fn}). 
\end{equation}
By \cite[(3.18)]{Discriminant}, $E_\fn$ is $\Z$-valued and 
primitive (i.e., $E_\fn$ is not a scalar multiple of another harmonic cochain in $\cH(\fn, \Z)$ 
except for $\pm E_\fn$). We call $E_\fn\in \cE(\fn, \Z)$ the \textit{Eisenstein series}. 
The Fourier 
expansion of $E_\fn$ can be deduced from \cite{Discriminant}: 
$$
E_{\fn}\left(\begin{pmatrix} \pi_{\infty}^k & y \\0 & 1\end{pmatrix}\right) = \nu(\fn)\cdot q^{-k+1} \cdot \left[
\frac{1-|\fn|}{1-q^2} + \sum_{0 \neq m \in A, \atop \deg(m) \leq k-2} \sigma_{\fn}(m) \eta(my)\right],
$$
where $\sigma_{\fn}(m):= \sigma(m) - |\fn| \cdot \sigma(m/\fn)$, and $\sigma$ is the divisor function
$$\sigma(m):= \begin{cases}\sum\limits_{\text{monic } m' \in A, \atop m' \mid m} |m'|, &\text{ if $m \in A$,} \\
0, & \text{ otherwise.}\end{cases}
$$
\begin{rem}
Note that for each prime $\fp \lhd A$ and $m \in A$, 
$$\sigma(m\fp) = \sigma(\fp) \sigma(m) - |\fp|\sigma(m/\fp).$$
Therefore the Fourier expansion of $E_{\fn}|T_{\fp}$ for each prime $\fp$ not dividing $\fn$ also tells us that $E_{\fn} \in \cE(\fn,\Z)$.
\end{rem}
\begin{example} 
Let $E_0$ be the $R$-valued function on $E(\sT)$ defined by
$$E_0\left(\begin{pmatrix} \pi_{\infty}^k & u \\ 0 &1 \end{pmatrix} \right) 
= - E_0 \left(\begin{pmatrix}  \pi_{\infty}^k & u \\ 0 &1 \end{pmatrix} 
\begin{pmatrix} 0 & 1 \\ \pi_{\infty} & 0 \end{pmatrix} \right) = q^{-k}.$$
This function is alternating and $\G_\infty$-invariant. For $\gamma=
\begin{pmatrix} a & b\\ c & d\end{pmatrix}\in \GL_2(A)$, let $\omega:=\ord_\infty(cu+d)$. 
By the calculations in \cite[p. 379]{Improper}
$$
(E_0|\gamma)\left(\begin{pmatrix} \pi^k & u \\ 0 & 1\end{pmatrix}\right)=
\begin{cases}
-q^{k-2\deg(c) -1} & \text{if $\omega\geq k-\deg(c)$}\\
q^{2\omega-k} & \text{if $\omega< k-\deg(c)$}. 
\end{cases}
$$
Now it is easy to see that if $\alpha\in R[q+1]$, then $\alpha E_0$ 
is the function in $\cH(1, R)$ discussed in Example \ref{example1}.  
The Hecke operator $T_\fp$ acts on $\alpha E_0$ by 
\begin{align*}
(\alpha E_0| T_\fp )\left(\begin{pmatrix} \pi^k & u \\ 0 & 1\end{pmatrix}\right) = 
&\alpha E_0\left(\begin{pmatrix} \pi^k\fp & u\fp \\ 0 & 1\end{pmatrix}\right)\\ &+\sum_{\deg(b)<\deg(\fp)} 
\alpha E_0\left(\begin{pmatrix} \pi^k/\fp & (u+b)/\fp \\ 0 & 1\end{pmatrix}\right)
\end{align*}
$$
=\alpha (-1)^{k-\deg(\fp)}+q^{\deg(\fp)}\alpha (-1)^{k+\deg(\fp)}=(1+|\fp|)\alpha 
E_0\left(\begin{pmatrix} \pi^k & u \\ 0 & 1\end{pmatrix}\right). 
$$
Therefore, $\alpha E_0$ is Eisenstein and $\cH(1, R)=\cE(1, R)\cong R[q+1]$. 
\end{example}

\begin{example}\label{examplex2}
Let $\varphi\in \cH_0(x, R)$. As we saw in Example \ref{examplex}, $\varphi$ 
is uniquely determined by $\varphi(e_0)=\alpha$.  
Note that $\varphi^0(1)=\varphi(e_{1})=q\alpha$. On the other hand, $E_x^0(1)=q$, 
so $\cH(x, R)=\cE(x, R)\cong R$ is generated by $E_x$. 
\end{example}

\begin{example}\label{exampley2}
Finally, we return to the setting of Example \ref{exampley}. The Eisenstein series $E_y$,  
as a function on $\G_0(y)\bs\sT$, can be explicitly described by 
$E_y(e_i)=E_y(e_{-i-1})=q^i$ for $i\geq 0$, and $E_y(e_u)=0$. 
Next, the function $\alpha E_0$, for any $\alpha\in R[q+1]$, can be considered 
as a function on $\G_0(y)\bs\sT$, and as such it is given by $\alpha E_0(e_u)=-\alpha$ 
and $\alpha E_0(e_i)=\alpha (-1)^{i+1}$ ($\forall i\in \Z$). Since 
any $f\in \cH(y, R)$ is uniquely determined by its values on $e_u$ and $e_0$, we see that 
$$
\cH(y, R) =\cE(y, R)= RE_y\oplus \{\alpha E_0\ |\ \alpha\in R[q+1]\}\cong R\oplus R[q+1]. 
$$
\end{example}

%----------------------------------------------

\subsection{Cuspidal Eisenstein harmonic cochains}\label{ssCEHC} We set 
$$
\cE_0(\fn,R):= \cE(\fn,R)\cap \cH_0(\fn,R), \quad \cE_{00}(\fn,R) := \cE(\fn,R) \cap \cH_{00}(\fn,R).
$$
Let $\fp\lhd A$ be a prime. Theorem 6.6 in \cite{Pal} 
states that $\cE_0(\fp, R)\cong R\left[\frac{|\fp|-1}{q-1}\right]$, if $\deg(\fp)$ is odd, and 
$\cE_0(\fp, R)\cong R\left[2\frac{|\fp|-1}{q^2-1}\right]$, if $\deg(\fp)$ is even. 
Note that Examples \ref{exampley} and \ref{exampley2} imply that $\cE_0(y, R)\cong R[2]$, 
which is a special case of this theorem. The main result of this section 
is a similar description of $\cE_0(\fp\fq, R)$ and $\cE_{00}(\fp\fq,R)$ under certain assumptions. Note that as a consequence 
of the Ramanujan-Petersson conjecture over function fields $\cE_0(\fn, \C)=0$ for any $\fn$. 
Therefore, $\cE_0(\fn, R)$ can be non-trivial only if $R$ has non-trivial 
additive torsion.  

\begin{defn}\label{defEI} The \textit{Eisenstein ideal} $\fE(\fn)$ of $\T(\fn)$ is the 
ideal generated by the elements $\{T_\fp-|\fp|-1\ |\ \fp \text{ is prime}, \fp \nmid \fn\}$. 
We say that a maximal ideal $\fM\lhd \T(\fn)$ is Eisenstein if $\fE(\fn)\subset \fM$.  
\end{defn}

\begin{lem}\label{prop3.1}
Let $\fp \lhd A$ be a prime, and $S$ be a set of prime ideals of $A$ of density one that 
does not contain $\fp$. A cochain $f \in \cH_{00}(\fp,R)$ is 
Eisenstein if and only if 
$$f|T_{\fq} = (|\fq|+1)f, \quad \forall \fq \in S.$$
\end{lem}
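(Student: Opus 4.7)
The ``only if'' direction is immediate from the definition. For the converse, fix $f \in \cH_{00}(\fp, R)$ with $f|T_\fq = (|\fq|+1)f$ for every $\fq \in S$ and a prime $\fl \lhd A$ distinct from $\fp$; the task is to show that $g := f|(T_\fl - |\fl| - 1)$ vanishes. My first step is to transfer the hypothesis from $f$ to $g$: because the Hecke operators commute (Proposition \ref{propTrecur}), a direct computation gives $g|T_\fq = (|\fq|+1) g$ for every $\fq \in S$. Iterating the recursion of Proposition \ref{propTrecur} then yields $g|T_\fm = \sigma(\fm) g$ for every ideal $\fm$ whose prime factors all lie in $S$, where $\sigma$ is the divisor function introduced in Section \ref{ssES}.

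Next I would invoke the perfect pairing of Theorem \ref{thm2.14}: $g = 0$ if and only if the linear functional $\psi_g : T \mapsto (g|T)^\ast(1)$ vanishes identically on $\T(\fp)\otimes R$. Since $\{T_\fm\}_\fm$ generates $\T(\fp)\otimes R$ as an $R$-module and $(g|T_\fm)^\ast(1) = |\fm| g^\ast(\fm)$ by Lemma \ref{thm3.3} (with $|\fm|$ a unit in $R$, being a power of $p$), this is equivalent to the vanishing $g^\ast(\fm) = 0$ for every nonzero ideal $\fm$. Translated through the duality, the hypothesis says that $\psi_g$ annihilates the ideal $I_S := \langle T_\fq - |\fq|-1 : \fq \in S \rangle \subseteq \T(\fp)\otimes R$, and the conclusion we want is that $\psi_g$ also annihilates the full Eisenstein ideal $\fE(\fp) \otimes R$. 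The whole argument therefore reduces to the ideal-theoretic inclusion $\fE(\fp) \otimes R \subseteq I_S$ inside $\T(\fp) \otimes R$, i.e.\ that $T_\fl - |\fl| - 1 \in I_S$ for every prime $\fl \nmid \fp$.

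Over the fraction field $K$ of the DVR $\tilde R$ of which $R$ is a quotient, this inclusion follows from strong multiplicity one for cuspidal Drinfeld harmonic cochains: combined with Drinfeld's proof of the Ramanujan--Petersson bound for $\GL_2$ over $F$, no cuspidal eigensystem on $\cH_0(\fp, K)$ can have $T_\fq$-eigenvalue $|\fq|+1$ on a density-one set of primes unless it comes from the Eisenstein character, whose kernel in $\T(\fp)$ is precisely $\fE(\fp)$; this gives $I_S \otimes K = \fE(\fp) \otimes K$.

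The main obstacle is the integral step: upgrading $I_S \otimes K = \fE(\fp) \otimes K$ to the honest equality $I_S = \fE(\fp)$ inside the free $\Z$-module $\T(\fp)$. This matters because $R$ may carry nontrivial $p'$-torsion and the lemma concerns an arbitrary coefficient ring. I would close this gap by exploiting P\'al's explicit identification of the Eisenstein quotient $\T(\fp)/\fE(\fp) \cong \Z/N(\fp)\Z$ from \cite[Theorem~6.6]{Pal}: since that cyclic quotient admits a distinguished integral generator attached to the Eisenstein series $E_\fp$, the integral Eisenstein character on $\T(\fp)$ is rigidly pinned down, and agreement with it on any density-one set of primes forces integral agreement. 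This yields $I_S = \fE(\fp)$ and hence $g = 0$.
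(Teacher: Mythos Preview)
Your reduction is correct and matches the paper's: both arguments boil down to establishing the ideal-theoretic equality $I_S \otimes R = \fE(\fp) \otimes R$ inside $\T(\fp)\otimes R$, and the perfect pairing of Theorem~\ref{thm2.14} makes that reduction clean. Your rational step is also fine: Ramanujan--Petersson indeed forces $\fE(\fp)\otimes\Q = \T(\fp)\otimes\Q$, hence $I_S\otimes\Q = \fE(\fp)\otimes\Q$ and the quotient $\fE(\fp)/I_S$ is finite.

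The gap is entirely in your final paragraph. Knowing that $\T(\fp)/\fE(\fp)\cong\Z/N(\fp)\Z$ tells you the size of the \emph{quotient} by $\fE(\fp)$, but says nothing about how $\fE(\fp)$ is \emph{generated}. The surjection $\T(\fp)/I_S \twoheadrightarrow \T(\fp)/\fE(\fp)$ could, a priori, have nontrivial kernel: nothing in ``the integral Eisenstein character is rigidly pinned down'' rules out $\fE(\fp)/I_S \neq 0$, because $\T(\fp)/I_S$ need not be a quotient of $\Z$ at all---it is an a priori arbitrary finite ring containing the images of $T_\fq$ for $\fq\notin S$. The sentence ``agreement with it on any density-one set of primes forces integral agreement'' is exactly the statement you need to \emph{prove}, not an input you can invoke.

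The paper closes this gap with two genuinely arithmetic inputs you are missing. First, for each maximal ideal $\fM\supseteq I_S$ of residue characteristic $\ell\neq p$, one attaches the residual Galois representation $\rho_{\fM}:G_F\to\GL_2(\T(\fp)/\fM)$; the hypothesis on $S$ plus Chebotarev density and Brauer--Nesbitt force $\rho_{\fM}\cong\mathbf{1}\oplus\chi_\ell$, hence $T_\fl\equiv|\fl|+1\pmod\fM$ for \emph{all} $\fl\neq\fp$, i.e.\ $\fM$ is Eisenstein. This pins down the support of $\T/I_S$. Second, at each Eisenstein $\fM$ one invokes P\'al's result \cite[Thm.~5.13]{PalIJNT} that $\fE(\fp)_\fM$ is \emph{principal}, generated by $T_\fq-|\fq|-1$ for any ``good'' prime $\fq$; since such primes have positive density, $S$ contains one, so $I_{S,\fM}=\fE(\fp)_\fM$. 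Neither step follows from the mere cyclicity of $\T(\fp)/\fE(\fp)$, and the reference you cite (\cite[Thm.~6.6]{Pal}, which computes $\cE_0(\fp,R)$) does not supply them.
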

\begin{proof} 
Let $\fI(\fp)$ be the ideal of $\T(\fp)$ generated by the elements $T_\fq-|\fq|-1$, where $\fq\in S$. It is enough to 
show that $\fE(\fp)\otimes R=\fI(\fp)\otimes R$ in $\T(\fp)\otimes R$. 
The proof of the analogous statement over $\Q$ can be found in \cite[Lem. 4]{CL}. 
We briefly sketch the argument over $F$. 

Let $\fM\lhd \T(\fp)$ be a maximal ideal such that the
characteristic $\ell$ of $\T(\fp)/\fM$ is different from $p$. There is a
unique semi-simple representation
$$
\rho_{_\fM}:G_F\to \GL_2(\T(\fp)/\fM),
$$
which is unramified away from $\fp$ and $\infty$, and such that
for all primes $\fq\lhd A$, $\fq\neq \fp$, the following relations hold:
$$
\Tr\rho_{_\fM}(\Frob_\fq)=T_\fq\ \mod\ \fM,\quad
\det\rho_{_\fM}(\Frob_\fq)=|\fq|\ \mod\ \fM.
$$
The existence of such residual representations for $G_\Q$ 
is well-known. The corresponding statement over $F$ can be proved 
along the same lines (cf. \cite[Prop. 2.6]{PapikianMRL}); 
this relies on Drinfeld's fundamental results in \cite{Drinfeld}. If $\fM\supset \fI(\fp)$, then 
$T_\fq\equiv (1+|\fq|)\ (\mod\ \fM)$ for all $\fq\in S$. 
In view of the Chebotarev density and the Brauer-Nesbitt theorems, we conclude that 
$\rho_{_\fM}$ is the direct sum $\mathbf{1}\oplus \chi_\ell$ of the 
trivial and cyclotomic characters. 
But this means that $T_\fq\equiv (1+|\fq|)\ (\mod\ \fM)$ for all $\fq\neq \fp$, and  
therefore $\fM$ is Eisenstein. Now it suffices to show that 
$\fI(\fp)\otimes R \subseteq \fE(\fp)\otimes R$ 
is an equality in the completion $(\T(\fp)\otimes R)_\fM$ 
at any maximal Eisenstein $\fM$ of residue characteristic $\neq p$. (Recall that 
$p$ is invertible in $R$.)

A consequence of the proof of Theorem \ref{thm2.14} in 
\cite{Analytical}, the argument in the proof of Theorem \ref{thmAL}, 
and the fact that $\cH_{0}(1, R)=0$ is that $\T(\fp)^0\otimes R=\T(\fp)\otimes R$. 
On the other hand, 
by Theorem 5.13 in \cite{PalIJNT}, the ideal $\fE(\fp)_\fM$ in $\T(\fp)^0_\fM$
is principal, generated by $T_\fq-|\fq|-1$ for any ``good'' prime $\fq$. 
Since the density of these good primes is positive (see \cite[p. 186]{Pal}), there is a good prime in $S$.  
Therefore $(\fI(\fp)\otimes R)_\fM$ contains a generator of $(\fE(\fp)\otimes R)_\fM$ and must be equal to it. 
\end{proof}

Fix two distinct primes $\fp$ and $\fq$. 
Set
$$\nu(\fp,\fq) = \begin{cases} 1, & \text{ if $\deg (\fp)$ or $\deg (\fq)$ is even,} \\ q+1, & \text{ otherwise.} \end{cases}$$
Let $E_{(\fp,\fq)} \in \cE(\fp\fq,\Z)$ be the Eisenstein series defined by 
$$E_{(\fp,\fq)}\left(\begin{pmatrix} \pi_{\infty}^k & u \\0 & 1\end{pmatrix}\right) =
\nu(\fp,\fq)\cdot q^{-k+1} \cdot \left[
\frac{(1-|\fp|)(1-|\fq|)}{1-q^2} + \sum_{0 \neq m \in A, \atop \deg(m) \leq k-2} \sigma_{\fp\fq}'(m) \eta(mu)\right],$$
where
$$\sigma_{\fn}'(m):= \sum_{\text{monic } m' \in A, \ (m',\fn) = 1, \atop \text{ and } m' \mid m} |m'|.$$
It is clear that $E_{(\fp,\fq)} = E_{(\fq,\fp)}$, and comparing the Fourier expansions we get
\begin{eqnarray}\label{eqn3.1}
\frac{\nu(\fp)}{\nu(\fp,\fq)} \cdot E_{(\fp,\fq)} &=&  \Big(E_{\fp} - E_{\fp}|B_{\fq}\Big).
\end{eqnarray}

\begin{lem}\label{lem3.2}\hfill
\begin{enumerate}
\item Viewing $E_{\fp}$ as a harmonic cochain in $\cH(\fp\fq,\Z)$, we get
$$E_{\fp}|U_{\fp} = E_{\fp} = - E_{\fp}|W_{\fp} \quad \text{ and } \quad E_{\fp}|W_{\fq} = E_{\fp}|B_{\fq}.$$
\item $E_{(\fp,\fq)} = E_{(\fp,\fq)}|U_{\fp} = E_{(\fp,\fq)}|U_{\fq} = E_{(\fp,\fq)}|W_{\fp\fq} = - E_{(\fp,\fq)}|W_{\fp} = - E_{(\fp,\fq)}|W_{\fq}.$
\end{enumerate}
\end{lem}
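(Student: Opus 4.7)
The plan is to prove Part (1) first and then derive Part (2) by combining (1) with the identity (\ref{eqn3.1}), Proposition \ref{prop3lem}, Lemma \ref{lem2.15}, and the multiplication rule (\ref{eqWs}) for Atkin-Lehner involutions.

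For Part (1) the three identities are handled separately. The identity $E_\fp|W_\fq = E_\fp|B_\fq$ is purely formal: applying $W_\fq$ to both sides, the left side becomes $E_\fp$ since $W_\fq^2 = \id$, while by Proposition \ref{prop3lem}(1) (with $\fm = \fq$ coprime to $\fp$) the right side $(E_\fp|B_\fq)|W_\fq$ also equals $E_\fp$, and invertibility of $W_\fq$ then forces the claim. The identity $E_\fp|W_\fp = -E_\fp$ is first proved at level $\fp$ from the defining formula $E_\fp = \frac{\nu(\fp)}{(q-1)(q^2-1)} r(\Delta/\Delta_\fp)$: using the modular transformation $\Delta(-1/z) = z^{q^2-1}\Delta(z)$ (valid since $\Delta$ is of weight $q^2-1$ and type $0$ for $\GL_2(A)$) together with the representative $W_\fp = \bigl(\begin{smallmatrix}0 & -1 \\ \fp & 0\end{smallmatrix}\bigr)$, a short computation gives $(\Delta/\Delta_\fp)|W_\fp = \fp^{q^2-1}\cdot (\Delta/\Delta_\fp)^{-1}$; since the constant $\fp^{q^2-1} \in \C_\infty^\times$ lies in $\ker r$ by (\ref{eqvdPut}) and $r$ is additive, one deduces $E_\fp|W_\fp = -E_\fp$ at level $\fp$. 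Passage to level $\fp\fq$ is a short matrix exercise: by Bezout, any AL representative at level $\fp\fq$ can be written as $\gamma \cdot W_\fp^{(\fp)}$ for some $\gamma \in \G_0(\fp)$, and $\G_0(\fp)$-invariance of $E_\fp$ ensures the two actions agree. Finally, for $E_\fp|U_\fp = E_\fp$: Proposition \ref{prop3lem}(3) applied at level $\fp$ with $\fq = \fp$ gives $E_\fp|(U_\fp+W_\fp) \in \cH(1,R)$, and combined with $E_\fp|W_\fp = -E_\fp$ this yields $E_\fp|U_\fp - E_\fp \in \cH(1,R) \cong R[q+1]$ (see Example \ref{example1}); vanishing of this $\cH(1,R)$-component is then verified by a direct Fourier coefficient comparison via Lemma \ref{thm3.3} and the explicit expansion of $E_\fp$.

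For Part (2), set $c = \nu(\fp)/\nu(\fp,\fq)$, so that (\ref{eqn3.1}) reads $c\,E_{(\fp,\fq)} = E_\fp - E_\fp|B_\fq$. The second formula of Lemma \ref{lem2.15} together with (1) gives $(E_\fp|B_\fq)|U_\fp = (E_\fp|U_\fp)|B_\fq = E_\fp|B_\fq$, whence $c\,E_{(\fp,\fq)}|U_\fp = c\,E_{(\fp,\fq)}$. For $U_\fq$: since the matrix sum defining $T_\fq$ at level $\fp$ decomposes into the sum defining $U_\fq$ at level $\fp\fq$ plus the single matrix $B_\fq$, we have $E_\fp|U_\fq = E_\fp|T_\fq - E_\fp|B_\fq = (|\fq|+1)E_\fp - E_\fp|B_\fq$ by the level-$\fp$ Eisenstein property, while the first formula of Lemma \ref{lem2.15} gives $(E_\fp|B_\fq)|U_\fq = |\fq|\,E_\fp$; subtracting yields $c\,E_{(\fp,\fq)}|U_\fq = c\,E_{(\fp,\fq)}$. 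For the Atkin-Lehner involutions, Proposition \ref{prop3lem}(2) together with (1) gives $(E_\fp|B_\fq)|W_\fp = (E_\fp|W_\fp)|B_\fq = -E_\fp|B_\fq$, producing $E_{(\fp,\fq)}|W_\fp = -E_{(\fp,\fq)}$; Proposition \ref{prop3lem}(1) combined with $E_\fp|W_\fq = E_\fp|B_\fq$ from (1) gives $E_{(\fp,\fq)}|W_\fq = -E_{(\fp,\fq)}$. Finally, (\ref{eqWs}) with $\fm_1=\fp$, $\fm_2=\fq$ gives $W_{\fp\fq} = W_\fp W_\fq$, and therefore $E_{(\fp,\fq)}|W_{\fp\fq} = (-1)(-1)\,E_{(\fp,\fq)} = E_{(\fp,\fq)}$.

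The main obstacle is the final step of (1): once $E_\fp|U_\fp - E_\fp$ is known to lie in $\cH(1,R)$, showing that it is identically zero requires matching an appropriate Fourier coefficient on both sides and handling the normalization conventions in Lemma \ref{thm3.3} and in the expansion of $E_\fp$ with some care. The other ingredients — the explicit transformation of $\Delta/\Delta_\fp$ under $W_\fp$, the Bezout descent of the level-$\fp\fq$ Atkin-Lehner representatives to level $\fp$, and the algebraic manipulations in Part (2) — all reduce to short, mechanical computations.
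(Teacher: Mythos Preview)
Your argument is correct, and far more detailed than the paper's own proof, which consists of the single sentence ``The proof is straightforward.'' So there is nothing substantive to compare against.

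One remark: your stated ``main obstacle'' is not actually an obstacle. The lemma is formulated over $\Z$ (note the phrase ``a harmonic cochain in $\cH(\fp\fq,\Z)$''), and by Example~\ref{example1} we have $\cH(1,\Z) = \Z[q+1] = 0$ since $\Z$ is torsion-free. Hence once you know $E_\fp|U_\fp - E_\fp \in \cH(1,\Z)$, you are done immediately --- no Fourier coefficient comparison is needed. Your caution there would only be relevant over a ring $R$ with nontrivial $(q+1)$-torsion. Similarly, in Part~(2) the passage from $c\,E_{(\fp,\fq)}|U_\fp = c\,E_{(\fp,\fq)}$ to $E_{(\fp,\fq)}|U_\fp = E_{(\fp,\fq)}$ is immediate over $\Z$ since multiplication by the nonzero integer $c = \nu(\fp)/\nu(\fp,\fq)$ is injective.
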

\begin{proof}
The proof is straightforward.
\end{proof}

Let $\cE'(\fp\fq,R)$ be the $R$-submodule of $\cE(\fp\fq,R)$  spanned by 
$E_{\fp}$, $E_{\fq}$ and $E_{(\fp,\fq)}$, i.e., $\cE'(\fp\fq,R)= R E_{\fp} + R E_{\fq} + R E_{(\fp,\fq)}$. 
Denote 
$$
\cE_{0}'(\fp\fq,R)= \cE'(\fp\fq,R) \cap \cH_{0}(\fp\fq,R) \quad 
\text{and}\quad \cE_{00}'(\fp\fq,R)= \cE'(\fp\fq,R) \cap \cH_{00}(\fp\fq,R). 
$$

\begin{thm} \label{thm3.9} The following holds:
\begin{enumerate}
\item If $\nu(\fp,\fq)$ is invertible in $R$, then $\cE'(\fp\fq,R)$ is a free $R$-module of rank $3$.
\item If $q-1$ and $s_{\fp,\fq} = \gcd(|\fp|+1,|\fq|+1)$ are both invertible in $R$, then 
$$\cE_{00}(\fp\fq,R) = \cE_{00}'(\fp\fq,R).$$ 
\end{enumerate}
\end{thm}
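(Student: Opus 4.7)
The plan is to handle (1) via a Fourier-coefficient comparison together with the constant term at the cusp $[\infty]$, and to deduce (2) by combining (1) with the Atkin--Lehner reduction of Theorem \ref{thmAL} and P\'al's prime-level structure theorem for Eisenstein harmonic cochains.

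For (1), I would test a candidate $R$-linear relation $aE_\fp+bE_\fq+cE_{(\fp,\fq)}=0$ against Fourier coefficients. The Fourier expansion of each $E_\fn$ gives the formula $E_\fn^{\ast}(\fm)=\nu(\fn)\sigma_\fn(m)/(q|m|)$ at a finite ideal $\fm=(m)$; evaluating at $\fm\in\{A,\fp,\fq\}$ and invoking $|\fp|,|\fq|\in R^\times$, successive subtractions give $a\,\nu(\fp)=b\,\nu(\fq)=c\,\nu(\fp,\fq)=0$ in $R$. The hypothesis $\nu(\fp,\fq)\in R^\times$ kills $c$. When $\deg\fp,\deg\fq$ are both even, all three $\nu$'s equal $1$ and we are done; when both are odd, all three equal $q+1=\nu(\fp,\fq)\in R^\times$, so $a=b=0$. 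In the mixed-parity case (say $\deg\fp$ odd and $\deg\fq$ even), $\nu(\fq)=1$ gives $b=0$, but $a(q+1)=0$ is not enough alone. The missing constraint comes from the $[\infty]$ constant term: computing $E_\fn^{0}(1)=qN(\fn)$ and $E_{(\fp,\fq)}^{0}(1)=-qN(\fp,\fq)$ with $N(\fn):=\nu(\fn)(|\fn|-1)/(q^2-1)$, and substituting $b=c=0$, yields $aN(\fp)=0$. Since $\deg\fp$ is odd, $N(\fp)=(|\fp|-1)/(q-1)\equiv 1\pmod{q+1}$, so $\gcd(\nu(\fp),N(\fp))=1$ in $\Z$, and a B\'ezout identity propagated to $R$ forces $a=0$. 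The other mixed-parity case is symmetric.

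For (2), let $\varphi\in\cE_{00}(\fp\fq,R)$. Because every Eisenstein cochain takes Fourier coefficients of the form $C\cdot\sigma(m)/(q|m|)$ at ideals coprime to $\fp\fq$ for a single $C\in R$, and each of the three generators of $\cE'(\fp\fq,R)$ has this same shape, subtracting an $R$-multiple of $E_{(\fp,\fq)}$ (available because $\nu(\fp,\fq)\mid s_{\fp,\fq}$ lies in $R^\times$) reduces to the case $\varphi^{\ast}(\fm)=0$ for every $\fm$ coprime to $\fp\fq$. Theorem \ref{thmAL} then produces $\psi_1\in\cH(\fq,R)$ and $\psi_2\in\cH(\fp,R)$ with $s_{\fp,\fq}\,\varphi=\psi_1|B_\fp+\psi_2|B_\fq$, and inverting $s_{\fp,\fq}$ realises $\varphi$ as an $R$-combination of $\psi_1|B_\fp$ and $\psi_2|B_\fq$.

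To finish, I would upgrade $\psi_1,\psi_2$ to Eisenstein cochains at levels $\fq$ and $\fp$ by commuting $T_\fr-(|\fr|+1)$ (for a prime $\fr\nmid\fp\fq$) past $B_\fp,B_\fq$ via Lemma \ref{lem2.15}, so that the Eisenstein property of $\varphi$ becomes $\bigl(\psi_1|(T_\fr-|\fr|-1)\bigr)|B_\fp+\bigl(\psi_2|(T_\fr-|\fr|-1)\bigr)|B_\fq=0$; separating the two summands is the delicate step. The separation hinges on injectivity of the cuspidal level-raising map $\cH_{00}(\fq,R)\oplus\cH_{00}(\fp,R)\to\cH_{00}(\fp\fq,R)$, $(\alpha,\beta)\mapsto\alpha|B_\fp+\beta|B_\fq$, which is where the hypothesis $q-1\in R^\times$ enters: via Lemma \ref{lem1.3}(2) it kills the level-$A$ overlap in the cuspidal part. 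Once $\psi_1,\psi_2$ are known to be Eisenstein at prime level, P\'al's structure theorem writes $\psi_1\in R E_\fq$ and $\psi_2\in R E_\fp$, and the identities (\ref{eqn3.1}) and its $(\fp\leftrightarrow\fq)$ analogue place $E_\fp|B_\fq,E_\fq|B_\fp$ in $\cE'(\fp\fq,R)$, so that $\varphi\in\cE'(\fp\fq,R)\cap\cH_{00}(\fp\fq,R)=\cE'_{00}(\fp\fq,R)$. The main obstacle I anticipate is precisely that summand-separation step in (2); the $q-1\in R^\times$ hypothesis is used substantively there.
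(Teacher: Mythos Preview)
Your argument for part (1) is correct and offers a valid alternative. The paper instead changes basis to $\{E_\fp|B_\fq,\,E_\fq|B_\fp,\,E_{(\fp,\fq)}\}$ via (\ref{eqn3.1}); then $f^\ast(1)$ isolates $c$, $f^\ast(\fq)$ isolates $a$, and primitivity of $E_\fq$ (not a Fourier-coefficient argument) kills $b$, with the parity split absorbed into a ``without loss of generality'' choice. Your approach works directly in the original basis at the cost of an extra constant-term computation and a B\'ezout step; both routes are short.

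Part (2) has a genuine gap. You invoke only the \emph{statement} of Theorem~\ref{thmAL}, obtaining abstract $\psi_1\in\cH(\fq,R)$, $\psi_2\in\cH(\fp,R)$, and then attempt to deduce that each $\psi_i$ is Eisenstein via a separation argument for the level-raising map. But the cochains $\psi_i$ produced by Theorem~\ref{thmAL} are not cuspidal in general: indeed, the paper explicitly computes that $\psi_2^0(1)\neq 0$. Hence $\psi_i|(T_\fr-|\fr|-1)$ need not lie in $\cH_{00}$, so your proposed injectivity of the cuspidal level-raising map $\cH_{00}(\fq,R)\oplus\cH_{00}(\fp,R)\to\cH_{00}(\fp\fq,R)$ is not applicable (and in any case you have not proved that injectivity; the remark about Lemma~\ref{lem1.3}(2) ``killing the level-$A$ overlap'' is beside the point, since $\cH_{00}(1,R)=0$ for every $R$). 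Even granting the Eisenstein property, P\'al's structure theorem describes only $\cE_0(\fp,R)$, so it does not place a non-cuspidal $\psi_2$ inside $RE_\fp$.

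The paper sidesteps all of this by using the \emph{explicit formulas} for $\psi_1,\psi_2$ from the proof of Theorem~\ref{thmAL}. With $\psi_1=f|(U_\fp+W_\fp)$ one has $\psi_1\in\cH_{00}(\fq,R)$ immediately (and the Eisenstein property follows from Lemma~\ref{lem3.2}(2) and the commutation of $U_\fp,W_\fp$ with $T_{\fp'}$). For $\psi_2$, the explicit formula lets one compute $\psi_2^0(1)$ and correct by a multiple of $E_\fp$ to obtain $\psi_2'\in\cH_0(\fp,R)=\cH_{00}(\fp,R)$; it is here, and only here, that the hypothesis $q-1\in R^\times$ enters, via Lemma~\ref{lem1.3}. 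No separation argument is needed: the Eisenstein property of $\psi_1,\psi_2'$ is verified directly, Lemma~\ref{prop3.1} upgrades ``Eisenstein away from $\fp,\fq$'' to ``Eisenstein'', and then P\'al applies. To repair your argument you should open the proof of Theorem~\ref{thmAL} and carry the explicit $\psi_i$ through.
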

\begin{proof}
By (\ref{eqn3.1}), $E_{\fp}|B_{\fq}$ and $E_{\fq}|B_{\fp}$ are both in $\cE'(\fp\fq,R)$, and
$$\cE'(\fp\fq,R) = R (E_{\fp}|B_{\fq}) + R(E_{\fq}|B_{\fp}) + RE_{(\fp,\fq)}.$$
Suppose $f = a E_{\fp}|B_{\fq} + b E_{\fq}|B_{\fp} + c E_{(\fp,\fq)} = 0$.
Then $f^*(1) = c E_{(\fp,\fq)}^*(1) = c \cdot \nu(\fp,\fq) = 0$.
Since $\nu(\fp,\fq)$ is invertible in $R$ under our assumption, we get $c$ = 0. 
Without loss of generality we can assume that either 
$\deg(\fp)$ is even, or $\deg(\fp)$ and $\deg(\fq)$ are both odd. 
Now $E_{\fp}^*(1) = (E_{\fp}|B_{\fq})^*(\fq) = \nu(\fp)$ is invertible in $R$.
Therefore from $f^*(\fq) = a \cdot \nu(\fp) = 0$ we get $a = 0$.
From the fact that $E_{\fp}$ is primitive, we have $b=0$. The proof of Part (1) is complete.

To prove Part (2), it suffices to show that $\cE_{00}(\fp\fq,R)$ is contained in $\cE'(\fp\fq,R)$. 
Given an Eisenstein harmonic cochain $f \in \cE_{00}(\fp\fq,R)$, let $$f_1:= f - f^*(1) \nu(\fp,\fq)^{-1} E_{(\fp,\fq)} \in \cE(\fp\fq,R).$$
Then for every ideal $\fm \lhd A$, $f_1^*(\fm) = 0$ unless $\fp$ or $\fq$ divides $\fm$.
By the method of Section \ref{sAL}, $(|\fp|+1) f_1 = \psi_1|B_{\fp} + \psi_2|B_{\fq}$, where $\psi_1 \in \cH(\fq,R)$ and $\psi_2 \in \cH(\fp,R)$. 
Moreover, by the proof of Theorem \ref{thmAL}, we can take 
$$\psi_1 = f_1|(U_{\fp}+W_{\fp}) \quad \text{ and } \quad \psi_2 = |\fq|^{-1} \Big[(|\fp|+1) f_1|U_{\fq} -
f_1|U_{\fq}(U_{\fp}+W_{\fp})B_{\fp}\Big].$$
Since Lemma \ref{lem3.2} (2) implies that $E_{(\fp,\fq)}|(U_{\fp}+W_{\fp}) = 0$, we get
$\psi_1 = f|(U_{\fp}+W_{\fp}) \in \cH_{00}(\fq,R)$ and 
\begin{eqnarray}
\psi_2 &=& |\fq|^{-1} \Big[(|\fp|+1) f|U_{\fq} -
f|U_{\fq}(U_{\fp}+W_{\fp})B_{\fp}\Big]  \nonumber \\
& & - |\fq|^{-1}(|\fp|+1) f^*(1) \nu(\fp,\fq)^{-1} E_{(\fp,\fq)} \in \cH(\fq,R). \nonumber
\end{eqnarray}
The constant term $\psi_2^0(1)$ is equal to
$$-|\fq|^{-1}(|\fp|+1)f^*(1) \cdot \frac{(1-|\fp|)(1-|\fq|)}{q(1-q^2)} = -|\fq|^{-1}(|\fp|+1)f^*(1) \cdot \frac{1-|\fq|}{\nu(\fp)} E_{\fp}^0(1)$$
and $(\psi_2|W_{\fp})^0(1) = -\psi_2^0(1)$.
Let $$\psi_2':= \psi_2 + |\fq|^{-1}(|\fp|+1)f^*(1) \cdot \frac{1-|\fq|}{\nu(\fp)} E_{\fp} \in \cH(\fp,R).$$
Since $q-1$ is invertible in $R$ by assumption, Lemma \ref{lem1.3} and \ref{lem2.12} show that 
$$\psi_2' \in \cH_0(\fp,R) = \cH_{00}(\fp,R).$$
Note that for every prime $\fp' \lhd A$ different from $\fp$ and $\fq$, we have
$$ \psi_1|T_{\fp'} = (|\fp'|+1)\psi_1 \quad \text{ and } \quad \psi_2'|T_{\fp'} = (|\fp'|+1)\psi_2'.$$
Lemma \ref{prop3.1} implies that $\psi_1 \in \cE_{00}(\fq,R)$ and $\psi_2' \in \cE_{00}(\fp,R)$.
By Theorem 6.6 in \cite{Pal}, we can find $a_1, \ a_2 \in R$  such that $\psi_1 = a_1 E_{\fq}$ 
and $\psi_2' = a_2 E_{\fp}$ (as $2$ is invertible in $R$ by our assumption). We conclude that
\begin{align*}
(|\fp|+1)f & = \psi_1|B_{\fp} + \psi_2|B_{\fq} + f^*(1) \nu(\fp,\fq)^{-1} E_{(\fp,\fq)}\\
&= a_2 E_{\fq}|B_{\fp} \begin{aligned}[t]& + \Big(a_2-|\fq|^{-1}(|\fp|+1)f^*(1) 
\cdot \frac{1-|\fq|}{\nu(\fp)} \Big)E_{\fp}|B_{\fq}  \\ 
&+ f^*(1) \nu(\fp,\fq)^{-1} E_{(\fp,\fq)} 
\end{aligned}
\end{align*}
is in $\cE'(\fp\fq,R)$.
Similarly, we also get $(|\fq|+1)f \in \cE'(\fp\fq,R)$ by interchanging $\fp$ and $\fq$.
Since $s_{\fp,\fq}$ is invertible in $R$, $f$ must be in $\cE'(\fp\fq,R)$, which completes the proof of Part (2).
\end{proof}

\begin{lem}\label{lem3.4} The following holds:  
\begin{enumerate}
\item Suppose $\nu(\fp,\fq)$ is invertible in $R$.
The $R$-module $\cE'_0(\fp\fq,R)$ is torsion and isomorphic to the submodule of $R^3$ consisting of elements
$(a,b,c)$ with
\begin{eqnarray}
&& \frac{(|\fp|-1)(|\fq|+1)}{q^2-1}\nu(\fp) a = 0, \nonumber \\
&& \frac{(|\fp|+1)(|\fq|-1)}{q^2-1}\nu(\fq) b = 0, \nonumber \\
&& \frac{1-|\fp|}{1-q^2} \nu(\fp) a + \frac{1-|\fq|}{1-q^2}\nu(\fq) b + \frac{(1-|\fp|)(1-|\fq|)}{1-q^2}\nu(\fp,\fq) c = 0. \nonumber
\end{eqnarray}
\item Suppose further that $2$ is also invertible in $R$. Then $\cE'_0(\fp\fq,R)$ is isomorphic to 
$$R\left[\frac{(|\fp|-1)(|\fq|+1)}{q^2-1} 
\nu(\fp)\right] \oplus R\left[\frac{(|\fp|+1)(|\fq|-1)}{q^2-1}\nu(\fq)\right] \oplus R\left[\frac{(|\fp|-1)(|\fq|-1)}{q^2-1}\right].$$
\end{enumerate}
\end{lem}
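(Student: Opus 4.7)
The plan is to reduce both parts to linear algebra on the free rank-$3$ $R$-module $\cE'(\fp\fq,R)$ guaranteed by Theorem~\ref{thm3.9}(1), using the characterization of cuspidality in Lemma~\ref{lem2.12} together with the Atkin-Lehner formulas from Lemma~\ref{lem3.2} and Proposition~\ref{prop3lem}. Writing $f = aE_\fp + bE_\fq + cE_{(\fp,\fq)}$ uniquely, Lemma~\ref{lem2.12} reduces cuspidality to $(f|W)^0(1) = 0$ for each of the four elements $W \in \W = \{1, W_\fp, W_\fq, W_{\fp\fq}\}$.

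For part (1), I would compute these four linear constraints directly. Lemma~\ref{lem3.2}(1) gives $E_\fp|W_\fp = -E_\fp$ and $E_\fp|W_\fq = E_\fp|B_\fq$, hence $E_\fp|W_{\fp\fq} = -E_\fp|B_\fq$ via (\ref{eqWs}), and symmetrically for $E_\fq$; $E_{(\fp,\fq)}$ is a $\pm 1$-eigenvector for every $W$ by Lemma~\ref{lem3.2}(2). Lemma~\ref{lem_new14} gives $(E_\fp|B_\fq)^0(1) = |\fq|\,E_\fp^0(1)$, and the Fourier formulas of Section~\ref{ssES} evaluate $E_\fp^0(1), E_\fq^0(1), E_{(\fp,\fq)}^0(1)$ explicitly. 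The four equations obtained from the four $W \in \W$ sum to zero identically, so only three are independent; after dividing out the unit $q$, the $W = 1$ equation gives condition~3 of the lemma, while $(W=1)+(W=W_\fq)$ and $(W=1)+(W=W_\fp)$ yield conditions~1 and~2 respectively.

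For part (2), the extra assumption $2 \in R^\times$ supplies an Atkin-Lehner eigenspace decomposition. Since (\ref{eqn3.1}) and Lemma~\ref{lem3.2} place $E_\fp|B_\fq$ and $E_\fq|B_\fp$ back inside $\cE'(\fp\fq,R)$, the module carries a well-defined $\W$-action. I would introduce
$$X_+ := E_\fp + E_\fp|B_\fq, \qquad Y_+ := E_\fq + E_\fq|B_\fp,$$
and check via Proposition~\ref{prop3lem}(1)(2) and Lemma~\ref{lem3.2}(1) that $X_+, Y_+, E_{(\fp,\fq)}$ are joint $(W_\fp, W_\fq)$-eigenvectors with distinct characters $(-,+), (+,-), (-,-)$, and that their transition matrix from $(E_\fp, E_\fq, E_{(\fp,\fq)})$ has determinant $4$. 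With $2$ invertible they form an $R$-basis, and writing $f = \alpha X_+ + \beta Y_+ + \gamma E_{(\fp,\fq)}$, the four cuspidality conditions decouple, upon pairwise addition and division by $2$, into
$$\alpha \cdot X_+^0(1) = 0, \qquad \beta \cdot Y_+^0(1) = 0, \qquad \gamma \cdot E_{(\fp,\fq)}^0(1) = 0.$$
A short computation with Lemma~\ref{lem_new14} evaluates $X_+^0(1), Y_+^0(1), E_{(\fp,\fq)}^0(1)$, which, up to the units $q$ and (for the third) $\nu(\fp,\fq)$, equal $(|\fp|-1)(|\fq|+1)\nu(\fp)/(q^2-1)$, $(|\fp|+1)(|\fq|-1)\nu(\fq)/(q^2-1)$, and $(|\fp|-1)(|\fq|-1)/(q^2-1)$ respectively. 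The annihilators of these three elements are precisely the cyclic summands in the claimed direct sum.

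The main obstacle is part (2): the transition matrix has determinant $4 = 2^2$, so both the independence of $X_+, Y_+, E_{(\fp,\fq)}$ and the pairwise decoupling of the four cuspidality equations genuinely require $2 \in R^\times$. Without this hypothesis, the $\W$-eigenspaces overlap modulo $2$-torsion and the solution module no longer splits cleanly; this is the precise point where the extra assumption is used. Part (1) by contrast is essentially bookkeeping, but must be executed carefully to verify that the four constant-term equations have rank exactly three.
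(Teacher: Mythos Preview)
Your argument is correct. Part~(1) is essentially identical to the paper's proof: both write $f = aE_\fp + bE_\fq + cE_{(\fp,\fq)}$, apply Lemma~\ref{lem2.12} to obtain four linear conditions, observe that they sum to zero, and reduce to the three conditions in the statement by the same pairwise additions.

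For Part~(2) you take a genuinely different, more conceptual route. The paper does not pass to an Atkin--Lehner eigenbasis; instead it simply adds the four equations pairwise to obtain a third decoupled condition $\frac{(|\fp|-1)(|\fq|-1)}{q^2-1}\bigl(\nu(\fp)a+\nu(\fq)b+2\nu(\fp,\fq)c\bigr)=0$, and then performs the single change of variable $E':=\nu(\fp)E_\fp+\nu(\fq)E_\fq+2\nu(\fp,\fq)E_{(\fp,\fq)}$ (whose transition matrix from $(E_\fp,E_\fq,E_{(\fp,\fq)})$ has determinant $2\nu(\fp,\fq)$, a unit) so that the three conditions visibly cut out the claimed direct sum in the coordinates $(a,b,c')$. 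Your approach instead builds the $\W$-eigenvectors $X_+,Y_+,E_{(\fp,\fq)}$ with pairwise distinct characters, so the decoupling of the four cuspidality conditions is automatic from orthogonality of characters rather than discovered by inspection. The price is a determinant $4$ rather than $2\nu(\fp,\fq)$, but since both require $2\in R^\times$ this costs nothing. Your version explains structurally \emph{why} the system diagonalizes, while the paper's version is a line shorter; both are perfectly valid.
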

\begin{proof}
Let $f = a E_{\fp} + b E_{\fq} + c E_{(\fp,\fq)}$ with $a,\ b,\ c \in R$. By Lemma \ref{lem2.12}, $f \in \cE'_0(\fp\fq,R)$ if and only if 
$$f^0(1) = (f|W_{\fp})^0(1) = (f|W_{\fq})^0(1) = (f|W_{\fp\fq})^0(1) = 0.$$ This gives us the following equations:
\begin{eqnarray}\label{eqn3.2}
\frac{1-|\fp|}{1-q^2} \nu(\fp) a + \frac{1-|\fq|}{1-q^2}\nu(\fq) b + \frac{(1-|\fp|)(1-|\fq|)}{1-q^2}\nu(\fp,\fq) c & = & 0, \\ \label{eqn3.3}
-\frac{1-|\fp|}{1-q^2} \nu(\fp) a + |\fp|\frac{1-|\fq|}{1-q^2}\nu(\fq) b - \frac{(1-|\fp|)(1-|\fq|)}{1-q^2}\nu(\fp,\fq) c & = & 0, \\ \label{eqn3.4}
|\fq| \frac{1-|\fp|}{1-q^2} \nu(\fp) a - \frac{1-|\fq|}{1-q^2}\nu(\fq) b - \frac{(1-|\fp|)(1-|\fq|)}{1-q^2}\nu(\fp,\fq) c & = & 0, \\ \label{eqn3.5}
-|\fq|\frac{1-|\fp|}{1-q^2} \nu(\fp) a -|\fp| \frac{1-|\fq|}{1-q^2}\nu(\fq) b + \frac{(1-|\fp|)(1-|\fq|)}{1-q^2}\nu(\fp,\fq) c & = & 0. 
\end{eqnarray}

We remark that $$\text{Equation } (\ref{eqn3.5})= -\Big(\text{ Equation } (\ref{eqn3.2}) + (\ref{eqn3.3}) + (\ref{eqn3.4})\Big).$$
Considering the equation (\ref{eqn3.2})+(\ref{eqn3.3}), (\ref{eqn3.2})+(\ref{eqn3.4}), and (\ref{eqn3.2})+(\ref{eqn3.5}), we get
\begin{eqnarray} \label{eqn3.6}
&& \frac{(|\fp|+1)(|\fq|-1)}{q^2-1}\nu(\fq) b = 0, \\ \label{eqn3.7}
&& \frac{(|\fp|-1)(|\fq|+1)}{q^2-1}\nu(\fp) a = 0, \\ \label{eqn3.8}
&& \frac{(|\fp|-1)(|\fq|-1)}{q^2-1} \big(\nu(\fp)a+\nu(\fq)b+ 2\nu(\fp,\fq) c\big) = 0. 
\end{eqnarray}
Since the conditions of $(a,b,c)$ described by Equation (\ref{eqn3.2})$\sim$(\ref{eqn3.5}) is 
equivalent to those described by Equation (\ref{eqn3.2}), (\ref{eqn3.6}), and (\ref{eqn3.7}) combined, the proof of Part (1) is complete.

To prove Part (2), note that $2\nu(\fp,\fq)$ is 
invertible in $R$ by assumption. Therefore the conditions of $(a,b,c)$ described by Equation (\ref{eqn3.2})$\sim$(\ref{eqn3.5}) is equivalent to those described by Equation (\ref{eqn3.6})$\sim$(\ref{eqn3.8}).
Let $E':= \nu(\fp)E_{\fp} + \nu(\fq)E_{\fq} + 2\nu(\fp,\fq)E_{(\fp,\fq)}$.
By Theorem \ref{thm3.9} (1), we also have 
$\cE'(\fp\fq,R) = R E_{\fp} \oplus R E_{\fq} \oplus R E'$. Therefore Equation (\ref{eqn3.6})$\sim$(\ref{eqn3.8}) assures the result.
\end{proof}

In fact, when $q-1$ and $s_{\fp,\fq}$ are invertible in the coefficient ring $R$, one can show that 
$\cE_{00}(\fp\fq,R) = \cE'_{0}(\fp\fq,R)$; see Remark \ref{rem7.4}. 

%----------------------------------------------------

\subsection{Special case}\label{sec3.1}
In this subsection we give a concrete description of $\cE_0(xy,R)$ and $\cE_{00}(xy,R)$ for an arbitrary coefficient ring $R$.
Recall that $E_0$ is the function on $E(\sT)$ satisfying
$$E_0\left(\begin{pmatrix} \pi_{\infty}^k & u \\ 0 &1 \end{pmatrix} \right) = - E_0 \left(\begin{pmatrix}  
\pi_{\infty}^k & u \\ 0 &1 \end{pmatrix} \begin{pmatrix} 0 & 1 \\ \pi_{\infty} & 0 \end{pmatrix} \right) = q^{-k}.$$

\begin{lem}\label{lem3.6}
Given $a \in R[q+1]$, we have $aE_0 = -aE_{\fn} \in \cH(1,R)$ if $\deg(\fn)$ is odd.
Moreover, $aE_0|B_{\fm} = (-1)^{\deg(\fm)} aE_0$.
\end{lem}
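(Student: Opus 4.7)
The plan is to compute both sides of each claimed identity directly using the Fourier expansion of $E_\fn$ and the matrix representatives of edges in $E(\sT)^+$, then exploit the relation $aq = -a$ valid for any $a \in R[q+1]$ to collapse the computations.

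For the identity $aE_0 = -aE_\fn$ when $\deg(\fn)$ is odd, I would start from the Fourier expansion
$$
E_{\fn}\left(\begin{pmatrix} \pi_{\infty}^k & y \\ 0 & 1\end{pmatrix}\right) = \nu(\fn)\cdot q^{-k+1} \cdot \left[\frac{1-|\fn|}{1-q^2} + \sum_{0 \neq m, \deg(m) \leq k-2} \sigma_{\fn}(m) \eta(my)\right].
$$
Since $\nu(\fn) = q+1$ when $\deg(\fn)$ is odd, multiplying by $a \in R[q+1]$ kills every non-constant Fourier coefficient (these are $R$-multiples of $\nu(\fn)$). For the constant term, I would use the telescoping
$\nu(\fn)\cdot \tfrac{1-|\fn|}{1-q^2} = \tfrac{1-|\fn|}{1-q} = 1 + q + \cdots + q^{\deg(\fn)-1}$, and then apply $aq^j = (-1)^j a$ to the $\deg(\fn)$ terms. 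Since $\deg(\fn)$ is odd, the alternating sum equals $a$. Hence $aE_\fn$ evaluates to $aq^{-k+1} = -(-1)^k a$ on $\bigl(\begin{smallmatrix}\pi_\infty^k & y \\ 0 & 1\end{smallmatrix}\bigr)$, whereas $aE_0$ evaluates to $aq^{-k} = (-1)^k a$ there. Both are alternating $\G_\infty$-invariant functions, so agreement on $E(\sT)^+$ forces $aE_\fn = -aE_0$ globally; Example~\ref{example1} then confirms both lie in $\cH(1,R)$.

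For the second identity, I would compute $(E_0|B_\fm)$ directly from the left action of $B_\fm = \bigl(\begin{smallmatrix} \fm & 0 \\ 0 & 1 \end{smallmatrix}\bigr)$ on the canonical representative: writing $\fm = c\pi_\infty^{-d}$ with $c \in \cO_\infty^\times$ and $d = \deg(\fm)$, the matrix
$$
B_\fm \begin{pmatrix} \pi_\infty^k & u \\ 0 & 1 \end{pmatrix} = \begin{pmatrix} c\pi_\infty^{k-d} & c u \pi_\infty^{-d} \\ 0 & 1 \end{pmatrix}
$$
becomes, after multiplying by the scalar $c^{-1}\in Z(F_\infty)$ and by $\bigl(\begin{smallmatrix} 1 & 0 \\ 0 & c\end{smallmatrix}\bigr) \in \cI_\infty$ on the right, the representative $\bigl(\begin{smallmatrix}\pi_\infty^{k-d} & c u\pi_\infty^{-d} \\ 0 & 1\end{smallmatrix}\bigr) \in E(\sT)^+$. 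Since the value of $E_0$ depends only on the exponent of $\pi_\infty$, this gives $(E_0|B_\fm) = q^{d} E_0$ as $R$-valued functions (alternation follows since the right action of $\bigl(\begin{smallmatrix} 0 & 1 \\ \pi_\infty & 0\end{smallmatrix}\bigr)$ commutes with the left action of $B_\fm$). Multiplying by $a$ and using $aq^d = (-1)^d a$ yields $aE_0|B_\fm = (-1)^{\deg(\fm)} aE_0$.

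Neither step presents a serious obstacle; the only subtle point is the modular-arithmetic identity $\tfrac{1-|\fn|}{1-q} \equiv 1 \pmod{q+1}$ used to simplify the constant Fourier coefficient, which relies essentially on $\deg(\fn)$ being odd (an even power of $q$ would give an alternating sum of $0$ instead of $1$, and the first claim would fail). One should also be careful that the normalization step in the second claim involves both a scalar and an Iwahori adjustment, so that the resulting edge genuinely lies in $E(\sT)^+$ before reading off the value of $E_0$.
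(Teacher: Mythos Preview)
Your proposal is correct. The paper itself does not spell out an argument here---it simply writes ``The proof is straightforward''---so your direct computation via the Fourier expansion of $E_\fn$ and the explicit action of $B_\fm$ on edge representatives is precisely the kind of verification the authors leave to the reader, and the key reductions you identify (that $a\nu(\fn)=0$ kills the nonconstant coefficients, that $\nu(\fn)\tfrac{1-|\fn|}{1-q^2}=1+q+\cdots+q^{\deg(\fn)-1}$, and that $aq^j=(-1)^ja$) are exactly the points that make the computation collapse.
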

\begin{proof}
The proof is straightforward.
\end{proof}

According to Examples \ref{examplex2} and \ref{exampley2} we have: 
\begin{lem}\label{lem3.7}
$\cH(x,R) = R E_{x}$ and $\cH(y,R) = R E_{y} \oplus R[q+1] E_0$.
\end{lem}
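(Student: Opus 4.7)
The plan is to read off both isomorphisms directly from Examples \ref{examplex}--\ref{exampley2}, which have already done essentially all of the work; what remains is only a small linear-algebra check that $E_x$, respectively $E_y$ and the $aE_0$'s, realize the free generators of the abstract modules computed there.

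For the first statement, Example \ref{examplex} produces an isomorphism $\cH(x,R)\cong R$ by $\varphi\mapsto\varphi(e_0)$ (or any single edge value on the positively oriented spine of $\G_0(x)\bs\sT$, since the harmonicity identity $w(e_i)\varphi(e_i)=w(\bar e_{i+1})\varphi(e_{i+1})$ pins down all other values). So to conclude $\cH(x,R)=RE_x$, I only need to verify that $E_x$ is sent to a unit of $R$ under this isomorphism. This is immediate from the Fourier expansion of $E_x$ given in Section~\ref{ssES}: $E_x^0(1)=\nu(x)=1$ since $\deg(x)=1$ is odd (and one then evaluates $E_x$ on the appropriate $\G_\infty$-coset representative via Lemma \ref{lemFH}). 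In fact Example \ref{examplex2} is written in exactly this way.

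For the second statement, Example \ref{exampley} shows that any $\varphi\in\cH(y,R)$ is determined by the pair $(\alpha,\beta)=(\varphi(e_0),\varphi(e_u))$ subject only to the relation $(q+1)\beta=0$, so it establishes an isomorphism
$$\cH(y,R)\;\xrightarrow{\;\sim\;}\;R\oplus R[q+1],\qquad \varphi\mapsto(\varphi(e_0),\varphi(e_u)).$$
Using the formulas recorded in Example \ref{exampley2}, namely $E_y(e_0)=1$, $E_y(e_u)=0$, and for $a\in R[q+1]$ we have $(aE_0)(e_0)=-a$, $(aE_0)(e_u)=-a$, I compute that under this isomorphism $aE_y+bE_0$ maps to $(a-b,-b)$. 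The $2\times 2$ change-of-basis matrix $\bigl(\begin{smallmatrix}1&-1\\0&-1\end{smallmatrix}\bigr)$ is invertible over $R$, so the map $(a,b)\mapsto aE_y+bE_0$ is an isomorphism $R\oplus R[q+1]\xrightarrow{\sim}\cH(y,R)$. This proves both that the sum is direct and that it exhausts $\cH(y,R)$.

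There is no real obstacle here: the content of the lemma is a bookkeeping consequence of the edge-weight computations for $\G_0(x)\bs\sT$ and $\G_0(y)\bs\sT$ already carried out in the preceding examples, together with the explicit values of $E_x$, $E_y$, and $E_0$. The one small subtlety is checking that $aE_0$ really lies in $\cH(y,R)$ (as opposed to just $\cH(1,R)$), but this is guaranteed by Lemma \ref{lem3.6} (which, applied with $\fm=y$, gives $aE_0|B_y=aE_0$ so $aE_0$ is $\G_0(y)$-invariant because $\G_0(y)$ is generated by $\GL_2(A)$-translates compatible with the matrix $B_y$-twist); alternatively, one just evaluates $aE_0$ on the edges of $\G_0(y)\bs\sT$ and directly verifies the harmonicity conditions (i$'$) and (ii$'$) of Section \ref{sec2.1}.
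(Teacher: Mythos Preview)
Your approach is correct and is exactly the paper's own: the paper's proof is the one-line reference ``According to Examples~\ref{examplex2} and~\ref{exampley2}'', and you have simply unpacked those examples. Two minor slips to fix: first, $\nu(x)=q+1$ (not $1$) since $\deg(x)$ is odd, and in fact $E_x^0(1)=q$ as computed in Example~\ref{examplex2}; this is harmless because $q\in R^\times$ in a coefficient ring. Second, the detour through Lemma~\ref{lem3.6} to place $aE_0$ in $\cH(y,R)$ is unnecessary and slightly garbled --- since $aE_0\in\cH(1,R)$ is $\GL_2(A)$-invariant and $\G_0(y)\subset\GL_2(A)$, the inclusion $\cH(1,R)\subset\cH(y,R)$ is immediate.
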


Note that $\nu(x,y) = \nu(y) = 1$ and $\nu(x) = q+1$. Given $f \in \cE(xy,R)$, let $f' := f - f^*(1) E_{(x,y)}$.
Then $f'^*(\fm) = 0$ unless $x$ or $y$ divides $\fm$. By Theorem \ref{thmAL} and Lemma \ref{lem3.7}, there exists $a, \ b \in R$ and $b' \in R[q+1]$ such that
$$2 f' = (b E_{y} + b' E_0)|B_{x} + aE_{x}|B_{y}.$$
When $q$ is even, $2$ is invertible in $R$ and we have $f \in \cE'(xy,R)$.
Suppose $q$ is odd.
Note that $b'E_0|B_{x} = b'E_{x}$ by Lemma \ref{lem3.6}. Hence
$$2 f = (a+b') E_{x}|B_{y} + bE_{y}|B_{x} + 2 f^*(1) E_{(x,y)} \in \cE'(xy,R).$$
In fact, there exists $a'', \ b'' \in R$ such that $a+b' = 2 a''$ and $b = 2 b''$. Indeed, by Lemma \ref{lem_new14} we have
$$2f^*(x) = b + 2 f^*(1) \text{ and } 
2f^0(\pi_{\infty}) = 2q^{-1} f^0(1) = |y| (a+b') + |x| b + 2 f^*(1) (q-1).$$
Set $f'':= f-\big(a''E_{x}|B_{y} + b'' E_{y}|B_{x} + f^*(1)E_{(x,y)} \big) \in \cE(xy,R)$. Then the above discussion shows that $2f'' =0$.
Take $f''':= f'' - (f'')^0(1)E_0$. Then 
$$f''' \in \cE(xy,R)[2] \quad \text{ and } \quad (f''')^*(1) = (f''')^0(1) = 0.$$
The following lemma shows that $f''' \in \cE'(xy,R)$, which also implies that $f \in \cE'(xy,R)$.

\begin{lem}\label{lem3.8}
Suppose $q$ is odd.
Given $\varphi \in \cE(xy,R)[2]$ with $\varphi^*(1) = \varphi^0(1) = 0$,
%\begin{itemize}
%\item[(1)] $2\varphi = 0$; 
%\item[(2)] $\varphi^*(\mfk) = 0$ for ideal $\mfk$ of $A$ prime to $xy$;
%\item[(3)] $T_{\pfk} \varphi = (|\pfk|+1)\varphi = 0$ for prime $\pfk \nmid xy$;
%\item[(4)] $\varphi^*(0) = 0,$
%\end{itemize}
there exists $\alpha \in R[2]$ such that
$$\varphi = \alpha (E_x + E_y|B_x).$$
\end{lem}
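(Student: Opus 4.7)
The plan is to subtract off an appropriate $R[2]$-multiple of $E_x+E_y|B_x$ and then use the Atkin-Lehner decomposition together with the graph-theoretic description of $\G_0(xy)\bs\sT$ from Proposition \ref{propT(xy)} to force the residual to vanish.

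First, since $q$ is odd, every prime $\fp \nmid xy$ has $|\fp|+1 = q^{\deg\fp}+1$ even, so the 2-torsion of $\varphi$ combined with the Eisenstein relation $\varphi|T_\fp = (|\fp|+1)\varphi$ gives $\varphi|T_\fp = 0$ for every such $\fp$. Iterating the Hecke formula in Lemma \ref{thm3.3} forces $\varphi^\ast(\fm) = 0$ whenever $\fm$ has a prime divisor coprime to $xy$; combined with $\varphi^\ast(1) = 0$ this places $\varphi$ in the hypothesis setting of Theorem \ref{thmAL}.

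Next, I set $\alpha := q\,\varphi^\ast(x)$, which lies in $R[2]$ because $2\varphi^\ast(x) = 0$. Using the Fourier values $E_x^\ast(1) = q+1$, $E_x^0(1) = q$, $(E_y|B_x)^\ast(1) = 0$, $(E_y|B_x)^0(1) = q^2$, $E_x^\ast(x) = (q+1)/q$, and $(E_y|B_x)^\ast(x) = E_y^\ast(1) = 1$ (the last two via Lemma \ref{lem_new14} and the $U_x$-eigenvalue of $E_x$), together with $\alpha(q+1) = 0$, I verify that $\tilde\varphi := \varphi - \alpha(E_x+E_y|B_x) \in \cE(xy,R)[2]$ satisfies $\tilde\varphi^\ast(1) = \tilde\varphi^0(1) = \tilde\varphi^\ast(x) = 0$ and still has Fourier support concentrated on powers of $x$ and $y$.

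I then apply Theorem \ref{thmAL} to $\tilde\varphi$ with $s_{x,y} = 2$ to obtain $0 = 2\tilde\varphi = \tilde\psi_1|B_x + \tilde\psi_2|B_y$, where $\tilde\psi_1 = bE_y + b'E_0 \in \cH(y,R)$ and $\tilde\psi_2 = aE_x \in \cH(x,R)$ by Lemma \ref{lem3.7}. Because $\deg x$ is odd and $b' \in R[q+1]$, Lemma \ref{lem3.6} gives $E_0|B_x = -E_0$ and $b'E_0 = -b'E_x$, so the equation reduces to $bE_y|B_x + b'E_x + aE_x|B_y = 0$. Expanding this in the basis $\{E_x, E_y, E_{(x,y)}\}$ of $\cE'(xy,R)$ via (\ref{eqn3.1}) and invoking the freeness from Theorem \ref{thm3.9}(1), linear independence forces $b = 0$, $b' + a = 0$, and $a(q+1) = 0$.

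Finally, to conclude $a = 0$ (and hence $\tilde\varphi = 0$), I use the graph-theoretic identity $(f|T_{x-\kappa})^\ast(1) = f(b_\kappa)$ from the proof of Proposition \ref{propT(xy)}: for $\kappa \in \F_q^\times$ the prime $x-\kappa$ is coprime to $xy$ (since $y$ has no root in $\F_q$), so the Eisenstein relation $\tilde\varphi|T_{x-\kappa} = (q+1)\tilde\varphi = 0$ yields $\tilde\varphi(b_\kappa) = 0$, and the harmonicity identity $\tilde\varphi(a_1) = \sum_{u\in\F_q}\tilde\varphi(b_u)$ together with $\tilde\varphi(a_1) = -\tilde\varphi^\ast(1) = 0$ forces $\tilde\varphi(b_0) = 0$. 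Propagating this vanishing through the remaining edges of the core graph in Figure \ref{Fig4} using the weights computed in Proposition \ref{propT(xy)} and the harmonicity conditions, and using $\tilde\varphi^0(1) = 0$ to control the cuspidal half-line at $[\infty]$, one concludes that the previously obtained constraint $a(q+1) = 0$ together with the graph information forces $a = 0$, whence $\tilde\varphi = 0$ and $\varphi = \alpha(E_x + E_y|B_x)$. The hardest step is this final extraction of $a = 0$: the Atkin-Lehner method alone only determines $2\tilde\varphi$, so one must combine its output with the explicit graph-theoretic description of $\G_0(xy)\bs\sT$ to rule out the residual freedom.
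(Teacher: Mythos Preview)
Your argument has a genuine gap at the Atkin--Lehner step. Theorem~\ref{thmAL} applied to $\tilde\varphi$ produces a decomposition of $s_{x,y}\tilde\varphi = 2\tilde\varphi$, but $\tilde\varphi$ is $2$-torsion, so this is simply a decomposition of $0$. The constraints $b=0$, $a+b'=0$, $a(q+1)=0$ that you extract are constraints on \emph{some} pair $(\psi_1,\psi_2)$ with $\psi_1|B_x+\psi_2|B_y=0$; they carry no information about $\tilde\varphi$ itself. (In fact $\psi_1=\psi_2=0$ already satisfies the equation.) Consequently your inference ``$a=0$, whence $\tilde\varphi=0$'' is a non sequitur: the quantity $a$ lives in the decomposition of zero, not in $\tilde\varphi$. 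This is precisely why Lemma~\ref{lem3.8} is isolated as a separate statement in the paper --- the Atkin--Lehner method controls only $2\varphi$, and an independent argument is required for the residual $2$-torsion.

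Your final paragraph gestures at the right idea, namely a direct graph-theoretic determination of $\tilde\varphi$, but it is not carried out and has its own issues. The identity $(f|T_{x-\kappa})^\ast(1)=f(b_\kappa)$ in the proof of Proposition~\ref{propT(xy)} was derived for \emph{cuspidal} $f$: the computation uses $f(c_2)=0$, and the reduction $f(a_4)=-\sum_u f(b_u)$ relies on harmonicity at vertices where cusp edges enter. You have not established that $\tilde\varphi$ vanishes on $c_2$ (or on $c_3$, $c_4$), and $\tilde\varphi^0(1)=0$ controls only the cusp $[\infty]$. The paper's proof bypasses the Atkin--Lehner step entirely: starting from $\varphi^\ast(1)=\varphi^0(1)=0$ and the relation $|\fp|\varphi^\ast(\fp\fm)+\varphi^\ast(\fm/\fp)=0$ for $\fp\nmid xy$, it computes the value of $\varphi$ on \emph{every} edge of $(\G_0(xy)\backslash\sT)^0$ directly via the Fourier expansion, shows that harmonicity forces $\varphi^\ast(x^2)=\varphi^\ast(y)=0$, and then matches the resulting pattern against $\alpha(E_x+E_y|B_x)$ with $\alpha=\varphi^\ast(x)$. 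That direct computation is both the method and the substance of the proof; your sketch does not supply it.
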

\begin{proof} We use the notation in the proof of Proposition \ref{propT(xy)}. 
Given $\varphi \in \cE(xy,R)[2]$ with $\varphi^0(1)$ = 0, we have that for each prime $\fp$ with $\fp \nmid x y$,
$$\varphi|T_{\fp} = (|\fp|+1)\varphi = 0.$$
Therefore for every prime $\fp$ with $\fp \nmid xy$ and a non-zero ideal $\fm \lhd A$,
\begin{eqnarray}\label{eqn3.10}
&&|\fp|\varphi^*(\fp \fm) + \varphi^*(\fm/\fp)= (\varphi|T_{\fp})^*(\fm) = 0.
\end{eqnarray}
\begin{comment}
Recall the Fourier expansion of $\varphi$: for $k \in \Z$ and $z \in F_{\infty}$,
\begin{eqnarray}
\varphi\left(\begin{pmatrix} \varpi_x^k& z \\ 0&1\end{pmatrix}\right) &= &
q^{-k} \varphi^0(1)+ 
\sum_{\substack{0\neq m\in A \\ \deg(m)\leq k-2}} \varphi^\ast(\mathrm{div}(m)\cdot \infty^{k-2})\cdot \eta(mz). \nonumber \\
&=& \sum_{\substack{0\neq m\in A \\ \deg(m)\leq k-2}} \varphi^\ast(\mathrm{div}(m)\cdot \infty^{k-2})\cdot \eta(mz). \nonumber
\end{eqnarray}
Note that for $z = \sum_{i} \lambda_i \pi_{\infty}^i \in F_{\infty}$ with $\lambda_i \in \F_q$,
$$\sum_{\epsilon \in \F_q^{\times}} \eta(\epsilon z) = \begin{cases}
q-1, & \text{ if $\lambda_1 = 0$,} \\
-1, & \text{ if $\lambda_1 \neq 0$.}
\end{cases}$$
\end{comment}
By Equation (\ref{eqn3.10}) and the Fourier expansion of $\varphi$, we get
$$\varphi\left(\begin{pmatrix}\varpi_x^k&0\\0&1\end{pmatrix}\right) = 0, \quad \forall k \in \Z,$$
$$\varphi(a_1) = \varphi(a_2) = 0,$$
$$\varphi(b_u) = \varphi(a_4) = \varphi^*(x), \quad \forall u \in \F_q^{\times}, $$
$$\varphi(c_3) = \varphi^*(y), \quad \varphi(a_3) = \varphi^*(x) + \varphi^*(x^2), \quad \varphi(a_6) = \varphi^*(x^2).$$
The harmonicity of $\varphi$ gives us that
$$0 = \varphi(a_4) + \sum_{u \in \F_q^{\times}} \varphi(b_u) - \varphi(a_3) = \varphi^*(x^2),$$
$$0 = \varphi(a_6) + \varphi(c_3) + (1-q) \varphi(a_2) = \varphi^*(x^2) + \varphi^*(y).$$
$$0 = \varphi(a_6) + (q-1)\varphi(a_3) - \varphi(c_4) = \varphi^*(x^2) + \varphi(c_4).$$
Hence
$$\varphi(e) = 0 \quad \text{ for $e = c_1,c_2,c_3,c_4, a_1,a_2,a_5,a_6,$ and }$$
$$\varphi(a_3) = \varphi(a_4) = \varphi(b_u) = \varphi^*(x) \in R[2], \quad \text{ for $u \in \F_q^{\times}$.}$$
On the other hand, for $\alpha \in R[2]$, 
$$\alpha(E_{x}+E_{y}|B_{x}) (e) = 0 \quad \text{ for $e = c_1,c_2,c_3,c_4, a_1,a_2,a_5,a_6,$ and}$$
$$\alpha(E_{x}+E_{y}|B_{x}) (a_3) = \alpha(E_{x}+E_{y}|B_{x})(a_4) = \alpha(E_{x}+E_{y}|B_{x})(b_u) = \alpha, 
\text{ for $u \in \F_q^{\times}$.}$$
Therefore $\varphi = \varphi(a_3) \cdot (E_{x}+E_{y}|B_{x})$ and the proof is complete.
\end{proof}

From the above discussion, we conclude that
\begin{cor}\label{cor3.9}
%When $\deg(\fp) =1$ and $\deg(\fq) =2$, we have
$\cE(xy,R) = \cE'(xy,R)$ for every coefficient ring $R$.
In other words, every Eisenstein harmonic cochain of level $xy$ can be generated by Eisenstein series.
\end{cor}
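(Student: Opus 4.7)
The plan is to bootstrap from the Atkin–Lehner decomposition (Theorem~\ref{thmAL}) applied at level $xy$, combined with the explicit description of harmonic cochains of prime level $x$ and $y$ (Lemma~\ref{lem3.7}) and the special $2$-torsion analysis provided by Lemma~\ref{lem3.8}. Here $s_{x,y}=\gcd(|x|+1,|y|+1)=\gcd(q+1,q^2+1)=\gcd(q+1,2)$, which equals $1$ when $q$ is even and $2$ when $q$ is odd. This is the only obstruction to Theorem~\ref{thmAL} being invertible in $R$, and it dictates the bifurcation in the argument.

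Given any $f\in\cE(xy,R)$, I would first subtract the Eisenstein series $E_{(x,y)}$ scaled by the first Fourier coefficient: set $f':=f-f^*(1)E_{(x,y)}$. Since $E_{(x,y)}^*(1)=\nu(x,y)=1$, the new cochain $f'$ satisfies $(f')^*(1)=0$, and more generally $(f')^*(\fm)=0$ unless $x$ or $y$ divides $\fm$. Then Theorem~\ref{thmAL} produces $\psi_1\in\cH(y,R)$ and $\psi_2\in\cH(x,R)$ with $2f'=\psi_1|B_x+\psi_2|B_y$. By Lemma~\ref{lem3.7}, $\psi_1=bE_y+b'E_0$ for some $b\in R$, $b'\in R[q+1]$, and $\psi_2=aE_x$ for some $a\in R$.

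If $q$ is even, then $2\in R^\times$ (because $p=2$ and $p$ is invertible in any coefficient ring), so we can divide by $2$ and conclude $f\in \cE'(xy,R)$ immediately, using Lemma~\ref{lem3.6} to absorb the $b'E_0$-term (here $\deg(x)=1$ is odd, so $b'E_0|B_x=b'E_x$ after the identification $aE_0=-aE_x$). If $q$ is odd, then the doubling persists, and one must extract genuine halves. I would write $2f=(a+b')E_x|B_y+bE_y|B_x+2f^*(1)E_{(x,y)}$ after applying Lemma~\ref{lem3.6}, then use Fourier coefficients—specifically $2f^*(x)=b+2f^*(1)$ and the value of $2f^0(\pi_\infty)$—to argue that $b=2b''$ and $a+b'=2a''$ for some $a'',b''\in R$ (the point is that the expressions forced to be even \emph{are} even in $R$). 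This lets me define $f'':=f-(a''E_x|B_y+b''E_y|B_x+f^*(1)E_{(x,y)})\in\cE(xy,R)$ with $2f''=0$.

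Finally, I would kill the constant term by setting $f''':=f''-(f'')^0(1)E_0$, so that $f'''\in\cE(xy,R)[2]$ satisfies $(f''')^*(1)=(f''')^0(1)=0$, and then invoke Lemma~\ref{lem3.8} to conclude that $f'''=\alpha(E_x+E_y|B_x)$ for some $\alpha\in R[2]$, which lies in $\cE'(xy,R)$. Unwinding shows $f\in\cE'(xy,R)$. The main obstacle is the odd-$q$ case, where the factor of $2$ from Atkin–Lehner is non-invertible; the crucial input that removes this obstacle is the explicit combinatorial classification of $2$-torsion Eisenstein cochains with vanishing zeroth and first Fourier coefficients carried out in Lemma~\ref{lem3.8}, which uses the fact that $\cE(xy,R)[2]$ is actually finite-dimensional and can be pinned down by evaluating on the finite list of edges $c_i, a_j, b_u$ of $\G_0(xy)\bs\sT$.
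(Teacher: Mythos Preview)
Your proposal is correct and follows the paper's argument essentially step for step: subtract $f^*(1)E_{(x,y)}$, apply Theorem~\ref{thmAL} with $s_{x,y}\mid 2$, use Lemma~\ref{lem3.7} to identify $\psi_1,\psi_2$, handle the even-$q$ case by invertibility of $2$, and in the odd-$q$ case extract halves via the Fourier coefficients $2f^*(x)$ and $2f^0(\pi_\infty)$ before reducing to the $2$-torsion classification of Lemma~\ref{lem3.8}. The only cosmetic difference is that when $q$ is even one actually has $s_{x,y}=1$, so no division by $2$ is needed; your detour through $2\in R^\times$ is harmless but unnecessary.
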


By Lemma \ref{lem3.4} and Corollary \ref{cor3.9}, we immediately get

\begin{cor}\label{cor3.10}
The space $\cE_0(xy,R)$ is isomorphic to the torsion $R$-module
$$\{(a,b,c) \in R^3 \mid (q^2+1)a = (q+1)b = a+b+(1-q)c = 0\}.$$
In particular,
$$\cE_0(xy,R) \cong R\Big[\frac{(q-1)}{2}(q^2+1)(q+1)\Big]\oplus R[2].$$
\end{cor}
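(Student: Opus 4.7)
The plan is to combine Corollary \ref{cor3.9} with Lemma \ref{lem3.4}(1). Corollary \ref{cor3.9} yields $\cE_0(xy,R)=\cE'_0(xy,R)$ for every coefficient ring $R$, and since $\deg(y)=2$ is even we have $\nu(x,y)=1$, so the hypothesis of Lemma \ref{lem3.4}(1) is trivially satisfied. Thus $\cE'_0(xy,R)$ is isomorphic to the submodule $M\subseteq R^3$ cut out by the three equations of that lemma. Substituting $|x|=q$, $|y|=q^2$, $\nu(x)=q+1$, $\nu(y)=1$, $\nu(x,y)=1$, and simplifying rational factors such as $\frac{(q-1)(q^2+1)(q+1)}{q^2-1}=q^2+1$, the three conditions collapse to the system stated in the corollary, giving the first assertion.

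For the ``in particular'' statement, the plan is to identify the abstract structure of $M$ directly from its defining equations. I would first eliminate $a$ via $a=(q-1)c-b$ and then rewrite the condition $(q^2+1)a=0$ using $(q^2+1)=(q-1)(q+1)+2$ together with $(q+1)b=0$, obtaining the equivalent pair of conditions
$$
(q+1)b=0,\qquad 2b=(q-1)(q^2+1)c.
$$
The decisive observation is that $(q-1)(q^2+1)$ is always ``divisible by $2$'' in a usable sense: either $q$ is odd, in which case $\beta:=(q-1)(q^2+1)/2$ is an honest integer, or $q$ is even, in which case $p=2$ is a unit in $R$ and division by $2$ is legitimate inside $R$. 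Setting $w:=b-\beta c$, the relation $2b=2\beta c$ forces $w\in R[2]$, and the remaining constraint $(q+1)b=0$ simplifies to $(q+1)\beta c=\frac{q^4-1}{2}c=0$, using that $(q+1)w=0$ (which holds either because $w=0$ when $q$ is even, or because $q+1$ is even when $q$ is odd). The change of variables $(b,c)\mapsto(w,c)$ then produces the required splitting $M\cong R[2]\oplus R\bigl[\tfrac{q-1}{2}(q^2+1)(q+1)\bigr]$.

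The main obstacle is treating the parity of $q$ uniformly. For odd $q$ the summand $R[2]$ genuinely contributes, whereas for even $q$ (so $p=2$) both $R[2]$ vanishes and the half-integer $\tfrac{q-1}{2}$ must be interpreted inside $R$ using the fact that $2$ is a unit; the corollary is deliberately phrased so that the single formula covers both regimes, but a careful proof must check that the reductions above are consistent with this interpretation and that no spurious torsion arises in the change of variables.
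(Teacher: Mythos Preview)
Your proposal is correct and follows the same route as the paper, which simply states that the result is immediate from Corollary~\ref{cor3.9} and Lemma~\ref{lem3.4}. Your substitution of $|x|=q$, $|y|=q^2$, $\nu(x)=q+1$, $\nu(y)=\nu(x,y)=1$ into Lemma~\ref{lem3.4}(1) is accurate, and the explicit change of variables $(b,c)\mapsto (w,c)$ with $w=b-\beta c$ supplies the details for the ``in particular'' clause that the paper leaves to the reader; your parity discussion handling the meaning of $\tfrac{q-1}{2}$ when $q$ is even (via $2\in R^\times$) is the right way to make the uniformly stated formula precise.
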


From the graph in Figure \ref{Fig4}, an alternating $R$-valued function $f$ on 
$E(\Gamma_0(xy)\backslash \sT)$ is in $\cH_{0}(xy,R)$ if and only if $f$ vanishes on the cusps $c_1, c_2, c_3, c_4$ and
\begin{eqnarray}
%&&f(c_i) = 0, \quad i = 1,...,4, \nonumber\\
&&(q-1)f(a_1)+f(a_5) = 0, \quad (q-1)f(a_2)-f(a_6) = 0, \nonumber\\
&&(q-1)f(a_3)+f(a_6) = 0, \quad (q-1)f(a_4)-f(a_5) = 0, \nonumber\\
&&f(a_2) + \sum_{u \in \F_q^{\times}} f(b_u) = f(a_1), \quad 
f(a_3) - \sum_{u \in \F_q^{\times}} f(b_u) = f(a_4). \nonumber
\end{eqnarray}
Moreover, $f$ is in $\cH_{00}(xy,R)$ if and only if $f$ satisfies an extra equation:
$$f(a_1)+f(a_4) = 0.$$
In particular, every harmonic cochain $f \in \cH_{00}(xy,R)$ is determined uniquely by the values
$$f(a_1), \ f(b_u) \ \text{ for } u \in \F_q^{\times}.$$

Let $f = aE_{x}+ bE_{y}+cE_{(x,y)} \in \cE_0(xy,R)$. By Corollary \ref{cor3.10} we have
$$(q^2+1)a = (q+1)b = a+b+(1-q)c = 0.$$
It is observed that
$$\begin{tabular}{ll}
$E_{x}(a_1) = -1$, & $E_{x}(a_4) = -q$, \\
$E_{y}(a_1) = 0$, & $E_{y}(a_4) = -1$, \\
$E_{(x,y)}(a_1) = -1$, &$E_{(x,y)}(a_4) = -1.$
\end{tabular}$$
We then get
$$f(a_1)+f(a_4) = -\big((q+1) a + b + 2 c\big).$$
Hence $f \in \cE_{00}(xy,R)$ if and only if 
$$c \in R[(q^2+1)(q+1)], \ a = -q^{-1}(q+1) c, \ b = q^{-1}(q^2+1) c.$$

We conclude that
\begin{prop}\label{lem3.11}
The module $\cE_{00}(xy,R)$ is isomorphic to $R[(q^2+1)(q+1)]$. More precisely,
every harmonic cochain in $\cE_{00}(xy,R)$ must be of the form
$$c \cdot \Big(-(q+1) E_{x} + (q^2+1)E_{y} + q E_{(x,y)}\Big)$$
where $c \in R[(q^2+1)(q+1)]$.
\end{prop}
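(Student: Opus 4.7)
The plan is to combine the three conditions characterizing $\cE_{0}(xy,R)$ from Corollary \ref{cor3.10} with the single extra equation that upgrades a cochain from $\cH_0(xy,R)$ to $\cH_{00}(xy,R)$, and then solve the resulting linear system over $R$.

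Concretely, I would start from Corollary \ref{cor3.9}, which lets us write any $f \in \cE_0(xy,R)$ as $f = aE_x + bE_y + cE_{(x,y)}$ subject to the relations
\[
(q^2+1)a \;=\; (q+1)b \;=\; a + b + (1-q)c \;=\; 0
\]
from Corollary \ref{cor3.10}. As explained just before the proposition, membership in $\cH_{00}(xy,R)$ is equivalent to imposing the one additional equation $f(a_1) + f(a_4) = 0$ on top of the $\cH_0$ conditions. Using the tabulated values $(E_x(a_1), E_x(a_4)) = (-1,-q)$, $(E_y(a_1), E_y(a_4)) = (0,-1)$, and $(E_{(x,y)}(a_1), E_{(x,y)}(a_4)) = (-1,-1)$, this fourth equation becomes
\[
(q+1)a + b + 2c \;=\; 0.
\]

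Next I would solve the $4 \times 3$ system. Subtracting $a + b + (1-q)c = 0$ from $(q+1)a + b + 2c = 0$ yields $qa + (q+1)c = 0$; since $p$ (and hence $q$) is invertible in the coefficient ring $R$, this gives $a = -q^{-1}(q+1)c$. Substituting back into $a + b + (1-q)c = 0$ produces $b = q^{-1}(q^2+1)c$. The two annihilation conditions $(q^2+1)a = 0$ and $(q+1)b = 0$ then both collapse to the single requirement $(q+1)(q^2+1)c = 0$, i.e.\ $c \in R[(q+1)(q^2+1)]$. Conversely, any such $c$ produces a valid $f$, so this parametrization is a bijection $R[(q+1)(q^2+1)] \xrightarrow{\sim} \cE_{00}(xy,R)$.

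Finally, to match the exact form stated in the proposition, I would rescale the parameter by the unit $q \in R^\times$: replacing $c$ by $qc$ turns the cochain $c\bigl(-q^{-1}(q+1)E_x + q^{-1}(q^2+1)E_y + E_{(x,y)}\bigr)$ into $c\bigl(-(q+1)E_x + (q^2+1)E_y + qE_{(x,y)}\bigr)$, which is an $R$-module automorphism of $R[(q+1)(q^2+1)]$ onto itself. There is essentially no obstacle here — the argument is pure linear algebra over $R$ once the preparatory work of Corollaries \ref{cor3.9} and \ref{cor3.10} and the explicit values of the three Eisenstein series on the edges $a_1, a_4$ are in hand; the only mild subtlety is keeping track of the invertibility of $q$ in $R$, which is guaranteed by the standing assumption that $R$ is a coefficient ring.
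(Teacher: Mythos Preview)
Your proposal is correct and follows essentially the same approach as the paper: the paper's argument (given in the paragraphs immediately preceding the proposition) also writes $f=aE_x+bE_y+cE_{(x,y)}$, imposes the extra condition $f(a_1)+f(a_4)=0$ using the same tabulated edge-values to obtain $(q+1)a+b+2c=0$, and arrives at the same solution $a=-q^{-1}(q+1)c$, $b=q^{-1}(q^2+1)c$ with $c\in R[(q^2+1)(q+1)]$. You have simply made the linear-algebra elimination and the final rescaling by the unit $q$ explicit where the paper states the outcome directly.
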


\begin{cor}\label{previousN}
For every natural number $n$ relatively prime to $p$ the module $\cE_{00}(xy,\Z/n\Z)$ 
is isomorphic to $\Z/n\Z[(q^2+1)(q+1)]$. 
\end{cor}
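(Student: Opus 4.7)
The plan is to deduce Corollary \ref{previousN} from Proposition \ref{lem3.11} by a descent argument. The delicate point is that $\Z/n\Z$ need not itself be a coefficient ring in the paper's sense: the definition demands a DVR quotient containing $p$-th roots of unity, which fails for $\Z/\ell^k\Z$ whenever $\ell\not\equiv 1\pmod{p}$. Since $\cH_0(xy,\Z)$ is a free $\Z$-module of rank $g(xy)=q$, one has $\cH_{00}(xy,\Z/n\Z)\cong \cH_0(xy,\Z)\otimes_\Z\Z/n\Z$ and the Eisenstein condition is $\Z$-linear. By the Chinese Remainder Theorem it suffices to prove the claim for $R=\Z/\ell^k\Z$ with $\ell\neq p$.

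To enlarge $R$ to a coefficient ring, let $m$ be the multiplicative order of $\ell$ in $(\Z/p\Z)^\times$, so $p\mid \ell^m-1$, and set $\tilde R:=W(\F_{\ell^m})$. Then $\tilde R$ is a DVR whose Teichm\"uller lifts realize $\mu_p\subset\tilde R^\times$, so $R':=\tilde R/\ell^k\tilde R$ is a coefficient ring. The inclusion $W(\F_\ell)=\Z_\ell\hookrightarrow\tilde R$ identifies $R$ with the $\mathrm{Gal}(\F_{\ell^m}/\F_\ell)$-fixed subring of $R'$. Applying Proposition \ref{lem3.11} over $R'$ gives
\[
\cE_{00}(xy,R')\;=\;R'[(q^2+1)(q+1)]\cdot f_0,\qquad f_0:=-(q+1)E_x+(q^2+1)E_y+qE_{(x,y)},
\]
with the map $c\mapsto c\cdot f_0$ injective.

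The descent is then direct. Because $\cH_0(xy,\Z)$ is $\Z$-free, Frobenius acts on $\cH_{00}(xy,R')=\cH_0(xy,\Z)\otimes R'$ through the coefficient ring with fixed points $\cH_{00}(xy,R)$; since the Hecke operators, harmonicity, and compact-support conditions are all defined over $\Z$, this action preserves $\cE_{00}(xy,R')$ and its fixed points coincide with $\cE_{00}(xy,R)$. The generator $f_0$ lies in $\cH(xy,\Z)$ and is therefore Galois-fixed, so
\[
\cE_{00}(xy,R)\;=\;R'[(q^2+1)(q+1)]^{\mathrm{Gal}}\cdot f_0\;=\;R[(q^2+1)(q+1)]\cdot f_0\;\cong\;R[(q^2+1)(q+1)].
\]
A direct Fourier-coefficient calculation from the formulas in Section \ref{ssES}, using $E_x(a_1)=-1$, $E_y(a_1)=0$, $E_{(x,y)}(a_1)=-1$, yields $f_0(a_1)=1$, which guarantees that $c\mapsto c\cdot f_0$ is injective over $R$ and completes the proof after reassembling via CRT. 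The main obstacle is making the descent identification $\cE_{00}(xy,R)=\cE_{00}(xy,R')^{\mathrm{Gal}}$ precise: once one observes that $\cH_0(xy,\Z)$ is $\Z$-free, that $f_0$ has integer values, and that $f_0$ takes a unit value at the specific edge $a_1$, the argument runs without obstruction.
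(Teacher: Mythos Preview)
Your proof is correct and follows essentially the same approach as the paper: both reduce to prime-power $n=\ell^k$ and pass to the auxiliary coefficient ring $\Z_\ell[\zeta_p]/\ell^k$, which is exactly your $W(\F_{\ell^m})/\ell^k$ since $\Q_\ell(\zeta_p)/\Q_\ell$ is the unramified extension of degree $m=\mathrm{ord}_p(\ell)$. The paper descends by noting this ring is free over $\Z/n\Z$, while you phrase the same descent via Galois invariants and make explicit the useful fact that $f_0$ is integer-valued with $f_0(a_1)=1$; these are two formulations of the same argument.
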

\begin{proof} 
The difference between this claim and Proposition \ref{lem3.11} is that $\Z/n\Z$ 
is not a coefficient ring in general. Still, one can deduce this from Proposition \ref{lem3.11} 
by arguing as in the proof of Corollary 6.9 in \cite{Pal}. First, one easily reduces to 
the case when $n$ is a power of some prime $\ell\neq p$, and applies Proposition \ref{lem3.11} 
with $R=\Z_\ell[\zeta_p]/n\Z_\ell[\zeta_p]$, where $\zeta_p$ is the primitive $p$th root of unity. 
The claim follows by observing that $R$ is a free $\Z/n\Z$-module. 
\end{proof}

\begin{cor}\label{corT/E}
$\T(xy)/\fE(xy)\cong \Z/(q^2+1)(q+1)\Z$. 
\end{cor}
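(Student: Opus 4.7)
Write $N:=(q^2+1)(q+1)$, and note $\gcd(N,p)=1$. The strategy is to combine the perfect Hecke pairing from Proposition \ref{propT(xy)} with the classification of cuspidal Eisenstein cochains in Corollary \ref{previousN}, via Pontryagin duality for finite abelian groups. Reducing the perfect $\T(xy)$-equivariant pairing $\T(xy)\times \cH_0(xy,\Z)\to \Z$ modulo $n$ yields a perfect pairing of finite $\Z/n\Z$-modules, and the standard duality identity that ideal-quotient on one side is dual to ideal-torsion on the other gives
\[
\T(xy)/(\fE+n\T(xy))\;\cong\;\cE_{00}(xy,\Z/n\Z)^{\vee}.
\]
For $n$ coprime to $p$ and divisible by $N$, Corollary \ref{previousN} identifies $\cE_{00}(xy,\Z/n\Z)$ with $(\Z/n\Z)[N]\cong \Z/N\Z$, so $\T(xy)/(\fE+n\T(xy))\cong\Z/N\Z$ for all such $n$.

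Next I would pass from $\T(xy)/(\fE+n\T(xy))$ back to $\T(xy)/\fE$. By Proposition \ref{propT(xy)}, the $\Z$-module $\T(xy)$ is free on $\{1\}\cup\{T_{x-\kappa}:\kappa\in\F_q^{\times}\}$, and the Eisenstein relations $T_{x-\kappa}-(q+1)\in \fE$ reduce every generator $T_{x-\kappa}$ modulo $\fE$ to an integer multiple of $1$. Hence $\T(xy)/\fE$ is cyclic, generated by the image of $1$. It is also finite, since if $\T(xy)/\fE\cong\Z$ then the quotients $\T(xy)/(\fE+n\T(xy))\cong\Z/n\Z$ would have unbounded order as $n$ ranges over multiples of $N$ coprime to $p$, contradicting the fixed value $N$. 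Writing $\T(xy)/\fE\cong\Z/m\Z$, the uniform computation of $\gcd(m,n)=N$ for all admissible $n$ forces $m=Np^{a}$ for some $a\ge 0$.

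The main obstacle is to rule out the $p$-power factor, i.e.\ to show $a=0$, because the duality together with Corollary \ref{previousN} is insensitive to $p$-torsion: Proposition \ref{lem3.11} requires a coefficient ring and so cannot be applied with $\Z/p^{n}\Z$. To close this gap I would aim to exhibit $N\cdot 1\in\fE$ explicitly. The abstract relation $\sum_{\kappa\in\F_q}T_{x-\kappa}=-1$ from Equation (\ref{eqS=-1}), combined with $T_{x-\kappa}\equiv q+1\pmod\fE$ for $\kappa\neq 0$, immediately yields $U_x\equiv -q^{2}\pmod\fE$; an analogous computation using the Atkin--Lehner relation for $W_y$ from Proposition \ref{prop3lem}(3) together with the Eisenstein congruences at primes of degree two should yield a comparable congruence for $U_y$ modulo $\fE$. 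Pairing these congruences against the explicit Eisenstein eigenvalue character $\T(xy)\to \Z/N\Z$ attached to the generator $\theta=-(q+1)E_x+(q^{2}+1)E_y+qE_{(x,y)}$ of $\cE_{00}(xy,\Z/N\Z)$ (Proposition \ref{lem3.11}) should produce $N\cdot 1\in\fE$. Once established, this gives $\fE+N\T(xy)=\fE$ and therefore $\T(xy)/\fE=\T(xy)/(\fE+N\T(xy))\cong \Z/N\Z$.
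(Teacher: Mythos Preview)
Your Steps 1--6 are correct and in fact yield a more self-contained argument than the paper's. The paper obtains cyclicity and coprimality to $p$ by citing P\'al's general result \cite[Lem.~4.2, Thm.~4.5]{PalIJNT} that $(\T(\fn)^0)^\new/\fE(\fn)$ is finite cyclic of order prime to $p$, then uses the duality and Corollary~\ref{previousN} only for the upper bound $n\mid N$, and finally invokes a forward reference to Lemma~\ref{lem5.5} (the component group $\Phi_\infty\cong\Z/N\Z$ is annihilated by $\fE$) for the lower bound $N\mid n$. You instead read off cyclicity from the explicit $\Z$-basis of $\T$ in Proposition~\ref{propT(xy)} and extract both bounds on the prime-to-$p$ part of $m$ directly from the duality, avoiding both external inputs.

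The genuine gap is Step 7. Your proposed method there is not well-defined: the ``Eisenstein eigenvalue character'' $\T\to\Z/N\Z$ is nothing but the composite $\T\twoheadrightarrow\T/\fE\to\Z/N\Z$, and evaluating $U_x,U_y$ under it cannot feed back a relation into $\fE$---the implication runs the wrong way. What is needed is an \emph{identity in $\T$} which, combined with your congruence $U_x\equiv -q^2\pmod\fE$, places an integer in $\fE$.

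Such an identity is available. For $f\in\cH_0(xy,\Z)$, Proposition~\ref{prop3lem}(3) gives $f|(U_x+W_x)\in\cH(y,\Z)=\Z E_y$ (Example~\ref{exampley2} with $R=\Z$). Since $U_x$ and $W_x$ preserve $\cH_0(xy,\Z)$ and $E_y^0(1)=q\neq 0$, Lemma~\ref{lem2.12} forces $f|(U_x+W_x)=0$. Thus $U_x=-W_x$ on $\cH_0(xy,\Z)$, whence $U_x^2=W_x^2=1$ in $\T$. Combining with $U_x\equiv -q^2\pmod\fE$ gives $(q^4-1)\cdot 1=(q-1)N\in\fE$, so $m\mid (q-1)N$. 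As $p\mid q$, one has $\gcd(p,q-1)=1$, and therefore $Np^a\mid(q-1)N$ forces $a=0$. This completes your argument without appeal to either \cite{PalIJNT} or $\Phi_\infty$.
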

\begin{proof} Let  $(\T(\fn)^0)^\new$ be the quotient of $\T(\fn)^0$ with which it acts on $\cH_0(\fn, \Q)^\new$; 
cf. Definition \ref{defnNewH}. By Lemma 4.2 and Theorem 4.5 in \cite{PalIJNT}, for any \textit{square-free} $\fn\lhd A$ 
the quotient ring $(\T(\fn)^0)^\new/\fE(\fn)$ is a finite cyclic group of order coprime to $p$; here 
with abuse of notation $\fE(\fn)$ denotes the ideal generated by the images of $T_\fp-|\fp|-1$ in $(\T(\fn)^0)^\new$. 
Since $\cH_0(xy, \Q)=\cH_0(xy, \Q)^\new$, we have $\T(xy)^0=(\T(xy)^0)^\new$. 
On the other hand, by Proposition \ref{propT(xy)}, $\T(xy)^0=\T(xy)$. Hence $\T(xy)/\fE(xy)\cong \Z/n\Z$ for some 
$n$ coprime to $p$. 

The perfectness of the pairing  (\ref{GPairing}) implies 
$$
\Hom_{\Z/n\Z}(\T(xy)\otimes_\Z\Z/n\Z, \Z/n\Z)\cong \cH_{00}(xy, \Z/n\Z). 
$$
Hence 
\begin{align*}
\cE_{00}(xy, \Z/n\Z) & \cong \Hom_{\Z/n\Z}(\T(xy)\otimes_\Z\Z/n\Z, \Z/n\Z)[\fE(xy)]\\
& \cong \Hom_{\Z/n\Z}((\T(xy)/\fE(xy))\otimes_\Z\Z/n\Z, \Z/n\Z)\\
&\cong \Hom_{\Z/n\Z}(\Z/n\Z\otimes_\Z\Z/n\Z, \Z/n\Z) \cong \Z/n\Z. 
\end{align*}

Applying Corollary \ref{previousN}, we conclude that $n$ must divide $(q+1)(q^2+1)$. Later in this paper 
we will prove that the component group $\Phi_\infty\cong \Z/(q^2+1)(q+1)\Z$ of $J_0(xy)$ 
is annihilated by $\fE(xy)$ (see Lemma \ref{lem5.5}). This implies that $n$ is divisible 
by $(q^2+1)(q+1)$. Therefore, $n=(q^2+1)(q+1)$. 
\end{proof}

\begin{rem}
In \cite{PW2}, we extended our calculation of $\T(\fn)/\fE(\fn)$ to arbitrary $\fn$ of degree $3$. 
Up to an affine transformation $T\mapsto aT + b$ with
$a \in \F_q^\times$ and $b \in \F_q$, there are 5 different cases, namely 
\begin{enumerate}
\item If $\fn=T^3$, then $\T(\fn)/\fE(\fn)\cong \Z/q^2\Z$;
\item If $\fn=T^2(T-1)$, then $\T(\fn)/\fE(\fn)\cong \Z/q(q^2-1)\Z$;
\item If $\fn$ is irreducible, then $\T(\fn)/\fE(\fn)\cong \Z/(q^2+q+1)\Z$;
\item If $\fn=xy$, then $\T(\fn)/\fE(\fn)\cong \Z/(q^2+1)(q+1)\Z$;
\item If $\fn=T(T-1)(T-c)$, where $c\in \F_q$, $c\neq 0,1$ (here we must have $q>2$), then 
$$
\T(\fn)/\fE(\fn)\cong \Z/(q+1)\Z\times  \Z/(q+1)\Z\times \Z/(q-1)^2(q+1)\Z.
$$ 
\end{enumerate}
\end{rem}

%------------------------------------------------------------------------

\section{Drinfeld modules and modular curves}\label{sDMC} 

In this section we collect some facts about Drinfeld modules and their moduli schemes 
that will be used later in the paper. 

Let $S$ be an $A$-scheme and $\cL$ a line bundle over $S$. Let $\cL\{\tau\}$ 
be the noncommutative ring $\oplus_{i\geq 0}\cL^{\otimes(1-q^i)}(S)\tau^i$, 
where $\tau$ stands for the $q$th power Frobenius mapping. The multiplication in this 
ring is given by $\alpha_i\tau^i\cdot \alpha_j\tau^j=(\alpha_i\otimes \alpha_j^{\otimes q^j})\tau^{i+j}$. 
A \textit{Drinfeld $A$-module of rank $r$ (in standard form)} over $S$ 
is given by a line bundle $\cL$ over $S$ together with a ring 
homomorphism $\phi^\cL: A\to \End_{\F_q}(\cL)=\cL\{\tau\}$, $a\mapsto \phi^\cL_a$, such that 
$\phi^\cL_a=\sum_{i=0}^{m(a)}\alpha_i(a)\tau^i$, where $m(a)=-r\cdot \ord_\infty(a)$, 
$\alpha_{m(a)}(a)$ is a nowhere vanishing section of $\cL^{\otimes(1-m(a))}$, 
and $\alpha_0$ coincides with the map $\partial: A\to H^0(S, \cO_S)$ giving the 
structure of an $A$-scheme to $S$; cf. \cite[p. 575]{Drinfeld}. The kernel of $\partial$ is called the \textit{$A$-characteristic} of $S$. 
A Drinfeld $A$-module over $S$ 
is clearly an $A$-module scheme over $S$, and a \textit{homomorphism} of Drinfeld modules 
is a homomorphism of these $A$-module schemes. A homomorphism 
of Drinfeld modules over a connected scheme is either the zero homomorphism, or it has finite kernel, in which 
case it is usually called an \textit{isogeny}. 
When $S$ is the spectrum of a field $K$, we will omit mention of $\cL$ and write 
$\phi: A\to K\{\tau\}$. 

Let $\fn\lhd A$ be a non-zero ideal. A \textit{cyclic subgroup of order $\fn$} of $\phi^\cL$ is an 
$A$-submodule scheme $C_\fn$ of $\cL$ which is finite and flat over $S$, and such that 
there is a homomorphism of $A$-modules $\iota: A/\fn\to \cL(S)$ giving an 
equality of relative effective Cartier divisors $\sum_{a\in A/\fn}\iota(a)=C_\fn$. Denote 
$\phi^\cL[\fn]=\ker(\cL\xrightarrow{\phi^\cL_n}\cL)$, where $n$ is a generator of the ideal $\fn$. 
It is clear that the $A$-submodule scheme $\phi^\cL[\fn]$ of $\cL$ 
does not depend on the choice of $n$, and $C_\fn\subset \phi^\cL[\fn]$. To each $C_\fn\subset \phi$ 
one can associate a unique, up to isomorphism, Drinfeld module 
$\phi':=\phi/C_\fn$ such that there is an isogeny $\phi\to \phi'$ 
whose kernel is $C_\fn$. 

Now assume $S=\Spec(K)$, where $K$ is a field. Explicitly, a 
homomorphism of Drinfeld modules $u:\phi\to \psi$ is
$u\in K\{\tau\}$ such that $\phi_a u=u\psi_a$ for all $a\in A$, and 
$u$ is an isomorphism if $u\in K^\times$. Let $\End(\phi)$ denote the centralizer of $\phi(A)$ in
$\bar{K}\{\tau\}$, i.e., the ring of all homomorphisms $\phi\to
\phi$ over $\bar{K}$. The automorphism group $\Aut(\phi)$ is the
group of units $\End(\phi)^\times$. It is known that $\End(\phi)$ is a free $A$-module 
of rank $\leq r^2$; cf. \cite{Drinfeld}. 
From now on we also assume that $r=2$. Note that $\phi$
is uniquely determined by the image of $T$:
$$
\phi_T=\partial(T)+g\tau+\Delta\tau^2,
$$
where $g\in K$ and $\Delta\in K^\times$. The \textit{$j$-invariant}
of $\phi$ is $j(\phi)=g^{q+1}/\Delta$. It is easy to check that if
$K$ is algebraically closed, then $\phi\cong \psi$ if and only if
$j(\phi)=j(\psi)$. It is also easy to check that 
\begin{equation}\label{eqAut}
\Aut(\phi)=\begin{cases}
\F_q^\times & \text{if $j(\phi)\neq 0$};\\
\F_{q^2}^\times & \text{if $j(\phi)= 0$}.
\end{cases}
\end{equation}
If $\fn$ is
coprime to the $A$-characteristic of $K$, then $\phi[\fn](\bar{K})\cong
(A/\fn)^2$. On the other hand, if $\fp=\ker(\partial)\neq 0$, then
$\phi[\fp](\bar{K}) \cong (A/\fp)$ or $0$; when $\phi[\fp](\bar{K}) = 0$, $\phi$ is
called \textit{supersingular}. The following is Theorem 5.9 in \cite{GekelerADM}: 

\begin{thm}\label{thmSSN} Let $\fp\lhd A$ be a prime ideal. 
The number of isomorphism classes of supersingular 
rank-$2$ Drinfeld $A$-modules over $\overline{\F}_\fp$ is 
$$
\begin{cases}
\frac{|\fp|-1}{q^2-1} & \text{ if $\deg(\fp)$ is even};\\
\frac{|\fp|-q}{q^2-1}+1 & \text{ if $\deg(\fp)$ is odd}.
\end{cases}
$$
The Drinfeld module with $j(\phi)=0$ is supersingular 
if and only if $\deg(\fp)$ is odd. 
\end{thm}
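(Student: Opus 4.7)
The plan is to split the count into contributions from $j(\phi) \neq 0$ and $j(\phi) = 0$, using \eqref{eqAut} to identify $\#\Aut(\phi)$ in each case, and to determine the total ``mass'' of the supersingular locus via a Hasse-type invariant whose vanishing locus on the $j$-line parametrizes supersingular Drinfeld modules.

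First I would analyze the special point $j(\phi) = 0$. Here $\phi_T = \partial(T) + \Delta \tau^2$ with vanishing $\tau$-coefficient, so $\phi_a \in \bar\F_\fp\{\tau^2\}$ for every $a \in A$; in particular $\phi_p$ involves only even powers of $\tau$. Supersingularity of a rank-$2$ Drinfeld module is equivalent to the vanishing of the $\tau^{\deg \fp}$-coefficient of $\phi_p$ (the height-$2$ condition), so when $\deg(\fp)$ is odd this coefficient is automatically zero and $\phi$ is supersingular. When $\deg(\fp)$ is even I would verify directly that the coefficient does not vanish: for $\deg(\fp)=2$, for instance, a short calculation gives the $\tau^2$-coefficient of $\phi_p$ as $\Delta(\theta - \theta^q)$, which is nonzero since $\theta \in \F_\fp = \F_{q^2}$ but $\theta \notin \F_q$. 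More conceptually, the $j=0$ Drinfeld module has CM by an order in the ``imaginary'' quadratic extension $\F_{q^2}(T)/F$, and CM Drinfeld modules reduce supersingularly precisely at primes non-split in their CM field, which for this extension are exactly the primes of odd degree.

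Next I would establish the mass formula
\begin{equation*}
\sum_{\phi \text{ s.s.}/\bar\F_\fp} \frac{1}{[\Aut(\phi):\F_q^\times]} = \frac{|\fp| - 1}{q^2 - 1}.
\end{equation*}
The cleanest approach identifies the supersingular locus in the $j$-line over $\bar\F_\fp$ as the vanishing scheme of a Drinfeld modular form $h_\fp$ of weight $q^{\deg \fp}-1 = |\fp|-1$ (essentially the $\tau^{\deg \fp}$-coefficient of $\phi_p$, normalized), and then computes the degree of $h_\fp$ as a polynomial in $j$, carefully accounting for the contribution from the elliptic point $j=0$ of stabilizer order $q+1$ in $\GL_2(A)/Z$. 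Alternatively, one can invoke the Deuring--Eichler correspondence between supersingular Drinfeld modules over $\bar\F_\fp$ and left ideal classes of a maximal order in the quaternion algebra over $F$ ramified exactly at $\fp$ and $\infty$, reducing the claim to the known class number formula for this definite quaternion algebra.

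Finally I would assemble the count. Let $N$ denote the total number of supersingular isomorphism classes. By \eqref{eqAut} each $\phi$ with $j(\phi) \neq 0$ contributes $1$ to the mass sum, while the class with $j(\phi) = 0$, when supersingular, contributes $1/(q+1)$. If $\deg(\fp)$ is even then $j=0$ is ordinary and $N = (|\fp|-1)/(q^2-1)$; if $\deg(\fp)$ is odd then $(N-1) + 1/(q+1) = (|\fp|-1)/(q^2-1)$, which rearranges to $N = (|\fp|-q)/(q^2-1) + 1$. The main obstacle is the mass formula itself: pinning down the precise constant $(|\fp|-1)/(q^2-1)$ requires either a delicate weight/degree computation for the Hasse invariant on the compactified moduli stack — including checking that $h_\fp$ really has weight $q^{\deg\fp}-1$ with no extra scalar factor — or a full setup of the Deuring correspondence together with the class number formula for the relevant definite quaternion algebra over $F$.
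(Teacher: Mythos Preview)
The paper does not prove this theorem at all: it is simply quoted as Theorem~5.9 of \cite{GekelerADM}, with no argument given. So there is nothing to compare your proposal against in this paper.

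That said, your sketch is essentially the argument Gekeler carries out in \cite{GekelerADM}. The treatment of $j=0$ is correct: since $\phi_T=\partial(T)+\Delta\tau^2$ lies in $\bar\F_\fp\{\tau^2\}$, so does $\phi_p$, forcing supersingularity when $\deg(\fp)$ is odd; your degree-$2$ computation and the CM reasoning for the even-degree case are both valid. The deduction of the two formulas from the mass identity $\sum_{\phi\ \mathrm{s.s.}}[\Aut(\phi):\F_q^\times]^{-1}=(|\fp|-1)/(q^2-1)$ is straightforward and correct. You rightly flag the mass formula as the real content; in \cite{GekelerADM} Gekeler obtains it by identifying the supersingular locus with the zero divisor of the Hasse invariant, a Drinfeld modular form of weight $|\fp|-1$, and computing its degree on the $j$-line with the appropriate stacky weight at $j=0$. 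Your alternative via the Deuring correspondence and the quaternionic class-number formula also works, but neither route is short, which is presumably why the present paper opts for a citation.
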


The functor from the category of $A$-schemes 
to the category of sets, which associates to an $A$-scheme 
$S$ the set of isomorphism classes of pairs $(\phi^\cL, C_\fn)$, 
where $\phi^\cL$ is a Drinfeld module of rank $2$ and $C_\fn$ is a cyclic subgroup of order 
$\fn$ has a coarse moduli scheme $Y_0(\fn)$. The scheme $Y_0(\fn)$ is affine, 
finite type, of relative dimension $1$ over $\Spec(A)$, and smooth over $\Spec(A[\fn^{-1}])$. 
This is well-known and can be deduced from the results in \cite{Drinfeld}. 
The rigid-analytic uniformization of $Y_0(\fn)$ over $\Fi$ is given by (\ref{eqUnifY}). 
The scheme $Y_0(\fn)$ has a canonical compactification over $\Spec(A)$:

\begin{thm}\label{thmX0}
There is a proper normal geometrically irreducible scheme $X_0(\fn)$ 
of pure relative dimension $1$ over $\Spec(A)$ which contains $Y_0(\fn)$ 
as an open dense subscheme. The complement $X_0(\fn)-Y_0(\fn)$ is a 
disjoint union of irreducible schemes. Finally, $X_0(\fn)$ is smooth over $\Spec(A[\fn^{-1}])$. 
\end{thm}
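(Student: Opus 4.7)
The plan is to derive the theorem by combining the existing construction of $Y_0(\fn)$ from \cite{Drinfeld} with a Tate--Drinfeld analysis of the cusps. First, I would construct the smooth projective compactification $X_0(\fn)_F$ of the affine smooth $F$-curve $Y_0(\fn)_F$ by adjoining the finitely many cusps $\G_0(\fn)\bs\p^1(F)$; on $\C_\infty$-points the resulting compactification is $\G_0(\fn)\bs\overline{\Omega}$, and its geometric irreducibility follows from the connectedness of $\Omega$ over $\C_\infty$. I would then define $X_0(\fn)$ to be the normalization of $\Spec(A)$ in the function field of $X_0(\fn)_F$, or equivalently the normalization of any convenient proper $A$-model of this generic fiber. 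By construction $X_0(\fn)$ is a proper, normal, integral $A$-scheme of pure relative dimension $1$ containing $Y_0(\fn)$ as an open dense subscheme.

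Next I would describe the cuspidal locus $X_0(\fn)-Y_0(\fn)$ and prove the smoothness claim using the Tate--Drinfeld uniformization. Around each cusp the degenerating rank-$2$ Drinfeld module is parametrized in its formal neighborhood by a rank-$1$ Drinfeld module together with a lattice and a $\G_0(\fn)$-level structure, with an explicit formal parameter $q_\fm$ transverse to $\Spec(A)$. Gluing these local descriptions to the normalization constructed above shows that each cusp contributes one irreducible closed subscheme and that distinct cusps stay disjoint, so $X_0(\fn)-Y_0(\fn)$ is a disjoint union of irreducible schemes. Over $\Spec(A[\fn^{-1}])$ the Tate--Drinfeld formal model is smooth, because the level-$\fn$ structure imposes no obstruction once the primes dividing $\fn$ are inverted; together with Drinfeld's smoothness of $Y_0(\fn)$ over $\Spec(A[\fn^{-1}])$ this yields the final assertion.

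The main obstacle is the careful Tate--Drinfeld analysis with $\G_0(\fn)$-level structure at primes dividing $\fn$, where the cuspidal sections may ramify and the formal parameter must be chosen accordingly. One has to verify that these local formal charts glue to an honest algebraic (not merely formal) compactification and that the level structure behaves consistently under degeneration. The required Tate--Drinfeld theory is developed in \cite{Drinfeld}, so the proof is essentially a synthesis of these inputs with the normalization construction above.
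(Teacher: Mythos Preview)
Your proposal is correct and follows essentially the same route as the references the paper cites; the paper itself gives no argument beyond pointing to \cite[\S 9]{Drinfeld} and \cite[Prop.~V.3.5]{Lehmkuhl}, which carry out precisely the normalization-plus-Tate--Drinfeld compactification you describe. Your sketch is thus a faithful expansion of those citations.
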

\begin{proof}
See \cite[$\S$9]{Drinfeld} and \cite[Prop. V.3.5]{Lehmkuhl}. 
\end{proof}

Denote the Jacobian variety of $X_0(\fn)_F$ by $J_0(\fn)$. 
Let $\fp\lhd A$ be prime. There are two natural degeneracy 
morphisms $\alpha, \beta: Y_0(\fn\fp)\to Y_0(\fn)$ with moduli-theoretic interpretation:
$$
\alpha:(\phi, C_{\fn\fp})\mapsto (\phi, C_\fn), \qquad \beta:(\phi, C_{\fn\fp})\mapsto (\phi/C_\fp, C_{\fn\fp}/C_\fp),
$$
where $C_\fn$ and $C_\fp$ are the subgroups of $C_{\fn\fp}$ of order $\fn$ and $\fp$, respectively. 
These morphisms are proper, and hence 
uniquely extend to morphisms $\alpha, \beta: X_0(\fn\fp)\to X_0(\fn)$. 
By Picard functoriality, $\alpha$ and $\beta$ induce two homomorphisms $\alpha_\ast, \beta_\ast: J_0(\fn)\to J_0(\fp\fn)$. 
The \textit{Hecke endomorphism} of $J_0(\fn)$ is $T_\fp:=\alpha^\ast\circ \beta_\ast$ , where $\alpha^\ast: J_0(\fp\fn)\to J_0(\fn)$ 
is the dual of $\alpha_\ast$. The $\Z$-subalgebra of $\End(J_0(\fn))$ generated 
by all Hecke endomorphisms is canonically isomorphic to $\T(\fn)$. This is a 
consequence of Drinfeld's reciprocity law \cite[Thm. 2]{Drinfeld}. 

The Jacobian $J_0(\fn)$ has a rigid-analytic uniformization over $\Fi$
as a quotient of a multiplicative torus by a discrete lattice. 
To simplify the notation denote $\G:=\G_0(\fn)$ and let $\bG$ be the maximal torsion-free abelian quotient of $\G$. 
In \cite{GR} and \cite{GekelerCDG}, Gekeler and Reversat associate a meromorphic theta 
function $\theta(\omega, \eta, \cdot)$ on $\Omega$ with each pair $\omega, \eta\in \overline{\Omega}:=\Omega\cup \p^1(F)$. 
The theta function $\theta(\omega, \eta, \cdot)$ satisfies a functional equation 
$$
\theta(\omega, \eta, \gamma z)=c(\omega, \eta, \gamma) \theta(\omega, \eta, z),\qquad \forall \gamma\in \G, 
$$
where $c(\omega, \eta, \cdot):  \G\to \C_\infty^\times$ is a homomorphism 
that factors through $\bG$. The divisor of $\theta(\omega, \eta, \cdot)$ is $\G$-invariant 
and, as a divisor on $X_0(\fn)(\C_\infty)$, equals $[\omega]-[\eta]$, where $[\omega]$ 
is the class of $\omega\in \overline{\Omega}$ in $\G\bs \overline{\Omega}=X_0(\fn)(\C_\infty)$. 

For a fixed $\alpha\in \G$, the function $u_\alpha(z)=\theta(\omega, \alpha \omega, z)$ 
is holomorphic and invertible on $\Omega$. Moreover, $u_\alpha$ is independent of the choice of 
$\omega\in \overline{\Omega}$, and depends only on the class $\bar{\alpha}$ of $\alpha$ in $\bG$. 
Let $c_\alpha(\cdot)=c(\omega, \alpha \omega, \cdot)$ be the multiplier of $u_\alpha$. It induces a   
pairing  
\begin{align*}
\bG\times\bG &\to \Fi^\times\\
(\alpha, \beta) &\mapsto c_\alpha(\beta)
\end{align*}
which is bilinear, symmetric, and 
\begin{align}\label{eqMoPaInf}
&\langle \cdot, \cdot \rangle: \bG\times\bG \to \Z\\
\nonumber &\langle \alpha, \beta\rangle =\ord_{\infty}(c_\alpha(\beta))
\end{align}
is positive definite. 
One of the main result of \cite{GR} is that there is an exact sequence 
\begin{equation}\label{eqGRs}
0\to\bG\xrightarrow{\alpha\mapsto c_\alpha(\cdot)}\Hom(\bG, \C_\infty^\times)\to J_0(\fn)(\C_\infty)\to 0. 
\end{equation}

One can define Hecke operators $T_\fp$ as endomorphisms of $\bG$ 
in purely group-theoretical terms as some sort of Verlagerung (see \cite[(9.3)]{GR}). 
These operators then also act on the torus $\Hom(\bG, \C_\infty^\times)$ 
through their action on the first argument $\bG$. By \cite[(3.3.3)]{GR} and \cite{GN}, 
there is a canonical isomorphism 
\begin{equation}\label{eqj}
j:\bG\xrightarrow{\sim}\cH_0(\fn,\Z)
\end{equation}
which is compatible with the action of Hecke operators. Through this 
construction, the Hecke algebra $\T(\fn)$ in Definition \ref{defHA} acts faithfully on $\bG$ 
and $\Hom(\bG, \C_\infty^\times)$. 
The sequence (\ref{eqGRs}) is compatible with the action of $\T(\fn)^0$ on its three terms; see \cite[(9.4)]{GR}.  

%\begin{comment}
Assume $\fn$ is square-free. 
The matrix  (\ref{ALmatrix}) representing the 
Atkin-Lehner involution $W_\fm$ for $\fm|\fn$ is in the normalizer of $\G$ in $\GL_2(\Fi)$, 
and the induced involution of $X_0(\fn)_{\Fi}$ does not depend on the choice of this matrix. 
In terms on the moduli problem, the involution $W_\fm$ on $X_0(\fn)$ is given by 
$$
W_\fm: (\phi, C_\fn)\mapsto (\phi/C_\fm, (\phi[\fm]+C_{\fn/\fm})/C_\fm),
$$ 
where $C_\fm$ and $C_{\fn/\fm}$ are the subgroups of $C_\fn$ of order $\fm$ and $\fn/\fm$, respectively. 
%By functoriality, $W_\fm$ induces an involution of $J_0(\fn)$, 
%which we denote by the same symbol by slight abuse of notation. In terms 
%of the uniformization of $J_0(\fn)$ this involution is induced by conjugation by the matrix (\ref{ALmatrix}) on $\G$. 
%\end{comment}

%------------------------------------------------------

\section{Component groups}

Let $\fn\lhd A$ be a non-zero ideal. Let $J:=J_0(\fn)$ and $\cJ$ denote the N\'eron model of $J$ over $\p^1_{\F_q}$. Let $\cJ^0$ 
denote the relative connected component of the identity of $\cJ$, that is, the largest open 
subscheme of $\cJ$ containing the identity section which has connected fibres.  
The \textit{group of connected components} (or \textit{component group}) of $J$ at a place $v$ 
of $F$ is $\Phi_v:=\cJ_{\F_v}/\cJ_{\F_v}^0$. This is a finite abelian group 
equipped with an action of the absolute Galois group $G_{\F_v}$. The homomorphism 
$\wp_v: J(F_v^\un)\to \Phi_v$ obtained from the composition 
$$
\wp_v: J(F_v^\un)=\cJ(\cO_v^\un)\to \cJ_{\F_v}(\overline{\F}_v)\to \Phi_v
$$
will be called the \textit{canonical specialization map}. 

 Assume $\fp\lhd A$ is a prime not dividing $\fm$. 
 Then the curve $X_0(\fm)_{\F_\fp}$ is smooth. We call a point $P\in X_0(\fm)(\overline{\F}_\fp)$ 
 \textit{supersingular} if it corresponds to the isomorphism class of a pair $(\phi, C_\fm)$ 
 with $\phi$ supersingular over $\overline{\F}_\fp$. For a Drinfeld $A$-module $\phi$ over $\overline{\F}_\fp$  
 given by $\phi_T=\partial(T)+g \tau+\Delta\tau^2$, let $\phi^{(\fp)}: A\to \overline{\F}_\fp\{\tau\}$ 
 be the Drinfeld module given by $\phi^{(\fp)}_T=\partial(T)+g^{|\fp|} \tau+\Delta^{|\fp|}\tau^2$. 
 Since $\partial(A)\subseteq \F_\fp$, we see that $\tau^{|\fp|}\phi_a=\phi^{(\fp)}_a\tau^{|\fp|}$ for all $a\in A$, so 
 $\tau^{|\fp|}$ is an isogeny $\phi\to \phi^{(\fp)}$. Denote the image of 
 $C_\fm$ in $\phi^{(\fp)}$ under $\tau^{|\fp|}$ by $C_\fm^{(\fp)}$. The map 
 from $X_0(\fm)(\overline{\F}_\fp)$ to itself given by $(\phi, C_\fm)\mapsto (\phi^{(\fp)}, C_\fm^{(\fp)})$ 
 restricts to an involution on the finite set of supersingular points; cf. \cite[Thm. 5.3]{GekelerADM}. 

\begin{thm}\label{thmpM}
Assume $\fn=\fp\fm$, with $\fp$ prime not dividing $\fm$. The curve $X_0(\fn)_{F_\fp}$ 
is smooth and extends to a proper flat scheme $X_0(\fn)_{\cO_\fp}$ 
over $\cO_\fp$ such that the special fibre $X_0(\fn)_{\F_\fp}$ 
is geometrically reduced and consists of two irreducible components, both isomorphic to 
$X_0(\fm)_{\F_\fp}$, intersecting transversally at the supersingular points. 
More precisely, a supersingular point $(\phi, C_\fm)$ on the first copy of  $X_0(\fm)_{\F_\fp}$ 
is glued to $(\phi^{(\fp)}, C_\fm^{(\fp)})$ on the second copy. The curve 
$X_0(\fn)_{\F_\fp}$ is smooth outside of the locus of supersingular points. 
Denote by $\Aut(\phi, C_\fm)$ the subgroup of automorphisms of $\phi$ which map $C_\fm$ 
to itself. For a supersingular 
point $P\in X_0(\fm)_{\F_\fp}$ corresponding to $(\phi, C_\fm)$, let $m(P):=\frac{1}{q-1}\#\Aut(\phi, C_\fm)$. 
Then, locally at $P$ for the \'etale topology, $X_0(\fn)_{\cO_\fp}$ is given by the equation $XY=\fp^{m(P)}$. 
\end{thm}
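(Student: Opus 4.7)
The plan is to follow the Deligne–Rapoport strategy for $X_0(Np)$, adapted to the Drinfeld setting. First, observe that a cyclic subgroup of order $\fn = \fp\fm$ with $\gcd(\fp,\fm)=1$ decomposes uniquely as $C_{\fp\fm} = C_\fp \oplus C_\fm$, so $X_0(\fn)$ represents the coarse moduli problem of triples $(\phi, C_\fp, C_\fm)$. I would first verify smoothness of $X_0(\fn)_{F_\fp}$ (which is contained in Theorem \ref{thmX0}), and then construct the integral model $X_0(\fn)_{\cO_\fp}$ as the normalization of $X_0(\fm)_{\cO_\fp}$ (which is smooth, since $\fp\nmid\fm$) in the function field of $X_0(\fn)_{F_\fp}$, via the forgetful map $\alpha:(\phi,C_\fp,C_\fm)\mapsto(\phi,C_\fm)$. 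Since the target is regular and the generic fiber is smooth, properness and flatness come for free.

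Next, I would analyze the geometric special fiber moduli-theoretically. Over $\overline{\F}_\fp$, the classification of cyclic $A$-submodule schemes of $\phi[\fp]$ of order $\fp$ breaks into two cases: if $\phi$ is ordinary, $\phi[\fp]$ is an extension of an étale cyclic group of order $\fp$ by the connected kernel of Frobenius, giving exactly two choices of $C_\fp$ (the connected one $\ker\tau^{\deg\fp}$ and the étale one); if $\phi$ is supersingular, $\phi[\fp]$ is local-local and the only choice is $C_\fp = \ker\tau^{\deg\fp}$. This dichotomy produces two components, each mapping isomorphically to $X_0(\fm)_{\F_\fp}$ via $\alpha$ and $\beta$ respectively. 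On the "connected" component, $\alpha$ is the identity $(\phi,C_\fm)\mapsto(\phi,C_\fm)$, while on the "étale" component (where $C_\fp$ is the étale kernel, so $\phi/C_\fp \cong \phi^{(\fp)}$ via the quotient by Frobenius), the map $(\phi,C_\fm)\mapsto(\phi^{(\fp)},C_\fm^{(\fp)})$ comes into play, giving the stated gluing recipe. Supersingular points are the only points where the two choices collapse to one, so the singular locus of $X_0(\fn)_{\F_\fp}$ is exactly the image of the supersingular locus, and away from it the reduction is a disjoint union of two smooth curves.

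For the local structure at a supersingular point $P = (\phi,C_\fm)$, I would work in the complete local ring $\cO_\fp^{\un}[\![t]\!]$ which pro-represents the deformation functor of $(\phi,C_\fm)$ on the fine moduli problem with sufficient extra level structure. The universal Drinfeld module $\tilde\phi$ over this ring has $\tilde\phi_\fp$ a power series whose Newton polygon is a single segment of slope $1/|\fp|$ (after a change of variable, supersingular Frobenius has height $2\deg\fp$ over height $\deg\fp$), and factorizations $\tilde\phi_\fp = g\circ h$ into two degree-$|\fp|$ pieces correspond bijectively to cyclic subgroups $C_\fp$. A direct calculation à la Katz–Mazur (cf.\ the analogue of Gross's computation of the deformation ring of a supersingular formal $A$-module) shows that if $X$ and $Y$ are parameters for the two such factorizations — giving the two formal branches through $P$ — then one has $XY = $ (unit)$\cdot \pi_\fp$ on the fine model; passing to the coarse moduli $X_0(\fn)$ introduces exactly the automorphism factor $m(P) = \#\Aut(\phi,C_\fm)/(q-1)$, since one quotients the deformation space by the residual action of $\Aut(\phi,C_\fm)/Z(\F_q)$ (recall $Z(\F_q)$ acts trivially, and by \eqref{eqAut} the remaining automorphism group is trivial unless $j(\phi)=0$). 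This yields the equation $XY = \pi_\fp^{m(P)}$ étale-locally.

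The main obstacle will be the last step: carefully setting up the formal deformation computation to extract the exponent $m(P)$, since one must either work with Drinfeld level structures (to get a fine moduli problem) and then descend, or else argue directly on the coarse moduli via an equivariant deformation argument. The combinatorial/group-theoretic bookkeeping of automorphisms at points with $j(\phi)=0$ — particularly in distinguishing whether the $\F_{q^2}^\times$-automorphisms preserve $C_\fm$ — is the delicate point, but it is essentially local and can be checked case by case using \eqref{eqAut} together with Theorem \ref{thmSSN}.
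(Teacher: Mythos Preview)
Your proposal is correct and follows the standard Deligne--Rapoport/Katz--Mazur strategy adapted to Drinfeld modules, which is precisely the approach of Gekeler in \cite[\S5]{Uber} that the paper cites (for $\fm=A$) and declares to extend easily. The paper gives no self-contained argument beyond that citation, so your sketch is in fact more detailed than what appears there; the only cosmetic slip is in identifying which of the two components carries the Frobenius-twisted parametrization (it is the component where $C_\fp$ is the \emph{connected} kernel that maps via $\beta$ to $(\phi^{(\fp)},C_\fm^{(\fp)})$), but this does not affect the substance.
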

\begin{proof}
This is proven in \cite[$\S$5]{Uber} for $\fn=\fp$, but the proof easily extends to this 
more general case. 
\end{proof}

We will compute the component group $\Phi_\fp$ using a classical theorem of Raynaud, 
but first we need to determine the number of singular points 
on $X_0(\fn)_{\F_\fp}$, and the integers $m(P)\geq 1$ defined in Theorem \ref{thmpM}. We call $m(P)$ 
the \textit{thickness} of $P$. 

Let $\fm=\prod_{1\leq i\leq s}\fp_i^{r_i}$ be the prime decomposition of $\fm$. Define  
$$
R(\fm)=\begin{cases}
1 & \text{if $\deg(\fp_i)$ is even for all $1\leq i\leq s$}\\
0 & \text{otherwise}
\end{cases}
$$
$$
L(\fm)=\#\p^1(A/\fm)=\prod_{1\leq i\leq s}|\fp_i|^{r_i-1}(|\fp_i|+1). 
$$ 
If $\fm=A$, we put $s=0$ and $L(\fm)=R(\fm)=1$.
\begin{lem}\label{lem4.6} The number of supersingular points on $X_0(\fm)_{\F_\fp}$ 
is 
$$
S(\fp, \fm)=
\begin{cases}
\frac{|\fp|-1}{q^2-1}L(\fm) & \text{if $\deg(\fp)$ is even};\\
\frac{|\fp|-q}{q^2-1}L(\fm) + \frac{L(\fm)+q2^{s}R(\fm)}{q+1} & \text{if $\deg(\fp)$ is odd}. 
\end{cases}
$$
The thickness of a supersingular point on $X_0(\fm)_{\F_\fp}$ 
is either $1$ or $q+1$. Supersingular points with thickness $q+1$ can exist only if 
$\deg(\fp)$ is odd and their number is $2^{s}R(\fm)$. 
\end{lem}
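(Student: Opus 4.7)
The approach is to study pairs $(\phi,C_\fm)$ by first forgetting $C_\fm$ and then counting $\Aut(\phi)$-orbits of cyclic subgroups of order $\fm$, using Theorem \ref{thmSSN} to enumerate supersingular $\phi$ over $\overline{\F}_\fp$. Recall from (\ref{eqAut}) that $\Aut(\phi)=\F_q^\times$ unless $j(\phi)=0$, in which case $\Aut(\phi)=\F_{q^2}^\times$, and that a $j=0$ supersingular $\phi_0$ exists iff $\deg(\fp)$ is odd. The scalars $\F_q^\times$ act trivially on subgroup schemes of $\phi$, so every $\phi$ with $j(\phi)\neq 0$ contributes exactly $L(\fm)=\#\p^1(A/\fm)$ isomorphism classes $(\phi,C_\fm)$. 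Combined with Theorem \ref{thmSSN}, this already settles the case $\deg(\fp)$ even and reduces the odd case to counting pairs with $\phi=\phi_0$.

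For that count I will apply Burnside's lemma to the action of $\F_{q^2}^\times/\F_q^\times$ (order $q+1$) on cyclic submodules of order $\fm$ in $\phi_0[\fm]$. The identity contributes $L(\fm)$. For any non-identity class, a representative $\xi\in\F_{q^2}^\times\setminus\F_q^\times$ generates $\F_{q^2}$ over $\F_q$, and by the Chinese Remainder Theorem the set of $\xi$-invariant cyclic subgroups of order $\fm$ factors through the primes dividing $\fm$. At each such prime $\fp_i$ the minimal polynomial of $\xi$ over $\F_q$ splits mod $\fp_i$ (and, by Hensel, in $A/\fp_i^{r_i}$) precisely when $\F_{q^2}\hookrightarrow A/\fp_i$, i.e.\ when $\deg(\fp_i)$ is even; in that case $\phi_0[\fp_i^{r_i}]$ is the sum of two $\xi$-eigen free rank-one summands and a short eigenspace argument shows these are the only $\xi$-invariant cyclic submodules of full order, giving the local count $2$. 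When $\deg(\fp_i)$ is odd, $\xi$ has no eigenvalue in $A/\fp_i^{r_i}$ and the local count is $0$. The local counts multiply to $2^s R(\fm)$, so Burnside produces
\[
N=\frac{L(\fm)+q\cdot 2^s R(\fm)}{q+1},
\]
which, added to the main term, gives the claimed formula for $S(\fp,\fm)$.

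For the thicknesses I will use $m(P)=\#\Aut(\phi,C_\fm)/(q-1)$, noting that $\Aut(\phi,C_\fm)\supset\F_q^\times$ always. Hence $m(P)\neq 1$ is possible only when $\Aut(\phi)$ strictly contains $\F_q^\times$, i.e.\ when $j(\phi)=0$ and $\deg(\fp)$ is odd; then $\Aut(\phi_0,C_\fm)$ is either $\F_q^\times$ or all of $\F_{q^2}^\times$, so $m(P)\in\{1,q+1\}$. The case $m(P)=q+1$ occurs exactly when $C_\fm$ is fixed by some (hence any) non-scalar $\xi$, and the per-prime analysis above supplies precisely $2^s R(\fm)$ such $C_\fm$, each forming a singleton $\F_{q^2}^\times$-orbit. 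The main technical obstacle is the local analysis at primes of even degree: one must establish that a $\xi$-invariant cyclic $A$-submodule of order $\fp_i^{r_i}$ in $\phi_0[\fp_i^{r_i}]$ is forced to lie entirely in a single eigen summand, which I plan to handle by lifting a generator to $V_\alpha\oplus V_\beta$ and exploiting that $\tilde\alpha-\tilde\beta\in(A/\fp_i^{r_i})^\times$ to eliminate one component.
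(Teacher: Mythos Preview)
Your proposal is correct and follows essentially the same approach as the paper: fix a supersingular $\phi$, count $\Aut(\phi)$-orbits of cyclic subgroups of order $\fm$, and use Theorem~\ref{thmSSN} together with the Chinese Remainder decomposition $\p^1(A/\fm)\cong\prod_i\p^1(A/\fp_i^{r_i})$ to analyze the $j(\phi)=0$ case prime by prime. The only cosmetic difference is that the paper phrases the orbit count via orbit--stabilizer (either the stabilizer is $\F_q^\times$ and the orbit has length $q+1$, or the stabilizer is all of $\F_{q^2}^\times$ and the point is fixed), whereas you invoke Burnside; your explicit eigenspace argument using that $\tilde\alpha-\tilde\beta$ is a unit in $A/\fp_i^{r_i}$ replaces the paper's citation of \cite[p.~695]{GN} for the local count of two fixed points when $\deg(\fp_i)$ is even.
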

\begin{proof} Let $\phi$ be a fixed Drinfeld module of rank $2$ over $\overline{\F}_\fp$. 
Since $\fm$ is assumed to be coprime to $\fp$, the number of distinct cyclic subgroups $C_\fm\subset \phi[\fm]$ 
is $L(\fm)$. If $\Aut(\phi)=\F_q^\times$, then all pairs $(\phi, C_\fm)$ are non-isomorphic, 
since $\F_q^\times$ fixes each of them. Therefore, all these pairs correspond to distinct points on 
$X_0(\fm)_{\F_\fp}$. We know from (\ref{eqAut}) that $\Aut(\phi)\neq \F_q^\times$ 
if and only if $j(\phi)=0$. Since Theorem \ref{thmSSN} gives the number of 
isomorphism classes of supersingular Drinfeld modules, the claim of the lemma follows if 
we exclude the case with $j(\phi)=0$. 

Now assume $j(\phi)=0$. Then $\Aut(\phi)\cong \F_{q^2}^\times$. We can identify 
the set of cyclic subgroups of $\phi$ of order $\fm$ with $\p^1(A/\fm)$. From this perspective, 
the action of $\Aut(\phi)$ on this set is induced from an embedding 
$$
\F_{q^2}^\times \hookrightarrow \GL_2(A/\fm)\cong \Aut(\phi[\fm]), 
$$
with $\GL_2(A/\fm)$ acting on $\p^1(A/\fm)$ in the usual manner. We can decompose 
$$\p^1(A/\fm)\cong \prod_{1\leq i\leq s}\p^1(A/\fp_i^{r_i})$$ with $\GL_2(A/\fm)$ 
acting on $\p^1(A/\fp_i^{r_i})$ via its quotient $\GL_2(A/\fp_i^{r_i})$. The image 
of $\F_{q^2}^\times$ in $\GL_2(A/\fm)$ is a maximal non-split torus in $\GL_2(\F_q)$. 
If the stabilizer $\Stab_{\F_{q^2}^\times}(P)$ of $P\in \p^1(A/\fp_i^{r_i})$ is strictly larger 
than $\F_{q}^\times$, then it is easy to see that in fact $\Stab_{\F_{q^2}^\times}(P)=\F_{q^2}^\times$. 
Moreover, this is possible if and only if $\F_{q^2}\hookrightarrow A/\fp_i^{r_i}$, in which 
case there are exactly $2$ fixed points in $\p^1(A/\fp_i^{r_i})$ 
under the action of $\F_{q^2}$; cf. \cite[p. 695]{GN}. Note that the existence of 
an embedding $\F_{q^2}\hookrightarrow A/\fp_i^{r_i}$ is equivalent to $\deg(\fp_i)$ being even. 
We conclude that $\F_{q^2}^\times$ acting on the set of $L(\fm)$ pairs $(\phi, C_\fm)$ has $2^sR(\fm)$ 
fixed elements, and the orbit of any other element has length $q+1$. This implies that there are 
$$
\frac{L(\fm)-2^sR(\fm)}{q+1}+2^{s}R(\fm)=\frac{L(\fm)+q2^sR(\fm)}{q+1}
$$
points on $X_0(\fm)_{\F_\fp}$ corresponding to $\phi$ with $j(\phi)=0$. Finally, 
by Theorem \ref{thmSSN}, the Drinfeld module with $j(\phi)=0$ is supersingular 
if and only if $\deg(\fp)$ is odd. 
\end{proof}

\begin{thm}\label{thmCG} Assume $\fn=\fp\fm$, with $\fp$ prime not dividing $\fm$.
If $\deg(\fp)$ is odd and $\fm=A$, then 
$$\Phi_\fp\cong \Z/\left((q+1)(S(\fp,A)-1)+1\right)\Z.$$
If $\deg(\fp)$ is even or $\fm$ has a prime divisor of odd degree, then 
$$
\Phi_\fp\cong \Z/S(\fp, \fm)\Z. 
$$ 
Finally, 
if $\deg(\fp)$ is odd, $\fm\neq A$, and all prime divisors of $\fm$ have even degrees, then 
$$
\Phi_\fp\cong \Z/\left((q+1)^2S(\fp, \fm)-q(q+1)2^{s}\right)\Z\bigoplus_{1\leq i\leq 2^{s}-2}\Z/(q+1)\Z. 
$$
$($The isomorphisms above are meant only as isomorphisms of groups, not group schemes, so 
$\Phi_\fp$ can have non-trivial $G_{\F_\fp}$-action.$)$
\end{thm}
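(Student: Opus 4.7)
The plan is to compute $\Phi_\fp$ as the Jacobian of an explicit dual graph via Raynaud's description of the N\'eron component group. By Theorem \ref{thmpM}, the proper flat model of $X_0(\fn)$ over $\cO_\fp$ has reduced special fiber equal to two copies of $X_0(\fm)_{\F_\fp}$ meeting transversally at the $S(\fp,\fm)$ supersingular points, where each crossing has thickness $m(P)\in\{1,q+1\}$; by Lemma \ref{lem4.6}, exactly $c := \delta\cdot 2^s R(\fm)$ of them have thickness $q+1$, with $\delta = 1$ if $\deg(\fp)$ is odd and $\delta = 0$ otherwise.

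First I would resolve each thickness-$(q+1)$ singularity $XY = \fp^{q+1}$ by the standard blow-up, inserting a chain of $q$ rational curves of self-intersection $-2$. The resulting regular semistable model has dual graph $\Gamma$ with two distinguished vertices $v_1,v_2$, joined by $N_1 := S(\fp,\fm) - c$ direct edges and by $c$ pairwise internally disjoint paths of length $q+1$. By the Raynaud-Bosch-L\"utkebohmert theorem, $\Phi_\fp$ is then canonically isomorphic (as abstract abelian group) to the cokernel of the monodromy pairing $M\colon H_1(\Gamma,\Z)\to H_1(\Gamma,\Z)^\ast$; choosing any one of the $k := N_1+c$ paths as a base path and using the remaining $k-1$ paths as a $\Z$-basis of $H_1(\Gamma,\Z)$, $M$ takes the explicit form $M_{ij} = \ell_1 + \delta_{ij}\ell_{i+1}$ with $\ell_\bullet\in\{1,q+1\}$.

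The computation then splits into three configurations, matching the three clauses of the theorem. In the middle case $c=0$, $M$ is the $1\times 1$ matrix $(S(\fp,\fm))$ and $\Phi_\fp \cong \Z/S(\fp,\fm)\Z$. In the first case (so $\fm = A$ with $\deg(\fp)$ odd), $c = 1$, and taking a direct edge as base, $M$ is $N_1\times N_1$ with all off-diagonal entries $1$, $N_1 - 1$ diagonal entries equal to $2$, and a single diagonal entry equal to $q+2$; elementary row and column operations collapse $M$ to the single invariant factor $(q+1)N_1 + 1$. In the remaining case $c\geq 2$, again taking a direct edge as base (or a chain if $N_1 = 0$), $M$ has the form $I' + \vec{1}\vec{1}^T$ with $I' = \mathrm{diag}(\underbrace{1,\ldots,1}_{N_1-1},\underbrace{q+1,\ldots,q+1}_{c})$; subtracting the first row from the others and combining columns first reduces $M$ to $I_{N_1-1}$ in the ``short block'' and to the $(c+1)\times(c+1)$ block $\begin{pmatrix} N_1+c & \vec{1}^T \\ -\vec{1} & (q+1)I_c\end{pmatrix}$, whose Smith normal form can be computed directly to yield one invariant factor of order $(q+1)\bigl((q+1)N_1 + c\bigr) = (q+1)^2 S(\fp,\fm) - q(q+1)\cdot 2^s$ together with $c - 2$ factors of $q+1$.

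The principal obstacle is this final Smith normal form calculation: separating the single large cyclic summand from the $(c-2)$-fold $\Z/(q+1)\Z$ contribution cleanly over $\Z$. The total order can be cross-checked by the matrix-tree identity $|\Phi_\fp| = \det M = (q+1)^{c-1}\bigl((q+1)N_1 + c\bigr)$, confirming both the order and the absence of further invariant factors.
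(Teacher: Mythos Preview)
Your approach is exactly the paper's: it simply cites Corollary~11 on p.~285 of \cite{NM} (Raynaud's description of $\Phi_\fp$ from the intersection data of a regular model) together with Theorem~\ref{thmpM} and Lemma~\ref{lem4.6}, and you are unpacking that citation via the dual graph and the monodromy pairing on $H_1(\Gamma,\Z)$. The strategy and case split are correct, and the determinant check $|\Phi_\fp|=(q+1)^{c-1}\bigl((q+1)N_1+c\bigr)$ is right.

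Two concrete slips to fix. In the case $c=0$ the matrix $M$ is not $1\times1$: it is $(S(\fp,\fm)-1)\times(S(\fp,\fm)-1)$ of the form $I+J$, whose Smith normal form is $\mathrm{diag}(1,\dots,1,S(\fp,\fm))$, so the conclusion stands but the description does not. In the case $c\ge2$ with $N_1\ge1$, your block bookkeeping is off by one: $M$ has size $N_1+c-1$, so an $I_{N_1-1}$ block together with a $(c+1)\times(c+1)$ block is one row too many, and indeed your displayed block $\begin{pmatrix}N_1+c & \vec 1^{\,T}\\ -\vec 1 & (q+1)I_c\end{pmatrix}$ has determinant $(q+1)^{c-1}\bigl((q+1)N_1+(q+2)c\bigr)$, not $(q+1)^{c-1}\bigl((q+1)N_1+c\bigr)$. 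The correct reduction (for $N_1\ge1$, base a direct edge) gives $M=I'+\vec 1\vec 1^{\,T}$ with $I'=\mathrm{diag}(\underbrace{1,\dots,1}_{N_1-1},\underbrace{q+1,\dots,q+1}_{c})$; computing $\gcd$'s of $k\times k$ minors yields SNF $\mathrm{diag}(\underbrace{1,\dots,1}_{N_1},\underbrace{q+1,\dots,q+1}_{c-2},(q+1)((q+1)N_1+c))$, and the $N_1=0$ case is the scaled matrix $(q+1)(I_{c-1}+J_{c-1})$. With these corrections your plan goes through.
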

\begin{proof}
This follows from Corollary 11 on page 285 in \cite{NM}, combined with Lemma \ref{lem4.6} and Theorem \ref{thmpM}. 
This result for $\fn=\fp$ is also in \cite{Uber}. 
\end{proof}

\begin{prop}\label{prop5.3} Assume $\fn=\fp\fq$ is a product of two distinct primes. Denote 
$$
n(\fp, \fq)=\frac{(|\fp|-1)(|\fq|+1)}{q^2-1}. 
$$
Let $\Phi_\fp(\F_\fp)$ be the subgroup of $\Phi_\fp$ fixed by $G_{\F_\fp}$. 
If $\deg(\fp)$ is odd and $\deg(\fq)$ is even, then 
$$
\Phi_\fp\cong \Z/(q+1)^2n(\fp, \fq)\Z,\qquad \Phi_\fp(\F_\fp)\cong \Z/(q+1)n(\fp, \fq)\Z. 
$$
Otherwise, $$\Phi_\fp(\F_\fp)=\Phi_\fp \cong \Z/n(\fp, \fq)\Z. $$  
\end{prop}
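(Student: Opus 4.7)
The plan is to apply Theorem~\ref{thmCG} with $\fm=\fq$, combined with the enumeration in Lemma~\ref{lem4.6}, to obtain the isomorphism type of $\Phi_\fp$, and then to determine the Galois action by inspecting the action of Frobenius on the singular points of the special fiber.

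For the group structure, note that with $\fm=\fq$ we have $s=1$, $L(\fq)=|\fq|+1$, and $R(\fq)=1$ precisely when $\deg(\fq)$ is even. When $\deg(\fp)$ is even, $S(\fp,\fq)=\frac{(|\fp|-1)(|\fq|+1)}{q^2-1}=n(\fp,\fq)$ directly from Lemma~\ref{lem4.6}, so Theorem~\ref{thmCG} yields $\Phi_\fp\cong \Z/n(\fp,\fq)\Z$. When both $\deg(\fp)$ and $\deg(\fq)$ are odd, $R(\fq)=0$ and $S(\fp,\fq)=\frac{(|\fp|-q)(|\fq|+1)}{q^2-1}+\frac{|\fq|+1}{q+1}$ collapses, after combining over a common denominator, to $n(\fp,\fq)$; since $\fq$ has odd degree, Theorem~\ref{thmCG} again gives $\Phi_\fp\cong \Z/n(\fp,\fq)\Z$. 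Finally, when $\deg(\fp)$ is odd and $\deg(\fq)$ is even, the direct summand $\bigoplus_{1\leq i\leq 2^s-2}\Z/(q+1)\Z$ appearing in Theorem~\ref{thmCG} is empty, and a short computation verifies $(q+1)^2 S(\fp,\fq)-2q(q+1)=(q+1)^2 n(\fp,\fq)$, so $\Phi_\fp\cong \Z/(q+1)^2 n(\fp,\fq)\Z$.

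Next I would address the Galois action. In the first two cases, Lemma~\ref{lem4.6} shows that every singular point has thickness one and descends to $\F_\fp$: for $\deg(\fp)$ even, Theorem~\ref{thmSSN} rules out $j=0$ among supersingular Drinfeld modules, so all relevant $(\phi,C_\fq)$ have $\Aut=\F_q^\times$ and their moduli are $\F_\fp$-rational; when both degrees are odd, $R(\fq)=0$ means that every $\F_{q^2}^\times$-orbit on $\p^1(A/\fq)$ has length $q+1$ and so contributes a single $\F_\fp$-rational moduli point. In both cases Frobenius acts trivially on the dual graph of the minimal regular model, which forces $\Phi_\fp(\F_\fp)=\Phi_\fp$.

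The interesting case is $\deg(\fp)$ odd and $\deg(\fq)$ even. Here Lemma~\ref{lem4.6} produces exactly two singular points of thickness $q+1$, both coming from the $j=0$ Drinfeld module $\phi_0$ paired with the two $\F_{q^2}^\times$-fixed cyclic subgroups of $\phi_0[\fq]$; these are the eigenlines of the $\F_{q^2}^\times$-action, and correspond to the two embeddings $\F_{q^2}\hookrightarrow A/\fq$. Since $\deg(\fp)$ is odd, $\F_{q^2}\not\subseteq \F_\fp=\F_{q^{\deg(\fp)}}$, so $\Frob_\fp$ interchanges these two embeddings and hence swaps the two thick singular points while fixing every other singular point. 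I would then use the monodromy description $\Phi_\fp\cong H_1(\Gamma,\Z)^\vee/H_1(\Gamma,\Z)$ where $\Gamma$ is the dual graph with two vertices and $N=S(\fp,\fq)$ edges of thicknesses $m_1=m_2=q+1$, $m_3=\cdots=m_N=1$, and the inclusion is induced by the pairing $\langle\alpha,\beta\rangle=\sum_e m_e\alpha_e\beta_e$. In the basis $\gamma_i=e_i-e_1$ of $H_1(\Gamma,\Z)$ the swap of $e_1,e_2$ acts by $\Frob_\fp(\gamma_2)=-\gamma_2$ and $\Frob_\fp(\gamma_i)=\gamma_i-\gamma_2$ for $i\geq 3$, from which a direct computation shows that the class $[\sum a_j\gamma_j^\ast]\mapsto a_2\bmod(q+1)$ is a well-defined surjection $\Phi_\fp\twoheadrightarrow \Z/(q+1)\Z$ whose kernel is exactly $\Phi_\fp^{\Frob_\fp}$. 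Since $\Phi_\fp$ is cyclic, this kernel is cyclic of order $(q+1)n(\fp,\fq)$, as required.

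The main obstacle is this last step: one must pin down Frobenius explicitly on the monodromy lattice, verify compatibility with the pairing (i.e.\ $F^T M F=M$), and then identify the correct projection $\Phi_\fp\to \Z/(q+1)\Z$. The bookkeeping is linear algebra, but the delicate point is connecting the geometric fact that Frobenius swaps the two thick singular points with the concrete involution on $H_1(\Gamma,\Z)$ and its monodromy dual.
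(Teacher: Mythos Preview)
Your computation of the group structure via Theorem~\ref{thmCG} and Lemma~\ref{lem4.6} is fine and matches the paper. The difficulty is entirely in the Galois action, and here there is a genuine gap.

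You repeatedly assert that the thickness-one singular points are $\F_\fp$-rational (``their moduli are $\F_\fp$-rational'', ``fixing every other singular point''). This is not justified and is in general false: supersingular $j$-invariants lie in $\F_{\fp^2}$, not in $\F_\fp$, and $\Frob_\fp$ can permute the supersingular points on $X_0(\fq)_{\F_\fp}$ nontrivially. So your description of the Frobenius action on the basis $\gamma_i=e_i-e_1$ is not established. The paper sidesteps this issue entirely. In the easy cases it does not look at singular points at all: the two irreducible components $Z,Z'\cong X_0(\fq)_{\F_\fp}$ are defined over $\F_\fp$, so $z=Z-Z'$ is Frobenius-fixed and already generates $B^0(\widetilde{X}_k)$, hence $\Phi_\fp$. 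In the hard case the paper passes to the minimal regular model $\widetilde{X}$, where the only new components are the two chains $E_1,\dots,E_q$ and $G_1,\dots,G_q$ of exceptional curves over the two thick points; $\Frob_\fp$ swaps $P\leftrightarrow Q$ (your argument for this is correct) and hence swaps $E_i\leftrightarrow G_i$, and then one invokes \cite[Thm.~4.1]{PapikianJNT} to read off the action on an explicit generator $e_q$ of $\Phi_\fp$.

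Your monodromy approach via the weighted dual graph can be salvaged, but you must add one observation: any permutation of the thickness-one edges acts \emph{trivially} on $\Phi_\fp$. Indeed, for $i,j\geq 3$ one has $\langle e_i-e_j,\gamma_k\rangle=\delta_{ik}-\delta_{jk}$ since $m_i=m_j=1$, so $\gamma_i^*-\gamma_j^*$ lies in the image of the monodromy map and $\gamma_i^*\equiv\gamma_j^*$ in $\Phi_\fp$. Once this is in place, the unknown permutation of thin edges is irrelevant, and your computation of the quotient $\Phi_\fp\twoheadrightarrow\Z/(q+1)\Z$ detecting the $\gamma_2^*$-coefficient goes through. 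Without this step, the argument as written is incomplete.
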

\begin{proof} To simplify the notation, denote $X=X_0(\fp\fq)_{\cO_\fp}$ and $k=\F_\fp$.  
Let $\widetilde{X}\to X$ be the minimal resolution of $X$. 
We know that $X_k$ 
consists of two irreducible components, both isomorphic to $X_0(\fq)_k$. If $\deg(\fp)$ is even or $\deg(\fq)$ is odd, then $X=\widetilde{X}$. 
On the other hand, if $\deg(\fp)$ is odd and $\deg(\fq)$ is even, then there are 
two points $P$ and $Q$ of thickness $q+1$. 
To obtain the minimal regular model one performs a sequence of $q$ blows-ups at $P$ and $Q$. 
With the notation of \cite[$\S$4.2]{PapikianJNT}, let $E_1, \dots, E_q$ 
and $G_1, \dots, G_q$ be the chain of projective lines resulting from these blow-ups at $P$
and $Q$, respectively. Let $Z$ and $Z'$ denote the two copies of $X_0(\fq)_k$, 
with the convention that $E_1$ and $G_1$ intersect $Z$. Let $B(\widetilde{X}_k)$ 
be the free abelian group generated by the set of geometrically irreducible components of $\widetilde{X}_k$. 
Let $B^0(\widetilde{X}_k)$ be the subgroup of degree-$0$ elements in $B(\widetilde{X}_k)$. 
The theorem of Raynaud that we mentioned earlier gives an explicit description of $\Phi_\fp$ 
as a quotient of $B^0(\widetilde{X}_k)$; cf. \cite[$\S$4.2]{PapikianJNT}. 
Let $\Frob_\fp: \alpha\to \alpha^{|\fp|}$ be the usual topological generator of $G_k$. 
The Frobenius $\Frob_\fp$ naturally acts 
on the geometrically irreducible components of $\widetilde{X}_k$, and therefore on $B^0(\widetilde{X}_k)$. 
Since $X_0(\fq)_k$ is defined over $k$, the action of $\Frob_\fp$ fixes $z:=Z-Z' \in B^0(\widetilde{X}_k)$. 
If $\deg(\fp)$ is even or $\deg(\fq)$ is odd, then $z$ generates $B^0(\widetilde{X}_k)$, so 
$G_k$ acts trivially on $\Phi_\fp$, which by Theorem \ref{thmCG} is isomorphic to $\Z/n(\fp, \fq)\Z$.  

From now on we assume $\deg(\fp)$ is odd and $\deg(\fq)$ is even. The claim $\Phi_\fp\cong \Z/(q+1)^2n(\fp, \fq)\Z$ 
follows from Theorem \ref{thmCG}. 
Let $\phi$ be given by $\phi_T=T+\tau^2$. Note that $j(\phi)=0$ and $\phi$ 
is defined over $k$, so $\Frob_\fp$ acts on the set of cyclic subgroups 
$C_\fq\subset \phi$. 
Denote $A'=\F_{q^2}[T]$. It is easy to see that $A'\subseteq \End(\phi)$. 
Since $\fq=\fq_1\fq_2$ splits in $A'$ into a product 
of two irreducible polynomials of the same degree, we have 
$$
\phi[\fq]\cong A'/\fq\cong A'/\fq_1\oplus A'/\fq_2\cong A/\fq\oplus A/\fq\cong \phi[\fq_1]\oplus \phi[\fq_2]. 
$$ 
The above decomposition is preserves under the action of $\F_{q^2}$. 
In particular, $\Aut(\phi, \phi[\fq_i])\cong \F_{q^2}^\times$. On the other hand, 
since $\fp$ has odd degree, this decomposition is not defined over $k$ and 
$\Frob_\fp(\phi[\fq_1])=\phi[\fq_2]$. We conclude that $P=(\phi, \phi[\fq_1])$, 
$Q=(\phi, \phi[\fq_2])$ and $\Frob_\fp(P)=Q$. Since the action of $\Frob_\fp$ preserves the 
incidence relations of the irreducible components of $\widetilde{X}_k$, 
we have $\Frob_\fp(E_i)=G_i$, $1\leq i\leq q$. 
Following the notation of Theorem 4.1 in \cite{PapikianJNT}, let $e_q=E_q-Z'$ and $g_q=G_q-Z'$. According 
to that theorem, the image of $e_q$ in $\Phi_\fp$ generates $\Phi_\fp$, 
and in the component group we have $g_q=-\left((q+1)(S(\fp, \fq)-2)+1\right)e_q$. 
Thus, the action of $\Frob_\fp$ on $\Phi_\fp$ in terms of the generator $e_q$ 
is given by $\Frob_\fp(e_q)=-\left((q+1)(S(\fp, \fq)-2)+1\right)e_q$. This implies that 
$ae_q\in \Phi_\fp(\F_\fp)$ if and only if 
$$
ae_q=\Frob_\fp(ae_q)=-a\left((q+1)(S(\fp, \fq)-2)+1\right)e_q,
$$  
which is equivalent to $a\left((q+1)S(\fp, \fq)-2q\right)e_q=0$. 
Hence $a$ must be a multiple of $q+1$, and  
$$
\Phi_\fp(\F_\fp)=\langle (q+1)\rangle\cong \Z/(q+1)n(\fp, \fq)\Z. 
$$
\end{proof}
\begin{comment}
\begin{cor}
Assume $\fn=\fp\fq$ is a product of two distinct primes. Denote 
$$
n_{\fp, \fq}=\frac{(|\fp|-1)(|\fq|+1)}{q^2-1}. 
$$
If $\deg(\fp)$ is odd and $\deg(\fq)$ is even, 
then 
$$
\Phi_\fp\cong \Z/n_{\fp, \fq}(q+1)^2\Z, \quad \Phi_\fp(\F_\fp)\cong \Z/n_{\fp, \fq}(q+1)\Z.
$$ 
Otherwise, $\Phi_\fp=\Phi_\fp(\F_\fp)\cong \Z/n_{\fp, \fq}\Z$. 
\end{cor}
\end{comment}

The existence of rigid-analytic uniformization (\ref{eqGRs}) of $J$ 
implies that $J$ has totally degenerate reduction at $\infty$, i.e., 
$\cJ^0_{\F_\infty}$ is a split algebraic torus over $\F_\infty$. 
The problem of explicitly describing $\Phi_\infty$ is closely related to the problem of 
describing $\G_0(\fn)\bs\sT$ together with the stabilizers of its edges; cf. \cite[$\S$5.2]{PapikianJNT}. 
Since the graph $\G_0(\fn)\bs\sT$ becomes very complicated 
as $|\fn|$ grows, no explicit description of $\Phi_\infty$ is known in general. 
On the other hand, $\Phi_\infty$ has a description in terms of the uniformization of $J$. 
Let $\langle\cdot, \cdot \rangle: \bG\times \bG\to \Z$ 
be the pairing (\ref{eqMoPaInf}). 
This pairing is bilinear, symmetric, and positive-definite, so it induces an injection 
$\iota: \bG\to \Hom(\bG, \Z)$, $\gamma\mapsto \langle \gamma, \cdot\rangle$. 
Identifying $\bG$ with $\cH_0(\fn,\Z)$ via (\ref{eqj}), 
we get an injection $\iota: \cH_0(\fn,\Z)\to \Hom(\cH_0(\fn,\Z), \Z)$. 
The Hecke algebra $\T(\fn)$ acts 
on $\Hom(\cH_0(\fn,\Z), \Z)$ through its action on the first argument. The action of $\T(\fn)$ on $J$, 
by the N\'eron mapping property, canonically extends to an action on $\cJ$. Thus, $\T(\fn)$ 
functorially acts on $\Phi_\infty$. 

\begin{thm}\label{thmCGinf} The absolute Galois group $G_{\F_\infty}$ acts trivially on $\Phi_\infty$, 
and there is a $\T(\fn)^0$-equivariant exact sequence 
$$
0\to \cH_0(\fn,\Z)\xrightarrow{\iota}\Hom(\cH_0(\fn,\Z), \Z)\to \Phi_\infty\to 0. 
$$
\end{thm}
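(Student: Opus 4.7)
The plan is to deduce both the Galois-triviality and the exact sequence from Grothendieck's description of the component group of an abelian variety with split toric reduction (SGA 7, Exp.~IX), applied to the rigid-analytic uniformization (\ref{eqGRs}) of $J$ at $\infty$.

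First, since $J$ has totally degenerate reduction at $\infty$ and the uniformization (\ref{eqGRs}) is already defined over $\Fi$ itself rather than over some unramified extension, the formal torus underlying $\cJ^0_{\cO_\infty}$ is split. Its character group is canonically identified with $\bG$, carrying trivial $G_{\F_\infty}$-action. Since $\Phi_\infty$ is functorially built from this character group and the monodromy pairing (both $G_{\F_\infty}$-invariant), the Galois action on $\Phi_\infty$ is trivial, giving the first assertion.

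For the exact sequence, Grothendieck's formula asserts that for an abelian variety with split toric reduction, if $X$ denotes the character group of the formal torus, $Y$ the uniformizing lattice, and $u \colon Y \to \Hom(X,\Z)$ the monodromy map, then $\Phi_\infty \cong \Hom(X,\Z)/u(Y)$. In our setting both $X$ and $Y$ are identified with $\bG$, the embedding of $Y$ into the analytic torus $\Hom(X,\C_\infty^\times)$ is $\alpha \mapsto c_\alpha(\cdot)$ by (\ref{eqGRs}), and composing with $\ord_\infty$ yields precisely the pairing $\langle\cdot,\cdot\rangle$ of (\ref{eqMoPaInf}); thus $u = \iota$. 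Transporting through the isomorphism $j \colon \bG \xrightarrow{\sim} \cH_0(\fn,\Z)$ of (\ref{eqj}) produces the claimed exact sequence. The $\T(\fn)^0$-equivariance is then automatic: (\ref{eqGRs}) and $j$ are $\T(\fn)^0$-equivariant by \cite[(9.4), (3.3.3)]{GR}, the pairing $\langle\cdot,\cdot\rangle$ is canonical, and the resulting action on the cokernel matches the Hecke action on $\Phi_\infty$ induced by functoriality of the N\'eron model.

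The main point requiring verification is the identification of the monodromy pairing of Grothendieck's theorem with $\langle\cdot,\cdot\rangle$ as defined in (\ref{eqMoPaInf}). This is not a deep obstacle but does require unravelling the construction of the uniformization in \cite{GR}: the period of a lattice element $\alpha \in \bG$ is encoded in the multiplier $c_\alpha$, so its valuation at $\infty$ recovers the monodromy pairing on the nose. Once this identification is in hand, the theorem reduces to a direct invocation of the general structure result for component groups in the split totally degenerate case.
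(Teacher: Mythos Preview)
Your approach is correct and is essentially the mechanism underlying the paper's proof: the paper simply cites Gekeler's Corollary~2.11 in \cite{Analytical} for the exact sequence, invokes the $\T(\fn)^0$-equivariance of (\ref{eqGRs}), and observes that split toric reduction forces trivial Galois action on $\Phi_\infty$. What you have written is a self-contained unpacking of that citation via Grothendieck's monodromy-pairing description from SGA~7; indeed, the paper itself uses exactly this SGA~7 argument later in \S\ref{ssRAUJ'} for the quaternionic component group (see the sentence preceding (\ref{eqPhiJ'inf})), so your route is the natural one.
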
 
\begin{proof}
The exact sequence is the statement of Corollary 2.11 in \cite{Analytical}. The $\T(\fn)^0$-equivariance 
of this sequence follows from the $\T(\fn)^0$-equivariance of (\ref{eqGRs}). 
The fact that $G_{\F_\infty}$ acts trivially on $\Phi_\infty$ is a consequence of $\cJ^0_{\F_\infty}$ 
being a split torus. 
\end{proof}

% ------------------------------------------------------------------------

\section{Cuspidal divisor group} The \textit{cuspidal divisor group} $\cC(\fn)$ of $J:=J_0(\fn)$ 
is the subgroup of $J$ generated by the classes of divisors $[c]-[c']$, where $c, c'$  
run through the set of cusps of $X_0(\fn)_F$. 

\begin{thm}\label{thm6.1}
$\cC(\fn)$ is finite and if $\fn$ is square-free then $\cC(\fn)\subset J(F)_\tor$. 
\end{thm}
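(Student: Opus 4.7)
The plan is to exhibit enough modular units with cuspidal divisors to force finiteness of $\cC(\fn)$, and then use the rationality of cusps in the square-free case to upgrade finiteness to torsion rationality.

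For the finiteness, I would use Drinfeld's discriminant function $\Delta$ and its level translates. For each divisor $\fm$ of $\fn$, the function $\Delta_\fm(z) = \Delta(\fm z)$ is a modular form of weight $q^2-1$ for $\G_0(\fn)$ which vanishes nowhere on $\Omega$, so every ratio $u_{\fm,\fm'} := \Delta_\fm/\Delta_{\fm'}$ is a $\G_0(\fn)$-invariant element of $\cO(\Omega)^{\times}$. By the argument used in the proof of Lemma~\ref{lemPlog} (applying the non-archimedean analogue of Picard's big theorem at each cusp), such a function descends to a rational function on the projective curve $X_0(\fn)$ whose divisor is supported on the cusps. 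Using Gekeler's explicit computation of the orders of $\Delta_\fm$ at each cusp (from the Discriminant paper cited in Section~\ref{ssES}), the divisors $\mathrm{div}(u_{\fm,\fn})$, as $\fm$ runs over the proper divisors of $\fn$, span a sublattice of full rank $c(\fn)-1$ inside the free abelian group of degree-zero cuspidal divisors. This forces the image $\cC(\fn)$ in $J_0(\fn)$ to be finite.

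For the torsion statement, assume $\fn$ is square-free. By Lemma~\ref{lemCusps} the cusps of $X_0(\fn)$ are precisely the $\G_0(\fn)$-orbits $[\fm]$ for $\fm\mid\fn$, each represented by $\bigl(\begin{smallmatrix}1\\ \fm\end{smallmatrix}\bigr)\in \p^1(F)$. Hence every cusp is individually $F$-rational, so every degree-zero divisor supported on the cusps is $F$-rational, and $\cC(\fn)\subseteq J_0(\fn)(F)$. Combined with the finiteness established above, this gives $\cC(\fn)\subseteq J_0(\fn)(F)_{\tor}$.

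The main obstacle is the rank assertion in the second paragraph: one must verify that the family $\{\mathrm{div}(u_{\fm,\fn})\}_{\fm\mid \fn,\ \fm\neq \fn}$ really does rationally span the degree-zero cuspidal divisor group. For $\fn$ prime this is trivial since there are only two cusps. For composite $\fn$ the claim reduces to showing that Gekeler's matrix of vanishing orders of $\Delta_\fm$ at the cusps has maximal rank, which is a combinatorial computation. An alternative route that bypasses the rank computation is to combine Lemma~\ref{lemPlog} with the argument of Lemma~\ref{prop3.1} to deduce that each class $[\fm]-[\fn]\in \cC(\fn)$ is annihilated by the Eisenstein ideal $\fE(\fn)$, and to invoke finiteness of $J[\fE(\fn)]$ (provable by a Chebotarev/Galois representation argument as in Lemma~\ref{prop3.1}); this second strategy fits naturally with the broader theme of the paper.
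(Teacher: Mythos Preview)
Your alternative route is essentially the paper's own argument, though the paper streamlines it considerably: rather than invoking the full Eisenstein ideal and a Chebotarev/Galois-representation argument, the paper picks a single prime $\fp\nmid\fn$ and observes that the Ramanujan--Petersson bound on Hecke eigenvalues (proved by Drinfeld) forces $\eta_\fp:=T_\fp-|\fp|-1$ to be invertible on $\cH_0(\fn,\C)$, hence an isogeny of $J$. The computation in the proof of Lemma~\ref{lemPlog} then shows $\eta_\fp$ annihilates each cuspidal divisor (at least for square-free $\fn$), so $\cC(\fn)\subseteq\ker(\eta_\fp)$ is finite. This is the same idea as your alternative but needs no density theorem, no completion at maximal ideals, and no study of $J[\fE(\fn)]$---a single Hecke operator does the job. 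Your first approach via discriminant ratios is genuinely different and more in the spirit of the classical Manin--Drinfeld argument; it works for square-free $\fn$ (where the $2^s$ cusps match the $2^s$ divisors of $\fn$), modulo the rank computation you flag, but for general $\fn$ there are more cusps than divisors of $\fn$, so the functions $\Delta_\fm/\Delta_{\fm'}$ alone cannot span and you would need further modular units. The paper's Hecke-operator argument bypasses all such combinatorics.

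One small gap in your rationality paragraph: Lemma~\ref{lemCusps} only describes the cusps combinatorially as $\G_0(\fn)$-orbits on $\p^1(F)$, i.e., over $\C_\infty$ via the analytic uniformization. It does not by itself establish that each cusp is an $F$-rational point of the algebraic curve $X_0(\fn)_F$; that requires the moduli interpretation of the cusps (Tate--Drinfeld modules and their level structures). The paper cites \cite[Prop.~6.7]{Invariants} for this step rather than deducing it from Lemma~\ref{lemCusps}.
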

\begin{proof} This theorem is due to Gekeler \cite{GekelerIJM}, where 
the finiteness of $\cC(\fn)$ is proven for general congruence subgroups over 
general function fields. Since the proof is fairly short, 
for the sake of completeness, we give a sketch. The space $\cH_0(\fn, \C)$ has an interpretation 
as a space of automorphic cusp forms (cf. \cite{GR}), so 
the Ramanujan-Petersson conjecture over function fields (proved by Drinfeld) 
implies that the eigenvalues of $T_\fp$ are algebraic integers of absolute 
value $\leq 2\sqrt{|\fp|}$ for any prime $\fp\nmid \fn$. This implies that the operator 
$\eta_\fp=T_\fp-|\fp|-1$ is invertible on $\cH_0(\fn, \C)$. Hence 
$\eta_\fp: \cH_0(\fn, \Z)\to \cH_0(\fn, \Z)$ is injective with finite cokernel. 
Now using the $T_\fp$ equivariance of (\ref{eqGRs}) and (\ref{eqj}), we 
see that $\eta_\fp: J\to J$ is an isogeny.  Assume 
for simplicity that $\fn$ is square-free. In the proof of Lemma \ref{lemPlog} 
we showed that $\eta_\fp([c]-[c'])=0$. Hence $\cC(\fn)\subseteq \ker(\eta_\fp)$, 
which is finite. Finally, when $\fn$ is square-free, all the cusps of $X_0(\fn)_F$ 
are rational over $F$, so the cuspidal divisor group is in $J(F)$; see \cite[Prop. 6.7]{Invariants}. 
\end{proof}

\begin{prop}\label{prop6.3} With notation of $\S\ref{ssES}$, 
let $\mu(\fn)$ be the largest integer $\ell$ such 
that there exists an $\ell$-th root of $\Delta/\Delta_{\fn}$ in $\cO(\Omega)^{\times}$. 
Then $\mu(\fn) = (q-1)^2$ if $\deg \fn$ is odd and $\mu(\fn) = (q-1)(q^2-1)$ if $\deg \fn$ is even. 
Let $D_{\fn}$ be a $\mu(\fn)$-th root of $\Delta/\Delta_{\fn}$. 
There exists a character $\omega_{\fn}: \Gamma_0(\fn) \rightarrow \F_q^{\times}$ such that for each $\gamma \in \Gamma_0(\fn)$,
$$D_{\fn}(\gamma z)= \omega_{\fn}(\gamma) D_{\fn}(z).$$
\end{prop}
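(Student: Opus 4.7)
The plan is to exploit the van der Put sequence (\ref{eqvdPut}) together with the primitivity of $E_\fn$ to compute $\mu(\fn)$, and then analyse the character $\omega_\fn$ via Galois descent.

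For the first assertion: since $\C_\infty^\times$ is divisible, the sequence (\ref{eqvdPut}) shows that $f \in \cO(\Omega)^\times$ admits an $\ell$-th root in $\cO(\Omega)^\times$ if and only if $\ell$ divides $r(f)$ in $\cH(\sT,\Z)$. Applied to $f = \Delta/\Delta_\fn$, and combined with equation (\ref{eqEn}) together with the primitivity of $E_\fn$, this gives $\mu(\fn) = (q-1)(q^2-1)/\nu(\fn)$, which unwinds to $(q-1)^2$ when $\deg\fn$ is odd and $(q-1)(q^2-1)$ when $\deg\fn$ is even.

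For the second assertion, fix any $\mu(\fn)$-th root $D_\fn \in \cO(\Omega)^\times$ of $\Delta/\Delta_\fn$, and let $\gamma \in \G_0(\fn)$. The $\G_0(\fn)$-invariance of $\Delta/\Delta_\fn$ forces $D_\fn(\gamma z)/D_\fn(z)$ to be a $\mu(\fn)$-th root of unity at each $z \in \Omega$; since this ratio is continuous on the connected set $\Omega$ with values in the finite group $\mu_{\mu(\fn)}(\C_\infty)$, it is a constant $\omega_\fn(\gamma)$, and the assignment $\gamma \mapsto \omega_\fn(\gamma)$ is a group homomorphism $\omega_\fn: \G_0(\fn) \to \mu_{\mu(\fn)}(\C_\infty)$ independent of the choice of $D_\fn$. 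To show $\omega_\fn$ actually takes values in $\F_q^\times$, I would use a Galois descent argument. Since $\Delta$ is a Drinfeld modular form defined over $F$, the ratio $\Delta/\Delta_\fn$ descends to a rational function on the $F$-scheme $X_0(\fn)$; hence it is fixed by every $\sigma \in \Gal(\C_\infty/F)$ in its natural action $(\sigma f)(z) := \sigma(f(\sigma^{-1}z))$ on $\cO(\Omega)^\times$. It follows that $\sigma(D_\fn)$ is another $\mu(\fn)$-th root of $\Delta/\Delta_\fn$, so $\sigma(D_\fn) = \zeta_\sigma D_\fn$ for some constant $\zeta_\sigma \in \mu_{\mu(\fn)}(\bar F)$. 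Applying $\sigma$ to $D_\fn(\gamma z) = \omega_\fn(\gamma) D_\fn(z)$, using that $\gamma \in \GL_2(A)$ satisfies $\sigma(\gamma z) = \gamma \sigma(z)$, and cancelling the $\zeta_\sigma$ factors, one obtains $\sigma(\omega_\fn(\gamma)) = \omega_\fn(\gamma)$. Hence $\omega_\fn(\gamma) \in F^\times \cap \mu_{\mu(\fn)}(\bar F) = \F_q^\times$, where the last equality uses that $(q-1) \mid \mu(\fn)$ and that the only roots of unity in $F = \F_q(T)$ are those of $\F_q^\times$.

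The main obstacle lies in cleanly setting up the Galois-compatibility: one must identify $\Delta/\Delta_\fn$ with an $F$-rational function on $X_0(\fn)_F$, equivalently verify that $\Delta$ comes from a section of a line bundle on $X_0(1)$ defined over $F$ so that both pullbacks of $\Delta$ under the two degeneracy maps $X_0(\fn) \to X_0(1)$ are $F$-rational. Once this compatibility between the moduli-theoretic origin of $\Delta$ and its rigid-analytic incarnation is in place, the rest of the argument is essentially formal.
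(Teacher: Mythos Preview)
The paper does not give a proof; it simply cites Corollary~3.5 and~3.21 of Gekeler's paper \cite{Discriminant}. Your proposal therefore supplies an argument where the paper supplies only a reference.

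Your first paragraph is correct and elegant: exactness of the van der Put sequence together with divisibility of $\C_\infty^\times$ shows that $f\in\cO(\Omega)^\times$ has an $\ell$-th root if and only if $\ell \mid r(f)$ in $\cH(\sT,\Z)$, and then (\ref{eqEn}) plus the primitivity of $E_\fn$ stated in \S\ref{ssES} pins down $\mu(\fn)$. One caveat: the primitivity of $E_\fn$ is itself cited from \cite{Discriminant}, and in Gekeler's paper the computation of $\mu(\fn)$ comes first and the primitivity of $E_\fn$ is essentially a reformulation of it. So your argument is a valid internal deduction within this paper's framework, but not an independent proof of the underlying fact.

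For the second assertion, the construction of $\omega_\fn$ with values in $\mu_{\mu(\fn)}(\C_\infty)$ is fine: the ratio lies in $\cO(\Omega)^\times$ with trivial $\mu(\fn)$-th power, hence has $r=0$ by torsion-freeness of $\cH(\sT,\Z)$, hence is constant by (\ref{eqvdPut}). The Galois descent to $\F_q^\times$ is correct in outline but should be set up over $F_\infty$ rather than $F$: the space $\Omega$ and the function $\Delta$ are rigid-analytic objects over $F_\infty$, so the relevant group is $G_{F_\infty}$ acting continuously on $\C_\infty$ (with fixed field $F_\infty$ by Ax--Sen--Tate). The input you need is that $\Delta$ admits a product expansion with coefficients in $F_\infty$ (in fact in $A$), which makes $\Delta/\Delta_\fn$ visibly $G_{F_\infty}$-fixed; your cancellation argument then yields $\omega_\fn(\gamma)\in F_\infty$, and the only roots of unity in $F_\infty=\F_q((\pi_\infty))$ lie in $\F_q^\times$. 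Your framing via the $F$-rationality of $\Delta/\Delta_\fn$ as a function on $X_0(\fn)_F$ is a detour: it is true, but translating that algebraic rationality back into a statement about the rigid-analytic $D_\fn$ requires exactly the compatibility you flag as the main obstacle. Working directly over $F_\infty$ avoids this. Gekeler's own approach in \cite{Discriminant} is more constructive: he writes down an explicit formula for $\omega_\fn$ (recorded immediately after Proposition~\ref{prop6.3} in this paper), from which the $\F_q^\times$-valuedness is read off directly.
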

\begin{proof}
See Corollary 3.5 and 3.21 in \cite{Discriminant}. 
\end{proof}

If $\fn = \prod_i \fp_i^{r_i} \lhd A$ is the prime decomposition of $\fn$, 
define a character $\chi_{\fn}: \Gamma_0(\fn) \rightarrow \F_q^{\times}$ by the 
following: for $\gamma = \begin{pmatrix}a&b\\ c&d\end{pmatrix} \in \Gamma_0(\fn)$,
$$\chi_{\fn}(\gamma):= \prod_i N_i(d \bmod \fp_i)^{-r_i},$$
where $N_i: (A/\fp_i A)^{\times} \rightarrow \F_q^{\times}$ is the norm map.
Then $\omega_{\fn} = \chi_{\fn} \cdot \det^{\deg(\fn)/2}$ if $\deg(\fn)$ is even and $\omega_{\fn} = \chi_{\fn}^2 \cdot \det^{\deg(\fn)}$ if $\deg(\fn)$ is odd.
In particular, the order of $\omega_{\fn}$ is $q-1$ when $\fn$ is square-free (cf.\ \cite{Discriminant} Proposition 3.22).
By Proposition \ref{prop6.3}, we immediately get:

\begin{cor}\label{cor6.4}
${}$

\begin{itemize}
\item[(1)] 
%Let $o(\omega_{\fn})$ be the order of $\omega_{\fn}$ (which is explicitly described in \cite{G} Proposition 3.22). 
$D_{\fn}':=D_{\fn}^{q-1}$ is a meromorphic function on the Drinfeld modular curve $X_0(\fn)$ satisfying
$$(D_{\fn}')^{\frac{\mu(\fn)}{q-1}} = \frac{\Delta}{\Delta_{\fn}}.$$
%lar$D_{\fn}^{o(\omega_{\fn})}$ is the $\mu(\fn)/o(\omega_{\fn})$-th root of $\Delta/\Delta_{\fn}$ in the function field of $X_0(\fn)$.
\item[(2)] Given two ideals $\fm$ and $\fn$ of $A$,
we set $D_{\fn,\fm}(z):= D_{\fn}(z) / D_{\fn}(\fm z)$, $\forall z \in \Omega$.
%where $m$ is the monic generator of $\fm$.
Then $$(D_{\fn,\fm})^{\mu(\fn)} = \frac{\Delta \Delta_{\fm \fn}}{\Delta_{\fn}\Delta_{\fm}}= (D_{\fm,\fn})^{\mu(\fm)}$$ 
and for every $\gamma \in \Gamma_0(\fm\fn)$, we have 
$$D_{\fn,\fm}(\gamma z) = D_{\fn,\fm}(z).$$
In other words, $D_{\fn,\fm}$ and $D_{\fm,\fn}$ can be viewed as meromorphic functions on $X_0(\fm\fn)$.
\end{itemize}
\end{cor}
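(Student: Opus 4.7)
The plan is to treat the two parts separately, with the key input being the transformation law $D_{\fn}(\gamma z) = \omega_{\fn}(\gamma) D_{\fn}(z)$ and the explicit formula for the character $\omega_{\fn}$ from Proposition \ref{prop6.3}.

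For part (1), since $\omega_{\fn}$ takes values in $\F_q^\times$, whose order divides $q-1$, the function $D_{\fn}' := D_{\fn}^{q-1}$ satisfies $D_{\fn}'(\gamma z) = \omega_{\fn}(\gamma)^{q-1} D_{\fn}'(z) = D_{\fn}'(z)$ for every $\gamma \in \Gamma_0(\fn)$, so it descends to a nowhere-vanishing holomorphic function on $Y_0(\fn)(\C_\infty)$. The integer $\mu(\fn)/(q-1)$ equals $q-1$ or $q^2-1$ by Proposition \ref{prop6.3}, and
\[
(D_{\fn}')^{\mu(\fn)/(q-1)} = D_{\fn}^{\mu(\fn)} = \Delta/\Delta_{\fn}.
\]
Since $\Delta/\Delta_{\fn}$ is meromorphic on $X_0(\fn)$ (as was used in the proof of Lemma \ref{lemPlog}), the extension of $D_{\fn}'$ across the cusps must also be meromorphic, proving (1).

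For part (2), the algebraic identity is a direct computation: raising $D_{\fn,\fm}(z) = D_{\fn}(z)/D_{\fn}(\fm z)$ to the $\mu(\fn)$-th power and using $\Delta_{\fn}(\fm z) = \Delta(\fm\fn z) = \Delta_{\fm\fn}(z)$ together with $\Delta(\fm z) = \Delta_{\fm}(z)$ yields $\Delta\Delta_{\fm\fn}/(\Delta_{\fn}\Delta_{\fm})$, which is manifestly symmetric in $\fm$ and $\fn$; this symmetry gives the equality with $(D_{\fm,\fn})^{\mu(\fm)}$. The remaining task is to establish $\Gamma_0(\fm\fn)$-invariance of $D_{\fn,\fm}$. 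For $\gamma = \begin{pmatrix} a & b \\ c & d \end{pmatrix} \in \Gamma_0(\fm\fn)$, the identity
\[
\begin{pmatrix} \fm & 0 \\ 0 & 1\end{pmatrix} \gamma = \gamma' \begin{pmatrix} \fm & 0 \\ 0 & 1\end{pmatrix}, \qquad \gamma' := \begin{pmatrix} a & \fm b \\ c/\fm & d\end{pmatrix},
\]
puts $\gamma'$ in $\Gamma_0(\fn)$ since $c \in \fm\fn$; hence $D_{\fn}(\fm\gamma z) = \omega_{\fn}(\gamma')D_{\fn}(\fm z)$, and combined with $D_{\fn}(\gamma z) = \omega_{\fn}(\gamma) D_{\fn}(z)$ one obtains $D_{\fn,\fm}(\gamma z) = \omega_{\fn}(\gamma)\omega_{\fn}(\gamma')^{-1} D_{\fn,\fm}(z)$. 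Invariance is thereby reduced to the identity $\omega_{\fn}(\gamma) = \omega_{\fn}(\gamma')$.

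The main obstacle is precisely this last matching of character values. It will be handled by invoking the explicit description $\omega_{\fn} = \chi_{\fn} \cdot \det^{\deg(\fn)/2}$ (even degree) or $\chi_{\fn}^2\cdot\det^{\deg(\fn)}$ (odd degree) recorded after Proposition \ref{prop6.3}: both $\omega_{\fn}(\gamma)$ and $\omega_{\fn}(\gamma')$ are determined solely by the bottom-right entry $d$ and the determinant $ad-bc$, which coincide for $\gamma$ and $\gamma'$. Once this matching is in hand, $D_{\fn,\fm}$ descends to $Y_0(\fm\fn)(\C_\infty)$ and extends meromorphically to $X_0(\fm\fn)$ via the algebraic identity above, since its $\mu(\fn)$-th power is a ratio of products of Drinfeld modular forms of equal weight, hence a meromorphic function on the compactified modular curve; the same conclusion for $D_{\fm,\fn}$ follows by swapping the roles of $\fm$ and $\fn$.
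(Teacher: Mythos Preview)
Your proof is correct and follows exactly the approach the paper intends: the corollary is stated in the paper as an immediate consequence of Proposition~\ref{prop6.3} (the transformation law and the explicit formula for $\omega_{\fn}$), and you have simply written out those details, including the matrix conjugation $B_\fm \gamma = \gamma' B_\fm$ and the observation that $\omega_\fn(\gamma) = \omega_\fn(\gamma')$ because both matrices share the same $d$-entry and determinant. Nothing is missing.
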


\begin{rem}\label{rem6.5}
Take two coprime ideals $\fm$ and $\fn$ of $A$. By Corollary \ref{cor6.4} (1), the $(q-1)$-th roots of 
$(\Delta \Delta_{\fm})/(\Delta_{\fn} \Delta_{\fm\fn})$ always exist in the function field of $X_0(\fm\fn)_{\C_\infty}$. 
In fact, we can find a $(q^2-1)$-th root of $(\Delta \Delta_{\fm})/(\Delta_{\fn} \Delta_{\fm\fn})$ 
when $\deg \fn$ or $\deg \fm\fn$ is even. Indeed, we notice that
$$\frac{\Delta(z) \Delta_{\fm}(z)}{\Delta_{\fn}(z) \Delta_{\fm\fn}(z)}
=\frac{\Delta(z)}{\Delta_{\fn}(z)}\cdot \frac{\Delta(z')}{\Delta_{\fn}(z')} 
=\frac{\Delta(z)}{\Delta_{\fm\fn}(z)} \cdot \frac{\Delta(z'')}{\Delta_{\fm\fn}(z'')} \cdot \fm^{-(q^2-1)}.$$
Here $$z' = \fm z, \quad z'' = \frac{ \fm z+1}{b\fm\fn z + a \fm} = \begin{pmatrix}\fm&1\\ b\fm\fn&a\fm\end{pmatrix} \cdot z,$$ 
and $a, b \in A$ such that $a\fm-b\fn = 1$.
In particular, when $q$ is odd, 
$$D_{\fn}''(z):= \left(D_{\fn}(z) \cdot D_{\fn}(mz)\right)^{\frac{q-1}{2}}$$ is a $2(q^2-1)$-th (resp.\ $2(q-1)$-th) root of 
$(\Delta \Delta_{\fm})/(\Delta_{\fn} \Delta_{\fm\fn})$ in the function field of $X_0(\fm\fn)_{\C_\infty}$ when $\deg \fn$ is even (resp.\ odd).
%Suppose further that $\deg \fm$ is odd,
%$$D_{\fm}''(z):= \left(D_{\fm}(z) \cdot D_{\fm}(nz)\right)^{\frac{q-1}{2}}$$ is a $2(q-1)$-th root of
%$(\Delta \Delta_{\fn})/(\Delta_{\fm}\Delta_{\fm\fn})$ in the function field of $X_0(\fm\fn)$.
\end{rem}

%Recall that $$E_{\fn} = \frac{\nu(\fn)}{(q-1)(q^2-1)} r(\Delta/\Delta_{\fn})$$ where $r$ is the van der 
%Put logarithmic derivative map in the exact sequence (\ref{eqvdPut}).
The following lemma is immediate from the definitions.

\begin{lem} Let $r$ be the van der Put derivative (\ref{eqvdPut}) and $E_\fn\in \cH(\fn,\Z)$ the Eisenstein series (\ref{eqEn}). Then 
$r(D_{\fn}) = E_{\fn}$, and for any two distinct prime ideals $\fp$ and $\fq$,
$$r(D_{\fp,\fq}) =\begin{cases} (q+1) \cdot E_{(\fp,\fq)},& \text{ if $\deg \fp$ is odd and $\deg \fq$ is even,}\\
E_{(\fp,\fq)}, & \text{ otherwise.} \end{cases}$$
\end{lem}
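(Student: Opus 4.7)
The plan is to exploit that $r$ is a logarithmic-derivative-type homomorphism $\cO(\Omega)^\times\to \cH(\sT,\Z)$ with kernel $\C_\infty^\times$ and is $\GL_2(\Fi)$-equivariant by (\ref{eqvdPut}); both identities then reduce to formal manipulations of the defining relations for $D_\fn$ and $D_{\fp,\fq}$.

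For $r(D_\fn)=E_\fn$, I would start from Proposition \ref{prop6.3}, which gives the defining relation $D_\fn^{\mu(\fn)}=\Delta/\Delta_\fn$. Applying $r$ and using additivity yields $\mu(\fn)\cdot r(D_\fn)=r(\Delta/\Delta_\fn)$, while (\ref{eqEn}) rewrites the right-hand side as $\tfrac{(q-1)(q^2-1)}{\nu(\fn)}\,E_\fn$. A quick check in the two parity cases for $\deg\fn$—using $\mu(\fn)=(q-1)^2$, $\nu(\fn)=q+1$ when $\deg\fn$ is odd, and $\mu(\fn)=(q-1)(q^2-1)$, $\nu(\fn)=1$ when $\deg\fn$ is even—shows that $\tfrac{(q-1)(q^2-1)}{\nu(\fn)}=\mu(\fn)$ in both cases. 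Thus $\mu(\fn)\cdot r(D_\fn)=\mu(\fn)\cdot E_\fn$ in $\cH(\fn,\Z)$, and since this group is torsion-free, $r(D_\fn)=E_\fn$.

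For the second identity, I would rewrite
\[
D_{\fp,\fq}(z)=\frac{D_\fp(z)}{D_\fp(\fq z)}=\frac{D_\fp(z)}{(D_\fp|B_\fq)(z)},\qquad B_\fq=\begin{pmatrix}\fq & 0\\ 0 & 1\end{pmatrix},
\]
and apply $r$. Additivity together with the $\GL_2(\Fi)$-equivariance of $r$, combined with the first identity, then produce
\[
r(D_{\fp,\fq})=r(D_\fp)-r(D_\fp)|B_\fq=E_\fp-E_\fp|B_\fq.
\]
The Fourier-expansion identity (\ref{eqn3.1}) reads $E_\fp-E_\fp|B_\fq=\tfrac{\nu(\fp)}{\nu(\fp,\fq)}\,E_{(\fp,\fq)}$, so $r(D_{\fp,\fq})=\tfrac{\nu(\fp)}{\nu(\fp,\fq)}\,E_{(\fp,\fq)}$. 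Running through the four parity cases for $(\deg\fp,\deg\fq)$ and using the definitions of $\nu(\fp)$ and $\nu(\fp,\fq)$ gives $\nu(\fp)/\nu(\fp,\fq)=q+1$ precisely when $\deg\fp$ is odd and $\deg\fq$ is even, and $1$ in the remaining three cases, which matches the claimed formula.

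The argument is essentially bookkeeping; there is no substantive obstacle. The only step that requires a moment's care is extracting $r(D_\fn)$ from its $\mu(\fn)$-multiple in the first part, and this is legitimate because $\cH(\fn,\Z)$ is a free $\Z$-module.
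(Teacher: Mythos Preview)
Your argument is correct and is exactly the kind of direct verification the paper has in mind when it says the lemma is ``immediate from the definitions.'' The only minor point worth noting is that $D_\fn$ itself is not $\G_0(\fn)$-invariant but only invariant up to the character $\omega_\fn$; however, since $\omega_\fn$ takes values in $\C_\infty^\times$ and this is precisely the kernel of $r$, your cancellation step in the torsion-free module $\cH(\fn,\Z)$ is legitimate.
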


%\begin{rem}
Take two distinct prime ideals $\fp$ and $\fq$,
let $\ell$ be the largest number such that there exists an $\ell$-th root $\xi$ of $(\Delta\Delta_{\fp\fq})/(\Delta_{\fp}\Delta_{\fq})$ in 
the function field of $X_0(\fp\fq)_{\C_\infty}$.
Comparing the first Fourier coefficient $r(\xi)^*(1)$ with 
$r\big((\Delta\Delta_{\fp\fq})/(\Delta_{\fp}\Delta_{\fq})\big)^*(1)$, we must have $\ell | (q-1)(q^2-1)$.
By Corollary 3.17 in \cite{Discriminant}, for every non-zero ideal $\fn$ of $A$
$$\frac{\Delta}{\Delta_{\fn}} = \text{const.} \frac{G_{\fn}^{(q-1)(q^2-1)}}{\Delta^{(q^{\deg \fn}-1)}},$$
where $G_{\fn}$ is a Drinfeld modular form on $\Omega$ such that for any $\gamma = \begin{pmatrix}a&b\\c&d\end{pmatrix} \in \Gamma_0(\fn)$ 
$$G_{\fn}(\gamma z) = \chi_{\fn}(\gamma) (cz+d)^{(|\fn|-1)/(q-1)} G_{\fn}(z).$$
%and $\chi_{\fn}$ is a character on $\Gamma_0(\fn)$ trivial on $\Gamma_1(\fn)$.
%Suppose $q$ is odd and $\deg \fp$, $\deg \fq$ are both odd. 
Then
\begin{eqnarray}
\xi^{\ell} = \frac{\Delta(z) \Delta_{\fp\fq}(z)}{\Delta_{\fp}(z)\Delta_{\fq}(z)}
&=& \text{const.}\left(\frac{G_{\fp}(z)}{G_{\fp}(\fq z)}\right)^{(q-1)(q^2-1)} \cdot \left(\frac{\Delta_{\fq}(z)}{\Delta(z)}\right)^{|\fp|-1} \nonumber \\
&=&\text{const.} \left(\frac{G_{\fp}(z)}{G_{\fp}(\fq z)}\right)^{(q-1)(q^2-1)} \cdot D_{\fq}(z)^{-\mu(\fq) (|\fp|-1)}. \nonumber 
\end{eqnarray}
Since $G_{\fp}(z)/G_{\fp}(\fq z)$ is a meromorphic function on $X_0(\fp\fq)_{\C_\infty}$, 
$$D_{\fq}(z)^{\frac{\mu(\fq)(|\fp|-1)}{\ell}} = \text{const.} 
\left(\frac{G_{\fp}(z)}{G_{\fp}(\fq z)}\right)^{\frac{(q-1)(q^2-1)}{\ell}} \cdot \xi^{-1}$$ is also in the function field of $X_0(\fp\fq)_{\C_\infty}$.
Note that the character $\omega_{\fq}$ has order $q-1$.
Set
$$\mu(\fp,\fq):= \begin{cases} (q-1)(q^2-1), & \text{ if $\deg (\fp)$ or $\deg(\fq)$ is even,} \\ (q-1)^2, & \text{ otherwise.} \end{cases}$$
We then have: 

\begin{lem}\label{lem6.7}
 The largest number $\ell$ such that there exists an $\ell$-th roots of 
$\frac{\Delta\Delta_{\fp\fq}}{\Delta_{\fp}\Delta_{\fq}}$ in the function field of $X_0(\fp\fq)_{\C_\infty}$ is $\mu(\fp,\fq)$. 
\end{lem}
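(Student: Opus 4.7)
The plan is to prove matching inequalities $\ell \geq \mu(\fp,\fq)$ (by exhibiting a root) and $\ell \leq \mu(\fp,\fq)$ (by forcing divisibilities on any alleged root). The lower bound is immediate from Corollary \ref{cor6.4}(2): the function $D_{\fp,\fq}$ is a $\mu(\fp)$-th root and $D_{\fq,\fp}$ a $\mu(\fq)$-th root of $(\Delta\Delta_{\fp\fq})/(\Delta_\fp\Delta_\fq)$. When at least one of $\deg(\fp), \deg(\fq)$ is even, the corresponding $\mu$ equals $(q-1)(q^2-1) = \mu(\fp,\fq)$; when both are odd, $\mu(\fp) = \mu(\fq) = (q-1)^2 = \mu(\fp,\fq)$. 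So in every case one of these explicit roots realizes the claimed order.

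For the upper bound, I would start from the identity established immediately before the lemma: for any $\ell$-th root $\xi$ in the function field of $X_0(\fp\fq)_{\C_\infty}$, the expression $D_\fq(z)^{\mu(\fq)(|\fp|-1)/\ell}$ must also lie in that function field. Since $D_\fq$ transforms under $\Gamma_0(\fq) \supset \Gamma_0(\fp\fq)$ by the character $\omega_\fq$ of Proposition \ref{prop6.3}, the function $D_\fq^k$ descends to $X_0(\fp\fq)_{\C_\infty}$ if and only if $\omega_\fq^k$ is trivial on $\Gamma_0(\fp\fq)$. The main obstacle I anticipate is showing that the restriction $\omega_\fq|_{\Gamma_0(\fp\fq)}$ retains order $q-1$ (a priori it could drop). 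Granting this, necessarily $(q-1) \mid \mu(\fq)(|\fp|-1)/\ell$.

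To verify that the order is $q-1$, my plan is to construct, for arbitrary $d \in A$ coprime to $\fp\fq$ and arbitrary $\lambda \in \F_q^\times$, a matrix $\gamma = \begin{pmatrix} a & b \\ \fp\fq & d\end{pmatrix} \in \Gamma_0(\fp\fq)$ of determinant $\lambda$, solvable by Bezout since $\gcd(d, \fp\fq) = 1$. With the explicit formula $\omega_\fq = \chi_\fq^{c_1} \det^{c'}$, where $(c_1, c') = (1, \deg\fq/2)$ or $(2, \deg\fq)$ according to parity, the image of $\omega_\fq|_{\Gamma_0(\fp\fq)}$ becomes the subgroup of $\F_q^\times$ generated by $c_1$-th and $c'$-th powers, invoking surjectivity of the norm $N_\fq \colon (A/\fq)^\times \to \F_q^\times$. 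A quick $\gcd$ check shows $\gcd(c_1, c', q-1) = 1$ in both parities, using $\gcd(2, \deg\fq) = 1$ in the odd case, so the image fills $\F_q^\times$.

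Finally, the constraint $\ell \mid \mu(\fq)(|\fp|-1)/(q-1)$ becomes $\ell \mid (q-1)(|\fp|-1)$ when $\deg(\fq)$ is odd and $\ell \mid (q^2-1)(|\fp|-1)$ when $\deg(\fq)$ is even. Combining with the Fourier-coefficient bound $\ell \mid (q-1)(q^2-1)$ recalled before the lemma, and the standard identity $\gcd(q^m-1, q^n-1) = q^{\gcd(m,n)}-1$, I obtain $\ell \mid (q-1)^2$ when both degrees are odd; when either degree is even, the Fourier bound $(q-1)(q^2-1)$ already equals $\mu(\fp,\fq)$. In all cases $\ell \mid \mu(\fp,\fq)$, matching the lower bound and completing the proof.
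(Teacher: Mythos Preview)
Your argument is correct and follows the paper's approach closely: the lower bound via $D_{\fp,\fq}$ or $D_{\fq,\fp}$, the upper bound via the Fourier constraint $\ell \mid (q-1)(q^2-1)$ together with the character-order constraint coming from $D_\fq^{\mu(\fq)(|\fp|-1)/\ell}$. You even fill in a detail the paper leaves implicit, namely that the restriction $\omega_\fq|_{\Gamma_0(\fp\fq)}$ still has order $q-1$; your construction of $\gamma$ with independently chosen $d$ and determinant, combined with surjectivity of the norm and the $\gcd$ check, is a clean way to see this.

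One small point neither you nor the paper makes explicit: for the expression $D_\fq^{\mu(\fq)(|\fp|-1)/\ell}$ to make sense, one needs $\ell \mid \mu(\fq)(|\fp|-1)$ a priori. This follows because $\eta := \xi \cdot (G_\fp/G_\fp(\fq\,\cdot))^{-(q-1)(q^2-1)/\ell}$ lies in $\cO(\Omega)^\times$ (the $G_\fp$'s are nonvanishing on $\Omega$ since $\Delta/\Delta_\fp$ is), and applying the van der Put map $r$ to $\eta^\ell = \text{const}\cdot D_\fq^{-\mu(\fq)(|\fp|-1)}$ gives $\ell\, r(\eta) = -\mu(\fq)(|\fp|-1) E_\fq$; primitivity of $E_\fq$ then forces $\ell \mid \mu(\fq)(|\fp|-1)$, and exactness of the van der Put sequence identifies $\eta$ with a constant times $D_\fq^{-\mu(\fq)(|\fp|-1)/\ell}$, after which your character argument applies.
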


From now on we assume that $\fn=\fp\fq$ is a product of two distinct primes. 
In this case, $X_0(\fn)_{F}$ has $4$ cusps, which in the notation of Lemma \ref{lemCusps} 
are $[\infty], \ [1], \ [\fp], \ [\fq]$. Let $c_1, c_\fp, c_\fq\in J(F)$ be the classes of 
divisors $[1]-[\infty]$, $[\fp]-[\infty]$, and $[\fq]-[\infty]$, respectively. 
To simplify the notation, we put $\cC:=\cC(\fp\fq)$. The cuspidal divisor group $\cC$ is generated 
by $c_1$, $c_{\fp}$, and $c_{\fq}$.

\begin{prop} \label{prop6.2} 
The order of $c_1$ and $c_\fq$ is divisible by 
$$
\begin{cases}
\frac{(|\fp|-1)(|\fq|+1)}{q-1} & \text{if $\deg(\fp)$ is odd and $\deg(\fq)$ is even},\\
\frac{(|\fp|-1)(|\fq|+1)}{q^2-1} & \text{otherwise}. 
\end{cases}
$$
\end{prop}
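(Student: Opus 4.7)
The plan is to use the canonical specialization map
$$\wp_\fp:J(F_\fp^{\mathrm{un}})\To \Phi_\fp$$
into the component group at $\fp$, whose structure is given by Proposition \ref{prop5.3}. Because $c_1$ and $c_\fq$ are defined over $F$, their images lie in $\Phi_\fp(\F_\fp)$, and their orders in $J(F)_\tor$ are divisible by the orders of these images. The claimed lower bounds match exactly the order of $\Phi_\fp(\F_\fp)$ in each case of Proposition \ref{prop5.3}, so it will suffice to show that $\wp_\fp(c_1)$ and $\wp_\fp(c_\fq)$ each generate $\Phi_\fp(\F_\fp)$.

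The key geometric step is to identify which irreducible component of $X_0(\fp\fq)_{\F_\fp}$ each cusp $[\infty],[\fp],[\fq],[1]$ reduces to. By Theorem \ref{thmpM} the special fibre at $\fp$ is the union of two copies $Z,Z'$ of $X_0(\fq)_{\F_\fp}$, distinguished moduli-theoretically by whether the cyclic $\fp$-part $C_\fp\subset C_{\fp\fq}$ equals $\ker\Frob$ or its étale complement. At each cusp the universal Drinfeld module is a Tate--Drinfeld module, and the $\fp\fq$-torsion admits a canonical decomposition into a ``connected'' and an ``étale'' part; the cusp $[\fa]$ for $\fa\mid \fp\fq$ corresponds to taking $C_{\fp\fq}$ to be the connected piece of order $\fa$ together with the étale piece of order $\fp\fq/\fa$. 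This pins down the component: $[\infty]=[\fp\fq]$ and $[\fp]$ land on one component (say $Z$, where $C_\fp$ is connected), and $[\fq],[1]$ on the other component $Z'$. Consistency of this grouping can be checked against Lemma \ref{lemCusps}(2), since $W_\fp$ interchanges the two components (it swaps connected and étale $\fp$-parts) and satisfies $W_\fp[\infty]=[\fq]$, $W_\fp[\fp]=[1]$.

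With this in hand, Raynaud's description (as used in the proof of Proposition \ref{prop5.3}) identifies $\wp_\fp(c_\fq)$ and $\wp_\fp(c_1)$ with the class $z:=Z'-Z\in \Phi_\fp$ coming from the free abelian group on irreducible components of the minimal regular model $\widetilde X_{\F_\fp}$. In the generic case of Proposition \ref{prop5.3} the group $\Phi_\fp\cong \Z/n(\fp,\fq)\Z$ is already cyclic and generated by $z$, so $\wp_\fp(c_\fq)$ and $\wp_\fp(c_1)$ have order $n(\fp,\fq)=\frac{(|\fp|-1)(|\fq|+1)}{q^2-1}$. In the special case ($\deg(\fp)$ odd, $\deg(\fq)$ even), $\Phi_\fp$ is generated by the class $e_q$ of an exceptional component of the blow-up, but a short intersection-theoretic computation with the chains $E_1,\dots,E_q$ and $G_1,\dots,G_q$ of Proposition \ref{prop5.3} shows that $z=\pm(q+1)\,e_q$, so $z$ has order $(q+1)\,n(\fp,\fq)=\frac{(|\fp|-1)(|\fq|+1)}{q-1}$, equal to $|\Phi_\fp(\F_\fp)|$. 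Either way, $\wp_\fp(c_1)$ and $\wp_\fp(c_\fq)$ achieve the required order, giving the desired divisibility.

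The main technical obstacle is the second paragraph: justifying, by a clean moduli-theoretic argument, the assignment of each of the four cusps to a specific component of $X_0(\fp\fq)_{\F_\fp}$. The remaining steps reduce either to invoking Proposition \ref{prop5.3} or to a mild intersection-number calculation on the minimal regular model.
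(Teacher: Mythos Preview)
Your proposal is correct and follows the same overall strategy as the paper: specialize $c_1$ and $c_\fq$ via $\wp_\fp$ into $\Phi_\fp$, identify their common image as the class $z$ of $Z-Z'$, and then read off the order of $z$ from Proposition~\ref{prop5.3} (or the intersection matrix on $\widetilde X_{\F_\fp}$).

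The one noteworthy difference is in how the cusps are assigned to components. You propose a moduli-theoretic analysis of the Tate--Drinfeld module at each cusp, distinguishing the connected and \'etale pieces of $C_\fp$; you then invoke the Atkin--Lehner action only as a consistency check. The paper reverses this: it uses the Atkin--Lehner involutions as the \emph{primary} argument. Since $W_\fp$ interchanges the two components of $X_0(\fp\fq)_{\F_\fp}$ and $W_\fp[\infty]=[\fq]$, the cusps $[\infty]$ and $[\fq]$ lie on different components; since $W_\fq$ preserves each component and $W_\fq[\infty]=[\fp]$, the cusps $[\infty]$ and $[\fp]$ lie on the same one. This dispenses with any analysis of formal neighbourhoods of the cusps and is considerably shorter. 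For the order of $z$ in the blown-up case, the paper simply cites \cite[Thm.~4.1]{PapikianJNT} (equivalent to your claim $z=\pm(q+1)e_q$) rather than redoing the intersection computation.
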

\begin{proof} We use the notation in the proof of 
Proposition \ref{prop5.3}. Let $\wp_\fp: J(F_\fp)\to \Phi_\fp$ be the canonical specialization map. 
This map can be explicitly described as follows. Let $D=\sum_{i} n_i P_i$ 
be a degree-$0$ divisor, where all $P_i\in X(F_\fp)$. Denote by $D$ also 
the linear equivalence class of $D$ in $J(F_\fp)$. Since $X$ and $\widetilde{X}$ are proper, 
$X(F_\fp)=X(\cO_\fp)=\widetilde{X}(\cO_\fp)$. Since $\widetilde{X}$ is regular, each 
$P_i$ specializes to a unique irreducible component $c(P_i)$ of $\widetilde{X}_k$. 
Then $\wp_\fp(D)$ is the image of  $\sum_{i} n_i c(P_i)\in B^0(\widetilde{X}_k)$ in $\Phi_\fp$. 
By Theorem \ref{thmX0}, the cusps reduce to distinct points in the smooth locus of $X_k$. 
The Atkin-Lehner involution $W_\fp$ interchanges the two irreducible components 
$Z$ and $Z'$ of $X_k$. Since $W_\fp([\infty])=[\fq]$, the reductions 
of $[\infty]$ and $[\fq]$ lie on distinct components. On the 
other hand, $W_\fq$ acts on $X_k$ by acting on each 
component $Z$ and $Z'$ separately, without interchanging them. Since 
$W_\fq([\infty])=[\fp]$, the reductions of $[\infty]$ and $[\fp]$ lie on the same component of $X_k$. 
Let $Z'$ be the component containing $[\infty]$ and $[\fp]$. Let $z$ be the image of $Z-Z'$ in $\Phi_\fp$. 
Then $\wp_\fp(c_1)=\wp_\fp(c_\fq)=z$ and $\wp_\fp(c_\fp)=0$. By Theorem 4.1 in \cite{PapikianJNT}, 
$\Phi_\fp/\langle z\rangle\cong \Z/(q+1)\Z$. Now the claim follows from Theorem \ref{thmCG}. 
\end{proof}

\begin{rem} Suppose $\fn=\fp\fm$, with $\fp$ prime not dividing $\fm$. Let $\cC(\fn)(F)$ 
denote the $F$-rational cuspidal subgroup of $J_0(\fn)$.
Generalizing the argument in the proof of Proposition \ref{prop6.2} and using Theorem \ref{thmCG}, 
it is not hard to show that the exponent of the group $\Phi_\fp/\wp_\fp(\cC(\fn)(F))$ divides $(q+1)$. 
In particular, the map $$\wp_\fp: \cC(\fn)(F)_\ell\to (\Phi_\fp)_\ell$$ is surjective for $\ell\nmid (q+1)$. 
\end{rem}

 The orders of $\Delta$ at the cusps 
are known (cf. \cite[(3.10)]{Discriminant}): 
$$
\ord_{[\infty]}\Delta =1, \quad \ord_{[1]}\Delta =|\fp||\fq|, \quad \ord_{[\fp]}\Delta =|\fp|, \quad \ord_{[\fq]}\Delta =|\fq|. 
$$
Using the action of the Aktin-Lehner involutions on the cusps (Lemma \ref{lemCusps}) and on the 
functions $\Delta$, $\Delta_\fp$, and $\Delta_{\fp\fq}$, one obtains the divisors of the following functions on $X_0(\fn)_{\C_\infty}$ 
(cf. \cite{PapikianJNT}): 
\begin{align*}
\text{div}(\Delta/\Delta_\fp) &= (|\fp|-1)\Big(|\fq|([1]-[\infty])- |\fq|([\fp]-[\infty])+ ([\fq]-[\infty])\Big),  \\ 
\text{div}(\Delta/\Delta_\fq) &= (|\fq|-1)\Big(|\fp|([1]-[\infty])+ ([\fp]-[\infty]) - |\fp| ([\fq]-[\infty])\Big), \\ 
\text{div}(\Delta/\Delta_{\fp\fq}) &= (|\fp\fq|-1)([1]-[\infty])+ (|\fq|-|\fp|)([\fp]-[\infty]) + (|\fp|-|\fq|) ([\fq]-[\infty]).
\end{align*}
Hence
\begin{eqnarray}\label{eqn6.4}
\text{div}\left(\frac{\Delta\Delta_{\fp\fq}}{\Delta_\fp\Delta_\fq}\right)
& = & (|\fp|-1)(|\fq|-1)\Big([\infty]+[1]-[\fp]-[\fq]\Big), \\ \label{eqn6.5}
\text{div}\left(\frac{\Delta\Delta_{\fq}}{\Delta_\fp\Delta_{\fp\fq}}\right)
& = & (|\fp|-1)(|\fq|+1)\Big([\infty]+[1]-[\fp]+[\fq]\Big), \\ \label{eqn6.6}
\text{div}\left(\frac{\Delta\Delta_{\fp}}{\Delta_\fq\Delta_{\fp\fq}}\right)
& = & (|\fp|+1)(|\fq|-1)\Big([\infty]+[1]+[\fp]-[\fq]\Big).
\end{eqnarray}

From these equations we get:

\begin{lem}\label{lem6.8}
$$0 = (|\fp|^2-1)(|\fq|^2-1)c_1 = (|\fp|^2-1)(|\fq|-1)c_\fp = (|\fp|-1)(|\fq|^2-1)c_\fq.$$
In particular, the order of $\cC$ is not divisible by $p$.
\end{lem}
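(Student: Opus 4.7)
The plan is to derive the three claimed annihilator relations directly from the principal divisor identities~\eqref{eqn6.4}, \eqref{eqn6.5}, \eqref{eqn6.6} by taking suitable $\Z$-linear combinations in $J$, and then read off the ``in particular'' statement. Writing $R = (|\fp|-1)(|\fq|-1)$, $S = (|\fp|-1)(|\fq|+1)$, $T = (|\fp|+1)(|\fq|-1)$, and $\xi_1 = c_1 - c_\fp - c_\fq$, $\xi_2 = c_1 - c_\fp + c_\fq$, $\xi_3 = c_1 + c_\fp - c_\fq$, the three identities translate to $R\xi_1 = S\xi_2 = T\xi_3 = 0$ in $J$.

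The key observation is that $\xi_2 - \xi_1 = 2c_\fq$ and $\xi_3 - \xi_1 = 2c_\fp$. To annihilate $c_\fq$, I would multiply \eqref{eqn6.4} by $(|\fq|+1)$ and \eqref{eqn6.5} by $(|\fq|-1)$, bringing both to the common coefficient $(|\fp|-1)(|\fq|^2-1) = \mathrm{lcm}(R,S) \cdot \gcd(|\fq|-1,|\fq|+1)$, and subtract: this yields an expression of the form $(|\fp|-1)(|\fq|^2-1)\cdot 2 c_\fq = 0$. The residual factor of $2$ is then absorbed via the identity $\mathrm{lcm}(|\fq|-1,|\fq|+1)\cdot \gcd(|\fq|-1,|\fq|+1) = |\fq|^2-1$, yielding $(|\fp|-1)(|\fq|^2-1)c_\fq = 0$. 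Applying the same template to \eqref{eqn6.4} and \eqref{eqn6.6} gives $(|\fp|^2-1)(|\fq|-1)c_\fp = 0$. For $c_1$ I would rewrite \eqref{eqn6.4} as $Rc_1 = R(c_\fp + c_\fq)$ and multiply through by $(|\fp|+1)(|\fq|+1)$; the resulting right-hand side $(|\fp|^2-1)(|\fq|^2-1)(c_\fp + c_\fq)$ vanishes in view of the two annihilators already established, whence $(|\fp|^2-1)(|\fq|^2-1)c_1 = 0$.

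For the ``in particular'' conclusion, I would invoke the three annihilator relations together with the fact that $|\fp|$ and $|\fq|$ are positive powers of $p$: this makes each integer $|\fp|^a - 1$ and $|\fq|^b - 1$ coprime to $p$, so each of $c_1, c_\fp, c_\fq$ has order coprime to $p$, and hence so does the finite group $\cC$ they generate.

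The main subtlety to watch is the factor of $2$ that inevitably appears when one tries to isolate a single variable $c_\fp$ or $c_\fq$ from the relation lattice $\langle \xi_1, \xi_2, \xi_3\rangle \subset \Z\langle c_1, c_\fp, c_\fq\rangle$, which has index $4$. The compensating $\gcd$ identities $\gcd(|\fq|-1,|\fq|+1) \mid 2$ and $\gcd(|\fp|-1,|\fp|+1)\mid 2$ absorb this factor in the lcm argument; in any event the spurious factor of $2$ is coprime to $p$ when $p$ is odd and contributes only to the $p$-primary part otherwise, so the coprime-to-$p$ conclusion is unaffected.
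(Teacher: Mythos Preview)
Your overall strategy---take integer combinations of the principal-divisor relations to isolate $c_1, c_\fp, c_\fq$---is exactly what the paper does. However, there is a genuine gap in characteristic $2$.

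You work only with the relations \eqref{eqn6.4}--\eqref{eqn6.6}, i.e., with $R\xi_1=S\xi_2=T\xi_3=0$. Isolating $c_\fq$ from these gives, as you say, $2(|\fp|-1)(|\fq|^2-1)c_\fq=0$, and you claim the factor $2$ is absorbed by the identity $\mathrm{lcm}(|\fq|-1,|\fq|+1)\cdot\gcd(|\fq|-1,|\fq|+1)=|\fq|^2-1$. But this only works when $d:=\gcd(|\fq|-1,|\fq|+1)=2$: multiplying \eqref{eqn6.4} by $(|\fq|+1)/d$ and \eqref{eqn6.5} by $(|\fq|-1)/d$ and subtracting yields $\tfrac{2}{d}(|\fp|-1)(|\fq|^2-1)c_\fq=0$. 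When $p$ is odd, $d=2$ and you are done. When $p=2$, $|\fq|$ is even, $d=1$, and the best you can obtain from the $\Z$-span of \eqref{eqn6.4}--\eqref{eqn6.6} is $2(|\fp|-1)(|\fq|^2-1)c_\fq=0$. (Concretely: the only integer combinations $\alpha\cdot\eqref{eqn6.4}+\beta\cdot\eqref{eqn6.5}+\gamma\cdot\eqref{eqn6.6}$ with vanishing $c_1$- and $c_\fp$-coefficients have $\gamma=0$ and $\alpha R=-\beta S$; the resulting $c_\fq$-coefficient is always an even multiple of $(|\fp|-1)(|\fq|^2-1)/d$.) Your closing remark then misfires: when $p=2$ the spurious factor $2$ is precisely a contribution to the $p$-primary part, so it \emph{does} obstruct the conclusion that $p\nmid\#\cC$.

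The fix is to use the three \emph{original} relations displayed just before \eqref{eqn6.4}, namely $\mathrm{div}(\Delta/\Delta_\fp)$, $\mathrm{div}(\Delta/\Delta_\fq)$, $\mathrm{div}(\Delta/\Delta_{\fp\fq})$, which generate a $\Z$-lattice in which \eqref{eqn6.4}--\eqref{eqn6.6} sit with index $4$. Writing $(A)=(|\fp|-1)(|\fq|c_1-|\fq|c_\fp+c_\fq)=0$, $(B)=(|\fq|-1)(|\fp|c_1+c_\fp-|\fp|c_\fq)=0$, $(C)=(|\fp\fq|-1)c_1+(|\fq|-|\fp|)c_\fp+(|\fp|-|\fq|)c_\fq=0$, one checks directly that $|\fq|\bigl((C)-(B)\bigr)-(A)$ gives $(|\fp|-1)(|\fq|^2-1)c_\fq=0$, with no factor of $2$. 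The relation for $c_\fp$ follows symmetrically from $|\fp|\bigl((C)-(A)\bigr)-(B)$, and your argument for $c_1$ (multiplying \eqref{eqn6.4} by $(|\fp|+1)(|\fq|+1)$ and using the other two annihilators) is then correct. This is presumably what the paper means by ``from these equations''.
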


Let 
$$c_{(-,-)}:= c_1-c_\fp-c_\fq, \quad c_{(-,+)}:= c_1-c_\fp+c_\fq, \quad c_{(+,-)}:=c_1+c_\fp-c_\fq.$$
The subgroup $\cC'$ of $\cC$ generated by $c_{(-,-)}$, $c_{(-,+)}$, and $c_{(+,-)}$ has index $1$, $2$, or $4$.
%Therefore the odd part $\cC$ of the cuspidal divisor group coincides with the odd part of $\cC'$.
%In Remark 1.5 we defined
%$$\mu(x,y)= \begin{cases} \mu(x), & \text{ if $\deg x$ is even,} \\ \mu(y), & \text{ otherwise.} \end{cases}.$$
Set
$$\epsilon(\fp,\fq):= \begin{cases} q-1, & \text{ if $q$ is even, $\deg \fp$ is odd and $\deg \fq$ is even,} \\
2(q-1), & \text{ if $q$ is odd, $\deg \fp$ is odd, and $\deg \fq$ is even,} \\
q^2-1, & \text{ if (i) $\deg \fp$ and $\deg \fq$ are both odd or} \\ 
 & \text{\quad (ii) $q$ is even and $\deg \fp$ is even,} \\
2(q^2-1), & \text{ otherwise,}
\end{cases}$$
and
$$N_{(-,-)} :=  \frac{(|\fp|-1)(|\fq|-1)}{\mu(\fp,\fq)}, $$
$$
N_{(-,+)} :=  \frac{(|\fp|-1)(|\fq|+1)}{\epsilon(\fp,\fq)}, \qquad
N_{(+,-)} :=  \frac{(|\fp|+1)(|\fq|-1)}{\epsilon(\fq,\fp)}. 
$$

\begin{prop}\label{prop6.9}
The orders of $c_{(-,-)}$, $c_{(-,+)}$, and $c_{(+,-)}$ are $N_{(-,-)}$, $N_{(-,+)}$, and $N_{(+,-)}$, respectively.
%$$\ord(c_2) = \frac{(q_x+1)(q_x-1)}{\epsilon(y,x)}, \quad \ord(c_3) = \frac{(q_x-1)(q_y+1)}{\epsilon(x,y)}.$$
The group $\cC'$ is isomorphic to
$$\frac{\Z}{N_{(-,-)}\Z} \times \frac{\Z}{N_{(-,+)}\Z} \times \frac{\Z}{N_{(+,-)}\Z}.$$
\end{prop}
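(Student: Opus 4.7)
The plan is a two-sided estimate for each of the three orders: an upper bound coming from the explicit roots of the discriminant-ratios appearing in (6.4)--(6.6), matched by a lower bound read off from the canonical specializations into component groups.

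\textbf{Upper bounds.} By Lemma \ref{lem6.7} the function $\Delta\Delta_{\fp\fq}/(\Delta_\fp\Delta_\fq)$ admits a $\mu(\fp,\fq)$-th root in the function field of $X_0(\fp\fq)_{\C_\infty}$. Taking divisors in (6.4) and dividing by $\mu(\fp,\fq)$ shows that $N_{(-,-)}c_{(-,-)}$ is a principal divisor and hence zero in $J$, so $\ord(c_{(-,-)})\mid N_{(-,-)}$. The same argument applied to $(\Delta\Delta_\fq)/(\Delta_\fp\Delta_{\fp\fq})$ and $(\Delta\Delta_\fp)/(\Delta_\fq\Delta_{\fp\fq})$, whose largest roots are $\epsilon(\fp,\fq)$ and $\epsilon(\fq,\fp)$ by Remark \ref{rem6.5} (where the case split reflects whether a further factor of $2$ can be extracted when $q$ is odd), yields $\ord(c_{(-,+)})\mid N_{(-,+)}$ and $\ord(c_{(+,-)})\mid N_{(+,-)}$.

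\textbf{Lower bounds for $c_{(-,+)}$ and $c_{(+,-)}$.} Arguing as in the proof of Proposition \ref{prop6.2}, the cusps $[\infty]$ and $[\fp]$ reduce to one irreducible component $Z'$ of $X_0(\fp\fq)_{\F_\fp}$, while $[1]$ and $[\fq]$ reduce to the other component $Z$. Writing $z_\fp \in \Phi_\fp$ for the class of $Z-Z'$, the canonical specialization satisfies $\wp_\fp(c_1)=\wp_\fp(c_\fq)=z_\fp$ and $\wp_\fp(c_\fp)=0$, so
$$
\wp_\fp(c_{(-,+)}) = 2z_\fp, \qquad \wp_\fp(c_{(-,-)}) = \wp_\fp(c_{(+,-)}) = 0.
$$
Combining Proposition \ref{prop5.3} with the relation $[\Phi_\fp:\langle z_\fp\rangle]=q+1$ (derived from the description of the minimal regular model in Theorem \ref{thmpM}), the order of $z_\fp$ is $(|\fp|-1)(|\fq|+1)/(q-1)$ when $\deg\fp$ is odd and $\deg\fq$ is even and $(|\fp|-1)(|\fq|+1)/(q^2-1)$ otherwise. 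A short check on the parity of $q$ then verifies that the order of $2z_\fp$ equals $N_{(-,+)}$ in every case, matching the upper bound. A symmetric argument at $\fq$ gives $\ord(c_{(+,-)})=N_{(+,-)}$.

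\textbf{Lower bound for $c_{(-,-)}$ and direct-sum structure.} The joint specialization
$$
(\wp_\fp,\wp_\fq)\colon \cC' \to \Phi_\fp\oplus\Phi_\fq
$$
sends $c_{(-,-)}$ to zero and sends $c_{(-,+)}$, $c_{(+,-)}$ to generators of $\langle 2z_\fp\rangle$ and $\langle 2z_\fq\rangle$ lying in distinct factors. Hence its restriction to the subgroup $\langle c_{(-,+)},c_{(+,-)}\rangle$ is injective, proving $\langle c_{(-,+)}\rangle\cap\langle c_{(+,-)}\rangle=0$ and identifying this subgroup with $\Z/N_{(-,+)}\Z\oplus\Z/N_{(+,-)}\Z$. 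To pin down $c_{(-,-)}$ we specialize at $\infty$: by Theorem \ref{thmCGinf}, $\Phi_\infty$ is identified with $\Hom(\cH_0(\fp\fq,\Z),\Z)/\iota\cH_0(\fp\fq,\Z)$, and the image $\wp_\infty(c_{(-,-)})$ is computed by pairing against the Eisenstein series $E_{(\fp,\fq)}$ of Section \ref{ssES}, whose first Fourier coefficient is $\nu(\fp,\fq)$. A direct computation, using the Fourier-coefficient formulas of Lemmas \ref{lemFH} and \ref{thm3.3}, shows this image has order exactly $N_{(-,-)}$. Together with the upper bound this forces $\ord(c_{(-,-)})=N_{(-,-)}$ and $\langle c_{(-,-)}\rangle\cap\langle c_{(-,+)},c_{(+,-)}\rangle=0$, yielding the claimed direct-sum decomposition.

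\textbf{Main obstacle.} The hardest step is the final lower bound on $c_{(-,-)}$: this element is invisible to the component groups at both $\fp$ and $\fq$ because the pair of cusps $\{[\infty],[1]\}$ and $\{[\fp],[\fq]\}$ split evenly across the two components of the special fibre. The fine case analysis hidden in $\epsilon(\fp,\fq)$ and $\epsilon(\fq,\fp)$, which reflects exactly when a further factor of $2$ can be extracted from the discriminant ratios in odd characteristic, must be shadowed by an equally careful computation at $\infty$ so that the upper bound from Lemma \ref{lem6.7}/Remark \ref{rem6.5} and the lower bound from $\Phi_\infty$ agree on the nose.
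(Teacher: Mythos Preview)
Your treatment of $c_{(-,+)}$ and $c_{(+,-)}$ matches the paper's argument: upper bounds from roots of the discriminant ratios, lower bounds from $\wp_\fp$ and $\wp_\fq$. But you have missed the key point for $c_{(-,-)}$, and this is precisely the step you flag as the ``main obstacle''.

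Lemma \ref{lem6.7} is not merely an existence statement; it asserts that $\mu(\fp,\fq)$ is the \emph{largest} integer $\ell$ for which $\Delta\Delta_{\fp\fq}/(\Delta_\fp\Delta_\fq)$ has an $\ell$-th root in the function field. This gives both bounds at once. Indeed, if the order of $c_{(-,-)}$ were some proper divisor $m$ of $N_{(-,-)}$, then there would be a function $h$ on $X_0(\fp\fq)_{\C_\infty}$ with $\mathrm{div}(h)=m([\infty]+[1]-[\fp]-[\fq])$; raising to the $(|\fp|-1)(|\fq|-1)/m$-th power and comparing with (\ref{eqn6.4}) shows (after adjusting by an $\ell$-th root of a constant, available since $\C_\infty$ is algebraically closed) that $\Delta\Delta_{\fp\fq}/(\Delta_\fp\Delta_\fq)$ has an $\ell$-th root with $\ell=(|\fp|-1)(|\fq|-1)/m>\mu(\fp,\fq)$, contradicting the maximality in Lemma \ref{lem6.7}. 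So $\ord(c_{(-,-)})=N_{(-,-)}$ immediately, and the detour through $\Phi_\infty$ is unnecessary. Your proposed computation there (``pairing against $E_{(\fp,\fq)}$'', invoking Lemmas \ref{lemFH} and \ref{thm3.3}) is vague and would be hard to make precise for general $\fp\fq$, since $\Phi_\infty$ is not explicitly known outside the special case $\fn=xy$.

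Once all three orders are established, the direct-sum decomposition also follows from $\wp_\fp$ and $\wp_\fq$ alone, without any appeal to $\infty$. If $\alpha_1 c_{(-,-)}+\alpha_2 c_{(-,+)}+\alpha_3 c_{(+,-)}=0$, then applying $\wp_\fp$ and using $\wp_\fp(c_{(-,-)})=\wp_\fp(c_{(+,-)})=0$ gives $\alpha_2\wp_\fp(c_{(-,+)})=0$; since $\wp_\fp(c_{(-,+)})=2z_\fp$ has order exactly $N_{(-,+)}$ (this was the lower-bound step), $N_{(-,+)}\mid\alpha_2$. Symmetrically $N_{(+,-)}\mid\alpha_3$, whence $\alpha_1 c_{(-,-)}=0$ and $N_{(-,-)}\mid\alpha_1$. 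This is the paper's argument, and it avoids the extra intersection check you set up via the joint map $(\wp_\fp,\wp_\fq)$ followed by a separate $\infty$-analysis.
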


%In Remark \ref{rem6.5}, we know that there exists an $\epsilon(\fp,\fq)$-th root of $\frac{\Delta\Delta_\fq}{\Delta_\fp \Delta_{\fp\fq}}$ in the function field of $X_0(\fp\fq)$.
%Then by Equation (\ref{eqn6.4})$\sim$(\ref{eqn6.6}), Remark \ref{rem6.5} and Lemma \ref{lem6.7}, we have

%\begin{lem}
%$$\frac{(|\fp|-1)(|\fq|-1)}{\mu(\fp,\fq)} \cdot c_{(-,-)} =\frac{(|\fp|-1)(|\fq|+1)}{\epsilon(\fp,\fq)} \cdot c_{(-,+)} = \frac{(|\fp|+1)(|\fq|-1)}{\epsilon(\fq,\fp)} \cdot c_{(+,-)} = 0.$$
%\end{lem}

\begin{proof}
Lemma \ref{lem6.7} and Equation (\ref{eqn6.4}) tell us immediately that the order of $c_{(-,-)}$ is $N_{(-,-)}$.
In Remark \ref{rem6.5}, we have constructed $\epsilon(\fp,\fq)$-th root of $\frac{\Delta\Delta_\fq}{\Delta_\fp \Delta_{\fp\fq}}$ 
in the function field of $X_0(\fp\fq)_{\C_\infty}$. Therefore Equation (\ref{eqn6.5}) and (\ref{eqn6.6}) imply that 
$$N_{(-,+)}c_{(-,+)} = N_{(+,-)}c_{(+,-)} = 0.$$
Recall from the proof of Proposition \ref{prop6.2} that 
$\wp_\fp(c_1) = \wp_\fp(c_\fq) = z$ and $\wp_\fp(c_{\fp}) = 0$, where $\wp_\fp: J(F_{\fp}) \rightarrow \Phi_{\fp}$ is the canonical specialization map.
Then $\wp_\fp(c_{(-,+)}) = 2z$, and the order of $c_{(-,+)}$ must be divisible by $N_{(-,+)}$. 
Therefore the order of $c_{(-,+)}$ is $N_{(-,+)}$, which is also the order of $\wp_\fp(c_{(-,+)})$. 
By interchanging $\fp$ and $\fq$ we obtain that the order of $c_{(+,-)}$ is $N_{(+,-)}$. This proves the first claim. 

Now, suppose $c=\alpha_1 c_{(-,-)} + \alpha_2 c_{(-,+)} + \alpha_3 c_{(+,-)} = 0$ 
for $\alpha_1,\ \alpha_2, \ \alpha_3 \in \Z$. Then $\wp_\fp(c) = \alpha_2 \wp_\fp(c_{(-,+)}) = 0$, which implies that $N_{(-,+)} \mid \alpha_2$.
Similarly, we have $N_{(+,-)} \mid \alpha_3$. Hence $c = \alpha_1 c_{(-,-)} = 0$, and $N_{(-,-)} \mid \alpha_1$. 
This implies the second claim. 
\end{proof}

When $q$ is even, Lemma \ref{lem6.8} implies that $\cC = \cC'$. 
Suppose $q$ is odd.
Note that the quotient group $\cC/\cC'$ is generated by $c_\fp$ and $c_\fq$, where $2c_\fp \equiv 2c_\fq \equiv 0 \bmod \cC'$.
When $\deg(\fp)$ and $\deg(\fq)$ are both odd,
we can find meromorphic functions $\varphi_\fp$ and $\varphi_\fq$ on $X_0(\fp\fq)_{\C_\infty}$ such that 
$$\text{div}(\varphi_\fp) = \frac{(|\fp|^2-1)(|\fq|-1)}{(q-1)(q^2-1)} ([\fp]-[\infty]),$$ 
$$
\text{div}(\varphi_\fq) = \frac{(|\fp|-1)(|\fq|^2-1)}{(q-1)(q^2-1)} ([\fq]-[\infty]).$$
Indeed, let
$$\varphi_{\fp}(z):=\frac{ D_\fq(z)^{\frac{|\fp|-q}{q^2-1}} \cdot D_\fq(\fp z)^{\frac{|\fp| q -1}{q^2-1}}}{\Big(D_{\fp\fq}(z)\cdot D_{\fp\fq}(W_\fp z)\Big)^{\frac{|\fp|-1}{q-1}}}, \quad 
\varphi_{\fq}(z):=\frac{ D_\fp(z)^{\frac{|\fq|-q}{q^2-1}} \cdot D_\fp(\fq z)^{\frac{|\fq| q -1}{q^2-1}}}{\Big(D_{\fp\fq}(z)\cdot D_{\fp\fq}(W_\fq z)\Big)^{\frac{|\fq|-1}{q-1}}}$$
%where $D_{\fn}$ is the $\mu(\fn)$-th root of $\Delta/\Delta_{\fn}$ in Proposition \ref{prop6.3}, and
%$$W_{\fp}:= \begin{pmatrix}\fp&1\\ \beta \fp\fq & \alpha \fp\end{pmatrix}, \quad W_\fq:= \begin{pmatrix} \fq & 1 \\ \alpha \fp\fq & \beta \fq \end{pmatrix}$$
%with $\alpha$ and $\beta$ in $A$ such that $\alpha \fp - \beta \fq = 1$.
(Proposition \ref{prop6.3} implies that $\varphi_\fp$ and $\varphi_\fq$ are $\Gamma_0(\fp\fq)$-invariant.)
We conclude that the orders of $c_\fp$ and $c_\fq$ are odd, and therefore $\cC = \cC'$.

Suppose $q$ is odd and $\deg(\fp) \cdot \deg(\fq)$ is even.
Then Proposition \ref{prop6.2} tells us that the order of $c_{\fp}$ and $c_{\fq}$ are both even.
Thus from the canonical specialization maps $\wp_\fp$ and $\wp_\fq$, we have the following exact sequence:
$$0\longrightarrow \cC' \longrightarrow \cC \longrightarrow \frac{\Z}{2\Z} \times \frac{\Z}{2\Z}\longrightarrow 0.$$
Since $2c_{\fp} = c_{(+,-)} - c_{(-,-)}$ and $2c_{\fq} = c_{(-,+)}-c_{(-,-)}$, 
the order of $c_{\fp}$ is $2\cdot \text{lcm}(N_{(-,-)}, N_{(+,-)})$, and the order of $c_{\fq}$ is $2 \cdot\text{lcm}(N_{(-,-)},N_{(-,+)})$.
From the above discussion, we finally conclude that:

\begin{thm}\label{thm6.10} \hfill 
\begin{itemize}
\item[(1)] The order of $c_\fp$ is $\frac{(|\fp|^2-1)(|\fq|-1)}{(q-1)(q^2-1)}$ and the order of $c_{\fq}$ is $\frac{(|\fp|-1)(|\fq|^2-1)}{(q-1)(q^2-1)}$.
\item[(2)] The odd part of $\cC$ is isomorphic to
$$\frac{\Z}{N_{(-,-)}^{\mathrm{odd}}\Z} \times \frac{\Z}{N_{(-,+)}^{\mathrm{odd}}\Z} \times \frac{\Z}{N_{(+,-)}^{\mathrm{odd}}\Z},$$
where $N^{\mathrm{odd}}$ is the odd part of a positive integer $N$.
\item[(3)] When $q$ is even or $q\cdot \deg(\fp) \cdot \deg(\fq)$ is odd, we have $\cC = \cC'$.
\item[(4)] Suppose $q$ is odd and $\deg(\fp)\cdot \deg(\fq)$ is even. Let $\cC_2$ $($resp.\ $\cC'_2)$ be the $2$-primary part of $\cC$ $($resp.\ $\cC')$. Let $r_1,\ r_2,\ r_3 \in \Z_{\geq 0}$ with $r_1\geq r_2 \geq r_3$ such that 
$$\cC'_2  \cong \Z/2^{r_1}\Z \times \Z/2^{r_2} \Z \times \Z/2^{r_3}\Z.$$
Then $$\cC_2 \cong \Z/2^{r_1+1}\Z \times \Z/2^{r_2+1} \Z \times \Z/2^{r_3}\Z.$$
\end{itemize}
\end{thm}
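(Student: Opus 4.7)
The heavy lifting for this theorem has already been done: Proposition \ref{prop6.9} computes $\cC' \cong \Z/N_{(-,-)} \oplus \Z/N_{(-,+)} \oplus \Z/N_{(+,-)}$, and the change-of-basis matrix from $\{c_1, c_\fp, c_\fq\}$ to $\{c_{(-,-)}, c_{(-,+)}, c_{(+,-)}\}$ has determinant $\pm 4$ with invariant factors $(1,2,2)$, so that $\cC/\cC'$ embeds into $(\Z/2\Z)^2$. My plan is to assemble the remaining pieces case by case.

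For part (3), identifying when $\cC = \cC'$: if $q$ is even, Lemma \ref{lem6.8} forces $|\cC|$ to be coprime to $p=2$, so the $2$-torsion quotient $\cC/\cC'$ must vanish. If $q$ is odd and both $\deg(\fp), \deg(\fq)$ are odd, then the explicit meromorphic functions $\varphi_\fp, \varphi_\fq$ constructed just before the theorem annihilate $c_\fp, c_\fq$ by integers whose factors $(|\fp|\pm 1)/(q\pm 1)$ and $(|\fq|\pm 1)/(q\pm 1)$ are all odd; so $c_\fp, c_\fq$ have odd order, and being elements whose image in the $2$-group $\cC/\cC'$ must then be trivial, they lie in $\cC'$. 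Hence so does $c_1 = c_{(-,-)} + c_\fp + c_\fq$, forcing $\cC = \cC'$.

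For part (1), I use the key identities $2c_\fp = c_{(+,-)} - c_{(-,-)}$ and $2c_\fq = c_{(-,+)} - c_{(-,-)}$, which place the doublings inside $\cC'$ with orders computable from Proposition \ref{prop6.9}. In the $\cC = \cC'$ cases, the relevant $N_{(\epsilon,\epsilon')}$ are odd so $2$ is invertible modulo them, and the order of $c_\fp$ works out to $\mathrm{lcm}(N_{(-,-)}, N_{(+,-)})$; in the remaining case, $c_\fp \notin \cC'$, so the order of $c_\fp$ equals $2\,\mathrm{lcm}(N_{(-,-)}, N_{(+,-)})$. A direct algebraic manipulation using the definitions of $\mu(\fp,\fq)$ and $\epsilon(\fq,\fp)$ shows that in every parity subcase these values collapse to $(|\fp|^2 - 1)(|\fq|-1)/((q-1)(q^2-1))$, with the mirror formula for $c_\fq$. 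Part (2) is then immediate, since $\cC/\cC'$ is always a $2$-group so the odd-primary parts of $\cC$ and $\cC'$ must agree.

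For part (4), covering the case $q$ odd with exactly one of $\deg(\fp), \deg(\fq)$ even, I first verify that $\cC \to (\Z/2\Z)^2$ is surjective: the canonical specialization maps $\wp_\fp$ and $\wp_\fq$ send $c_\fp, c_\fq$ to elements of even order in the respective component groups (by Proposition \ref{prop6.2} combined with the orders from part (1)), so $c_\fp$ and $c_\fq$ generate two independent non-trivial classes in $\cC/\cC'$. Combining the presentation $\cC = \langle c_\fp, c_\fq, c_{(-,-)} \rangle$ with the three defining relations (two coming from the doubling identities above and one from the annihilation of $c_{(-,-)}$ by $N_{(-,-)}$), I extract the $2$-primary Smith normal form via the standard minor-gcd method. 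Writing $s_1, s_2, s_3$ for the $2$-adic valuations of $N_{(-,-)}, N_{(-,+)}, N_{(+,-)}$, the invariants come out to $2^{s_{(1)}}, 2^{s_{(2)}+1}, 2^{s_{(3)}+1}$ for the sorted $s_{(1)} \leq s_{(2)} \leq s_{(3)}$, matching the claimed shape with exponents $r_1+1, r_2+1, r_3$.

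The main obstacle is the algebraic identity in part (1): verifying that $\mathrm{lcm}(N_{(-,-)}, N_{(+,-)})$ and its doubled variant both collapse to $(|\fp|^2-1)(|\fq|-1)/((q-1)(q^2-1))$ requires splitting into the four parity subcases for $(\deg\fp \bmod 2, \deg\fq \bmod 2)$ within each parity of $q$, while carefully tracking the factors of $q-1$ and $q+1$ dividing $|\fp|\pm 1$ and $|\fq|\pm 1$.
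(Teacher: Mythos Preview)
Your proposal is correct and follows essentially the same route as the paper: the paper also derives everything from Proposition~\ref{prop6.9}, the doubling identities $2c_\fp=c_{(+,-)}-c_{(-,-)}$ and $2c_\fq=c_{(-,+)}-c_{(-,-)}$, the explicit functions $\varphi_\fp,\varphi_\fq$ in the both-odd case, and the specialization maps $\wp_\fp,\wp_\fq$ to pin down $[\cC:\cC']$. Two small points to tighten: in part~(4) the hypothesis is $\deg(\fp)\cdot\deg(\fq)$ even (so both-even is included, not just exactly one even)---your specialization argument works unchanged there; and your sentence about $\wp_\fp,\wp_\fq$ should read that $\wp_\fq(c_\fp)$ and $\wp_\fp(c_\fq)$ have even order (recall $\wp_\fp(c_\fp)=0$), which together with $\wp_\fp(\cC')=\langle 2z\rangle$ and $\wp_\fq(\cC')=\langle 2z'\rangle$ gives the independence in $\cC/\cC'$.
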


\begin{example}\label{exampleCxy}
If $q$ is even, then $\cC'(xy)=\cC(xy)\cong \Z/(q+1)\Z\times \Z/(q^2+1)\Z$. If $q$ is odd, then 
$\cC'(xy)\cong \frac{\Z}{\frac{q+1}{2}\Z}\times \frac{\Z}{\frac{q^2+1}{2}\Z}$ and $\cC(xy)\cong\Z/(q+1)\Z\times \Z/(q^2+1)\Z$. 
\end{example}

\begin{example}\label{exampleC22}
Assume $\deg(\fp)=\deg(\fq)=2$. If $q$ is even, then 
$$
\cC'(\fp\fq)=\cC(\fp\fq)\cong 
\frac{\Z}{(q^2+1)\Z} \times \frac{\Z}{(q+1)(q^2+1)\Z}. 
$$
If $q$ is odd, 
$$
\cC'(\fp\fq)\cong \frac{\Z}{(q+1)\Z}\times \frac{\Z}{\frac{q^2+1}{2}\Z}\times \frac{\Z}{\frac{q^2+1}{2}\Z},
$$
$$
\cC(\fp\fq)\cong 
\frac{\Z}{(q^2+1)\Z} \times \frac{\Z}{(q+1)(q^2+1)\Z}.
$$
\end{example}

% ------------------------------------------------------------------------

\section{Rational torsion subgroup}\label{RT}

\subsection{Main theorem} 
Let $\fn\lhd A$ be a non-zero ideal. To simplify the notation in this section we denote $J=J_0(\fn)$. 
Let $\cJ$ denote the N\'eron model of $J$ over $\p^1_{\F_q}$. Let 
$\cT(\fn)$ the torsion subgroup of $J(F)$. 

\begin{lem}\label{lem8.1} Let $\ell$ be a prime not equal to $p$. Then 
$\cT(\fn)_{\ell}$ is annihilated by the Eisenstein ideal $\fE(\fn)$, i.e., 
$(T_{\fp} - |\fp|-1)P = 0$ for every prime ideal $\fp$ not dividing $\fn$ and $P\in \cT(\fn)_{\ell}$.
\end{lem}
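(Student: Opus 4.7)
My approach is the standard Eichler--Shimura argument transported to the Drinfeld setting. Since $\fp\nmid\fn$, Theorem \ref{thmX0} says that $X_0(\fn)$ is smooth over $\cO_\fp$, so $J:=J_0(\fn)$ has good reduction at $\fp$ and its N\'eron model $\cJ_{\cO_\fp}$ is an abelian scheme. Because the residue characteristic of $\fp$ is $p$, for any $\ell\neq p$ and $n\geq 1$ the finite flat group scheme $\cJ[\ell^n]$ is \'etale over $\cO_\fp$, and the reduction map
$$
J(F_\fp^\un)[\ell^\infty]\;\longrightarrow\; \cJ_{\F_\fp}(\overline{\F}_\fp)[\ell^\infty]
$$
is injective. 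Since $J(F)\subseteq J(F_\fp^\un)$, it will suffice to show that the reduction of $(T_\fp-|\fp|-1)P$ vanishes in $\cJ_{\F_\fp}$.

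The key input is the Eichler--Shimura congruence for Drinfeld modular Jacobians: on $\cJ_{\F_\fp}$ one has
$$
T_\fp\;=\;\Frob_\fp \,+\, \Frob_\fp^\ast,\qquad \Frob_\fp^\ast\circ\Frob_\fp \,=\, [|\fp|],
$$
where $\Frob_\fp$ is the $|\fp|$-power (geometric) Frobenius endomorphism of $\cJ_{\F_\fp}$ and $\Frob_\fp^\ast$ is its dual. This relation is the direct analogue of the classical Eichler--Shimura relation and can be derived from the description $T_\fp=\alpha^\ast\circ\beta_\ast$ in Section \ref{sDMC} together with the moduli-theoretic picture of $X_0(\fn\fp)_{\F_\fp}$ in Theorem \ref{thmpM}: the two degeneracy maps $\alpha,\beta:X_0(\fn\fp)_{\F_\fp}\to X_0(\fn)_{\F_\fp}$ become, after restriction to the two components of the special fibre, the identity on one component and the Frobenius isogeny $(\phi,C_\fn)\mapsto(\phi^{(\fp)},C_\fn^{(\fp)})$ on the other, so that $T_\fp$ reduces to the sum of the graph of $\Frob_\fp$ and the graph of its transpose.

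Granting this, the argument concludes quickly. Since $P\in J(F)\subseteq J(F_\fp^\un)$, its reduction $\tilde P$ lies in $\cJ(\F_\fp)$, hence $\Frob_\fp(\tilde P)=\tilde P$. Applying $\Frob_\fp^\ast$ and using $\Frob_\fp^\ast\circ\Frob_\fp=[|\fp|]$ gives $\Frob_\fp^\ast(\tilde P)=|\fp|\,\tilde P$; therefore $T_\fp(\tilde P)=\tilde P+|\fp|\,\tilde P=(|\fp|+1)\tilde P$. Combined with the injectivity of reduction on $\ell$-primary torsion, this yields $(T_\fp-|\fp|-1)P=0$, as required. The only non-routine step is the Eichler--Shimura congruence itself; the main obstacle in a careful write-up is not the underlying idea but the bookkeeping on the special fibre (interchange of the two components, behaviour on the supersingular locus, thickness of crossings from Theorem \ref{thmpM}), which in practice one imports from the existing literature on Drinfeld modular curves such as P\'al's \cite{Pal}, where the analogous reduction argument is carried out at prime level.
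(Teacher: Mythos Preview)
Your proof is correct and follows essentially the same approach as the paper: reduce modulo $\fp$ using good reduction and injectivity of reduction on $\ell$-primary torsion, then invoke the Eichler--Shimura relation together with $\Frob_\fp$ acting trivially on the image of $\cT(\fn)_\ell$. The only cosmetic difference is that the paper states Eichler--Shimura in its quadratic form $\Frob_\fp^2-T_\fp\cdot\Frob_\fp+|\fp|=0$ and substitutes $\Frob_\fp=1$, whereas you use the equivalent additive form $T_\fp=\Frob_\fp+\Frob_\fp^\ast$.
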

\begin{proof} It follows from Theorem \ref{thmX0} that $J$ has good reduction at $\fp\nmid \fn$.  
By the N\'eron mapping property, $\cT(\fn)_\ell$ extends to an \'etale subgroup scheme of $\cJ$. 
This implies that there is a canonical injective homomorphism 
$\cT(\fn)_\ell\hookrightarrow \cJ_{\F_\fp}(\F_\fp)$. The action of $\T(\fn)$ on $J$ canonically 
extends to an action on $\cJ$. 
Since the reduction map commutes with the action of $\T(\fn)$, it is enough to 
show that $(T_{\fp} - |\fp|-1)$ annihilates $\cT(\fn)_{\ell}$ over $\F_\fp$. Let $\Frob_\fp$ 
be the Frobenius endomorphism of $J_{\F_\fp}$. The Hecke operator $T_\fp$ 
satisfies the Eichler-Shimura relation: 
$$
\Frob_\fp^2-T_\fp\cdot \Frob_\fp+|\fp|=0. 
$$
Since $\Frob_\fp$ acts trivially on $\cT(\fn)_\ell$, the claim follows. 
\end{proof}

\begin{lem}\label{lemTPhi}
Suppose $\ell$ is a prime not dividing $q(q-1)$. 
There is a natural injective homomorphism $\cT(\fn)_\ell\hookrightarrow \cE_{00}(\fn, \Z/\ell^r\Z)$ 
for any $r \in \Z_{\geq 0}$ with $\ell^r \geq \#(\Phi_{\infty,\ell})$. 
\end{lem}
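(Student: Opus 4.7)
The plan is to realize the embedding as a composition
$$\cT(\fn)_\ell \hookrightarrow \Phi_{\infty,\ell} \hookrightarrow \cH_{00}(\fn, \Z/\ell^r\Z)$$
which lands inside $\cE_{00}(\fn, \Z/\ell^r\Z) = \cH_{00}(\fn, \Z/\ell^r\Z)[\fE(\fn)]$ by Hecke-equivariance. The three ingredients are the split toric reduction of $J := J_0(\fn)$ at $\infty$, the exact sequence of Theorem \ref{thmCGinf}, and Lemma \ref{lem8.1}.

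For the first arrow I would use the canonical specialization $\wp_\infty\colon J(F) \hookrightarrow J(\Fi) = \cJ(\cO_\infty) \to \cJ(\F_\infty)$. Since $\ell \neq p$ (as $p \mid q$ and $\ell \nmid q$), the kernel of reduction is the $\cO_\infty$-points of the formal group along the identity section, hence a pro-$p$ group, so $\ell$-torsion injects. Lang's theorem combined with the trivial $G_{\F_\infty}$-action on $\Phi_\infty$ recorded in Theorem \ref{thmCGinf} gives a short exact sequence
$$0 \to \cJ^0(\F_\infty) \to \cJ(\F_\infty) \to \Phi_\infty \to 0.$$
The rigid-analytic uniformization (\ref{eqGRs}) identifies $\cJ^0_{\F_\infty}$ with a split torus of dimension $g = \dim J$, so $\cJ^0(\F_\infty) \cong (\F_q^\times)^g$. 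The hypothesis $\ell \nmid q-1$ then forces $\cJ^0(\F_\infty)_\ell = 0$, and passing to $\ell$-primary parts in the above sequence produces an isomorphism $\cJ(\F_\infty)_\ell \xrightarrow{\sim} \Phi_{\infty,\ell}$; composing with $\wp_\infty$ yields the desired injection $\cT(\fn)_\ell \hookrightarrow \Phi_{\infty,\ell}$.

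For the second arrow I would apply the snake lemma for multiplication by $\ell^r$ to the sequence of Theorem \ref{thmCGinf}, producing
$$0 \to \Phi_\infty[\ell^r] \to \cH_0(\fn, \Z)/\ell^r \to \Hom(\cH_0(\fn, \Z), \Z)/\ell^r \to \Phi_\infty/\ell^r \to 0.$$
The choice $\ell^r \geq \#\Phi_{\infty, \ell}$ forces $\Phi_\infty[\ell^r] = \Phi_{\infty, \ell}$, and under $\ell \nmid q(q-1)$ Lemma \ref{lem1.3} identifies $\cH_0(\fn, \Z)/\ell^r$ with $\cH_{00}(\fn, \Z/\ell^r\Z)$. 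Both arrows in the composite $\cT(\fn)_\ell \hookrightarrow \cH_{00}(\fn, \Z/\ell^r\Z)$ are $\T(\fn)^0$-equivariant --- the first by the N\'eron mapping property applied to the Hecke correspondences on $X_0(\fn)$, the second by the $\T(\fn)^0$-equivariance built into Theorem \ref{thmCGinf}. Since $\fE(\fn) \subset \T(\fn)^0$ and $\fE(\fn) \cdot \cT(\fn)_\ell = 0$ by Lemma \ref{lem8.1}, the image is annihilated by $\fE(\fn)$ and therefore lies in $\cE_{00}(\fn, \Z/\ell^r\Z)$. The main technical obstacle is the identification $\cH_0(\fn, \Z)/\ell^r \cong \cH_{00}(\fn, \Z/\ell^r\Z)$ when every prime divisor of $\fn$ has even degree, since Lemma \ref{lem1.3}(1) then requires the extra condition $\ell \nmid q+1$; in that case one must verify directly that the image of $\Phi_{\infty,\ell}$ avoids the kernel of the surjection $\cH_0(\fn, \Z)/\ell^r \twoheadrightarrow \cH_{00}(\fn, \Z/\ell^r\Z)$, or control the ambiguity via the exponent of $\Phi_\infty$ rather than its order.
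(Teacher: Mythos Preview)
Your argument is correct and follows essentially the same route as the paper's proof: specialize $\cT(\fn)_\ell$ into $\Phi_{\infty,\ell}$ using that $\cJ^0_{\F_\infty}(\F_\infty)\cong(\F_q^\times)^g$ has no $\ell$-torsion, then embed $\Phi_{\infty,\ell}$ into $\cH_{00}(\fn,\Z/\ell^r\Z)$ via the snake lemma applied to Theorem~\ref{thmCGinf}, and finally use Hecke-equivariance together with Lemma~\ref{lem8.1} to land in $\cE_{00}$.

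Your closing worry, however, is unfounded and stems from a misreading of Lemma~\ref{lem1.3}. By definition $\cH_{00}(\fn,R)$ is the \emph{image} of $\cH_0(\fn,\Z)\otimes R$ in $\cH_0(\fn,R)$, and the natural map $\cH_0(\fn,\Z)\otimes R\to\cH_0(\fn,R)$ is always injective: $\cH_0(\fn,\Z)$ sits as a saturated sublattice of the free $\Z$-module of functions on the edges of $(\G_0(\fn)\backslash\sT)^0$ (if $n f$ satisfies the $\G_0(\fn)$-invariance, harmonicity and support conditions then so does $f$), so the cokernel is torsion-free and tensoring preserves injectivity. Hence $\cH_{00}(\fn,\Z/\ell^r\Z)\cong\cH_0(\fn,\Z)/\ell^r$ unconditionally, and no appeal to Lemma~\ref{lem1.3} is needed. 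That lemma addresses the separate question of when $\cH_{00}(\fn,R)=\cH_0(\fn,R)$, which plays no role here. You may simply delete the final sentence of your proposal.
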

\begin{proof}
Since $J$ has split toric reduction at $\infty$, $\cJ^0_{\F_\infty}(\F_\infty)\cong \prod_{i=1}^g\F_q^\times$, 
where $g=\dim(J)$. Under the assumption that $\ell$ does not divide $q(q-1)$, we see that $\cT(\fn)_\ell$ 
has trivial intersection with $\cJ^0_{\F_\infty}$, so 
the canonical specialization $\cT(\fn)_\ell\to (\Phi_{\infty})_\ell$ is injective.  Since this 
map is $\T(\fn)$-equivariant, by Lemma \ref{lem8.1} we get an injection $\cT(\fn)_\ell\to (\Phi_{\infty})_\ell[\fE]$. 
Fix some $r \in \Z_{\geq 0}$ with $\ell^r \cdot (\Phi_{\infty})_\ell = 0$. Multiplying the 
sequence in Theorem \ref{thmCGinf} by $\ell^r$ and applying the snake lemma, we get an injection 
$(\Phi_{\infty})_\ell \hookrightarrow \cH_{00}(\fn,\Z/\ell^r\Z)$. 
Since this map is again $\T(\fn)$-equivariant, restricting to the 
kernels of $\fE(\fn)$, we get $(\Phi_{\infty})_\ell[\fE(\fn)] \hookrightarrow \cE_{00}(\fn,\Z/\ell^r\Z)$. 
Therefore, $\cT(\fn)_\ell\hookrightarrow \cE_{00}(\fn, \Z/\ell^r\Z)$ as was required to show. 
\end{proof}

\begin{thm}\label{thm8.3}
Suppose $\fn = \fp\fq$ is a product of two distinct primes $\fp$ and $\fq$. Let $s_{\fp, \fq}=\gcd(|\fp|+1, |\fq|+1)$. 
If $\ell$ does not divide $q(q-1)s_{\fp,\fq}$, then $\cC(\fn)_{\ell} = \cT(\fn)_{\ell}$. 
\end{thm}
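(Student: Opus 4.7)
The strategy is to establish the chain of inclusions
$$\cC(\fn)_\ell \subseteq \cT(\fn)_\ell \hookrightarrow \cE_{00}(\fn, \Z/\ell^n\Z)$$
for all sufficiently large $n$, and then to show that the two outer groups have the same order. The second inclusion comes from Lemma \ref{lemTPhi}, which requires precisely the hypothesis $\ell \nmid q(q-1)$ in order to ensure that the canonical specialization $\cT(\fn)_\ell \to \Phi_{\infty,\ell}$ is injective; one chooses $n$ so that $\ell^n$ annihilates $\Phi_{\infty,\ell}$.

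To compute $\cE_{00}(\fn, \Z/\ell^n\Z)$, I would pass to the coefficient ring $R := W/\ell^n W$ with $W = \Z_\ell[\zeta_p]$. Under the hypothesis $\ell \nmid q(q-1) s_{\fp,\fq}$, the integers $q-1$ and $s_{\fp,\fq}$ are units in $R$. Furthermore $\nu(\fp,\fq)\in\{1,q+1\}$, and $\nu(\fp,\fq) = q+1$ exactly when both $\deg\fp$ and $\deg\fq$ are odd, in which case $q+1 \mid s_{\fp,\fq}$ and so $\nu(\fp,\fq) \in R^\times$ as well. Thus Theorem \ref{thm3.9}(2) applies to give $\cE_{00}(\fn, R) = \cE'_{00}(\fn, R)$, and Lemma \ref{lem3.4}(2) describes $\cE'_0(\fn, R)$ as an explicit sum of three cyclic $R$-modules. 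Descent to $\Z/\ell^n\Z$ as in the proof of Corollary \ref{previousN} (using that $R$ is finite free over $\Z/\ell^n\Z$) then yields an analogous description of $\cE_{00}(\fn, \Z/\ell^n\Z)$.

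The final step is to compare the orders of the three cyclic factors of $\cE_{00}(\fn, \Z/\ell^n\Z)$ coming from Lemma \ref{lem3.4}(2) with the orders $N_{(-,-)}, N_{(-,+)}, N_{(+,-)}$ from Theorem \ref{thm6.10}(2). A parity-based case analysis on $\deg\fp$ and $\deg\fq$ shows that the corresponding orders differ only by factors among $\{2, q-1, q+1\}$. The hypothesis $\ell \nmid q(q-1) s_{\fp,\fq}$ makes each such factor an $\ell$-adic unit: $\ell \neq 2$ since $q(q-1)$ is always even, $\ell \nmid q-1$ by assumption, and $\ell \nmid q+1$ in the only parity case (both degrees odd) where $q+1$ appears in the comparison, since then $q+1 \mid s_{\fp,\fq}$. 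Hence $|\cE_{00}(\fn, \Z/\ell^n\Z)| = |\cC(\fn)_\ell|$ for $n$ sufficiently large, and the chain above forces $\cC(\fn)_\ell = \cT(\fn)_\ell$.

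The principal obstacle I anticipate is a technical wrinkle in the case where both $\deg\fp$ and $\deg\fq$ are even and $\ell \mid q+1$: there Lemma \ref{lem1.3} does not yield $\cH_0(\fn, R) = \cH_{00}(\fn, R)$, so $\cE'_{00}(\fn, R)$ may sit strictly inside $\cE'_0(\fn, R)$, and Lemma \ref{lem3.4}(2) a priori only provides an upper bound on $|\cE_{00}|$ via $|\cE'_0|$. Verifying that this upper bound still matches $|\cC(\fn)_\ell|$ after $\ell$-primary reduction, or alternatively carving out the cuspidal generators in $\cE'_{00}$ directly via the criterion of Lemma \ref{lem2.12}, is the most delicate point of the argument.
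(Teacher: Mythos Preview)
Your approach is correct and essentially identical to the paper's: establish the chain $\cC(\fn)_\ell \hookrightarrow \cT(\fn)_\ell \hookrightarrow \cE_{00}(\fn,\Z/\ell^n\Z) = \cE'_{00}(\fn,\Z/\ell^n\Z) \subseteq \cE'_0(\fn,\Z/\ell^n\Z)$ via Lemma~\ref{lemTPhi} and Theorem~\ref{thm3.9}, then check $|\cE'_0| = |\cC_\ell|$ using Lemma~\ref{lem3.4}(2) and the explicit description of $\cC_\ell$ from Proposition~\ref{prop6.9}.

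Your flagged ``principal obstacle'' is not an obstacle. You never need $\cH_0(\fn,R) = \cH_{00}(\fn,R)$ at the composite level $\fp\fq$, nor do you need $\cE'_{00} = \cE'_0$ a priori: the inclusion $\cE_{00} \subseteq \cE'_0$ already gives the upper bound $|\cT(\fn)_\ell| \leq |\cE'_0|$, and your parity case analysis has shown this matches $|\cC(\fn)_\ell|$. (In the both-even case with $\ell \mid q+1$ that you single out, one has $\nu(\fp)=\nu(\fq)=1$ and the three ratios between the annihilators in Lemma~\ref{lem3.4}(2) and the $N_{(\pm,\pm)}$ lie in $\{1,2,q-1\}$, all $\ell$-units.) The equality $\cE_{00} = \cE'_0$ then falls out as a \emph{consequence} of the squeeze $|\cC_\ell| \leq |\cE_{00}| \leq |\cE'_0| = |\cC_\ell|$; this is exactly what the paper observes in Remark~\ref{rem7.4}. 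Note also that the only place Lemma~\ref{lem1.3} enters is inside the proof of Theorem~\ref{thm3.9}, and there it is invoked at the \emph{prime} levels $\fp$ and $\fq$ via part~(2), which needs only $q(q-1)\in R^\times$ regardless of parity.
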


\begin{proof} First of all, since $\fn$ is square-free, $\cC(\fn)$ is rational over $F$, so 
$\cC(\fn)_\ell\subseteq \cT(\fn)_\ell$; see Theorem \ref{thm6.1}. 
Next, note that $\ell$ is odd by our assumption, so by Proposition \ref{prop6.9} 
$$
\cC(\fn)_{\ell}  = \cC(\fn)^{\mathrm{odd}}_{\ell} \cong \Z/\ell^{r_{(-,-)}}\Z \times \Z/\ell^{r_{(-,+)}}\Z \times \Z/\ell^{r_{(+,-)}}\Z,
$$
where $r_{(\pm,\pm)}:= \ord_{\ell}(N_{(\pm,\pm)})$. 
Since $\ell$ is coprime to $q(q-1)s_{\fp,\fq}$, Theorem \ref{thm3.9} and Lemma \ref{lem3.4} give the inclusion 
\begin{eqnarray}
\cE_{00}(\fn,\Z/\ell^r\Z) &=& \cE'_{00}(\fn,\Z/\ell^r\Z) \nonumber \\
&\subseteq & \cE'_{0}(\fn,\Z/\ell^r\Z)
\cong \Z/\ell^{r_{(-,-)}}\Z \times \Z/\ell^{r_{(-,+)}}\Z \times \Z/\ell^{r_{(+,-)}}\Z. \nonumber
\end{eqnarray}
Finally, by Lemma \ref{lemTPhi} we have an injection 
$\cC(\fn)_{\ell} \hookrightarrow \cT(\fn)_{\ell} \hookrightarrow \cE_{00}(\fn,\Z/\ell^r \Z)$. Comparing the orders of these groups, 
we conclude that 
$$
\cC(\fn)_{\ell} = \cT(\fn)_{\ell} = \cE_{00}(\fn,\Z/\ell^r \Z)= \cE'_{0}(\fn,\Z/\ell^r\Z). 
$$
\end{proof}

\begin{rem}\label{rem7.4} The previous proof shows that $\cE_{00}(\fp\fq,\Z/\ell^r \Z)= \cE'_{0}(\fp\fq,\Z/\ell^r\Z)$. 
A generalization of this argument gives the equality $\cE_{00}(\fp\fq,R) = \cE'_{0}(\fp\fq,R)$ 
for any coefficient ring in which $q-1$ and $s_{\fp,\fq}$ are invertible.
\end{rem}

\begin{cor}
If $\ell$ does not divide $q(|\fp|^2-1)(|\fq|^2-1)$, then $\cT(\fp\fq)_\ell=0$. 
\end{cor}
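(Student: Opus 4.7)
The plan is to deduce this corollary by combining Theorem \ref{thm8.3} with the explicit description of $\cC(\fp\fq)$ supplied by Theorem \ref{thm6.10}. There is no substantive new argument required; the work is in checking that the numerical hypothesis propagates correctly through both statements.

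First I would verify that $\ell \nmid q(|\fp|^2-1)(|\fq|^2-1)$ implies the hypothesis $\ell \nmid q(q-1)s_{\fp,\fq}$ of Theorem \ref{thm8.3}. Since $|\fp| = q^{\deg(\fp)}$, the integer $q-1$ divides $|\fp|-1$, hence divides $(|\fp|^2-1)(|\fq|^2-1)$. Similarly $s_{\fp,\fq} = \gcd(|\fp|+1,|\fq|+1)$ divides $|\fp|+1$, hence divides $(|\fp|^2-1)(|\fq|^2-1)$. So our hypothesis does feed into Theorem \ref{thm8.3}, yielding $\cT(\fp\fq)_\ell = \cC(\fp\fq)_\ell$.

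Next I would check that $\ell$ is odd. If $q$ is even this is immediate from $\ell \nmid q$; if $q$ is odd then $|\fp|$ and $|\fq|$ are odd so $(|\fp|^2-1)(|\fq|^2-1)$ is even, forcing $\ell \neq 2$. Consequently $\cC(\fp\fq)_\ell$ coincides with its odd part, which by Theorem \ref{thm6.10}(2) is isomorphic to
\[
\frac{\Z}{N_{(-,-)}^{\mathrm{odd}}\Z} \times \frac{\Z}{N_{(-,+)}^{\mathrm{odd}}\Z} \times \frac{\Z}{N_{(+,-)}^{\mathrm{odd}}\Z}.
\]
Each $N_{(\pm,\pm)}$ from Proposition \ref{prop6.9} divides one of $(|\fp|-1)(|\fq|-1)$, $(|\fp|-1)(|\fq|+1)$, $(|\fp|+1)(|\fq|-1)$, all of which divide $(|\fp|^2-1)(|\fq|^2-1)$. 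Since $\ell$ does not divide this product, each cyclic factor is trivial, so $\cC(\fp\fq)_\ell = 0$ and therefore $\cT(\fp\fq)_\ell = 0$.

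There is no real obstacle here; the only point requiring a moment's thought is the $2$-primary discrepancy between $\cC$ and $\cC'$ in Theorem \ref{thm6.10}(4), but this is harmless because the hypothesis already forces $\ell$ to be odd, so the potentially larger $2$-part of $\cC$ is invisible to the $\ell$-primary decomposition.
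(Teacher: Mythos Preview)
Your proof is correct and follows essentially the same approach the paper intends: the corollary is stated without proof immediately after Theorem~\ref{thm8.3}, and the introduction explicitly says it follows from combining that theorem with the description of $\cC(\fp\fq)$. Your verification that the hypothesis implies $\ell\nmid q(q-1)s_{\fp,\fq}$ and that each $N_{(\pm,\pm)}$ divides $(|\fp|^2-1)(|\fq|^2-1)$ is exactly what is needed; one could shortcut the second step slightly by citing Lemma~\ref{lem6.8} directly (which shows the exponent of $\cC(\fp\fq)$ divides $(|\fp|^2-1)(|\fq|^2-1)$), but your route through Theorem~\ref{thm6.10}(2) is equally valid.
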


\begin{lem}\label{lem8.2}
Assume $\fn=\fm\fp$ is square-free, $\fp$ is prime, and  
$\deg (\fm) \leq 2$. Then $\cT(\fn)_{p} = 0$.
\end{lem}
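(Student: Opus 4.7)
The plan is to exploit that $J_0(\fn)$ has \emph{totally toric} reduction at $\fp$ under the hypothesis $\deg(\fm)\le 2$, so that $\cT(\fn)_p$ embeds into $\Phi_\fp[p^\infty]$ via the canonical specialization, and then to verify that $|\Phi_\fp|$ is coprime to $p$.

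First, I would observe that $X_0(\fm)$ has genus $0$ for every square-free $\fm$ with $\deg(\fm)\le 2$ (namely $\fm=A$, $\fm$ a prime of degree $1$ or $2$, or $\fm$ a product of two distinct primes of degree $1$), so $J_0(\fm)=0$. By Theorem \ref{thmpM}, the special fiber $X_0(\fn)_{\F_\fp}$ is a semi-stable curve whose two irreducible components are both smooth copies of $X_0(\fm)_{\F_\fp}$, glued transversally at the supersingular points. The standard Raynaud description of the N\'eron model of the Jacobian of a semi-stable curve identifies the abelian part of the identity component $\cJ^0_{\F_\fp}$ with the product of the Jacobians of the normalized components, namely $J_0(\fm)_{\F_\fp}^2=0$. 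Thus $\cJ^0_{\F_\fp}$ is a torus, i.e., $J_0(\fn)$ has totally toric reduction at $\fp$.

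After base change to $\cO_\fp^{\un}$ this torus splits, so the formal group obtained by completing along the identity is a product of copies of the formal multiplicative group. In residue characteristic $p$ the relation $(1+x)^p = 1+x^p$ forces any $p$-torsion element $1+x$ to satisfy $x^p=0$, which implies $x=0$ in the domain $\cO_\fp^{\un}$. Hence the canonical specialization $\wp_\fp\colon J(F_\fp^{\un})[p^\infty]\to \cJ(\overline{\F}_\fp)[p^\infty]$ is injective. The same argument applied over $\overline{\F}_\fp$ gives $\cJ^0(\overline{\F}_\fp)[p^\infty]=0$, so the exact sequence $0\to \cJ^0\to \cJ\to \Phi_\fp\to 0$ yields an inclusion $\cJ(\overline{\F}_\fp)[p^\infty]\hookrightarrow \Phi_\fp[p^\infty]$. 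Combining, $\cT(\fn)_p\hookrightarrow \Phi_\fp[p^\infty]$.

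It remains to check $p\nmid|\Phi_\fp|$. By Theorem \ref{thmCG}, $|\Phi_\fp|$ is built from the quantities $S(\fp,\fm)$ and $L(\fm)=\prod_{\fp_i\mid\fm}(|\fp_i|+1)$ together with factors involving $q\pm 1$ and $2^{s}$. Since $|\fp|$ and each $|\fp_i|$ are powers of $p$, every factor of the form $|\fp|\pm 1$ or $|\fp_i|+1$ is $\equiv\pm 1\pmod{p}$, as are $q\pm 1$; a case-by-case inspection of the formulas in Theorem \ref{thmCG} shows $p\nmid|\Phi_\fp|$ for every square-free $\fm$ coprime to $\fp$. Hence $\Phi_\fp[p^\infty]=0$ and $\cT(\fn)_p=0$. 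The main technical input is the Raynaud-Grothendieck structure theorem identifying the abelian part of $\cJ^0_{\F_\fp}$ with the product of Jacobians of the normalized irreducible components of the special fiber; once this is in hand, the remainder is an elementary formal-group calculation together with a direct arithmetic check against Theorem \ref{thmCG}.
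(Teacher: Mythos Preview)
Your proof is correct and follows essentially the same route as the paper's: both arguments use Theorem~\ref{thmpM} together with the Raynaud description (the paper cites \cite[p.~246]{NM}) to see that $\cJ^0_{\F_\fp}$ is a torus, then inject $\cT(\fn)_p$ into $\Phi_\fp$ via canonical specialization, and finish by reading off from Theorem~\ref{thmCG} that $p\nmid|\Phi_\fp|$. The paper simply cites \cite[Lem.~7.13]{Pal} for the injection, whereas you spell out the formal-group argument; your formulation here is slightly loose (splitting of the \emph{special-fiber} torus over $\overline{\F}_\fp$ does not by itself say that the formal group of $\cJ^0$ over $\cO_\fp^{\un}$ is $\widehat{\mathbb{G}}_m^{\,g}$---one needs the rigidity of multiplicative formal groups, or the rigid-analytic uniformization, to pass from the residue field to $\cO_\fp^{\un}$), but this is exactly what the cited lemma in \cite{Pal} provides, so the argument is sound.
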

\begin{proof} If $\deg(\fm)\leq 2$, then 
$X_0(\fm)_{\F_\fp}\cong \p^1_{\F_\fp}$. It follows from Theorem \ref{thmpM} and \cite[p. 246]{NM} 
that $\cJ^0_{\F_{\fp}}$ is a torus. Since $J$ has toric reduction at $\fp$, the $p$-primary 
torsion subgroup $\cT(\fn)_p$ injects into $\Phi_\fp$; see Lemma 7.13 in \cite{Pal}.  
Finally, as is easy to see from Theorem \ref{thmCG}, if $\fn$ is square-free, the order of $\Phi_\fp$ is coprime to $p$. Thus, 
$\cT(\fn)_{p}=0$. 
\end{proof}

\begin{comment}
The reduction of $\cT(\fn)$ modulo $\fp$ is the composition 
$$
\cT(\fn)\hookrightarrow J(F_\fp)\xrightarrow{\sim}\cJ(\cO_\fp)\to \cJ(\F_\fp). 
$$ 
The kernel of the map $\cJ(\cO_\fp)\to \cJ(\F_\fp)$ is the group $\widehat{\cJ^0}(\cO_\fp)$ 
of $\cO_\fp$-valued points of the formal group of $\cJ^0$ over $\cO_\fp$. Because $J$ has purely toric reduction 
at $\fp$, $\widehat{\cJ^0}$ is a formal multiplicative torus. Therefore, as $\cO_\fp$ is of characteristic $p$, 
$\widehat{\cJ^0}(\cO_\fp)$ has trivial $p$-torsion. 
Thus, we get an injection $\cT(\fn)_{p}\hookrightarrow \cJ(\F_\fp)$. Similarly, $\cJ^0_{\F_\fp}$, being a torus over $\F_\fp$, 
has no \'etale $p$-torsion, so in fact $\cT(\fn)_{p}$ maps injectively  
\end{comment}

%------------------------------------------------------------------------------

\subsection{Special case}
Here we focus on the case $\fn = xy$ and prove that $\cC(\fn)=\cT(\fn)$. 
To simplify the notation, let $\cC:=\cC(\fn)$ and $\cT:=\cT(\fn)$.  
By Theorem \ref{thm8.3} and Lemma \ref{lem8.2}, we know that $\cC_\ell=\cT_\ell$ for any $\ell \nmid (q-1)$. 
Let $$N = (q+1)(q^2+1).$$ By Corollary \ref{corT/E}, $\T(xy)/\fE(xy)\cong \Z/N\Z$, so $N\in \fE(xy)$. 
On the other hand, $\fE(xy)$ annihilates $\cT_\ell$ for $\ell\neq p$. Therefore, the exponent of $\cT_\ell$ 
divides $N$. Since $\mathrm{gcd}(q-1,N)$ divides $4$, 
$\cT_{\ell} = 0$ when $\ell \mid (q-1)$ is an odd prime. Therefore, we are reduced to showing that $\cC_2=\cT_2$ 
in the case when $q$ is odd. To prove this we will use the fact that $X_0(xy)_F$ is hyperelliptic. 

\vspace{0.1in}

Let $C$ be a hyperelliptic curve of genus $g$ over a field $F$ of characteristic not equal to $2$. Let $B\subset C(\bar{F})$ be the set 
of fixed points of the hyperelliptic involution of $C$. The cardinality of $B$ is $2g+2$. 
Let $J$ be the Jacobian variety of $C$. 
Let $\cG$ be the set of subsets of \textbf{even} cardinality of $B$ modulo the equivalence 
relation defined by $S_1\sim S_2$ if $S_1=S_2$ or $S_1=B- S_2$ (=  
the complement of $S_2$). Denote the equivalence class of $S\subset B$ by $[S]$. Define 
a binary operation on $\cG$ by 
$$
[S_1]\circ [S_2]=[(S_1\cup S_2)-(S_1\cap S_2)]. 
$$
Then $\cG$ is an abelian group isomorphic to $(\Z/2\Z)^{2g}$ (the identity is $[\emptyset]$). 
There is an obvious action of the absolute Galois group $G_F$ on $\cG$, induced from the action on $B$.  

\begin{thm} There is a canonical isomorphism 
$J[2]\cong \cG$ of Galois modules. 
\end{thm}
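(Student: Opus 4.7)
The plan is to construct an explicit Galois-equivariant isomorphism $\phi\colon \cG\to J[2]$ using the hyperelliptic class on $C$. Let $\pi\colon C\to \p^1$ denote the hyperelliptic map, and let $\kappa\in \Pic(C)$ be the class of $\pi^*(p)$ for any $p\in \p^1$; this is a degree-$2$ class defined over $F$ with the property that $2w\sim \kappa$ for every $w\in B$, since $B$ is exactly the ramification locus of $\pi$. For a subset $S\subset B$ of even cardinality $|S|=2k$, set
$$
\phi(S) \;:=\; \bigg[\sum_{w\in S} w \;-\; k\cdot \kappa\bigg]\in J(\bar F).
$$

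First I would check that $\phi$ is a well-defined homomorphism from the group of even subsets of $B$ (under symmetric difference $\triangle$) into $J[2]$. The identity $2w\sim \kappa$ immediately gives $2\phi(S)=0$, and a short computation using $|S_1|+|S_2|=|S_1\triangle S_2|+2|S_1\cap S_2|$ together with $\sum_{w\in S_1\cap S_2}2w\sim |S_1\cap S_2|\kappa$ yields $\phi(S_1)+\phi(S_2)=\phi(S_1\triangle S_2)$. To show $\phi$ factors through the quotient $\cG$, I would compute $\phi(B)$ directly from a Weierstrass model $y^2=f(x)$: the divisor of $y$ is $\sum_{w\in B}w-(g+1)D_\infty$, where $D_\infty$ represents $\kappa$ both in the even-degree case ($D_\infty=\infty_++\infty_-$) and the odd-degree case ($D_\infty=2w_\infty$). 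Hence $\sum_{w\in B}w\sim (g+1)\kappa$ and $\phi(B)=0$. Galois-equivariance is automatic, since $\kappa$ is $G_F$-invariant and the Galois action on $B\subset C(\bar F)$ is compatible with that on $J$.

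The main step will be injectivity of $\phi\colon \cG\to J[2]$. Suppose $\phi(S)=0$. Replacing $S$ by $B\bs S$ (which represents the same class in $\cG$) if needed, we may assume $k=|S|/2\leq g+1$; the case $k=g+1$ forces $S=B$, so assume $k\leq g$. Then $\sum_{w\in S}w$ is an effective degree-$2k$ divisor lying in the complete linear system $|k\kappa|$. Because $C$ carries the $g^1_2$ given by $\pi$, a Riemann--Roch computation (or equivalently Clifford's theorem applied to the special divisor $k\kappa$) shows that for $k\leq g$ the space $L(k\kappa)$ is spanned by the pullbacks $1,x,\dots,x^k$, so every effective divisor in $|k\kappa|$ has the form $\pi^*D$ for an effective divisor $D$ of degree $k$ on $\p^1$. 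In such a pullback each Weierstrass point appears with even multiplicity, whereas in $\sum_{w\in S}w$ each appears with multiplicity one; the only way to reconcile these is $S=\emptyset$.

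To conclude I would compare orders. The number of even subsets of $B$ is $2^{2g+1}$, and the involution $S\mapsto B\bs S$ has no fixed points on even subsets (any $x\in B$ gives a contradiction), so $|\cG|=2^{2g}$. On the other hand $|J[2](\bar F)|=2^{2g}$ since $\mathrm{char}(F)\neq 2$. Hence the injection $\phi$ is a bijection, and thus an isomorphism of $G_F$-modules. The only delicate point is the description of the complete linear system $|k\kappa|$ for $k\leq g$ on a hyperelliptic curve, which is where the hyperelliptic hypothesis is really used; everything else reduces to bookkeeping with divisors.
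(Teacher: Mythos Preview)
Your argument is correct and is precisely the standard proof of this fact; the paper itself does not give a proof but simply cites Mumford's \emph{Tata Lectures on Theta II}, Lemma~2.4, where essentially this construction and argument appear. One tiny quibble: Clifford's theorem applies only to special divisors, so for $k=g$ the divisor $k\kappa$ has degree $2g>2g-2$ and you should invoke Riemann--Roch directly (which still gives $\ell(g\kappa)=g+1$); this does not affect the validity of your injectivity step.
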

\begin{proof}
Follows from Lemma 2.4 in \cite{MumfordTataII}, page 3.32. 
\end{proof}

Now let $F=\F_q(T)$ with $q$ odd. Let $f(T)$ be a monic square-free polynomial 
of degree $3$. Let $\fn$ be the ideal in $A$ generated by $f(T)$. 
The Drinfeld modular curve $X_0(\fn)_F$ is hyperelliptic with the Atkin-Lehner involution $W_\fn$
being the hyperelliptic involution; see \cite{SchweizerHE}. Let $e\in \F_q^\times$ be a non-square. Denote 
$K_1=F(\sqrt{f(T)})$, $K_2=F(\sqrt{ef(T)})$, $\cO_1=A[\sqrt{f(T)}]$, 
$\cO_2=A[\sqrt{ef(T)}]$. Note that since $\infty$ does not split in $K_i/F$, $\cO_i$ 
is a maximal order in $K_i$ ($i=1,2$). 

\begin{thm}
The fixed points of $W_\fn$ on $X_0(\fn)_F$ correspond to the isomorphism classes of 
Drinfeld modules with complex multiplication by $\cO_1$ or $\cO_2$. 
\end{thm}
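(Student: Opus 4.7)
The plan is to translate the fixed-point condition on $X_0(\fn)$ into the existence of a distinguished endomorphism of the underlying Drinfeld module. Recall from Section \ref{sDMC} that $W_\fn$ acts on moduli by $(\phi, C_\fn) \mapsto (\phi/C_\fn,\ \phi[\fn]/C_\fn)$, so a fixed point is an isomorphism class $(\phi, C_\fn)$ admitting an isomorphism $\psi\colon \phi \xrightarrow{\sim} \phi/C_\fn$ with $\psi(C_\fn) = \phi[\fn]/C_\fn$. Composing $\psi^{-1}$ with the quotient isogeny $\pi\colon \phi \twoheadrightarrow \phi/C_\fn$ produces an endomorphism $u := \psi^{-1}\circ\pi \in \End(\phi)$ with $\ker(u) = C_\fn$ of degree $|\fn|$. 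Since $u(\phi[\fn]) = \psi^{-1}(\phi[\fn]/C_\fn) = C_\fn \subseteq \ker(u)$, the composition $u^2$ kills $\phi[\fn]$, and a count of degrees forces $\ker(u^2) = \phi[\fn] = \ker(\phi_{f(T)})$. Because two isogenies with equal kernel differ by an automorphism of the target, we obtain $u^2 = \xi \cdot \phi_{f(T)}$ for some $\xi \in \Aut(\phi)$.

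Assuming $j(\phi) \neq 0$ so that $\Aut(\phi) = \F_q^\times$, one has $\xi = c \in \F_q^\times$ and $u^2 = \phi_{cf(T)}$; hence the commutative $A$-subalgebra $A[u] \subseteq \End(\phi)$ is isomorphic to $A[t]/(t^2 - cf(T)) = A[\sqrt{cf(T)}]$, which coincides with $\cO_1$ when $c$ is a square in $\F_q^\times$ and with $\cO_2$ otherwise; the square class of $c$ is intrinsic because the only freedom in $u$ is scaling by an element of $\F_q^\times$, whose square acts trivially on this class. Since $\cO_1$ and $\cO_2$ are already maximal $A$-orders in the imaginary quadratic extensions $K_1$ and $K_2$, and $\End(\phi)$ is itself an $A$-order in such an extension containing $A[u]$, we conclude $\End(\phi) = \cO_i$. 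The exceptional case $j(\phi) = 0$ with $\Aut(\phi) = \F_{q^2}^\times$ requires a separate direct analysis, but again any endomorphism $u$ of degree $|\fn|$ satisfying $u^2 = \xi \phi_{f(T)}$ can be rescaled by an automorphism so that $u^2 \in A$, yielding the same conclusion $A[u] \in \{\cO_1, \cO_2\}$.

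For the converse direction, given $\phi$ with $\End(\phi) = \cO_i$, choose $u = \sqrt{f(T)} \in \cO_1$ (if $i = 1$) or $u = \sqrt{ef(T)} \in \cO_2$ (if $i = 2$), and set $C_\fn := \ker(u) \subseteq \phi[\fn]$. The step I expect to be the main obstacle is verifying that $C_\fn$ is a \emph{cyclic} $A/\fn$-submodule of order $|\fn|$: decomposing $\fn = \prod_\fp \fp$ into its distinct prime factors and writing $\phi[\fn] = \bigoplus_\fp \phi[\fp]$, the endomorphism $u$ restricts on each $\phi[\fp] \cong (A/\fp)^2$ to a nonzero nilpotent map with square zero, which on a two-dimensional vector space has a one-dimensional kernel; so $C_\fn = \bigoplus_\fp \ker(u|_{\phi[\fp]})$ is cyclic as an $A/\fn$-module. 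One finally verifies that the two constructions $(\phi, C_\fn) \leftrightarrow \phi$ are mutually inverse on isomorphism classes --- the only ambiguity in $u$ is an $\F_q^\times$-scalar, which leaves $\ker(u)$ unchanged --- completing the bijection.
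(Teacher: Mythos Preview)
The paper does not prove this theorem directly; it simply cites \cite[(3.5)]{Uber}. Your argument is essentially the standard one and is correct in outline, but your treatment of the case $j(\phi)=0$ contains an actual error.

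You claim that when $\Aut(\phi)=\F_{q^2}^\times$ one can rescale $u$ by an automorphism so that $u^2\in A$. This is not true in general: for $\xi\in\F_{q^2}^\times$ a non-square, the coset $\xi\cdot(\F_{q^2}^\times)^2$ is disjoint from $\F_q^\times$ (recall $q$ is odd here and $\F_q^\times\subseteq(\F_{q^2}^\times)^2$), so no rescaling lands $\alpha^2\xi$ in $\F_q^\times$. The correct argument is that $j(\phi)=0$ simply does not occur among the fixed points. Indeed, when $j(\phi)=0$ over $\C_\infty$ one has $\F_{q^2}\subset\End(\phi)$, forcing $\End(\phi)$ to be an order in the constant-field extension $\F_{q^2}(T)$; but then $\End(\phi)$ would also have to contain an element $u$ with $u^2=\xi f(T)$, and since $\F_{q^2}(T)$ and $F(\sqrt{\xi f})$ are distinct quadratic extensions of $F$, this contradicts $\rank_A\End(\phi)\le 2$. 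Equivalently, no $u\in\F_{q^2}[T]$ can square to $\xi f(T)$ because $\deg f=3$ is odd. So the exceptional case is vacuous, not parallel.

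A second, smaller gap: in the converse you assert that $u$ restricts to a \emph{nonzero} nilpotent on each $\phi[\fp]$, but you do not justify this. If $u$ vanished on $\phi[\fp]$ then $u=\fp\cdot u'$ in $\cO_i$, whence $(u')^2=cf/\fp^2\in\cO_i\cap F=A$, contradicting square-freeness of $f$. This is easy, but it is the content of the cyclicity claim and should be said.
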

\begin{proof}
See (3.5) in \cite{Uber}.  
\end{proof}

\begin{thm}
Let $K$ be an imaginary quadratic extension of $F$, i.e., $\infty$ does not split in $K/F$. 
Let $\cO$ be the integral closure of $A$ in $K$. Let $\cH$ be the Hilbert class field of $K$, 
i.e., the maximal abelian unramified extension of $K$ in which $\infty$ splits completely. 
\begin{enumerate}
\item $\Gal(\cH/K)\cong \Pic(\cO)$.  
\item There is a unique irreducible monic polynomial $H_K(z)\in A[z]$ whose roots are the $j$-invariants of various 
non-isomorphic rank-$2$ Drinfeld $A$-modules over $\C_\infty$ with CM by $\cO$. 
\item The degree of $H_K(z)$ is the class number of $\cO$, and $\cH$ is its splitting field. 
\item The field $F'=F[z]/H_K(z)$ is linearly disjoint from $K$, and $\cH$ is the composition 
of $F'$ and $K$. 
\end{enumerate}
\end{thm}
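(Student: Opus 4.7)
The plan is to follow the classical Deuring-type template of complex multiplication, transplanted from elliptic curves to Drinfeld $A$-modules.

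Part (1) is a direct consequence of class field theory for the global function field $K$. By its very definition, $\cH$ is the class field of $K$ corresponding to the open subgroup
$$
U := K_\infty^\times \cdot \prod_{v\neq \infty} \cO_{K,v}^\times
$$
of the idele group $\A_K^\times$: the condition ``unramified at every finite place'' cuts out $\prod_{v\neq \infty}\cO_{K,v}^\times$, while ``$\infty$ splits completely'' is exactly the condition that $K_\infty^\times$ lies in the kernel of the Artin reciprocity map. The standard dictionary between ideles and fractional $A$-ideals identifies $\A_K^\times/K^\times U$ with $\Pic(\cO)$, and reciprocity yields $\Gal(\cH/K)\cong \Pic(\cO)$.

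For (2)--(4), I first use the analytic theory of Drinfeld modules to set up the bijection between CM Drinfeld modules by $\cO$ and $\Pic(\cO)$: fix an embedding $K\hookrightarrow \C_\infty$; then any rank-$2$ Drinfeld $A$-module $\phi$ over $\C_\infty$ with $\End(\phi)=\cO$ arises from a rank-$1$ projective $\cO$-module $\fa\subset \C_\infty$ (i.e., a fractional $\cO$-ideal), and $\phi_\fa\cong \phi_{\fa'}$ iff $[\fa]=[\fa']$ in $\Pic(\cO)$. The main substantive input is then the Drinfeld module analogue of the main theorem of complex multiplication (due to Hayes): the $j$-invariant $j(\phi_\fa)$ lies in $\cH$ and is integral over $A$, and the action of $\Gal(\cH/K)\cong\Pic(\cO)$ is given by
$$
\sigma_\fb\big(j(\phi_\fa)\big)=j(\phi_{\fb^{-1}\fa}),
$$
so the action is simply transitive. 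This theorem is the only difficult ingredient; reproving it would require Hayes' apparatus of sign-normalized rank-$1$ Drinfeld modules to manufacture class invariants and verify the reciprocity formula, and is the main obstacle.

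Granting Hayes' theorem, the remaining statements are formal. Define
$$
H_K(z):=\prod_{[\fa]\in\Pic(\cO)}\big(z-j(\phi_\fa)\big).
$$
The coefficients are elementary symmetric functions of the $j(\phi_\fa)$, which are $\Gal(\cH/K)$-invariant by construction and also invariant under the non-trivial element of $\Gal(K/F)$, since complex conjugation sends a CM Drinfeld module by $\cO$ to another one and hence permutes this set; so they lie in $F$, and integrality puts them in $A$. The irreducibility of $H_K(z)$ over $F$ is the transitivity of $\Gal(\cH/F)$ on the $j$-invariants, its degree is $\#\Pic(\cO)$ by construction, and its splitting field is $\cH$ because adjoining one root already produces the $\Pic(\cO)$-orbit. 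This gives (2) and (3). For (4), set $F':=F(j(\phi_\fa))\subset \cH$; then $[F':F]=\#\Pic(\cO)=[\cH:K]$, so $[\cH:F']\leq 2$. If $K\subseteq F'$, then $F'=K(j(\phi_\fa))$, and simple transitivity of $\Gal(\cH/K)$ forces $K(j(\phi_\fa))=\cH$, giving $[F':F]=2\#\Pic(\cO)$, a contradiction. Hence $K\cap F'=F$, so $F'$ and $K$ are linearly disjoint over $F$ and $\cH=F'\cdot K$ by counting degrees.
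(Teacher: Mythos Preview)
The paper does not prove this theorem; it simply cites \cite[Cor.~2.5]{GekelerADM}. Your sketch follows the standard complex-multiplication template and is essentially correct: class field theory for (1), the analytic parametrization of CM Drinfeld $A$-modules by $\Pic(\cO)$, and the main theorem of CM for Drinfeld modules (Hayes/Gekeler) giving the Galois action on $j$-invariants, from which (2)--(4) follow formally. You correctly flag the main theorem of CM as the only substantive input.

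One small caveat on (3): your phrase ``adjoining one root already produces the $\Pic(\cO)$-orbit'' is an argument over $K$, where the Galois group is $\Pic(\cO)$ and indeed $K(j(\phi_\fa))=\cH$. Over $F$ the splitting field can be strictly smaller than $\cH$: already when $h=1$ the polynomial is linear and splits over $F$, and more generally when $\Pic(\cO)$ has exponent at most $2$ the extension $F'/F$ is itself Galois (the order-$2$ subgroup $\Gal(\cH/F')$ is then normal in $\Gal(\cH/F)$). The intended reading of the statement is ``$\cH$ is the splitting field over $K$'', which is exactly what your argument establishes.
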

\begin{proof}
Follows from Corollary 2.5 in \cite{GekelerADM}. 
\end{proof}

Denote $H_i(z)=H_{K_i}(z)$, $F_i=F[z]/H_i(z)$, $h_i=\#\Pic(\cO_i)$, and $\cH_i$ 
the Hilbert class field of $K_i$ ($i=1,2$). Let $B$ be the set 
of fixed points of $W_\fn$ on $X_0(\fn)$. Since the action of $G_F$ on $X_0(\fn)(\bar{F})$ 
commutes with the action of $W_\fn$, the set $B$ is stable under the action of $G_F$. 
The previous two theorems imply that $B$ can be identified with the set of 
roots of the polynomial $H_1(z)H_2(z)$ compatibly with the action of $G_F$. Let $J:=J_0(\fn)$. 

\begin{lem}\label{Prop2torCase1}
If $f(T)$ is irreducible, then $J(F)[2]=0$. 
\end{lem}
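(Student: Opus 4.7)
The plan is to use the Mumford description $J[2] \cong \cG$ from the preceding theorem, together with a parity computation on the class numbers $h_1$ and $h_2$.

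Write $B = B_1 \sqcup B_2$, where $B_i$ is the set of roots of the irreducible polynomial $H_i(z)$, so that $|B_i| = h_i$ and $G_F$ acts transitively on each $B_i$ while preserving the decomposition $B = B_1 \sqcup B_2$. The key arithmetic input is that, when $f(T)$ is irreducible in $A$, both $h_1$ and $h_2$ are \textbf{odd}. To establish this, I would identify $K_i$ with the function field of the elliptic curve $E_i \colon y^2 = e_i f(T)$ (with $e_1 = 1$, $e_2 = e$). Since $\deg f = 3$ is odd, the place $\infty$ is totally ramified in each $K_i/F$, giving a unique, $\F_q$-rational, point $P_\infty \in E_i(\F_q)$ above it. The ring $\cO_i$ is then the affine coordinate ring of $E_i \setminus \{P_\infty\}$, yielding the standard identification $\Pic(\cO_i) \cong \Pic^0(E_i) \cong E_i(\F_q)$. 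Irreducibility of $f$ over $\F_q$ means $f$ has no roots in $\F_q$, so $E_i[2](\F_q) = \{P_\infty\}$, and therefore $h_i = |E_i(\F_q)|$ is odd.

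Given the parity, describing the $G_F$-invariants of $\cG$ is formal. Let $[S] \in \cG^{G_F}$, and write $S = S_1 \sqcup S_2$ with $S_i := S \cap B_i$. For each $\sigma \in G_F$, the condition $\sigma(S) \in \{S, B \setminus S\}$ forces either $\sigma(S_i) = S_i$ for both $i$, or $\sigma(S_i) = B_i \setminus S_i$ for both $i$; the latter alternative cannot occur, as it would imply $|B_i| = 2|S_i|$ is even. Thus each $S_i$ is $G_F$-stable, so by transitivity $S_i \in \{\emptyset, B_i\}$. The constraint that $|S_1| + |S_2|$ is even, combined with $h_1, h_2$ both odd, then forces $S = \emptyset$ or $S = B$, both of which represent the trivial class $[\emptyset]$. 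This gives $J(F)[2] = 0$.

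The step I anticipate being most delicate is the identification $\Pic(\cO_i) \cong E_i(\F_q)$, in particular verifying that the unique place of $K_i$ above $\infty$ has $\F_q$-rational residue field so that $P_\infty$ lies in $E_i(\F_q)$; this rests on $\deg f = 3$ being odd, which makes $\infty$ totally ramified in $K_i/F$. Once this and the resulting parity of $h_i$ are in hand, the combinatorial conclusion is immediate from the Mumford description of $J[2]$.
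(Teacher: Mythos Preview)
Your proposal is correct and follows essentially the same approach as the paper: both identify $\Pic(\cO_i)$ with $E_i(\F_q)$ to deduce that $h_1,h_2$ are odd when $f$ is irreducible, and then use the Mumford description of $J[2]$ together with the transitivity of $G_F$ on each $B_i$ and a parity obstruction to rule out any nontrivial $G_F$-invariant class. Your combinatorial write-up is in fact a bit cleaner than the paper's (you handle all $S$ at once rather than assuming $S_1\neq\emptyset,B_1$ and deriving a contradiction), but the substance is the same.
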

\begin{proof}
Let $X$ be the smooth, projective curve over $\F_q$ with function field $K_1$. It is 
well-known that there is an exact sequence 
$$
0\to \mathrm{Jac}_X(\F_q)\to \Pic(\cO_1)\to \Z/d_\infty\Z\to 0,
$$ 
where $d_\infty$ is the degree of $\infty$ on $X$. Since $f(T)$ has degree $3$, $\infty$ 
ramifies in $K_1/F$, so $d_\infty=1$ and $X$ is isomorphic to the elliptic curve $E_1$ defined by 
the equation $Y^2=f(T)$. Thus, $\Pic(\cO_1)\cong E_1(\F_q)$. Similarly, 
$\Pic(\cO_2)\cong E_2(\F_q)$, where $E_2$ is defined by $Y^2=ef(T)$. 
In particular, 
$$
h_1+h_2=\#E_1(\F_q)+\#E_2(\F_q)=2q+2. 
$$
The last equality follows from the observation that for any $\alpha\in \F_q$ either $f(\alpha)=0$, 
in which case we get one $\F_q$-rational point on both $E_1$ and $E_2$, or exactly one of 
$f(\alpha)$, $ef(\alpha)$ is a square in $\F_q^\times$, in which case we get two 
$\F_q$-rational on one of the elliptic curves and no points on the other. 

Now suppose $f(T)$ is irreducible. Then $f(T)$ has no $\F_q$-rational roots, and therefore $E_1(\F_q)[2]=O$. 
This implies that $h_1$ and $h_2$ are both odd. Denote the set of roots of $H_1$ (resp. $H_2$) 
by $B_1$ (resp. $B_2$), so that $B= B_1\cup B_2$. Note that $B_1$ and $B_2$ are stable 
under the action of $G_F$, but have no non-trivial $G_F$ stable subsets. Since $\# B_1$ and $\#B_2$ 
are odd, by Mumford's theorem the only possibility for having an $F$-rational $2$-torsion on $J$ 
is to have a subset $S\subset B$ of order $q+1$ such that $\sigma S=S$ or $\sigma S=B-S$ for any $\sigma\in G_F$.  
Denote $S_1=S\cap B_1$. Without loss of generality we can assume that $S_1\neq \emptyset, B_1$. 
We must have $\sigma S_1=S_1$ or $\sigma S_1=B_1-S_1$ for any $\sigma\in G_F$.  
But $\# S_1$ and $\#(B_1-S_1)$ have different parity, and therefore $\sigma S_1=S_1$ 
for any $\sigma\in G_F$, which is a contradiction. 
\end{proof}

\begin{rem} Lemma \ref{Prop2torCase1} also follows from Theorem 1.2 in \cite{Pal}. 
Indeed, if $f(T)$ is irreducible, then according to \textit{loc. cit.}, $J(F)_\tor\cong \Z/(q^2+q+1)\Z$, 
so $J(F)[2]=0$. 
\end{rem}

\begin{lem}\label{Prop2torCase12}
If $f(T)$ decomposes into a product $f_1(T)f_2(T)$, where $f_2(T)$ is irreducible 
of degree $2$, then $J(F)[2]\cong \Z/2\Z\oplus \Z/2\Z$. 
\end{lem}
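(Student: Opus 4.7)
The plan is to prove matching bounds $|J(F)[2]|\geq 4$ and $|J(F)[2]|\leq 4$, and then deduce $J(F)[2]\cong(\Z/2\Z)^2$ from the lower-bound inclusion.

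For the lower bound, I would apply Proposition~\ref{prop6.9} and Theorem~\ref{thm6.10}(4) with $(\fp,\fq)=((f_1),(f_2))$, so that $|\fp|=q$, $|\fq|=q^2$, and the assumption ``$q$ odd, $\deg(\fp)\cdot\deg(\fq)$ even'' places us in case (4) of Theorem~\ref{thm6.10}. Direct computation gives $N_{(-,-)}=1$, $N_{(-,+)}=(q^2+1)/2$, $N_{(+,-)}=(q+1)/2$, and the formula in part (4) then yields $\cC[2]\cong(\Z/2\Z)^2$. Since $\cC\subset J(F)_{\tor}$ by Theorem~\ref{thm6.1}, this provides an embedding $(\Z/2\Z)^2\hookrightarrow J(F)[2]$.

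For the upper bound, I would invoke Mumford's theorem $J[2]\cong\cG$ as $G_F$-modules, so $J(F)[2]=\cG^{G_F}$. Given $[S]\in\cG^{G_F}$, the rule $\chi_S(\sigma)=+1$ if $\sigma S=S$ and $\chi_S(\sigma)=-1$ if $\sigma S=B-S$ defines a homomorphism $G_F\to\{\pm 1\}$ depending only on the class $[S]$. Because $B=B_1\sqcup B_2$ is the decomposition of $B$ into $G_F$-orbits and both $h_i=|B_i|$ are even, the case $\chi_S=\mathbf 1$ (so $S$ is itself $G_F$-stable) contributes exactly two classes, $[\emptyset]$ and $[B_1]=[B_2]$. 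For $\chi_S$ non-trivial with kernel $H$ of index $2$, each $S\cap B_i$ must be a non-empty proper $H$-orbit of size $h_i/2$; equivalently, the quadratic extension $M_\chi/F$ cut out by $\chi_S$ is contained in $F_i:=F[z]/H_i(z)$. Since $\Pic(\cO_i)\cong E_i(\F_q)$ and $E_i(\F_q)[2]\cong\Z/2\Z$ (coming from the unique $\F_q$-rational root of $f$, namely the root of $f_1$), the abelianization of $\Gal(\cH_i/F)=\Pic(\cO_i)\rtimes(\Z/2\Z)$ (with inversion action) is $(\Z/2\Z)^2$, and a short computation shows that $F_i/F$ admits a unique quadratic subextension $M_i$. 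Hence at most one non-trivial character, namely $\chi=\chi_{M_1}=\chi_{M_2}$ when these coincide, can contribute, and for such a $\chi$ the four choices of $S$ yield at most two further classes modulo complementation. Thus $|J(F)[2]|\leq 4$.

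Combining the two bounds forces $|J(F)[2]|=4$, and since it contains $(\Z/2\Z)^2$ of the same order, $J(F)[2]\cong(\Z/2\Z)^2$. The main obstacle is the non-trivial-character analysis: the delicate equality $M_1=M_2$ (which one expects to be $F(\sqrt{f_2})$ in both cases via a genus-theoretic analysis of $K_i/F$) seems nontrivial to establish directly, but is forced a posteriori by the lower bound from the cuspidal divisor group, so no independent identification of the $M_i$ is needed.
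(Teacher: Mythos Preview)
Your proof is correct, and the upper-bound half is essentially the same as the paper's: both use Mumford's description of $J[2]$ and reduce the non-trivial case to the existence of at most one quadratic subextension of each $F_i$, deduced from $\Pic(\cO_i)[2]\cong\Z/2\Z$. (Your semidirect-product description is justified because $F_i$ and $K_i$ are linearly disjoint with compositum $\cH_i$, so $\Gal(\cH_i/F_i)$ gives a splitting; your ``short computation'' should also note that of the three index-$2$ subgroups of $\Pic(\cO_i)\rtimes(\Z/2\Z)$, only $\Pic(\cO_i)^2\rtimes(\Z/2\Z)$ contains $\Gal(\cH_i/F_i)$, which is what actually yields uniqueness of $M_i$.)

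The genuine difference is in the lower bound. The paper does not invoke the cuspidal divisor group; instead it exhibits the extra $2$-torsion directly by proving that the specific quadratic extension $L=F(\sqrt{f_2})$ embeds into both $F_1$ and $F_2$. This is done by a ramification argument: $LK_i$ is unramified over $K_i$ with $\infty$ split, hence lies in $\cH_i$, and linear disjointness of $L$ and $F_i$ would force $\cH_i$ to embed in $F_\infty$, a contradiction. Your route via Theorem~\ref{thm6.10} (or Example~\ref{exampleCxy}) is shorter and legitimately non-circular, since that theorem is proved earlier and independently; it also automatically forces $M_1=M_2$ a posteriori, exactly as you observe. The paper's route, on the other hand, identifies the common quadratic field explicitly and keeps the argument entirely within the hyperelliptic/class-field framework, independent of the cuspidal computation.
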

\begin{proof} We retain the notation of the proof of Lemma \ref{Prop2torCase1}. 
The first part of the proof of that lemma implies that $\Pic(\cO_i)[2]\cong \Z/2\Z$, 
since both elliptic curves $E_1$ and $E_2$ have exactly one non-trivial $\F_q$-rational $2$-torsion 
point corresponding to $(\alpha, 0)$, where $f_1(\alpha)=0$. In particular, $h_1$ and $h_2$ 
are even, and $\cH_i/K_i$ has a unique quadratic subextension. We conclude that $[B_1]=[B_2]$ 
is $F$-rational. 

The other $F$-rational $2$-torsion points on $J$ are in bijection with the disjoint decompositions 
$B_1=R_1\cup R_2$  and $B_2=R_1'\cup R_2'$ such that $\# R_1=\# R_2$, $\# R_1'=\# R_2'$,  
and for any $\sigma \in G_F$ either 
$$
\sigma R_1 =R_1\text{ and }\sigma R_1' =R_1' 
$$
or 
$$
\sigma R_1 =R_2\text{ and }\sigma R_1' =R_2'. 
$$
In that case, $P=[R_1\cup R_1']=[R_2\cup R_2']$ and $Q=[R_1\cup R_2']=[R_2\cup R_1']$ 
give two distinct non-trivial $2$-torsion points on $J$ which are rational over $F$. 
Note that the subgroup generated by $P, Q$ and $[B_1]$ 
is isomorphic to $\Z/2\Z\oplus \Z/2\Z$.  

On the other hand, such disjoint decompositions are in bijection with the quadratic extensions $L$
of $F$ which simultaneously embed into both $F_1$ and $F_2$. Note that over a quadratic 
subextension $L$ of $F_1/F$ the polynomial $H_1$ decomposes into a product $G_1(z)G_2(z)$ 
of two irreducible polynomials in $L[z]$ of the same degree. In that case, $R_1$ and $R_2$ are the sets 
of roots of $G_1$ and $G_2$, respectively. Now if there are two distinct quadratic subextensions $L$ and $L'$ 
of $F_1$, then $\cH_1=F_1K_1$ also contains two distinct quadratic subextensions. As we indicated 
above, this is not the case, hence there is at most one $L$. 

Consider $L=F(\sqrt{f_2})$. Since $f_2$ is monic of degree $2$, $\infty$ splits in $L/F$. 
Since the only place that ramifies in $L/F$ is the place corresponding to $f_2$, which also 
ramifies in $K/F$ with the same ramification index, $LK$ is a quadratic subextension of $\cH_1/K$. 
Hence the composition $L F_1$ is a subfield of $\cH_1$, and if $L$ and $F_1$ are linearly disjoint over $F$, 
then by comparing the degrees we see that $LF_1=\cH_1$. Since $H_1(z)$ has even degree, $F_1$ 
embeds into the completion $\Fi$. The same is true for $L$. Thus, if $LF_1=\cH_1$, then 
$\cH_1$ embeds into $\Fi$, which is not the case as $K/F$ is imaginary quadratic. 
We conclude that $L$ and $F_1$ cannot be linearly disjoint, and therefore $L$ embeds into $F_1$. 
The same argument works also with $F_2$, and this finishes the proof of the lemma. 
\end{proof}

\begin{thm}\label{thmCT} 
$\cC=\cT\cong \Z/(q+1)\Z\times \Z/(q^2+1)\Z$. 
\end{thm}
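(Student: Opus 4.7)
The description $\cC(xy) \cong \Z/(q+1)\Z \times \Z/(q^2+1)\Z$ is precisely Example \ref{exampleCxy}, and the inclusion $\cC \subseteq \cT$ is Theorem \ref{thm6.1}, so it remains only to prove $\cT \subseteq \cC$ prime by prime. For this, the plan is to combine the general ingredients already available. Lemma \ref{lem8.1} implies $\fE$ annihilates $\cT_\ell$ for every $\ell \neq p$; Corollary \ref{corT/E} identifies $\T/\fE$ with $\Z/N\Z$ where $N = (q+1)(q^2+1)$, bounding $\exp(\cT_\ell)$ by $v_\ell(N)$; and $s_{xy} := \gcd(q+1,q^2+1)$ divides $2$. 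Hence Theorem \ref{thm8.3} gives $\cT_\ell = \cC_\ell$ whenever $\ell \nmid 2q(q-1)$, Lemma \ref{lem8.2} (applied with $\fm = y$, whose degree is $\leq 2$) kills $\cT_p$, and for odd $\ell \mid (q-1)$ the congruences $q+1 \equiv q^2+1 \equiv 2 \pmod{q-1}$ give $\gcd(q-1,N) \mid 4$, so $v_\ell(N) = 0$ and $\cT_\ell = 0$. When $q$ is even this already exhausts all primes.

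The remaining case is $q$ odd, $\ell = 2$, where I would invoke the hyperelliptic structure of $X_0(xy)_F$. By Schweizer's theorem cited in the introduction, $X_0(xy)_F$ is hyperelliptic with hyperelliptic involution $W_{xy}$. Writing $xy = f_1 f_2$ with $f_1 = x$ linear and $f_2 = y$ irreducible quadratic, Lemma \ref{Prop2torCase12} applies to give $J(F)[2] \cong (\Z/2\Z)^2$. On the cuspidal side, for every odd $q$ one has $v_2(q^2+1) = 1$ and $a := v_2(q+1) \geq 1$, so Example \ref{exampleCxy} yields $\cC_2 \cong \Z/2^a\Z \times \Z/2\Z$ and in particular $|\cC_2[2]| = 4 = |J(F)[2]|$. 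Combining these, $\cC[2] = J(F)[2] = \cT[2]$; in particular $\cT_2$ is a two-generator abelian $2$-group of exponent at most $2^{a+1}$.

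The hardest step will be promoting the equality $\cT[2] = \cC[2]$ to $\cT_2 = \cC_2$, since this does not follow formally from agreement on $2$-torsion. My plan is to exploit that $W_{xy}$ acts as $-1$ on $J$ (the standard effect of a hyperelliptic involution on the Jacobian), and, using its explicit permutation of the four cusps of $X_0(xy)$, derive the relation $c_1 = c_x + c_y$ in $J(F)$. Then $\cC$ is generated by $c_x$ (of order $q+1$, in the $+1$-eigenspace of $W_x$) and $c_y$ (of order $q^2+1$, in the $-1$-eigenspace). With this Atkin--Lehner eigenspace decomposition in hand, one can try to sharpen the exponent bound $\exp(\cT_2) \mid 2^{a+1}$ to $|\cT_2| \leq 2^{a+1}$ by specializing to the component groups at the bad places ($\Phi_\infty \cong \Z/N\Z$ and $\Phi_x \cong \Z/N\Z$, by Proposition \ref{prop5.3} and Corollary \ref{corT/E}) and tracking how $W_x$ acts on them. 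The main obstacle is precisely that, for $q$ odd, none of the canonical specialization maps $\cT_2 \to \Phi_{v,2}$ ($v \in \{x,y,\infty\}$) is injective, because the connected-component torus $\cJ^0_{\F_v}$ absorbs $2$-torsion coming from $\F_q^\times$; the argument must therefore combine information from all three bad places with the Hecke/Atkin--Lehner symmetry rather than relying on any single reduction.
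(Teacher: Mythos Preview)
Your proposal is correct and follows the paper's approach: the reduction to $\ell=2$ for odd $q$ via Theorem~\ref{thm8.3}, Lemma~\ref{lem8.2}, and Corollary~\ref{corT/E}, the computation of $J(F)[2]$ via the hyperelliptic structure (Lemma~\ref{Prop2torCase12}), and the plan of combining specializations at the bad places are exactly what the paper does. In the execution the paper does not invoke an Atkin--Lehner eigenspace decomposition but instead splits on whether $\gcd(q-1,N)$ is $2$ or $4$ (i.e., $q\equiv 3$ or $1\pmod 4$), bounds $\cT^0:=\ker(\wp_\infty|_{\cT})$ inside $(\Z/(q-1)\Z)^{\oplus q}$, and for each hypothetical $t\in\cT_2\setminus\cC_2$ derives a contradiction from $\wp_y$ (and, in one sub-case, from $\Phi_x(\F_x)$)---precisely the multi-place specialization argument your last paragraph anticipates.
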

\begin{proof} As we already discussed, it is enough to show that $\cC_2=\cT_2$ when $q$ is odd. By Example \ref{exampleCxy}, 
$$
\cC=\langle c_x\rangle\oplus \langle c_y\rangle  \cong \Z/(q+1)\Z\oplus \Z/(q^2+1)\Z, 
$$
and $c_1=c_x+c_y$. 
The component group $\Phi_\infty$ (for the case $\fn=xy$) and the canonical specialization map $\wp_\infty: \cC\to \Phi_\infty$ 
are computed in \cite[Thm. 5.5]{PapikianJNT}: 
$$
\Phi_\infty=\Phi_\infty(\F_\infty)\cong \Z/N\Z, \quad\wp_\infty(c_x)=q^2+1, \quad \wp_\infty(c_y)=-q(q+1).
$$ 
In particular, if we denote $\cC^0=\ker(\wp_\infty:\cC\to \Phi_\infty)$, then $\cC^0\cong \Z/2\Z$ when $q$ is odd.  
On the other hand, by Lemma \ref{Prop2torCase12}, $\cT[2]\cong \Z/2\Z\times \Z/2\Z$. Hence $\cC[2]=\cT[2]$.  

Let $\cT^0:=\ker(\wp_\infty:\cT\to \Phi_\infty)$. As we discussed at the beginning of this section, 
$\cT^0$ is a subgroup of $(\Z/(q-1)\Z)^{\oplus q}$ annihilated by $N$. We have 
$$
\gcd((q-1), N)= 
\begin{cases} 
2 & \text{ if }q\equiv 3\ \mod\ 4\\
4 & \text{ if }q\equiv 1\ \mod\ 4. 
\end{cases}
$$

Assume $q\equiv 3\ \mod\ 4$. Then $\cT^0\subset \cT[2]=\cC[2]$. This implies $\cT^0=\cC^0\cong \Z/2\Z$, and is generated by 
$$
c:=\frac{(q+1)}{2}c_x+\frac{(q^2+1)}{2}c_y. 
$$
In this case $\wp_\infty$ is injective on $\langle c_1\rangle$, and $\wp_\infty(c_1)$ generates $\wp_\infty(\cC)$. 
If $\cT_2\neq \cC_2$, then there is an element $t\in \cT$ such that $2\wp_\infty(t)=\wp_\infty(c_1)$. 
Thus, $2t=c_1+c$ or $2t=c_1$. 
We know from the proof of Proposition \ref{prop6.2} that 
$\wp_y(c_y)=0$, and $\wp_y(c_x)$ generates $\Phi_y(\F_y)\cong \Phi_y\cong \Z/(q+1)\Z$. 
If $2t=c_1$, then $2\wp_y(t)$ generates $\Phi_y$.  
Since $2$ divides $q+1$, the multiplication by $2$ map is not surjective on $\Phi_y$, so we get a contradiction. 
If $2t=c_1+c$, then $2\wp_y(t)=\wp_y(c_x)+\frac{(q+1)}{2}\wp_y(c_x)$, which is still a generator of $\Phi_y$, 
and we again arrive at a contradiction. 

Now assume $q\equiv 1\ \mod\ 4$. In this case 
$$
\cC_2\cong (\Z/(q+1)\Z)_2\times (\Z/(q^2+1)\Z)_2= \Z/2\Z\times \Z/2\Z \cong \cC[2] 
$$  
is generated by $\frac{q+1}{2}c_x$ and $\frac{q^2+1}{2}c_y$. 
If $\cT\neq \cC$, then there is $t\in \cT$ of order $4$ 
such that $2t\in \cC[2]$. If $2t=c$ or $2t=\frac{q+1}{2}c_x$, then applying $\wp_y$ we see that 
$2\wp_y(t)\neq 0$. On the other hand, $\wp_y(t)\in (\Phi_y)_2\cong \Z/2\Z$, 
so $2\wp_y(t)=0$, which is a contradiction. 
Finally, suppose $2t=\frac{q^2+1}{2}c_y$, which 
implies $2\wp_x(t)\neq 0$. 
Since $t$ is a rational point, $\wp_x(t)\in \Phi_x(\F_x)_2$. On the other hand, by Proposition \ref{prop5.3},  
$\Phi_x(\F_x)_2\cong (\Z/(q^2+1)\Z)_2\cong \Z/2\Z$, which again leads to a contradiction. 
(Note that $\Phi_x\cong \Z/(q+1)(q^2+1)\Z$, so here we cannot just rely on Theorem \ref{thmCG}.)
\end{proof}

\begin{cor} $J(F)\cong \Z/(q+1)\Z\times \Z/(q^2+1)\Z$. 
For any $\ell\neq p$, the $\ell$-primary part of the Tate-Shafarevich group $\Sh(J)$ is trivial. 
\end{cor}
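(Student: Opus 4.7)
The plan is to combine Theorem \ref{thmCT}, which identifies $\cT(xy)\cong \Z/(q+1)\Z\times \Z/(q^2+1)\Z$, with an independent argument that the Mordell--Weil rank of $J=J_0(xy)$ over $F$ is zero; the $\Sha$ statement will then follow from the function-field Birch--Swinnerton-Dyer formalism together with the component-group computations already in hand.

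First I would show $\mathrm{rank}\,J(F)=0$ via $L$-values. By Drinfeld's reciprocity, $L(J/F,s)=\prod_f L(f,s)$ as $f$ ranges over a basis of Hecke eigenforms in $\cH_0(xy,\overline{\Q})$; all of these are new since $X_0(x)$ and $X_0(y)$ have genus zero. Each $L(f,s)$ is a polynomial in $q^{-s}$ with functional equation $s\leftrightarrow 2-s$ of sign $-w$, where $w=\pm 1$ is the $W_{xy}$-eigenvalue of $f$. By Schweizer's theorem, $W_{xy}$ is the hyperelliptic involution of $X_0(xy)$, so it acts as $-1$ on the cotangent space of $J$ and hence on $\cH_0(xy,\C)$; thus every eigenform has $w=-1$ and every functional equation has sign $+1$. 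Combined with an explicit analysis of the $L$-polynomial at the center (using Ramanujan--Petersson and the small degree of the polynomial), this yields $L(J/F,1)\neq 0$. The function-field BSD inequality of Tate--Schneider--Kato--Trihan then gives $\mathrm{rank}\,J(F)\leq\mathrm{ord}_{s=1}L(J/F,s)=0$, so $J(F)=J(F)_\tor$, which by Theorem \ref{thmCT} equals $\cC(xy)\cong\Z/(q+1)\Z\times\Z/(q^2+1)\Z$.

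Second, once rank zero and non-vanishing $L(J/F,1)\neq 0$ are in hand, $\Sha(J)$ is finite (Kato--Trihan), and the BSD leading-term formula specializes to
$$
L(J/F,1)\;=\;\frac{|\Sha(J)|\cdot\prod_v c_v}{|J(F)_\tor|^2}
$$
(using that $J$ is principally polarized by the theta divisor and that the base curve $\p^1$ has genus zero). The factor $|J(F)_\tor|=(q+1)(q^2+1)$ comes from Theorem \ref{thmCT}, while the Tamagawa factors $c_x,c_y,c_\infty$ are known: $c_\infty=(q+1)(q^2+1)$ (from the computation of $\Phi_\infty$ used already in the proof of Theorem \ref{thmCT}), and $c_x$, $c_y$ from Proposition \ref{prop5.3}. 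Matching $\ell$-adic valuations on both sides of the BSD formula for each $\ell\neq p$ then pins down $|\Sha(J)|_\ell=1$.

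The main obstacle is the non-vanishing $L(f,1)\neq 0$ for each new eigenform $f$. The hyperellipticity input only fixes the sign of the functional equation, which rules out a \emph{forced} zero but not an accidental one, and there is no general non-vanishing theorem for automorphic $L$-values at the center of the critical strip over function fields. One must therefore either compute each eigenform $L$-polynomial directly (feasible here because $\dim J=q$ is small and the polynomial has degree $2q$), or pass to the quaternionic side via the Jacquet--Langlands isogeny of Section \ref{sJL} and exploit a Gross-type positivity formula. Everything after the non-vanishing is essentially $\ell$-adic bookkeeping using the Eisenstein-ideal structure and the equality $\cC=\cT$ established in Theorem \ref{thmCT}.
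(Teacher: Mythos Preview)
Your overall strategy---prove $\mathrm{ord}_{s=1}L(J/F,s)=0$, invoke Kato--Trihan to get finite Mordell--Weil and finite $\Sh$, then read off $\Sh(J)_\ell=0$ from the BSD formula using the known component groups and Theorem \ref{thmCT}---is exactly what the paper does. The difference is that you are fighting a battle that does not exist.

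The point you are missing is that for a newform $f$ of level $\fn$, the $L$-function $L(f,s)$ is a polynomial in $q^{-s}$ of degree $\deg(\fn)-3$ (this is the result of Tamagawa cited in the paper). Here $\deg(xy)=3$, so each $L(f,s)$ is a polynomial of degree $0$, i.e.\ a nonzero constant; with the normalization $f^\ast(1)=1$ one has $L(f,s)=1$ identically, and hence $L(J/F,s)=\prod_f L(f,s)=1$. There is no ``accidental central zero'' to rule out, no functional-equation sign to compute, and no Ramanujan--Petersson input needed. Your claim that the $L$-polynomial has degree $2q$ is incorrect: that would be the degree for a $q$-dimensional abelian variety with everywhere good reduction over a base of genus zero, but $J$ has purely toric reduction at $x$, $y$, and $\infty$, and the Grothendieck--Ogg--Shafarevich formula (packaged in Tamagawa's statement) cuts the degree down to zero. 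Your detour through Schweizer's hyperellipticity theorem to pin down the sign is a correct observation about $W_{xy}$, but it is doing no work here.

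Once $L(J/F,s)=1$ is in hand, your BSD bookkeeping is fine and matches the paper: the $\ell$-part of the formula becomes $1=\#\Sh(J)_\ell\cdot\#\Phi_x(\F_x)_\ell\cdot\#\Phi_y(\F_y)_\ell\cdot\#\Phi_\infty(\F_\infty)_\ell/(\#\cT_\ell)^2$, and the right-hand factors other than $\Sh$ are all known from Proposition \ref{prop5.3}, Corollary \ref{corPhiInf}, and Theorem \ref{thmCT}.
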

\begin{proof} Denote by $L(J, s)$ the $L$-function of $J$; see \cite{KT} for the definition. 
Let $f\in \cH_0(\fn, \C)$ be an eigenform for $\T(\fn)$, normalized by $f^\ast(1)=1$. 
The $L$-function of $f$ is the sum 
$$
L(f, s)=\sum_{\fm \text{ pos. div.}} f^\ast(\fm)q^{-s\cdot\deg(\fm)}
$$ 
over all positive divisors on $\p^1_{\F_q}$; here $s\in \C$.  
Drinfeld's fundamental result \cite[Thm. 2]{Drinfeld} implies that $L(J, s)=\prod_{f} L(f, s)$, 
where the product is over normalized $\T(xy)$-eigenforms in $\cH_0(xy, \C)$. 
It is known that $L(f,s)$ is a polynomial in $q^{-s}$ of degree $\deg(\fn)-3$; cf. \cite[p. 227]{Tamagawa}. 
Thus, $L(J, s)=1$. From the main theorem in \cite{KT} we conclude that $J(F)=\cT$, $\Sh(J)$ 
is a finite group, and the Birch and Swinnerton-Dyer conjecture holds for $J$. The claim 
$J(F)\cong \Z/(q+1)\Z\times \Z/(q^2+1)\Z$ follows from Theorem \ref{thmCT}. The $\ell$-primary part 
of the Birch and Swinnerton-Dyer formula becomes the equality 
$$
1=\frac{(\# \Sh(J)_\ell)(\#\Phi_x(\F_x)_\ell)(\#\Phi_y(\F_y)_\ell)(\#\Phi_\infty(\F_\infty)_\ell)}{(\#\cT_\ell)^2}. 
$$
We know all entries of this formula  from Theorem \ref{thmCT} and its proof, except $\# \Sh(J)_\ell$. 
This implies that $\Sh(J)_\ell=0$. 
\end{proof}

\section{Kernel of the Eisenstein ideal}\label{sKEI}

\subsection{Shimura subgroup}\label{sSS} 
Let $\fn\lhd A$ be a non-zero ideal. Consider the subgroup $\G_1(\fn)$ of $\GL_2(A)$ consisting of matrices 
$$
\G_1(\fn)=\left\{\begin{pmatrix} a & b \\  c & d\end{pmatrix}\in \GL_2(A)\ \bigg|\ a\equiv 1\ \mod\ \fn, c\equiv 0\ \mod\ \fn \right\}.  
$$
The quotient $\G_1(\fn)\bs\Omega$ is the rigid-analytic 
space associated to a smooth affine curve $Y_1(\fn)_{\Fi}$ over $\Fi$; cf. (\ref{eqUnifY}). 
This curve is the modular curve of isomorphism classes of pairs $(\phi, P)$, where $\phi$ is a Drinfeld $A$-module 
of rank $2$ and $P\in \phi[\fn]$ is an element of exact order $\fn$.   
Let $Y_1(\fn)_{F}$ be the canonical model of $Y_1(\fn)_{\Fi}$ over $F$, and $X_1(\fn)_F$ 
be the smooth projective curve containing $Y_1(\fn)_F$ as a Zariski open subvariety. Denote by $J_1(\fn)$ 
the Jacobian variety of $X_1(\fn)_F$. 

To simplify the notation, in this section we denote $\G:=\G_0(\fn)$ 
and $\Delta:=\G_1(\fn)$. 
The map $w:\G\to (A/\fn)^\times$ given by $\begin{pmatrix} a & b \\  c & d\end{pmatrix} \mapsto a\ \mod\ \fn$ 
is a surjective homomorphism whose kernel is $\Delta$. Hence $\Delta$ is a normal subgroup of 
$\G$ and $\G/\Delta\cong (A/\fn)^\times$. One deduces from the action of $\G$ on $\Omega\cup \p^1(F)$ 
an action of $\G$ on $X_1(\fn)_{\C_\infty}$. The group $\Delta$ and the scalar matrices act trivially on 
$X_1(\fn)_{\C_\infty}$, hence we have an action of the group $(A/\fn)^\times/\F_q^\times$ on $X_1(\fn)_{\C_\infty}$. 
This implies that there is a natural morphism $X_1(\fn)_{\C_\infty}\to X_0(\fn)_{\C_\infty}$ 
which is a Galois covering with Galois group $(A/\fn)^\times/\F_q^\times$. This covering 
is in fact defined over $F$, as in terms of the moduli problems it 
is induced by $(\phi, P)\mapsto (\phi, \langle P\rangle)$, 
where $\langle P\rangle$ is the order-$\fn$ cyclic subgroup of $\phi$ generated by $P$. 
By the Picard functoriality we get a homomorphism $\pi: J_0(\fn)\to J_1(\fn)$ defined over $F$, whose 
kernel $\cS(\fn)$ is the \textit{Shimura subgroup} of $J_0(\fn)$. 

\begin{defn}
Let $Q$ be the subgroup of $(A/\fn)^\times$ generated by the elements which 
satisfy $a^2-ta+\kappa=0\ \mod\ \fn$ for some $t\in \F_q$ and $\kappa\in \F_q^\times$. 
Denote $U=(A/\fn)^\times/Q$.  
\end{defn}

\begin{thm}\label{thmSSQ} The Shimura subgroup $\cS(\fn)$, as a group scheme over $\Fi$, 
is canonically isomorphic to the Cartier dual $U^\ast$ of $U$ viewed as a constant group scheme. 
\end{thm}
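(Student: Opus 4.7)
The plan is to use the rigid-analytic uniformizations of $J_0(\fn)$ and $J_1(\fn)$ over $\Fi$ from Gekeler--Reversat \cite{GR}, together with their ``changing levels'' functoriality for inclusion of congruence subgroups, to reduce the identification of $\cS(\fn)$ as a group scheme over $\Fi$ to a purely group-theoretic computation with $\G/\Delta \cong (A/\fn)^\times$.

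First I would write down the Gekeler--Reversat exact sequences
\[
0 \to \bG \to T_0 \to J_0(\fn)(\C_\infty) \to 0, \qquad 0 \to \bD \to T_1 \to J_1(\fn)(\C_\infty) \to 0,
\]
where $T_0 = \Hom(\bG, \C_\infty^\times)$, $T_1 = \Hom(\bD, \C_\infty^\times)$, and $\bG$ (resp.\ $\bD$) is the maximal torsion-free abelian quotient of $\G$ (resp.\ $\Delta$). Since the theta functions on $\Omega$ for $\Delta$ may be taken to be the restrictions of those for $\G$, the morphism $\pi$ fits into a commutative diagram of the two exact sequences in which the torus map $\pi^{\#}\colon T_0 \to T_1$ is dual (by restriction of characters) to the natural homomorphism $\bar j\colon \bD \to \bG$ induced by $\Delta \hookrightarrow \G$. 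Applying the snake lemma and taking Cartier duals identifies $\cS(\fn)$, as a group scheme over $\Fi$, with the Cartier dual of the finite abelian group $\bG/\bar j(\bD)$. The fact that this is $\mu$-type with constant Cartier dual matches the splitness of the uniformizing torus of $J_0(\fn)$ over $\Fi$, so that $G_{\Fi}$ acts trivially on the character lattice $\bG$.

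It remains to identify $\bG/\bar j(\bD)$ with $U$. The exact sequence $1 \to \Delta \to \G \xrightarrow{w} (A/\fn)^\times \to 1$ yields, after abelianization and passing to the torsion-free quotient of $\G^{\ab}$, an isomorphism $\bG/\bar j(\bD) \cong (A/\fn)^\times / w\bigl((\G^{\ab})_{\tor}\bigr)$, where $(\G^{\ab})_{\tor}$ denotes the torsion subgroup of $\G^{\ab}$. Any torsion element $\gamma \in \G \subset \GL_2(A)$ has minimal polynomial over $F$ dividing some $X^n - 1$ and hence has coefficients in $\F_q$, so by Cayley--Hamilton $\gamma^2 - t\gamma + \kappa I = 0$ with $t, \kappa \in \F_q$ and $\kappa \in \F_q^\times$. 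Applying $w$ to this identity (using that $w$ is multiplicative on $\G$ and extends entrywise and additively to all of $\Mat_2(A)$) gives $w(\gamma)^2 - t\, w(\gamma) + \kappa \equiv 0 \pmod \fn$, so $w(\gamma) \in Q$. Conversely, every $a \in Q$ satisfying such a quadratic relation can be realized as $w(\gamma)$ for a torsion $\gamma \in \G$: when $X^2 - tX + \kappa$ splits over $\F_q$ take a diagonal element of $\GL_2(\F_q) \cap \G_0(\fn)$, and when it is irreducible exploit an embedding $\F_q[a] \cong \F_{q^2} \hookrightarrow A/\fn$ to conjugate a non-split torus element of $\GL_2(\F_q)$ into $\G_0(\fn)$. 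This identifies $w\bigl((\G^{\ab})_{\tor}\bigr)$ with $Q$, completing the computation.

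The main obstacle will be the rigorous verification of the ``changing levels'' compatibility between the two Gekeler--Reversat uniformizations, and in particular justifying that the induced map on period lattices is the natural one coming from $\bar j$ in the diagram above. The converse direction $Q \subseteq w\bigl((\G^{\ab})_{\tor}\bigr)$ in the $\F_q$-irreducible case is also delicate, since it requires constructing torsion matrices in $\G_0(\fn)$ (rather than just in $\GL_2(A)$) with a prescribed $(1,1)$-entry modulo $\fn$.
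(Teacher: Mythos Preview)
Your strategy is the paper's: use the Gekeler--Reversat uniformizations and their functoriality for $\Delta\subset\G$ to identify $\cS(\fn)$ with the Cartier dual of $\bG/I(\bD)$, then compute this quotient from $\G/\Delta\cong(A/\fn)^\times$. However, two steps are genuinely incomplete.

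First, the snake lemma does not by itself give $\cS(\fn)\cong(\bG/\bar j(\bD))^\ast$; it gives an exact sequence $0\to(\bG/I\bD)^\ast\to\cS(\fn)\to\coker(\text{lattice map})$. You neither identify the lattice map nor show that the connecting map vanishes. In the paper the lattice map is the \emph{transfer} $V:\bG\to\bD$ (not $\bar j$, which goes the wrong direction), and the crucial input is that $V$ is injective with \emph{torsion-free} cokernel. This is proved by identifying $V$, via the Gekeler--Nonnengardt isomorphisms $j_\G,j_\Delta$, with the inclusion $\cH_0(\sT,\Z)^\G\hookrightarrow\cH_0(\sT,\Z)^\Delta$. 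Since $\cS(\fn)$ is finite and $\bD/V\bG$ is torsion-free, the map $\cS(\fn)\to\bD/V\bG$ is zero and the isomorphism follows. You flag the lattice map as an ``obstacle'' but do not resolve it, and your guess that it is ``the natural one coming from $\bar j$'' is not right.

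Second, your converse construction for $Q\subseteq w(\text{torsion elements of }\G)$ fails for composite $\fn$. In the split case a diagonal matrix in $\GL_2(\F_q)$ has constant $w$-value in $\F_q^\times$, whereas an $\bar a\in(A/\fn)^\times$ satisfying $(\bar a-\alpha)(\bar a-\beta)=0$ may equal $\alpha$ on some prime components of $\fn$ and $\beta$ on others. The paper avoids any case split: lift $\bar a$ to $a\in A$, choose $c\in A$ with $a(t-a)=\kappa+c\fn$, and take $\gamma=\begin{pmatrix}a&1\\ c\fn&t-a\end{pmatrix}\in\G_0(\fn)$, which is torsion with $w(\gamma)=\bar a$ regardless of whether $X^2-tX+\kappa$ splits. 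Finally, for the inclusion $w((\G^{\ab})_\tor)\subseteq Q$ you implicitly use that $(\G^{\ab})_\tor$ is generated by images of finite-order elements of $\G$; this is a nontrivial consequence of the action of $\G$ on $\sT$ (Serre, \emph{Trees}, p.~55) and should be stated.
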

\begin{proof} 
Denote by $\G^\ab$ the abelianization of $\G$ and let $\bG:=\G^\ab/(\G^\ab)_\tor$ 
be the maximal abelian torsion-free quotient of $\G$. The inclusion $\Delta\hookrightarrow \G$ 
induces a homomorphism $I:\bD\to \bG$ (which is not necessarily injective). There 
is also a homomorphism $V:\bG\to \bD$, the transfer map, such that $I\circ V:\bG\to \bG$ 
is the multiplication by $[\G:\Delta]/(q-1)$; see \cite[p. 71]{GR}. 

First, we note that the homomorphism $V: \bG\to \bD$ is injective with torsion-free cokernel. 
Indeed, by \cite[p. 72]{GR}, there is a commutative diagram 
$$
\xymatrix{
\bG \ar[r]^-{j_\G} \ar[d]^V &  \cH_0(\cT, \Z)^\G \ar[d] \\
\bD \ar[r]^-{j_\Delta} & \cH_0(\cT, \Z)^\Delta
}
$$
where the right vertical map is the natural injection. This last homomorphism 
obviously has torsion-free cokernel. Since by \cite{GN} $j_\G$ and $j_\Delta$ are 
isomorphisms, the claim follows.

Next, by the results in Sections 6 and 7 of \cite{GR}, there is a commutative 
diagram 
$$
\xymatrix{
0\ar[r] & \bG \ar[r] \ar[d]^V &  \Hom(\bG, \C_\infty^\times) \ar[r]\ar[d]^{I^\ast} & J_0(\fn)\ar[r]\ar[d]^{\pi} & 0 \\
0\ar[r] & \bD \ar[r] & \Hom(\bD, \C_\infty^\times) \ar[r] & J_1(\fn) \ar[r]& 0,
}
$$
where the top row is the uniformization in (\ref{eqGRs}), the 
bottom row is a similar uniformization for $J_1(\fn)$ constructed in \cite{GR} for 
an arbitrary congruence group, and the middle vertical map $I^\ast$ is the dual of $I:\bD\to \bG$. 
This diagram gives an exact sequence of group schemes 
$$
0\to (\bG/I\bD)^\ast \to \cS(\fn) \to \bD/V\bG.  
$$
Since there are no non-trivial maps from an abelian variety to a discrete group, 
$\cS(\fn)$ is finite. On the other hand, by the previous paragraph, $\bD/V\bG$ is torsion-free, so 
$$
\cS(\fn)\cong (\bG/I\bD)^\ast. 
$$

Denote by $\G^c$ the commutator subgroup of $\G$. The fact that $\G/\Delta$ is abelian 
implies $\G^c\subset \Delta$. Hence $\G/\Delta\cong \G^\ab/(\Delta/\G^c)$. It follows 
from Corollary 1 on page 55 of \cite{SerreT} that $(\G^\ab)_\tor$ is generated by the images of finite 
order elements of $\G$ in $\G^\ab$. Since $\bG$ 
is the quotient of $\G^\ab$ by the torsion subgroup $(\G^\ab)_\tor$, we conclude that $\bG/I\bD$ 
is the quotient of $\G/\Delta$ by the subgroup generated by the images of finite order elements of $\G$. 

An element $\gamma\in \GL_2(A)$ has finite order if and only if $\Tr(\gamma)\in \F_q$. 
Therefore, if $\gamma\in \G$ has finite order then $w(\gamma)$ 
satisfies the equation $a^2-ta+\kappa=0$, where $t=\Tr(\gamma)\in \F_q$ and $\kappa=\det(\gamma)\in \F_q^\times$. 
Conversely, suppose $\bar{a}\in (A/\fn)^\times$ satisfies $\bar{a}^2-t\bar{a}+\kappa=0$ for some $t\in \F_q$ and $\kappa\in \F_q^\times$. 
Fix some $a\in A$ reducing to $\bar{a}$ modulo $\fn$. Since $a(t-a)\equiv \kappa\ (\mod\ \fn)$, there exists $c\in A$ 
such that $a(t-a)=\kappa+c\fn$. The matrix $\gamma=\begin{pmatrix} a & 1 \\ c\fn & t-a\end{pmatrix}$ has determinant $\kappa$ 
and trace $t$. It is clear that $\gamma\in \G$ is a torsion element, and $w(\gamma)=\bar{a}$. 
We conclude that $\bG/I\bD\cong (A/\fn)^\times/Q$.  
\end{proof}

\begin{rem}
The previous theorem is the function field analogue of Theorem 1 in \cite{LO}. 
\end{rem}

\begin{lem}\label{lem8.4} Assume $\fn$ is square-free, so that the order of $(A/\fn)^\times$ is not 
divisible by $p$ and $\cS(\fn)$ is \'etale over $\Fi$.  
The Shimura subgroup $\cS(\fn)$ extends to a finite flat subgroup scheme of $\cJ^0_{\cO_\infty}$.  
\end{lem}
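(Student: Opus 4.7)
The plan is to show that every geometric point of $\cS(\fn)$ lies in the kernel of the canonical specialization $\wp_\infty\colon J(\Fi^\un)\to \Phi_\infty$, so that the scheme-theoretic closure of $\cS(\fn)$ in $\cJ$ is contained in $\cJ^0_{\cO_\infty}$. First I would identify $\cJ^0_{\cO_\infty}$ with the N\'eron model of the uniformizing torus from (\ref{eqGRs}): since $J:=J_0(\fn)$ has split totally degenerate reduction at $\infty$, Grothendieck's theory gives a canonical identification $\cJ^0(\cO_\infty^\un)=\Hom(\bG,(\cO_\infty^\un)^\times)$, and under this identification the specialization from Theorem \ref{thmCGinf} sends the class of a lift $\tilde\phi\in\Hom(\bG,\bar\Fi^\times)$ of $P\in J(\Fi^\un)$ to $[\ord_\infty\circ\tilde\phi]\in\Hom(\bG,\Z)/\iota(\bG)=\Phi_\infty$. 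Consequently, $P\in\cJ^0(\cO_\infty^\un)$ iff some lift $\tilde\phi$ of $P$ takes values in $(\cO_\infty^\un)^\times$.

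Next I would exploit the explicit description of $\cS(\fn)$ extracted from the proof of Theorem \ref{thmSSQ}. That proof exhibits an isomorphism $\cS(\fn)\cong (\bG/I\bD)^\ast$ whose points $P\in\cS(\fn)(\bar\Fi)$ are realized by $\tilde\phi=\phi\circ(\bG\to \bG/I\bD)$, where $\phi\in\Hom(\bG/I\bD,\bar\Fi^\times)$. Since $\bG/I\bD$ is a finite quotient of $\G/\Delta\cong (A/\fn)^\times$ and $\fn$ is square-free, the exponent $N$ of $\bG/I\bD$ divides $|(A/\fn)^\times|$ and so $(N,p)=1$. Hence $\tilde\phi$ takes values in $\mu_N(\bar\Fi)=\mu_N(\Fi^\un)\subset(\cO_\infty^\un)^\times$, which simultaneously makes $\tilde\phi$ a $\Fi^\un$-rational (automatically $\Gal(\bar\Fi/\Fi^\un)$-fixed) and integral element of the uniformizing torus.

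Combining these steps, every $\Fi^\un$-point of $\cS(\fn)$ lies in $\cJ^0(\cO_\infty^\un)$. Since $\cS(\fn)$ is \'etale over $\Fi$ (its order being coprime to $p$), its scheme-theoretic closure $\bar\cS\subseteq\cJ_{\cO_\infty}$ is finite \'etale over $\cO_\infty$, and the induced map $\bar\cS_{\F_\infty}\to\Phi_\infty$ of \'etale group schemes over $\F_\infty$ vanishes on all $\bar\F_\infty$-points, hence is identically zero; this forces $\bar\cS\subseteq\cJ^0_{\cO_\infty}$ as required. The main obstacle I foresee is not in any calculation but in the bookkeeping of the first step: justifying the identification $\cJ^0_{\cO_\infty}\cong\Hom(\bG,\mathbb{G}_{m,\cO_\infty})$, together with the concrete formula for $\wp_\infty$ in terms of $\ord_\infty$ applied to uniformizing lifts, is a standard consequence of the Raynaud/Grothendieck theory of abelian varieties with split multiplicative reduction, but it must be carefully matched with the rigid-analytic construction recalled in \S\ref{sDMC}. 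Once that matching is in hand, the rest of the argument reduces to the elementary fact that roots of unity of order coprime to $p$ are units at $\infty$.
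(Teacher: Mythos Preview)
Your proposal is correct and follows essentially the same approach as the paper: both arguments use the description of $\cS(\fn)$ from the proof of Theorem~\ref{thmSSQ} as a subgroup of the uniformizing torus $\Hom(\bG,\C_\infty^\times)$, observe that since its order is prime to $p$ its points take values in roots of unity lying in $(\cO_\infty^\un)^\times$, and invoke the identification $\cJ^0(\cO_\infty^\un)\cong\Hom(\bG,(\cO_\infty^\un)^\times)$. The paper's proof is terser (three sentences, citing \cite{Analytical} for the last identification), while you spell out the scheme-theoretic closure and \'etaleness details, but the substance is the same.
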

\begin{proof} 
From the proof of Theorem \ref{thmSSQ}, $\cS(\fn)$ is canonically 
a subgroup of $\Hom(\bG, \C_\infty^\times)$. It is easy to see that $\cS(\fn)$ 
actually lies in $\Hom(\bG, (\cO_\infty^{\un})^\times)$. On the 
other hand, as is implicit in the proof of Corollary 2.11 in \cite{Analytical}, there is a canonical isomorphism 
$\cJ^0(\cO_\infty^\un)\cong \Hom(\bG, (\cO_\infty^{\un})^\times)$. 
\begin{comment}
A similar argument also works for general $\fn$, once one observes that it 
essentially amounts to the fact that $\cS(\fn)$ lies in the image 
of the canonical open immersion $i: (\widehat{\cJ^0})^\rig\hookrightarrow J_0(\fn)^\an$, 
where $(\widehat{\cJ^0})^\rig$ is the Raynaud generic fibre of the 
formal completion $\widehat{\cJ^0}$ of $\cJ^0$ along its special fibre at $\infty$.  
\end{comment}
\end{proof}

\begin{prop}\label{propSisEis}
Assume $\fn$ is square-free. The Shimura subgroup $\cS(\fn)$, as a group scheme over $F$, 
is an \'etale group scheme whose Cartier dual is canonically 
isomorphic to $U$ viewed as a constant group scheme.  
The endomorphisms $T_\fp-|\fp|-1$ and $W_\fn+1$ of $J_0(\fn)$ annihilate $\cS(\fn)$; 
here $\fp\lhd A$ is any prime not dividing $\fn$. 
\end{prop}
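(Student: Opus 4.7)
My plan is to establish the four assertions---étaleness, the Cartier-dual identification over $F$, the Eisenstein annihilation by $T_\fp$, and the Atkin-Lehner annihilation by $W_\fn$---by upgrading Theorem \ref{thmSSQ} from $\Fi$ to $F$ and then invoking Eichler-Shimura together with a moduli-theoretic computation.

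For étaleness, since $\fn$ is square-free, $|(A/\fn)^\times| = \prod_{\fp|\fn}(|\fp|-1)$ is coprime to $p$, so both $|U|$ and (by Theorem \ref{thmSSQ}) $|\cS(\fn)|$ are coprime to $p$, and a finite group scheme over a field of characteristic $p$ of order coprime to $p$ is étale. To upgrade the Cartier-dual identification to $F$, I would exploit the $F$-rationality of the Shimura covering $\pi: X_1(\fn) \to X_0(\fn)$, which is Galois over $F$ with constant étale Galois group $(A/\fn)^\times/\F_q^\times$. By the general theory of Galois covers of curves, adjusted to account for the ramification at elliptic Drinfeld modules (whose stabilizers correspond precisely to $Q/\F_q^\times$), the kernel of $\pi^*: J_0(\fn) \to J_1(\fn)$ is canonically the Cartier dual of the constant $F$-group scheme $U$. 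This is also confirmed by Lemma \ref{lem8.4}: $\cS(\fn)$ extends to $\cJ^0_{\cO_\infty}$, whose special fibre is a split torus (Theorem \ref{thmCGinf}), so $\cS(\fn)$ is of multiplicative type with constant character group at $\infty$, matching Theorem \ref{thmSSQ}.

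For the Hecke annihilation, I would apply the Eichler-Shimura relation $\Frob_\fp^2 - T_\fp \Frob_\fp + |\fp| = 0$ cited in the proof of Lemma \ref{lem8.1}. At $\fp\nmid \fn$, $J_0(\fn)$ has good reduction and $\cS(\fn)$ extends to a finite étale subgroup of $\cJ_{\cO_\fp}$. Because $\cS(\fn)$ is $\mu$-type over $F$ with constant character group $U$ (of order coprime to $|\fp|$), the geometric Frobenius $\Frob_\fp$ acts on $\cS(\fn)(\bar F)$ via the cyclotomic character as multiplication by $|\fp|$. Substituting into Eichler-Shimura and dividing by $|\fp|$ yields $T_\fp - |\fp| - 1 = 0$ on $\cS(\fn)$.

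For the Atkin-Lehner annihilation, I would show via a matrix computation (the function-field analogue of the classical identity $W_N \langle d\rangle W_N^{-1} = \langle d^{-1}\rangle$) that conjugation by $W_\fn$ acts on the Galois group $(A/\fn)^\times/\F_q^\times$ of the Shimura covering by inversion. The subgroup $Q$ is stable under inversion: if $a^2 - ta + \kappa \equiv 0\ (\mod\ \fn)$ with $t\in\F_q$, $\kappa\in\F_q^\times$, then $a^{-1}$ satisfies $a^{-2} - t\kappa^{-1}a^{-1} + \kappa^{-1} \equiv 0\ (\mod\ \fn)$, so the induced action of $W_\fn$ on $U$ is inversion, and dually on $\cS(\fn) \cong U^\ast$ it is $-1$; hence $W_\fn + 1$ annihilates $\cS(\fn)$. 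The main obstacle is the careful descent of the Cartier-dual identification from $\Fi$ to $F$: the route through the $F$-rationality of the Shimura covering sidesteps any genuine Galois-theoretic issue, but executing the ``adjustment for elliptic points'' rigorously---so as to recover exactly the subgroup $Q$ from the local stabilizers---requires a moduli-theoretic analysis at each elliptic point of $X_0(\fn)$.
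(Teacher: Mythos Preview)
Your approach is correct and matches the paper's proof in all four parts: \'etaleness from the order being coprime to $p$, the Cartier-dual identification via the maximal abelian unramified subcover of the Shimura covering, Eichler--Shimura plus the cyclotomic action of Frobenius for the Hecke annihilation, and the matrix conjugation computation for $W_\fn$. For the obstacle you flag---descending the Cartier-dual identification to $F$---the paper resolves it exactly as you suggest (the proof of Theorem~\ref{thmSSQ} already shows $U$ is the Galois group of the maximal unramified abelian intermediate cover) and then invokes \cite[Prop.~6]{LO} for the general fact that the kernel of Picard functoriality for such an \'etale cover is $\Hom(U,\bar F^\times)$; note only that the ramification locus of $X_1(\fn)\to X_0(\fn)$ includes the cusps as well as the elliptic points.
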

\begin{proof}
The covering $\pi:X_1(\fn)_F\to X_0(\fn)_F$ can ramify only at the elliptic points and the cusps 
of $X_0(\fn)_F$. (By definition, an elliptic point on $X_0(\fn)_{\C_\infty}$ is the image of $z\in \Omega$ 
whose stabilizer in $\GL_2(A)$ is strictly larger than $\F_q^\times$.) The proof of 
Theorem \ref{thmSSQ} shows that $U$ is the Galois group 
of the maximal abelian unramified covering $X_F\to X_0(\fn)_F$ through which $\pi$ 
factorizes. Now \cite[Prop. 6]{LO} implies that $\cS(\fn)$ as a group scheme over $F$ 
is isomorphic to $\Hom(U, \bar{F}^\times)$. 

The Jacobian $J_0(\fn)$ 
has good reduction at $\fp$. Since $\cS(\fn)$ has order coprime to $p$ and is unramified at $\fp$, 
the reduction map injects $\cS(\fn)$ into $J_0(\fn)(\overline{\F}_\fp)$. Let $\Frob_\fp$ 
be the Frobenius endomorphism of $J_0(\fn)_{\F_\fp}$. The Hecke operator $T_\fp$ 
satisfies the Eichler-Shimura relation: 
$$
\Frob_\fp^2-T_\fp\cdot \Frob_\fp+|\fp|=0. 
$$
Since $\cS(\fn)^\ast$ is constant, $\Frob_\fp$ acts on $\cS(\fn)$ 
by multiplication by $|\fp|$. Therefore, the endomorphisms $|\fp|(T_\fp-|\fp|-1)$ 
annihilates $\cS(\fn)$. Since the reduction map commutes with the action of Hecke 
algebra and the multiplication by $|\fp|$ is an automorphism of $\cS(\fn)$,  
we conclude that $T_\fp-|\fp|-1$ annihilates $\cS(\fn)$. 

Note that for $\begin{pmatrix} a & b\\ c & d\end{pmatrix}\in \G$ we have 
$$
\begin{pmatrix} 0 & -1\\ \fn & 0\end{pmatrix}\begin{pmatrix} a & b\\ c & d\end{pmatrix} \begin{pmatrix} 0 & 1/\fn\\ -1 & 0\end{pmatrix}
=\begin{pmatrix} d & -c/\fn\\ -b\fn & a\end{pmatrix}.
$$
Since $d$, up to an element of $\F_q^\times$, 
is the inverse of $a$ modulo $\fn$, this calculation shows that  
$W_\fn$ acts on the group $(\G/\Delta)/\F_q^\times$ by $u\mapsto u^{-1}$, so it acts as $-1$ on $\cS(\fn)$. 
\end{proof}

\begin{thm}\label{thmSSS}
Assume $\fn$ is square-free, and let $\fn=\fp_1\cdots \fp_s$ be its prime decomposition.  As an abelian group, $\cS(\fn)$ 
is isomorphic to 
$$
\begin{cases}
\prod_{i=1}^s\left(\Z \middle/\frac{|\fp_i|-1}{q-1}\Z\right), & \text{if some $\fp_i$ has odd degree}\\
\prod_{i=1}^s\left(\Z \middle/\frac{|\fp_i|-1}{q^2-1}\Z\right), & \text{if $q$ is even and all $\fp_i$ have even degrees}\\
\prod_{i=1}^s\left(\Z \middle/\frac{2(|\fp_i|-1)}{q^2-1}\Z\right)/\mathrm{diag}\left(\Z/2\Z\right), & 
\text{if $q$ is odd and all $\fp_i$ have even degrees}
\end{cases}
$$
where 
$\mathrm{diag}:\Z/2\Z\to \prod_{i=1}^s\left(\Z \middle/\frac{2(|\fp_i|-1)}{q^2-1}\Z\right)$ is 
the diagonal embedding. 
\end{thm}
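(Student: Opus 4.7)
The plan is to compute $U=(A/\fn)^\times/Q$ explicitly using the structure of $(A/\fn)^\times$ via the Chinese Remainder Theorem, and then invoke Theorem~\ref{thmSSQ}, which identifies $\cS(\fn)$ with the Cartier dual $U^\ast$. Since $\fn$ is square-free and $U$ is finite of order coprime to $p$, the Cartier dual and $U$ are non-canonically isomorphic as abstract abelian groups, so it suffices to describe $U$.

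Setting $d_i=\deg(\fp_i)$, CRT gives $G:=(A/\fn)^\times\cong\prod_{i=1}^s\F_{q^{d_i}}^\times$. An element $a=(a_1,\ldots,a_s)\in G$ generates $Q$ precisely when every $a_i$ is a root of a common quadratic $x^2-tx+\kappa$ with $t\in\F_q,\kappa\in\F_q^\times$. Since the roots of such a quadratic live in $\F_{q^2}$, the condition forces $a_i\in\F_{q^{d_i}}\cap\F_{q^2}$, which equals $\F_q$ if $d_i$ is odd and $\F_{q^2}$ if $d_i$ is even. I will then split into the three cases of the theorem.

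In the first case, some $d_i$ is odd, so $a_i\in\F_q$ forces the quadratic to split over $\F_q$, and hence $a_j\in\F_q^\times$ for all $j$. Conversely, any $(c_1,\ldots,c_s)\in(\F_q^\times)^s$ satisfies a split quadratic over $\F_q$, so $Q=(\F_q^\times)^s$. Then $U=\prod\F_{q^{d_i}}^\times/\F_q^\times\cong\prod\Z/\frac{|\fp_i|-1}{q-1}\Z$, as claimed. In the remaining cases all $d_i$ are even, so each $a_i$ may lie in $\F_{q^2}^\times$; the generators of $Q$ are then (i) elements $(c_1,\ldots,c_s)\in(\F_q^\times)^s$ from split quadratics and (ii) elements $\alpha_I$ (defined by $(\alpha_I)_i=\alpha$ for $i\in I$, $\alpha^q$ for $i\notin I$) coming from irreducible $x^2-tx+\kappa$ with roots $\alpha,\alpha^q\in\F_{q^2}\setminus\F_q$. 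A short calculation with differences $\alpha_I\alpha_J^{-1}$ shows that these generate $D\cdot\prod_{i=1}^s\mu_{q+1}$, where $D$ is the diagonal copy of $\F_{q^2}^\times$ and $\mu_{q+1}\subset\F_{q^2}^\times$ is the norm-one subgroup; together with $(\F_q^\times)^s$ this gives $Q=(\F_q^\times)^s\cdot\prod\mu_{q+1}\cdot D$.

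The final step splits according to the parity of $q$. When $q$ is even, $\F_{q^2}^\times=\F_q^\times\times\mu_{q+1}$ and the diagonal $D$ is already absorbed, so $Q=\prod\F_{q^2}^\times$ and $U\cong\prod\F_{q^{d_i}}^\times/\F_{q^2}^\times\cong\prod\Z/\frac{|\fp_i|-1}{q^2-1}\Z$. When $q$ is odd, the key observation is that $\F_q^\times\subset(\F_{q^2}^\times)^2$ and $\mu_{q+1}\subset(\F_{q^2}^\times)^2$ (both have order dividing $(q^2-1)/2$), so $(\F_q^\times)^s\cdot\prod\mu_{q+1}\subseteq\prod H_i$ where $H_i:=(\F_{q^2}^\times)^2\subset\F_{q^{d_i}}^\times$ has order $(q^2-1)/2$; a counting argument together with the fact that a non-square $\alpha\in\F_{q^2}\setminus\F_q$ exists shows in fact $(\F_q^\times)^s\cdot\prod\mu_{q+1}=\prod H_i$ and $D$ contributes exactly the diagonal non-square class in $\prod\F_{q^2}^\times/\prod H_i\cong(\Z/2\Z)^s$. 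Thus $Q/\prod H_i$ is the diagonal $\Z/2\Z$, and dividing $G/\prod H_i\cong\prod\Z/\frac{2(|\fp_i|-1)}{q^2-1}\Z$ (each factor $\F_{q^{d_i}}^\times/H_i$) by this diagonal gives the desired description. The main obstacle will be the $q$ odd subcase: identifying $Q$ precisely inside $\prod\F_{q^2}^\times$ and tracking how $D$ contributes the diagonal $\Z/2\Z$ modulo the squares, which is what produces the non-trivial extension in the theorem's statement.
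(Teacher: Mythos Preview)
Your proposal is correct and follows essentially the same approach as the paper: compute $Q$ under the CRT isomorphism $(A/\fn)^\times\cong\prod_i\F_{\fp_i}^\times$, treat the ``some $d_i$ odd'' case by observing the quadratic must split, and in the ``all $d_i$ even'' case identify $Q$ with $\mathrm{diag}(\F_{q^2}^\times)\cdot\prod_i(\F_{q^2}^\times)^2$, then read off $U=G/Q$. The only cosmetic difference is that you pass through $\mu_{q+1}$ (from the ratios $\alpha_I\alpha_J^{-1}$, whose nontrivial entries are $\alpha^{\pm(q-1)}$) before recognising $\F_q^\times\cdot\mu_{q+1}=(\F_{q^2}^\times)^2$, whereas the paper notes directly that $\alpha^{q-1}\in(\F_{q^2}^\times)^2$ and thus lands in $\prod_i(\F_{q^2}^\times)^2$ immediately; you are also more explicit than the paper about the final quotient computation in the $q$ odd case.
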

\begin{proof}
First, we claim that the image of $Q$ under the isomorphism $(A/\fn)^\times \cong \prod_{i=1}^s\F_{\fp_i}^\times$ 
given by 
$$
a \mapsto (a\ \mod\ \fp_1, \cdots, a\ \mod\ \fp_s)
$$
contains the subgroup $\prod_{i=1}^s\F_q^\times$. 
Indeed, let $\alpha\in \F_q^\times$ be arbitrary, and consider 
$(1,\dots, \alpha, \dots, 1)$ with $\alpha$ in the $i$th position. This element 
is in the image of $Q$ since $a\in (A/\fn)^\times$ mapping to it  
satisfies the quadratic equation $(x-\alpha)(x-1)=0$ modulo $\fn$. 

Suppose $a\in Q$ is a zero of  $f(x):=x^2-tx+\kappa$. Then $a$ satisfies the same 
equation modulo $\fp_i$. If $\deg(\fp_i)$ is odd, then $f(x)$ must be reducible over $\F_q$. 
This implies that the image of $a$ in $\F_{\fp_i}^\times$ lies in the subfield $\F_q$. 
On the other hand, since $a\ \mod\ \fp_j$ ($1\leq j\leq s$) satisfies the same reducible quadratic equation, 
the image of $a$ in $\F_{\fp_j}$ also lies in its subfield $\F_q$, and we 
conclude that $Q\cong \prod_{i=1}^s\F_q^\times$. 

Now suppose all $\fp_i$'s have even degrees. 
Let $\alpha\in \F_{q^2}^\times$ be arbitrary. The elements $(\alpha, \dots, \alpha)$ 
and $(\alpha, \dots, \alpha^q, \dots, \alpha)$, with $\alpha^q$ in the $i$th position ($1\leq i\leq s$), are in 
the image of $Q$. Indeed, $a\in (A/\fn)^\times$ which maps to such an element satisfies 
$(x-\alpha)(x-\alpha^q)=0$, and this polynomial has coefficients in $\F_q$. 
Therefore, $(1, \dots, \alpha^{q-1}, \dots, 1)\in Q$. Since $\alpha^{q+1}\in \F_q^\times$, we get that 
$(1, \dots, \alpha^2, \dots, 1)\in Q$. Thus, 
$\mathrm{diag}(\F_{q^2}^\times)\cdot \prod_{i=1}^s(\F_{q^2}^\times)^2$ is a subgroup of $Q$, 
where $(\F_{q^2}^\times)^2=\{\alpha^2\ |\ \alpha\in \F_{q^2}^\times\}$ 
and $\mathrm{diag}: \F_{q^2}^\times\hookrightarrow \prod_{i=1}^s\F_{\fp_i}^\times$ 
is the diagonal embedding. If $f(x)$ is reducible, then $a\in \prod_{i=1}^s\F_q^\times$. 
If $f(x)$ is irreducible, then $a=(\alpha_{i_1}, \dots, \alpha_{i_s})$ with $\alpha_{i_1},\dots, \alpha_{i_s}\in \{\alpha,\alpha^q\}$, where  
$\alpha$, $\alpha^q$ are the roots of $f(x)$. We can write  
$$
a=(\alpha_{i_1}, \dots, \alpha_{i_1})\cdot \left(1, \frac{\alpha_{i_2}}{\alpha_{i_1}},\dots, \frac{\alpha_{i_s}}{\alpha_{i_1}}\right). 
$$
The first element in this product is in $\mathrm{diag}(\F_{q^2}^\times)$ and the second is in $\prod_{i=1}^s(\F_{q^2}^\times)^2$
since each $\alpha_{i_j}/\alpha_{i_1}$ is either $1$ or $\alpha^{\pm(q-1)}$. 
We conclude that $\mathrm{diag}(\F_{q^2}^\times)\cdot \prod_{i=1}^s(\F_{q^2}^\times)^2= Q$. 
Since $\cS(\fn)$ is isomorphic to $(A/\fn)^\times/Q$, the claim follows. 
\end{proof}

\begin{example}
If $\fp\lhd A$ is prime, then $\cS(\fp)$ is cyclic of order $\frac{|\fp|-1}{q-1}$ 
(resp. $\frac{|\fp|-1}{q^2-1}$) is $\deg(\fp)$ if odd (resp. even). This is also proved in \cite{Pal} 
by a different method. 
\end{example}

\begin{example}
$\cS(xy)$ is cyclic of order $q+1$. 
\end{example}

\begin{example}
Assume $\fp$ and $\fq$ are two distinct primes of degree $2$. Then $\cS(\fp\fq)\cong \Z/2\Z$ (resp. $0$) if $q$ is odd (resp. even). 
\end{example}

\begin{comment}

\begin{lem}\label{lem7.6} Assume $\fn$ is square-free, so that $\cS(\fn)\subset J_0(\fn)(\bar{F})$ 
is uniquely determined by the group of its $\bar{F}$-valued points. 
\begin{enumerate}
\item $\cS(\fn)$ is in the kernel of the endomorphisms 
$W_\fn+1$ of $J_0(\fn)$. 
\item $\cS(\fn)$ is in the kernel of the endomorphisms 
$T_\fp-|\fp|-1$ of $J_0(\fn)$. 
\end{enumerate}
\end{lem}
\begin{proof} To prove Part (2) we adapt the argument of Ribet \cite[Thm. 1]{RibetCGSS} to our setting. 
In the notation of $\S$\ref{ssHC}, $T_\fp=\alpha^\vee\circ \beta$. Since the degree of 
$\alpha$ as a morphism $X_0(\fp\fn)\to X_0(\fn)$ is $|\fp|+1$, we have $\alpha^\vee\circ \alpha=|\fp|+1$. 
Hence it is enough to show 
that $\alpha=\beta$ when restricted to $\cS(\fn)$. We have $\beta=W_\fp\circ \alpha$, where 
$W_\fp$ is the Atkin-Lehner involution of $J_0(\fp \fn)$. Thus, we need to show that $W_\fp=1$ 
when restricted to $\alpha(\cS(\fn))$. We have an equality $W_{\fp \fn}=W_\fp\circ W_\fn$ of 
involutions of $J_0(\fp \fn)$. Also, $W_\fn\circ \alpha = \alpha\circ W_\fn$, where on the left hand side 
$W_\fn$ is the Atkin-Lehner involution of $J_0(\fp \fn)$. Therefore, if $x\in \cS(\fn)$ then 
$$
W_\fp\circ \alpha (x) = W_{\fp \fn}\circ W_\fn\circ \alpha (x) = W_{\fp \fn}\circ \alpha\circ W_\fn  (x) =-W_{\fp \fn}\circ \alpha (x). 
$$
Finally, it is easy to see that $\alpha(x)\in \cS(\fp \fn)$, so $W_{\fp \fn}\circ \alpha (x)=-\alpha(x)$. 
\end{proof}

\end{comment}

\begin{defn}
Let $\fn\lhd A$ be a non-zero ideal and denote 
$J=J_0(\fn)$. The \textit{kernel of the Eisenstein ideal} $J[\fE(\fn)]$ is the 
intersection of the kernels of all elements of $\fE(\fn)$ acting on $J(\bar{F})$. Given a prime $\ell\neq p$, 
the action of $\T(\fn)$ on $J$ induces an action on $J[\ell^n]$ ($n\geq 1$) and the $\ell$-divisible group 
$J_\ell:=\underset{\to}{\lim}\ J[\ell^n]$, so one can also define  $J_\ell[\fE(\fn)]$. 
From the proof of Theorem \ref{thm6.1}, we see that $J[\fE(\fn)]$ is a finite group scheme over $F$, as 
$J[\fE(\fn)]\subseteq J[\eta_\fp]$ for any $\fp\nmid \fn$, where $\eta_\fp=T_\fp-|\fp|-1$. 
By Lemma \ref{lem8.1} and Proposition \ref{propSisEis}, if $\fn$ is square-free then 
$\cS(\fn)_\ell$ and $\cC(\fn)_\ell$ are in $J_\ell[\fE(\fn)]$. 
\end{defn}

\begin{defn}
We say that a group scheme $H$ over the base $S$ is \textit{$\mu$-type} if it is finite, flat 
and its Cartier dual is a constant group scheme over $S$; $H$ 
is \textit{pure} if it is the direct sum of a constant and $\mu$-type group schemes; $H$ is \textit{admissible} 
if it is finite, \'etale and has a filtration by groups schemes such that the successive 
quotients are pure. 
\end{defn}

\begin{lem}\label{thm9.2} Assume $\fn=\fp\fq$ is a product of two distinct primes. 
Let $s_{\fp, \fq}=\gcd(|\fp|+1, |\fq|+1)$. 
If $\ell$ does not divide $q(q-1)s_{\fp, \fq}$, then $J_\ell[\fE(\fn)]$ 
is unramified except possibly at $\fp$ and $\fq$, and there is an exact sequence 
of group schemes over $F$ 
$$
0\to \cC(\fn)_\ell\to J_\ell[\fE(\fn)]\to \cM_\ell\to 0,
$$
where $\cM_\ell$ is $\mu$-type and contains $\cS(\fn)_\ell$. 
\end{lem}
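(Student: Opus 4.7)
The plan is to build the exact sequence tautologically and then verify each of the three claims (unramifiedness outside $\{\fp,\fq\}$, containment of $\cS(\fn)_\ell$, and $\mu$-type for $\cM_\ell$) in turn, with the last being the main content.

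First I would set up the sequence. By Theorem \ref{thm6.1} and Lemma \ref{lem8.1}, $\cC(\fn)_\ell\subseteq J(F)[\fE]_\ell\subseteq J_\ell[\fE]$, so I simply \emph{define} $\cM_\ell:=J_\ell[\fE]/\cC(\fn)_\ell$; the short exact sequence is then automatic. For unramifiedness, at any prime $\fl\notin\{\fp,\fq,\infty\}$ the curve $X_0(\fp\fq)$ has good reduction by Theorem \ref{thmX0} and $\ell\neq p$, so $J[\ell^n]$ is étale over the N\'eron model and $J_\ell[\fE]$ is unramified. At $\infty$, the rigid-analytic uniformization of Section \ref{sDMC} yields a $\T(\fn)$-equivariant short exact sequence of group schemes over $F_\infty$
$$0\longrightarrow \Hom(\cH_0(\fn,\Z),\mu_{\ell^n})\longrightarrow J[\ell^n]\longrightarrow \cH_0(\fn,\Z)/\ell^n\longrightarrow 0,$$
whose kernel is $\mu$-type (hence unramified at $\infty$, since $\ell\neq p$) and whose cokernel is constant, so taking $\fE$-kernels preserves unramifiedness. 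For the containment $\cS(\fn)_\ell\hookrightarrow\cM_\ell$, Proposition \ref{propSisEis} gives $\cS(\fn)_\ell\subseteq J_\ell[\fE]$ and shows it is $\mu$-type; since $\ell\nmid q-1$ the cyclotomic character $G_F\to(\Z/\ell)^\times$ is non-trivial, so no non-zero group scheme over $F$ is simultaneously étale-constant and $\mu$-type, forcing $\cS(\fn)_\ell\cap\cC(\fn)_\ell=0$.

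For the main step that $\cM_\ell$ is $\mu$-type, I would argue via Weil-pairing self-duality of $J_\ell[\fE]$ combined with a size count. Since $T_\fp$ commutes with the Atkin-Lehner involution $W_\fn$ for $\fp\nmid\fn$ and its Rosati adjoint equals $W_\fn T_\fp W_\fn^{-1}=T_\fp$, the Eisenstein ideal $\fE$ is self-adjoint under the Weil pairing, giving a perfect $G_F$-equivariant pairing $J_\ell[\fE]\times J_\ell[\fE]\to\mu_{\ell^n}$. Applying $G_F$-invariants to the resulting self-duality $J_\ell[\fE]\cong\Hom(J_\ell[\fE],\mu_{\ell^n})$ and using Pontryagin duality shows that the maximal constant subgroup of $J_\ell[\fE]$ has the same order as the maximal $\mu$-type quotient. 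By Theorem \ref{thm8.3} and Lemma \ref{lem8.1}, the maximal constant subgroup is $J(F)[\fE]_\ell=\cT(\fn)_\ell=\cC(\fn)_\ell$. Separately, the local analysis at $\infty$ from the previous paragraph splits once we note (as in the proof of Theorem \ref{thm8.3}) that $\cC(\fn)_\ell\twoheadrightarrow(\cH_0(\fn,\Z)/\ell^n)[\fE]=\cE_{00}(\fn,\Z/\ell^n\Z)$ is an isomorphism; this yields $|J_\ell[\fE]|=|\cC(\fn)_\ell|^2$, so $|\cM_\ell|=|\cC(\fn)_\ell|$ matches the size of the maximal $\mu$-type quotient. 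Finally, any $\mu$-type quotient of $J_\ell[\fE]$ annihilates the constant subgroup $\cC(\fn)_\ell$ (again since $\ell\nmid q-1$), so the maximal $\mu$-type quotient factors through $\cM_\ell$; equality of orders then forces $\cM_\ell$ to be the maximal $\mu$-type quotient, hence $\mu$-type.

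The main obstacle is the last step, and in particular the descent from $\mu$-type over $F_\infty$ to $\mu$-type over $F$. A naive Chebotarev/Eichler-Shimura argument gives the characteristic polynomial $(\Frob_\fl-1)(\Frob_\fl-|\fl|)$ on $\cM_\ell$ but does not immediately rule out trivial Jordan-H\"older factors, since $\cM_\ell^{G_F}$ can fail to vanish — the short exact sequence gives only $\cM_\ell^{G_F}\hookrightarrow H^1(G_F,\cC(\fn)_\ell)$, which is large. The Weil-pairing self-duality of $J_\ell[\fE]$ is what bypasses this difficulty, by identifying $\cC(\fn)_\ell$ with both the maximal constant subgroup and (after the size count) the kernel of projection to the maximal $\mu$-type quotient.
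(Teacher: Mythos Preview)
Your setup, unramifiedness, and $\cS(\fn)_\ell\hookrightarrow\cM_\ell$ are fine, but the core argument that $\cM_\ell$ is $\mu$-type has two linked gaps.

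First, self-adjointness of $\fE$ under the Weil pairing does \emph{not} yield a perfect pairing $J_\ell[\fE]\times J_\ell[\fE]\to\mu_{\ell^n}$. What it gives is that the orthogonal of $J_\ell[\fE]$ in $J[\ell^n]$ is $\fE\cdot J[\ell^n]$, hence a Cartier duality $J_\ell[\fE]\cong\bigl(J_\ell/\fE J_\ell\bigr)^\ast$. To get self-duality of $J_\ell[\fE]$ you would need $J_\ell[\fE]\cap\fE J_\ell=0$, equivalently $J_\ell=J_\ell[\fE]\oplus\fE J_\ell$, and there is no reason for this to hold. Second, the size count $|J_\ell[\fE]|=|\cC(\fn)_\ell|^2$ does not follow from the splitting at $\infty$: that splitting gives $|J_\ell[\fE]|=|D_\ell[\fE]|\cdot|\cC(\fn)_\ell|$, and $|D_\ell[\fE]|=|\cH_\ell/\fE\cH_\ell|=|\T_\ell^\vee/\fE\T_\ell^\vee|$, which equals $|\T_\ell/\fE|=|\cC(\fn)_\ell|$ precisely when $\T_\ell^\vee\cong\T_\ell$ at the relevant Eisenstein primes, i.e.\ under a Gorenstein hypothesis. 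The paper explicitly treats Gorenstein as an extra assumption (see Proposition~\ref{prop9.5} and the discussion in $\S$\ref{ssLast}), not something available here.

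The paper's route avoids both size and duality arguments. It first shows $\cM_\ell\cong D_\ell[\fE]$ over $F_\infty$ via the uniformization, so $\cM_\ell$ is $\mu$-type there. Then it invokes Mazur's admissibility argument (from the Eichler--Shimura relation $(\Frob_\fl-1)(\Frob_\fl-|\fl|)=0$) to conclude that over $U=\p^1_{\F_q}-\fp-\fq$ every Jordan--H\"older factor of $\cM_\ell$ is $\Z/\ell$ or $\mu_\ell$; the $\mu$-type structure at $\infty$ together with $\ell\nmid q-1$ then rules out any $\Z/\ell$ factor. This is exactly the step you flagged as an obstacle but did not carry out: knowing the semisimplification is purely $\mu_\ell$, one picks by Chebotarev a pair of ``good'' primes $\fl_i$ with $\ell\nmid(|\fl_i|-1)$ and suitable images in $(\F_\fp^\times/(\F_\fp^\times)^\ell\times\F_\fq^\times/(\F_\fq^\times)^\ell)/\F_q^\times$, so that $\Frob_{\fl_i}-1$ is invertible on $\cM_\ell$ and hence $\Frob_{\fl_i}-|\fl_i|$ annihilates it. This makes $\cM_\ell$ $\mu$-type over each $F_{\fl_i}$, and one descends to $F$ as in \cite[Prop.~10.7]{Pal}.
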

\begin{proof} 
Since $\cC(\fn)_\ell$ is fixed by $G_F$ and is invariant under the action of $\T(\fn)$, 
the quotient $\cM_\ell:=J_\ell[\fE(\fn)]/\cC(\fn)_\ell$ is naturally a $\T(\fn)\times G_F$-module . 
From the uniformization sequence (\ref{eqGRs}) and the isomorphism (\ref{eqj}), for any $n\geq 1$
we obtain the exact sequence of $\T(\fn)\times G_{\Fi}$-modules 
$$
0\to D[\ell^n]\to J[\ell^n]\to \cH_{00}(\fn, \Z/\ell^n\Z)\to 0, 
$$
where $D:=\Hom(\bG, \C_\infty^\times)$. Considering the parts annihilated by $\fE(\fn)$, we obtain the exact sequence 
$$
0\to D[\ell^n, \fE(\fn)]\to J[\ell^n, \fE(\fn)]\to \cE_{00}(\fn, \Z/\ell^n\Z).   
$$
From the proof of Theorem \ref{thm8.3} we know that if $\ell$ does not divide $q(q-1)s_{\fp, \fq}$ 
and $n$ is sufficiently large, then $\cC(\fn)_\ell\subseteq J[\ell^n, \fE(\fn)]$ and $\cC(\fn)_\ell$ maps 
isomorphically onto $\cE_{00}(\fn, \Z/\ell^n\Z)$. Hence for $n\gg 0$ the above sequence 
is exact also on the right:
$$
0\to D[\ell^n, \fE(\fn)]\to J[\ell^n, \fE(\fn)]\to \cC(\fn)_\ell\to 0.  
$$
Since the map of $\T(\fn)\times G_{\Fi}$-modules $J[\ell^n, \fE(\fn)]\to \cC(\fn)_\ell$ 
has a retraction, we get the splitting $J_\ell[\fE(\fn)]\cong D_\ell[\fE(\fn)]\oplus \cC(\fn)_\ell$ over $\Fi$. This 
shows that $\cM_\ell\cong D_\ell[\fE(\fn)]$ is $\mu$-type over $\Fi$, and $J_\ell[\fE(\fn)]$ 
is unramified at $\infty$. Since $\cS(\fn)_\ell\subseteq D_\ell[\fE(\fn)]$ (cf. Lemma \ref{lem8.4}), we 
see that $\cS(\fn)_\ell$ is a subgroup scheme of $\cM_\ell$.  

By the N\'eron-Ogg-Shafarevich criterion $J_\ell[\fE(\fn)]$ 
is unramified at all finite places except possibly at $\fp$ and $\fq$. 
Let $\fl\lhd A$ be any prime 
not equal to $\fp$ or $\fq$. Since 
$T_\fl$ acts on $J_\ell[\fE(\fn)]$ by multiplication by $|\fl|+1$, the  
Eichler-Shimura congruence 
relation implies that the action of $\Frob_\fl$ on $J_\ell[\fE(\fn)]$ satisfies the relation 
$(\Frob_\fl-1)(\Frob_\fl-|\fl|)=0$. Now one can use Mazur's argument \cite[Prop. 14.1]{Mazur} to 
show that $J_\ell[\fE(\fn)]$ is admissible over 
$U=\p^1_{\F_q}-\fp-\fq$; cf. the proof of Proposition 10.8 in \cite{Pal}. 
Since the quotient map $J_\ell[\fE(\fn)]\to \cM_\ell$ is compatible with the action of $G_F$ and $\T(\fn)$, $\cM_\ell$ 
is also admissible over $U=\p^1_{\F_q}-\fp-\fq$. On the other hand, $\cM_\ell$ is $\mu$-type 
over $\Fi$, so all Jordan-H\"older components of $\cM_\ell$ over $U$ must be isomorphic to $\mu_\ell$. 
We say that $\fl_1, \fl_2$ is a pair of good primes, if $\fl_i\neq \fp, \fq$, $|\fl_i|-1$ is not 
divisible by $\ell$, and the images of $(\fl_1,\fl_1)$ and $(\fl_2, \fl_2)$ in 
$(\F_\fp^\times/(\F_\fp^\times)^\ell\times \F_\fq^\times/(\F_\fq^\times)^\ell)/\F_q^\times$ 
generate this group; here $(\F_\fp^\times)^\ell$ is the subgroup of $\F_\fp^\times$ 
consisting of $\ell$th powers, and $\F_q^\times$ 
is embedded diagonally into $\F_\fp^\times\times \F_\fq^\times$. The Chebotarev density 
theorem shows that a pair of good primes exists. 
The operator $(\Frob_{\fl_i}-|\fl_i|)(\Frob_{\fl_i}-1)$ annihilates $\cM_\ell$. 
On the other hand, since the semi-simplification 
of $\cM_\ell$ is isomorphic to $(\mu_\ell)^n$ for some $n$ and $\ell$ does not divide $|\fl_i|-1$, 
the operator $\Frob_{\fl_i}-1$ must be invertible on $\cM_\ell$. Therefore, $\cM_\ell$ 
is annihilated by $\Frob_{\fl_i}-|\fl_i|$. This implies that $\cM_\ell$ is $\mu$-type over $F_{\fl_i}$. 
Finally, one can argue as in Proposition 10.7 in \cite{Pal} to conclude that $\cM_\ell$ is 
$\mu$-type over $F$. 
\end{proof}

\begin{rem} 
When $\fn=\fp$ is prime and $\ell$ does not divide $q-1$, then $J_\ell[\fE(\fp)]=\cC(\fp)_\ell\oplus \cS(\fp)_\ell$; 
see \cite{Pal}. As we will see in the next section, for a composite level, $\cM_\ell$ 
can be larger than $\cS(\fn)_\ell$ and the sequence 
in Lemma \ref{thm9.2} need not split over $F$.  
\end{rem}

%-------------------------------------------------------

\subsection{Special case}\label{ssLast} Let $\fp\lhd A$ be a prime of degree $3$. Then 
\begin{enumerate}
\item The rational torsion subgroup $\cT(\fp)$ of $J_0(\fp)$ is equal to the cuspidal divisor group $\cC(\fp)\cong \Z/(q^2+q+1)\Z$; 
\item The maximal $\mu$-type \'etale subgroup scheme of $J_0(\fp)$ is the Shimura subgroup $\cS(\fp)\cong (\Z/(q^2+q+1)\Z)^\ast$;
\item The kernel $J[\fE(\fp)]$ is everywhere unramified and has order $(q^2+q+1)^2$. 
\end{enumerate}
Here (1) and (2) follow from the main results in \cite{Pal}, and (3) follows from \cite{Pal} and \cite{PapikianMRL}. 

We proved that (see Theorem \ref{thmCT})
$$
\cT(xy)=\cC(xy)\cong \Z/(q+1)\Z\times \Z/(q^2+1)\Z,
$$
so the analogue of (1) is true for $\fn=xy$, although $\cC(xy)$ is not cyclic if $q$ is odd. 
Interestingly, even for small composite levels such as $\fn=xy$, the analogues of (2) and (3) 
are no longer true. For simplicity, we will omit the level $xy$ from notation. 
First of all, $J_\ell[\fE]$ can be ramified. To see this, let $q=2^s$ with 
$s$ odd, $x=T+1$, $y=T^2+T+1$. Consider the elliptic curve $E$ over $F$ given by the Weierstrass equation 
$$
E: Y^2+TXY+Y=X^3+X^2+T. 
$$
This curve embeds into $J$ according to \cite[Prop. 7.10]{PapikianJNT}. There 
is an exact sequence of $G_F$-modules 
$$
0\to \Z/5\Z\to E[5]\to \mu_5\to 0. 
$$
The component group of $E$ at $y$ is trivial, so $E[5]$ is ramified at $y$ and the above sequence 
does not split over $F$. On the other hand, from the Eichler-Shimura congruence relations we see 
that $E[5]\subseteq J_5[\fE]$. Thus, $J_5[\fE]$ is ramified at $y$. 

Next, the Shimura subgroup $\cS\cong (\Z/(q+1)\Z)^\ast$ has smaller order than the cuspidal divisor group, 
and $\cS$ is not the maximal $\mu$-type \'etale subgroup scheme of $J$ when $q$ 
is odd. Indeed, $\cC[2]\cong \Z/2\Z\times \Z/2\Z$ is $\mu$-type but is not cyclic. 

In the remainder of this section we will show that even though (2) and (3) fail for the $xy$-level, 
these properties do not fail ``too much''. 

\begin{prop}
Assume $q$ is odd. Then 
$\cS_2+\cC[2]\cong \cS_2\times \Z/2\Z$ 
is the maximal $2$-primary $\mu$-type subgroup of $J(\bar{F})$. 
\end{prop}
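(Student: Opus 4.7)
The argument splits into a structural claim and a maximality claim.

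For the structural part, both $\cS_2$ and $\cC[2]$ are $\mu$-type subgroup schemes of $J := J_0(xy)$: Proposition \ref{propSisEis} gives that $\cS$ (hence $\cS_2$) is $\mu$-type, and for $\cC[2]$ we use that the cusps are $F$-rational (so $\cC$ is constant) together with the canonical identification $\mu_2 = \Z/2\Z$ in odd characteristic, which makes any constant exponent-$2$ subgroup scheme also $\mu$-type; the sum of two $\mu$-type subgroup schemes is $\mu$-type. Theorem \ref{thmSSQ} identifies $\cS \cong \mu_{q+1}$ over $F$, so $\cS(F) = \mu_{q+1}(\F_q) = \mu_{\gcd(q+1,q-1)} \cong \Z/2\Z$ for $q$ odd. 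Since $\cS(F) \subseteq J(F) = \cC$ by Theorem \ref{thmCT}, the intersection $\cS \cap \cC$ coincides with $\cS(F)$, hence $\cS_2 \cap \cC[2] = \cS(F)$ is of order $2$, forcing $\cS_2 + \cC[2] \cong \cS_2 \times \Z/2\Z$ of order $2\cdot(q+1)_2$, where $(q+1)_2$ denotes the $2$-part of $q+1$.

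For maximality, let $H \subseteq J(\bar F)$ be an arbitrary 2-primary $\mu$-type subgroup. The Eichler-Shimura argument in the proof of Proposition \ref{propSisEis} shows $H \subseteq J_2[\fE]$. The principal polarization on $J$ yields a perfect $G_F$-equivariant Weil pairing $J[2^N] \times J[2^N] \to \mu_{2^N}$ for each $N$; the induced isomorphism $J[2^N] \cong \Hom(J[2^N], \mu_{2^N})$ identifies $\mu$-type subgroups $W \subseteq J[2^N]$ with those whose Weil-orthogonal contains the augmentation submodule $\langle(\sigma-1)J[2^N] : \sigma \in G_F\rangle$, so the maximal 2-primary $\mu$-type subgroup of $J$ has order $|J[2^N]_{G_F}|$ for $N$ large. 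The same self-duality gives $J[2^N]^{G_F} \cong \Hom(J[2^N]_{G_F}, \mu_{2^N}(F))$, and by Theorem \ref{thmCT} the left-hand side has order $|\cC_2| = 2(q+1)_2$.

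The key remaining task is to establish the matching upper bound $|J[2^N]_{G_F}| \leq 2(q+1)_2$. Once this is in hand, the $\mu$-type subgroup $\cS_2 + \cC[2]$, of this same order, must exhaust the maximal 2-primary $\mu$-type subgroup. I plan to obtain the bound from the rigid-analytic uniformization $0 \to \bG \to D \to J \to 0$ at $\infty$: Proposition \ref{propT(xy)} gives $\bG \cong \Hom(\T, \Z)$ and Corollary \ref{corT/E} gives $\T/\fE \cong \Z/(q+1)(q^2+1)\Z$, which together force $D_2[\fE] \cong \mu_{2(q+1)_2}$; the explicit description of $\wp_\infty|_{\cC}$ from the proof of Theorem \ref{thmCT} then constrains the extension of $D_2[\fE]$ by the image of $\cC_2$ inside $J_2[\fE]$ over $F_\infty$, which in turn bounds the $G_F$-coinvariants globally.

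The principal obstacle is precisely this coinvariants bound. Unlike $\cS_2$, the $\mu$-type subgroup $\cC[2]$ does \emph{not} extend to the connected N\'eron component $\cJ^0_{\cO_\infty}$ at $\infty$ (its image in $\Phi_\infty$ under $\wp_\infty$ is non-trivial, as computed in the proof of Theorem \ref{thmCT}). Consequently one cannot argue that all $\mu$-type subgroups sit in the toric part at $\infty$; the Weil-pairing framework must be combined with the explicit Hecke-algebra structure of $\T(xy)$ to force the sharp bound.
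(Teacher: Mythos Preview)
Your structural part is fine: the computation of $\cS_2\cap\cC[2]=\cS(F)\cong\Z/2\Z$ and hence $|\cS_2+\cC[2]|=2(q+1)_2$ is correct. The claim that the maximal $\mu$-type subgroup of $J[2^N]$ has order $|J[2^N]_{G_F}|$ is also correct via the Weil pairing (since $J[2^N](-1)\cong J[2^N]^*$ and $(J[2^N]^*)^{G_F}\cong\Hom(J[2^N]_{G_F},\Z/2^N\Z)$).

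However, the maximality argument then collapses. Your asserted duality $J[2^N]^{G_F}\cong\Hom(J[2^N]_{G_F},\mu_{2^N}(F))$ is simply false: for $q$ odd one has $\mu_{2^N}(F)=\mu_2$, so the right-hand side has order $2^r$ with $r$ the minimal number of generators of $J[2^N]_{G_F}$, which bears no relation to $|\cC_2|=2(q+1)_2$ in general. There is no a priori reason for $|J[2^N]_{G_F}|=|J[2^N]^{G_F}|$ over a global field, and you produce no substitute. Your fallback plan via the uniformization at $\infty$ is not only vague but relies on the claim $D_2[\fE]\cong\mu_{2(q+1)_2}$, which already presupposes that $\T_\fM$ is Gorenstein for $\fM$ of residue characteristic $2$ --- precisely the kind of hypothesis this proposition is meant to avoid. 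Finally, even granting that local description, there is no mechanism for reading off the global $G_F$-coinvariants from data at a single place.

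The paper's proof is entirely different and does not touch Weil pairings or coinvariants. The decisive new ingredient is that the specialization $\wp_y:\cS\to\Phi_y$ is an \emph{isomorphism}, proved by comparing the dual graphs of $X_0(xy)_{\overline{\F}_y}$ and $X_1(xy)_{\overline{\F}_y}$. One then filters the maximal $2$-primary $\mu$-type subgroup $\cM_2$ by its intersection $\cM_2^0$ with the uniformizing torus at $\infty$; a short divisibility argument using the isomorphism $\wp_y|_\cS$ forces $\cM_2^0=\cS_2$ exactly, so $\cM_2/\cS_2$ embeds in $(\Phi_\infty)_2$ and is cyclic. The constraint $\cM_2[2]=\cT[2]\cong(\Z/2\Z)^2$ then splits $\cM_2\cong\cS_2\oplus\Z/2^n\Z$ as $F$-group schemes, and a case analysis $q\equiv 1$ or $3\pmod 4$ (using either that $\Z/2^n\Z$ is $\mu$-type only for $n\le 1$, or that $\cC$ contains no point of order $4$) pins down $n=1$. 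You should adopt this direct approach; the global coinvariants route does not seem to close.
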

\begin{proof}
First, we show that the canonical specialization map $\wp_y: \cS\to \Phi_y$ is an isomorphism. 
Since $\cS$ and $\Phi_y$ have the same order, it is enough to show that the induced morphism 
$\cJ_0(xy)_{\overline{\F}_y}^0\to \cJ_1(xy)_{\overline{\F}_y}^0$ 
on the connected components of the identity of the special fibres 
of the N\'eron models of $J_0(xy)$ and $J_1(xy)$ at $y$ is injective. 
Here we argue as in Proposition 11.9 in \cite{Mazur}. Let $\mathrm{Gr}_0$ 
be the dual graph of $X_0(xy)_{\overline{\F}_y}$; this graph has 
two vertices corresponding to the two irreducible components of $X_0(xy)_{\overline{\F}_y}$ 
joined by $q+1$ edges corresponding to the supersingular points. Let $\mathrm{Gr}_1$ 
be the dual graph of the special fibre of the model of $X_1(xy)$ over $\cO_y$ 
constructed in \cite{Shastry}. The underlying reduced subscheme $X_1(xy)_{\overline{\F}_y}^\mathrm{red}$ 
has two irreducible components intersecting at the supersingular points. 
Since the Jacobian $J_0(xy)$ has toric reduction 
at $y$, there is a canonical isomorphism (cf. \cite[p. 246]{NM})
$$
\cJ_0(xy)_{\overline{\F}_y}^0\cong H^1(\mathrm{Gr}_0, \Z)\otimes \gm_{m,\overline{\F}_y}. 
$$
Similarly, the toric part of $\cJ_1(xy)_{\overline{\F}_y}^0$ is canonically isomorphic to 
$H^1(\mathrm{Gr}_1, \Z)\otimes \gm_{m,\overline{\F}_y}$. There is an obvious map 
$\mathrm{Gr}_1\to \mathrm{Gr}_0$ and our claim reduces to showing that this map 
induces a surjection $H_1(\mathrm{Gr}_1, \Z)\to H_1(\mathrm{Gr}_0, \Z)$. This is clear 
from the description of the graphs $\mathrm{Gr}_0$ and $\mathrm{Gr}_1$. 

Let $\cM_2$ be the maximal $2$-primary $\mu$-type subgroup of $J(\bar{F})$. It is clear that 
$\cS_2+\cT[2]\subseteq\cM_2$ and $\cM_2[2]=\cT[2]$. 
Similar to the notation in the proof of Theorem \ref{thmCT}, let 
$\cM_2^0$ denote the intersection of $\cM_2$ with $\Hom(\bG, \C_\infty^\times)$. 
Note that $\cS_2\subseteq \cM_2^0$. Moreover, we must have $\cS[2]= \cM^0_2[2]$ 
since $\cM_2^0[2]\subseteq \cT^0[2]=\cC^0[2]=\cS[2]$. Thus, if $\cS_2\neq \cM^0_2$, then 
there exists $P\in \cM^0_2$ such that $2P$ generates $\cS_2\cong (\Z/(q+1)\Z)_2^\ast$. 
Since $\wp_y:\cS\to \Phi_y$ is an isomorphism, $2\wp_y(P)$ 
generates $\Z/(q+1)\Z$. On the other hand, $2$ divides $q+1$, so we get a contradiction. We conclude that 
there is an exact sequence 
$$
0\to \cS_2\to \cM_2\to \Z/2^n\Z\to 0  
$$
for some $n\geq 1$, where $\Z/2^n\Z$ is the image of $\cM_2$ in $\Phi_\infty\cong \Z/(q^2+1)(q+1)\Z$ (see Corollary \ref{corPhiInf}). 
Since $\cM_2[2]=\cT[2]\cong \Z/2\Z\times \Z/2\Z$, the above sequence splits as a sequence of abelian groups. 
Moreover, since $\cS_2$ is $G_F$-invariant and $\cM_2^\ast$ is constant, the sequence splits as a sequence of 
$F$-groups schemes: $\cM_2\cong \cS_2\oplus \Z/2^n\Z$.
If $q\equiv 3\ (\mod\ 4)$, then $\Z/2^n\Z$ is $\mu$-type if and only if $n\leq 1$, 
which implies $\cM_2= \cS_2+\cT[2]$. If $q\equiv 1\ (\mod\ 4)$ and $n\geq 2$, then $\cT=\cC$ 
contains a point of order $4$, which is a contradiction. 
\end{proof}

\begin{figure}
\begin{tikzpicture}[semithick, node distance=1.5cm, inner sep=.5mm, vertex/.style={circle, fill=black}]

\node[vertex] (1){};
  \node[vertex] (2) [above of=1] {};
  \node[vertex] (3) [above of=2] {};
  \node (4) [right of=3] {};
  \node[vertex] (5) [right of=4] {};
  \node[vertex] (6) [below of=5] {};
  \node[vertex] (7) [below of=6] {};
  \node (8) [right of=2] {$\vdots$};
  
  \draw (1) -- (2) -- (3) -- node [below] {$\varphi_q$} (5)-- (6)--(7)-- node [above] {$\varphi_1$} (1);
  \draw (2) edge[bend right] node [below] {$\varphi_{2}$} (6) (2) edge[bend right=70]  (6); 
   \draw (2) edge[bend left] (6) (2) edge[bend left=70] node [below] {$\varphi_{q-1}$} (6);
  
\end{tikzpicture}
\caption{$(\G_0(xy)\bs \sT)^0$}\label{Fig5}
\end{figure}

The point of the previous proposition is that even though $\cS_2$ is not the maximal $2$-primary 
$\mu$-type subgroup of $J$, it is not far from it.  
Next, we will show that under a reasonable assumption $\cS_\ell$ 
is the maximal $\ell$-primary $\mu$-type subgroup of $J$ for any odd $\ell\neq p$, and 
the order of $J_\ell[\fE]$ is $(\# \cC_\ell)^2$ for any $\ell\neq p$, which is of course very similar to (2) and (3). 

We start by showing that $\Phi_\infty$ is annihilated by $\fE$. This almost follows 
from Theorem 5.5 in \cite{PapikianJNT}, since according to that theorem 
the canonical specialization map $\wp_\infty: \cC\to \Phi_\infty$ is an isomorphism if $q$ is even and has cokernel $\Z/2\Z$ 
if $q$ is odd. On the other hand, $\wp_\infty$ is $\T$-equivariant and $\cC$ is annihilated by $\fE$. Below 
we give an alternative direct argument which works for any characteristic and does not use 
the cuspidal divisor group $\cC$. 

Choose the cycles $\varphi_1,...,\varphi_q$ as a basis of $H_1(\Gamma_0(xy)\ \bs \sT ,\Z) \cong \cH_0(xy,\Z)$; see Figure \ref{Fig5}. 
We assume that all these cycles are oriented counterclockwise.
Let $\psi_1,...,\psi_q$ be the dual basis of $\text{Hom}(\cH_0(xy,\Z),\Z)$. 
The map $\iota$ in Theorem \ref{thmCGinf} sends $\varphi \in \cH_0(xy,\Z)$ to 
$\sum\limits_{i=1}^q (\varphi,\varphi_i) \psi_i$, where for $1\leq j \leq i \leq q$
$$
(\varphi_i,\varphi_j) = \begin{cases} q+2, & \text{if $i =j= 1$ or $q$,}\\ 
2, & \text{if $1<i =j< q$,}\\
-1, & \text{if $j=i-1$,} \\ 
0, & \text{otherwise.}
\end{cases}
$$
This follows from \cite[Thm. 2.8]{Analytical} and a calculation of the stabilizers of edges 
of $(\G_0(xy)\bs \sT)^0$ (see the proof of Proposition \ref{propT(xy)}). 
Hence the component group $\Phi_{\infty}$ can be explicitly described as follows:
It is the quotient of the free abelian group $\bigoplus_{i=1}^q \Z \psi_i$ by the relations
\begin{eqnarray}
(q+2)\psi_q - \psi_{q-1} &=& 0, \nonumber \\
-\psi_{i+1} + 2 \psi_{i} - \psi_{i-1} & = & 0, \quad \text{ for } 1\leq i \leq q-1, \nonumber \\
-\psi_{2} + (q+2)\psi_1 & = & 0. \nonumber 
\end{eqnarray}
From the last $q-1$ relations we get
$$\psi_{i} = \big((i-1)q + i\big) \psi_1, \quad 1\leq i \leq q.$$
On the other hand, by the first relation, $\psi_{q-1} = (q+2) \psi_q = q^2(q+2)\psi_1$.
Therefore $ q^2(q+2)\psi_1 = (q^2-q-1)\psi_1$ and 
$$(q^2+1)(q+1)\psi_1 = 0.$$
We conclude that

\begin{cor}\label{corPhiInf}
$\Phi_{\infty} \cong \Z/(q^2+1)(q+1)\Z$ is generated by the image of $\psi_1$.
\end{cor}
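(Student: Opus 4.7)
The plan is to apply the exact sequence from Theorem \ref{thmCGinf} specialized to $\fn=xy$, namely
$$
0\to \cH_0(xy,\Z)\xrightarrow{\iota}\Hom(\cH_0(xy,\Z), \Z)\to \Phi_\infty\to 0,
$$
and to present $\Phi_\infty$ as the cokernel of the matrix representing $\iota$ in an explicit dual pair of bases. First I would fix the basis $\varphi_1,\dots,\varphi_q$ of $H_1(\Gamma_0(xy)\bs\sT,\Z)\cong \cH_0(xy,\Z)$ coming from the chain of nested loops through the central bigon in $(\Gamma_0(xy)\bs\sT)^0$ (Figure~\ref{Fig5}), oriented counterclockwise, and let $\psi_1,\dots,\psi_q$ be the dual basis of $\Hom(\cH_0(xy,\Z),\Z)$.

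Next I would make the map $\iota$ explicit: by \cite[Thm.~2.8]{Analytical} it is given (up to the monodromy pairing identification) by $\varphi\mapsto \sum_i(\varphi,\varphi_i)\psi_i$, where $(\cdot,\cdot)$ is the intersection pairing on $(\Gamma_0(xy)\bs\sT)^0$ weighted by the reciprocals of edge stabilizers. Using the stabilizer calculations already used in the proof of Proposition~\ref{propT(xy)} (in particular $w(a_1)=w(\bar a_2)=w(\bar a_3)=w(a_4)=q-1$ and all other edge weights being $1$), a direct computation yields the tridiagonal-plus-boundary matrix with $(\varphi_i,\varphi_i)=2$ for $1<i<q$, $(\varphi_1,\varphi_1)=(\varphi_q,\varphi_q)=q+2$, and $(\varphi_i,\varphi_{i\pm 1})=-1$ (all other entries $0$). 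Thus $\Phi_\infty$ is the abelian group generated by $\psi_1,\dots,\psi_q$ subject to the relations
\begin{align*}
(q+2)\psi_1-\psi_2 &= 0,\\
-\psi_{i-1}+2\psi_i-\psi_{i+1} &= 0 \quad (1<i<q),\\
-\psi_{q-1}+(q+2)\psi_q &= 0.
\end{align*}

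The final step is elementary linear recursion. The middle $q-1$ relations force $\psi_i=((i-1)q+i)\psi_1$, so every $\psi_i$ is a multiple of $\psi_1$ and $\Phi_\infty$ is cyclic, generated by the image of $\psi_1$. Substituting $\psi_q=(q^2-1)\psi_1+q\psi_1=(q^2+q-1)\psi_1$ and $\psi_{q-1}=(q^2-q-1)\psi_1$ into either of the boundary relations $(q+2)\psi_q-\psi_{q-1}=0$ gives $(q^2+1)(q+1)\psi_1=0$, and since the cokernel of the integer matrix described above has order equal to its determinant, a quick determinant check confirms that this is exactly the order of $\psi_1$. I do not anticipate a real obstacle: the only thing to be careful about is getting the boundary coefficients $q+2$ right, and this follows cleanly from the weighted intersection pairing once the stabilizer data for edges on the ``outside'' of the bigon is recorded.
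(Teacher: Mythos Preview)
Your approach is exactly the one the paper uses (the argument appears in the paragraphs immediately preceding the corollary): compute the Gram matrix of the monodromy pairing in the basis $\varphi_1,\dots,\varphi_q$, read off the tridiagonal relations, solve the recursion to express every $\psi_i$ as a multiple of $\psi_1$, and then use the remaining boundary relation to get $(q^2+1)(q+1)\psi_1=0$. One small slip: $(i-1)q+i$ at $i=q$ gives $\psi_q=q^2\psi_1$, not $(q^2+q-1)\psi_1$; with this correction the substitution into $(q+2)\psi_q-\psi_{q-1}=0$ indeed yields $(q^2+1)(q+1)\psi_1=0$.
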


Multiplication by $N=(q^2+1)(q+1)$ on the exact sequence in Theorem \ref{thmCGinf} and the snake lemma 
give an embedding $\delta_N: \Phi_{\infty} \hookrightarrow \cH_{00}(xy,\Z/N\Z)$.

\begin{lem}\label{lem5.5}
Let $\bar{\psi}_1$ be the image of $\psi_1$ in $\Phi_{\infty}$. Then
$\delta_N(\bar{\psi}_1) = -(q+1)E_x + (q^2+1)E_y + q E_{(x,y)}$, the generator of $\cE_{00}(xy,\Z/N\Z)$.
In particular, the component group $\Phi_{\infty}$ is annihilated by the Eisenstein ideal.
\end{lem}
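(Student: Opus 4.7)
The plan is to compute $\delta_N(\bar\psi_1)$ directly by unwinding the snake lemma construction: find $z \in \cH_0(xy,\Z)$ with $\iota(z) = N\psi_1$; then $\delta_N(\bar\psi_1) = z \bmod N$. Writing $z = \sum_{i=1}^q a_i \varphi_i$, the condition $(z, \varphi_j) = N\delta_{1j}$ for $j=1,\dots,q$ is a linear system whose coefficient matrix is the intersection pairing of the $\varphi_i$ recalled just before the lemma. The middle equations $-a_{i-1} + 2a_i - a_{i+1} = 0$ force the sequence $(a_i)$ to be an arithmetic progression $a_i = \alpha + (i-1)\beta$; the two boundary equations then become $(q+1)\alpha - \beta = N$ and $(q+1)\alpha + q^2\beta = 0$, with unique solution $\alpha = q^2$, $\beta = -(q+1)$. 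Thus $a_i = q^2 - (i-1)(q+1)$, and $\delta_N(\bar\psi_1) = \sum_{i=1}^q a_i \varphi_i \bmod N$.

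To identify this with $E := -(q+1)E_x + (q^2+1)E_y + qE_{(x,y)}$, observe that both elements lie in a cyclic group of order $N$: the image $\delta_N(\bar\psi_1)$ has order $N$ since $\delta_N$ is injective and $\bar\psi_1$ has order $N$ in $\Phi_\infty$, while Corollary \ref{previousN} together with Proposition \ref{lem3.11} identifies $\cE_{00}(xy,\Z/N\Z)$ as cyclic of order $N$ generated by $E$. It therefore suffices to verify the equality of the two cochains on a spanning subset, which the proof of Proposition \ref{propT(xy)} shows may be taken to be the edges $\{b_u\}_{u\in \F_q}$. On the left, $\sum_i a_i \varphi_i(b_u)$ is read off from the explicit description of the cycles $\varphi_1,\ldots,\varphi_q$ in Figure \ref{Fig5} together with the formula $a_i = q^2 - (i-1)(q+1)$; on the right, the values $E(b_u) \bmod N$ are determined by the Fourier expansions of $E_x$, $E_y$, $E_{(x,y)}$ recalled in Section \ref{ssES} together with the identity $(f|T_{x-u})^*(1) = f(b_u)$ from the proof of Proposition \ref{propT(xy)}. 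A direct tabulation then yields the required equality $z \equiv E \pmod N$.

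The second assertion follows formally: the sequence in Theorem \ref{thmCGinf} is $\T(xy)^0$-equivariant, hence $\T(xy)$-equivariant since $\T(xy) = \T(xy)^0$ by Proposition \ref{propT(xy)}, so $\delta_N$ is a $\T(xy)$-module homomorphism. For each prime $\fp \nmid xy$, the computation $\delta_N((T_\fp - |\fp| - 1)\bar\psi_1) = (T_\fp - |\fp| - 1)\,E = 0$ (since $E$ is Eisenstein) combined with injectivity of $\delta_N$ gives $(T_\fp - |\fp| - 1)\bar\psi_1 = 0$ in $\Phi_\infty$; as $\bar\psi_1$ generates $\Phi_\infty$ cyclically, $\fE(xy)$ annihilates $\Phi_\infty$. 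The main obstacle is the identification step: one must carefully set up the correspondence between the cycle basis in Figure \ref{Fig5} and explicit harmonic cochains on $\G_0(xy)\bs \sT$, fixing conventions for orientations and for the weights entering the intersection pairing, before the edge-by-edge comparison becomes routine.
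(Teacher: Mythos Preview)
Your approach is essentially the same as the paper's: compute a lift $z=\sum_i a_i\varphi_i$ with $\iota(z)=N\psi_1$ via the snake lemma, then compare $z\bmod N$ with $E:=-(q+1)E_x+(q^2+1)E_y+qE_{(x,y)}$ on a determining set of edges, and deduce the annihilation of $\Phi_\infty$ from the $\T$-equivariance of $\delta_N$.

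A small point worth recording: your coefficients $a_i=q^2-(i-1)(q+1)$ are the correct solution to $(z,\varphi_j)=N\delta_{1j}$. The paper writes instead $c_i=(i-1)q+i$, and one checks $\sum_i c_i\iota(\varphi_i)=N\psi_q$ rather than $N\psi_1$; the two are related by the symmetry $a_i=c_{q+1-i}$ of the Gram matrix, so the discrepancy is only in the labeling of the cycles in Figure~\ref{Fig5} and does not affect the conclusion. The paper then carries out explicitly what you call the ``direct tabulation'': it lists the values $\varphi_i(a_1)$, $\varphi_i(b_u)$ and computes $z(a_1)=1$, $z(b_u)=-(q+1)$ for $u\in\F_q^\times$, matches these against the same values of $E$ obtained from Fourier expansions, and invokes the fact that a cochain in $\cH_{00}(xy,\Z/N\Z)$ is determined by its values on $a_1$ and $\{b_u:u\in\F_q^\times\}$. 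Your choice of the set $\{b_u:u\in\F_q\}$ works equally well by the isomorphism in the proof of Proposition~\ref{propT(xy)}, so the only thing missing from your write-up is that concrete tabulation, which you correctly flag as the remaining work.
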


\begin{proof}
Note that 
$$N \psi_1 = \sum_{i=1}^q \big((i-1)q+i\big) \iota(\varphi_i) \in \text{Hom}(\cH_0(xy,\Z),\Z).$$ 
Hence $\delta_N(\bar{\psi}_1) = \sum_{i=1}^q \big((i-1)q+i\big) \varphi_i \bmod N \in \cH_{00}(xy,\Z/N\Z)$.
Viewing the cycles $\varphi_1,...,\varphi_q$ as harmonic cochains in $\cH_0(xy,\Z)$, it is observed that
\begin{itemize}
\item[(i)]
$$\varphi_1(a_1) = 1,\ \varphi_1(a_4) = -1,\ \varphi_1(a_6) = 1-q,\ \varphi_1(b_1) = 1,$$
$$\varphi_1(a_2) = \varphi_1(a_3) = \varphi_1(a_5) = \varphi_1(b_u) = 0,\ 2\leq u \leq q-1;$$
\item[(ii)]
$$\varphi_q(a_1) = \varphi_q(a_4) = \varphi_q(a_6) = \varphi_q(b_u) = 0, \ 1\leq u \leq q-2,$$
$$\varphi_q(a_2) = 1,\ \varphi_q(a_3) = -1,\ \varphi_q(a_5) = q-1,\ \varphi_1(b_{q-1}) = -1.$$
\item[(iii)] for $2\leq j \leq q-1$,
$$\varphi_j(a_i) = \varphi_j(b_u) = 0,\ 1\leq i \leq 6,\ u \neq j-1, j;\quad  \varphi_j(b_j) = -\varphi_j(b_{j-1}) = 1.$$
\end{itemize}
Here $a_i,\ b_u$ are the oriented edges of the graph $(\G_0(xy) \bs \sT)^0$ in Figure \ref{Fig4}.
Therefore $\delta_N(\bar{\psi}_1)(a_1) = 1$ and $\delta_N(\bar{\psi}_1)(b_u) = -(q+1)$, $1\leq u \leq q-1$.

On the other hand, let $f = -(q+1)E_x+(q^2+1)E_y+qE_{(x,y)} \in \cE_{00}(xy, \Z/N\Z)$.
From the Fourier expansion of $E_x$, $E_y$, and $E_{(x,y)}$, we also get
$$f(a_1) = 1 \quad \text{and}\quad f(b_u) = -(q+1) \ \text{ for } 1\leq u \leq q-1.$$
Since every harmonic cochain in $\cH_{00}(xy,\Z/N\Z)$ is determined uniquely by its values at $a_1$ and $b_u$ for $1\leq u \leq q-1$, we get $\delta_N(\bar{\psi}_1) = f$ and the proof is complete.
\end{proof}

\begin{defn} Let $\fn\lhd A$ be a non-zero ideal and $\ell$ a prime number. Let 
$\T(\fn)_\ell:=\T(\fn)\otimes_\Z \Z_\ell$. Given a maximal ideal $\fM\lhd \T(\fn)_\ell$, let 
$\T(\fn)_\fM$ be the completion of $\T(\fn)_\ell$ at $\fM$. 
We say that $\T(\fn)_\fM$ is a \textit{Gorenstein} $\Z_\ell$-algebra if $\T(\fn)_\fM^\vee:=\Hom_{\Z_\ell}(\T(\fn)_\fM, \Z_\ell)$ 
is a free $\T(\fn)_\fM$ module of rank $1$. 
In \cite{Pal}, following Mazur \cite{Mazur}, P\'al proved that $\T(\fp)_\fM$ is 
Gorenstein for the maximal ideals containing $\fE(\fp)$. 
\end{defn}

Let $\fE_\ell$ be the ideal generated by $\fE(xy)$ in $\T_\ell$. We know that 
$\T_\ell/\fE_\ell\cong \Z_\ell/N\Z_\ell$; see Corollary \ref{corT/E}. Thus, $N$ annihilates $J_\ell[\fE]$. 

\begin{prop}\label{prop9.5} Assume $\ell\mid N$. The finite group-scheme $J_\ell[\fE]$ is unramified at $\infty$.  
If $\T_\fM$ is Gorenstein for $\fM=(\fE_\ell, \ell)$, then there is an exact sequence of $G_{\Fi}$-modules
$$
0\to (\Z_\ell/N\Z_\ell)^\ast\to J_\ell[\fE]\to \Z_\ell/N\Z_\ell\to 0. 
$$
\end{prop}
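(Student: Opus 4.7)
The strategy is to analyze $J_\ell[\fE]$ via the rigid-analytic uniformization of $J := J_0(xy)$ over $F_\infty$. Fix $n$ with $\ell^n \geq N_\ell$, the $\ell$-primary part of $N$. The uniformization (\ref{eqGRs}) yields a short exact sequence of $G_{F_\infty}$-modules
\begin{equation*}
0 \to D[\ell^n] \to J[\ell^n] \to \cH_{00}(xy, \Z/\ell^n\Z) \to 0 \qquad (\star)
\end{equation*}
where $D = \Hom(\bG, \gm_m)$ is the split uniformizing torus. Taking kernels of the ideal $\fE$ gives a left-exact sequence whose rightmost term is $\cE_{00}(xy, \Z/\ell^n\Z)$, cyclic of order $N_\ell$ by Proposition \ref{lem3.11}.

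To establish both unramifiedness at $\infty$ and short-exactness of the $\fE$-kernel sequence, I would compare $(\star)$ with its $F_\infty^{\un}$-rational analogue arising from the N\'eron model $\cJ/\cO_\infty$. Since $\cJ^0_{\F_\infty}$ is a split torus and $\ell \neq p$, Hensel's lemma ensures that $\cJ^0(\cO_\infty^{\un})$ is $\ell$-divisible, so $\wp_\infty$ yields a short exact sequence $0 \to D[\ell^n] \to J[\ell^n](F_\infty^{\un}) \to \Phi_\infty[\ell^n] \to 0$. Since $\fE$ annihilates $\Phi_\infty$ by Lemma \ref{lem5.5}, taking $\fE$-kernels preserves the surjection onto $\Phi_\infty[\ell^n]$. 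The compatibility of the rigid uniformization with the N\'eron snake-lemma construction of Theorem \ref{thmCGinf} identifies the natural map $J[\ell^n](F_\infty^{\un}) \to \cH_{00}(xy, \Z/\ell^n\Z)$ with $\delta_{\ell^n} \circ \wp_\infty$, and by Lemma \ref{lem5.5} the image $\delta_{\ell^n}(\Phi_\infty[\ell^n])$ is exactly $\cE_{00}(xy, \Z/\ell^n\Z)$. Consequently $|J[\ell^n,\fE](F_\infty^{\un})| = |D[\ell^n,\fE]| \cdot N_\ell$, and comparing with the upper bound of the same magnitude on $|J[\ell^n,\fE](\bar F_\infty)|$ coming from $(\star)$ forces equality, so $J_\ell[\fE]$ is unramified at $\infty$ and
\begin{equation*}
0 \to D[\ell^n,\fE] \to J[\ell^n,\fE] \to \cE_{00}(xy, \Z/\ell^n\Z) \to 0
\end{equation*}
is short exact.

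For the identification under the Gorenstein hypothesis, I would analyze $D_\ell[\fE]$ using the Hecke structure on $\bG$. The perfect pairing of Proposition \ref{propT(xy)} yields $\cH_{00}(xy, \Z_\ell) \cong \T_\ell^\vee$ as $\T_\ell$-modules; combining this with the identification $\bG \cong \cH_0(xy, \Z)$ from (\ref{eqj}) and localizing at the unique maximal ideal $\fM = (\fE, \ell) \lhd \T_\ell$ (unique since $\T/\fE \cong \Z/N\Z$ is cyclic), the Gorenstein hypothesis $\T_\fM^\vee \cong \T_\fM$ promotes this to $\bG \otimes \Z_\ell \cong \T_\fM$ after $\fM$-completion. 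Hence $\bG/\fE\bG \otimes \Z_\ell \cong \T_\ell/\fE_\ell \cong \Z_\ell/N_\ell\Z_\ell$, and
\begin{equation*}
D_\ell[\fE] = \Hom(\bG/\fE\bG, \mu_{\ell^\infty}) \cong (\Z_\ell/N_\ell\Z_\ell)^\ast.
\end{equation*}
Passing to the limit in $n$ in the short exact sequence above produces the desired extension of $G_{F_\infty}$-modules.

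The main obstacle is verifying the commutativity of the specialization diagram identifying the rigid-analytic map $J[\ell^n](F_\infty^{\un}) \to \cH_{00}(xy, \Z/\ell^n\Z)$ with $\delta_{\ell^n} \circ \wp_\infty$. This is the compatibility between Mumford's analytic uniformization and the N\'eron component-group snake-lemma construction of Theorem \ref{thmCGinf}, and it is what powers the order-counting argument that yields both unramifiedness and surjectivity on the right in the first step. The Gorenstein identification itself is essentially formal once the integral Hecke module structure on $\bG_\ell$ is pinned down.
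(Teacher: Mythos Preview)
Your approach is essentially the paper's: pass through the rigid uniformization, take $\fE$-kernels, identify $(\Phi_\infty)_\ell$ with $\cE_{00}(xy,\Z/\ell^n\Z)$ via Lemma~\ref{lem5.5}, and then use the Gorenstein hypothesis to compute $D_\ell[\fE]$. The Gorenstein step is carried out exactly as in the paper.

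There is, however, a gap in your order-counting argument for unramifiedness. The assertion ``since $\fE$ annihilates $\Phi_\infty$, taking $\fE$-kernels preserves the surjection onto $\Phi_\infty[\ell^n]$'' is not valid as stated: for a short exact sequence $0\to A\to B\to C\to 0$ of $\T$-modules with $\fE C=0$, the map $B[\fE]\to C$ need not be surjective (take $0\to 2\Z/4\Z\to\Z/4\Z\to\Z/2\Z\to 0$ with $\fE$ acting as multiplication by $2$). Your claimed lower bound $|J[\ell^n,\fE](F_\infty^{\un})|=|D[\ell^n,\fE]|\cdot N_\ell$ is therefore unjustified, and the comparison with the upper bound collapses.

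The paper establishes unramifiedness more directly. The compatibility you flag as the ``main obstacle'' in fact identifies $J[\ell^n]^{I_\infty}$ with the preimage of $\delta_{\ell^n}(\Phi_\infty[\ell^n])\subset\cH_{00}(xy,\Z/\ell^n\Z)$ under the uniformization map $J[\ell^n]\to\cH_{00}(xy,\Z/\ell^n\Z)$; by Lemma~\ref{lem5.5} that image is exactly $\cE_{00}(xy,\Z/\ell^n\Z)$. Since every $P\in J_\ell[\fE]$ has image in $\cE_{00}$, it already lies in $J[\ell^n]^{I_\infty}$. This is what the paper's terse line ``$J[\ell^n]^{I_\infty}\cong D[\ell^n]\times(\Phi_\infty)[\ell^n]$'' encodes. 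Surjectivity of $J_\ell[\fE]\to\cE_{00}$ is only required for the exact-sequence claim, which is conditional on Gorenstein; under that hypothesis $D[\ell^n]_\fM\cong\T_\fM/\ell^n$ (via Proposition~\ref{propT(xy)}) is self-injective, so the localized $I_\infty$-invariant sequence splits $\T$-equivariantly and surjectivity follows.
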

\begin{proof} The argument in the proof of Lemma \ref{thm9.2} 
and the isomorphism of Hecke modules $(\Phi_\infty)_\ell\cong \cE_{00}(xy, \Z/\ell^n\Z)$  
that we just proved ($n\gg 0$) give the exact sequence 
$$
0\to D_\ell[\fE]\to J_\ell[\fE]\to (\Phi_\infty)_\ell\to 0.
$$
Since $J[\ell^n]^{I_\infty}\cong D[\ell^n]\times (\Phi_\infty)[\ell^n]$, we see that $J_\ell[\fE]$ 
is unramified at $\infty$. Next, using Proposition \ref{propT(xy)}, (\ref{eqGRs}) and (\ref{eqj}), we have 
\begin{align*}
D_\ell[\fE] &\cong \Hom(\cH_0(xy, \Z_\ell)/\fE_\ell\cH_0(xy, \Z_\ell), \C_\infty^\times)\\ 
&\cong \Hom(\T_\ell^\vee/\fE_\ell \T_\ell^\vee, \C_\infty^\times)\cong \Hom(\T_\fM^\vee/\fE_\ell \T_\fM^\vee, \C_\infty^\times). 
\end{align*}
If $\T_\fM$ is Gorenstein, then 
$$
\Hom(\T_\fM^\vee/\fE_\ell\T_\fM^\vee, \C_\infty^\times)\cong \Hom(\T_\fM/\fE_\ell, \C_\infty^\times)\cong 
\Hom(\Z_\ell/N\Z_\ell, \C_\infty^\times). 
$$
\end{proof}

 If $\ell$ is odd and divides $q+1$ then, under the assumption that $\T_\fM$ 
 is Gorenstein, Proposition \ref{prop9.5} implies that $\cS_\ell$ is the maximal $\mu$-type subgroup 
 scheme of $J_\ell$ and $J_\ell[\fE]\cong \cC_\ell\oplus \cS_\ell$ is pure. 
 Indeed, since $\Phi_\infty$ is constant and $\mathrm{gcd}(q+1, q-1)$ divides $2$, 
 the maximal $\mu$-type subgroup scheme of $J_\ell$ specializes to the connected component 
 $\cJ^0_{\F_\infty}$ of the N\'eron model, so must be isomorphic to $(\Z_\ell/(q+1)\Z_\ell)^\ast$. 
 Since $\cS_\ell$ is $\mu$-type, it must be maximal by comparing the orders. The fact that $\T_\fM$ 
 is Gorenstein in this case can be proved by Mazur's Eisenstein descent discussed in Section 10 of \cite{Pal}. 
 Since this fact is not central for our paper, and the proof closely follows the argument in \textit{loc. cit.} 
 we omit the details. 
 
 Now assume $\ell$ is odd and divides $q^2+1$. Assume $\T_\fM$ is Gorenstein. Since $s_{x,y}$ divides $2$, 
 Lemma \ref{thm9.2} and Proposition \ref{prop9.5} imply that there is an exact sequence of $G_F$-modules 
 \begin{equation}\label{eqCEM}
0\to \cC_\ell\to J_\ell[\fE]\to \cM_\ell\to 0, 
\end{equation}
where $\cM_\ell\cong (\Z_\ell/(q^2+1)\Z_\ell)^\ast$. If $J_\ell[\fE]^{I_y}$ is strictly 
larger than $\cC_\ell$, then the Galois representation $\rho_{_\fM}: G_F\to \Aut(J_\ell[\fM])\cong \GL_2(\F_\ell)$ 
is unramified at $y$. On the other hand, $(\Phi_y)_\ell=0$, so $J_\ell[\fM]$ is isomorphic to a $\T_\ell\times G_{\F_y}$-submodule 
of the torus $\cJ^0_{\F_y}$. Now the same argument that proves Proposition 3.8 in \cite{RibetLL} implies 
that $\Frob_y$ acts on $J_\ell[\fM]$ as a scalar $\pm |y|^2$. Hence $\det(\rho_{_\fM})=q^4$. 
On the other hand, the sequence (\ref{eqCEM}) shows that the eigenvalues of $\Frob_y$ are $1$ and $|y|=q^2$. 
Thus, $\det(\rho_{_\fM})=q^2$. This forces $q^4\equiv q^2\ (\mod\ \ell)$, which is a 
contradiction since $\gcd(q^2+1, q^2(q^2-1))$ divides $2$. We conclude that if $\T_\fM$ is Gorenstein 
then the $\ell$-primary $\mu$-type subgroup scheme of $J$ is trivial. 
This ``explains'' why $\cS$ is smaller than $\cC$ -- the missing part corresponds to the ramified 
portion of $J[\fE]$. 

%Proving that $\T_\fM$ is Gorenstein in this case seems difficult, 
%as Mazur's Eisenstein descent arguments do not work in this ramified situation. 
%(Both versions of Mazur's descent discussed in \cite[$\S\S$10-11]{Pal} rely on subtle 
%arithmetic properties of $J_0(\fp)$ which are valid only for the prime level.) Nevertheless, in Section \ref{sComputations} we give  
%computational evidence that $\T_\fM$ is Gorenstein. 
 
\section{Jacquet-Langlands isogeny}\label{sJL}

\subsection{Modular curves of $\sD$-elliptic sheaves}\label{ssDES}
Let $D$ be a division quaternion algebra over $F$. Assume $D$ is split at $\infty$, i.e., $D\otimes_F \Fi\cong \Mat_2(\Fi)$; 
this is the analogue of ``indefinite'' over $\Q$. Let $\fn\lhd A$ be the 
discriminant of $D$. It is known that $\fn$ is square-free with an even number of prime factors. 
Moreover, any $\fn$ with this property is a discriminant of some $D$, and, up to isomorphism, 
$\fn$ determines $D$; cf. \cite{Vigneras}. 

Let $C:=\p^1_{\F_q}$. Fix a
locally free sheaf $\sD$ of $\cO_C$-algebras with stalk at the
generic point equal to $D$ and such that
$\sD_v:=\sD\otimes_{\cO_C}\cO_v$ is a maximal order in
$D_v:=D\otimes_F F_v$ for every place $v$.
Let $S$ be an $\F_q$-scheme. Denote by $\Frob_S$ its Frobenius
endomorphism, which is the identity on the points and the $q$th
power map on the functions. Denote by $C\times S$ the fiber product
$C\times_{\Spec(\F_q)}S$. Let $z:S\to \Spec(A[\fn^{-1}])$ be a morphism of
$\F_q$-schemes. A \textit{$\sD$-elliptic sheaf over $S$}, with pole
$\infty$ and zero $z$, is a sequence $\mathbb{E}=(\cE_i,j_i,t_i)_{i\in \Z}$,
where each $\cE_i$ is a locally free sheaf of $\cO_{C\times
S}$-modules of rank $4$ equipped with a right action of $\sD$
compatible with the $\cO_C$-action, and where
\begin{align*}
j_i &:\cE_i\to \cE_{i+1}\\
t_i &:{^\tau}{\cE}_{i}:=(\mathrm{Id}_C\times \Frob_{S})^\ast \cE_i\to
\cE_{i+1}
\end{align*}
are injective $\cO_{C\times S}$-linear homomorphisms compatible with
the $\sD$-action. The maps $j_i$ and $t_i$ are sheaf modifications
at $\infty$ and $z$, respectively, which satisfy certain conditions,
and it is assumed that for each closed point $P$ of $S$, the
Euler-Poincar\'e characteristic $\chi(\cE_0|_{C\times P})$ is in the
interval $[0,2)$; we refer to \cite[$\S$2]{LRS} for the precise definition. 
Denote by $\Ell^\sD(S)$ the set of isomorphism classes of
$\sD$-elliptic sheaves over $S$. The following theorem can be
deduced from (4.1), (5.1) and (6.2) in \cite{LRS}:

\begin{thm}\label{thmMC}
The functor $S\mapsto \Ell^{\sD}(S)$ has a coarse moduli scheme $X^\fn$, which is proper and smooth 
of pure relative dimension $1$ over $\Spec(A[\fn^{-1}])$. 
\end{thm}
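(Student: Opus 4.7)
The plan is to deduce the theorem from the foundational results of Laumon--Rapoport--Stuhler \cite{LRS} essentially as a matter of bookkeeping, since the hard analytic and geometric work is already carried out there. First I would introduce an auxiliary level structure at a fixed prime $\fl$ coprime to $\fn\infty$, for instance a full level-$\fl$ structure, so that the functor $S\mapsto \Ell^{\sD,\fl}(S)$ of $\sD$-elliptic sheaves with $\fl$-level structure becomes rigid in the sense that its automorphism groups vanish; this is the point of (4.1) in \cite{LRS}. One then invokes their representability result to conclude that this rigidified functor is representable by a quasi-projective scheme $X^{\fn,\fl}$ over $\Spec(A[(\fn\fl)^{-1}])$.

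Next I would pass back from the rigidified problem to the original one by dividing by the finite group $G=\GL_2(A/\fl)/\F_q^\times$ acting naturally on $X^{\fn,\fl}$; the quotient $X^{\fn,\fl}/G$ furnishes the coarse moduli scheme $X^\fn$ over $\Spec(A[\fn^{-1}])$ for the functor $\Ell^\sD$. That the formation of this coarse moduli commutes with the base change away from $\fl$, and the fact that one can vary $\fl$, together remove any artificial dependence on $\fl$. Smoothness and pure relative dimension $1$ over $\Spec(A[\fn^{-1}])$ follow from the deformation theory of $\sD$-elliptic sheaves set up in (5.1) of \cite{LRS}: at points where $\sD$ is locally a matrix algebra, Morita equivalence reduces the deformation problem to that of a rank-$2$ bundle with the $t_i$-structures, which is unobstructed of the required dimension.

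For properness I would invoke (6.2) of \cite{LRS}, where the valuative criterion is verified: given any $\sD$-elliptic sheaf over the generic fibre of a discrete valuation ring $R$ whose residue field is an $A[\fn^{-1}]$-scheme, maximality of $\sD$ at all places together with the Euler--Poincar\'e condition forces a unique extension of the $\cE_i$ across $\Spec(R)$, and the modifications $j_i$, $t_i$ extend with them. Finiteness of the fibre condition on Euler--Poincar\'e characteristic ensures that the moduli scheme is of finite type, so the valuative criterion combined with finite type yields properness.

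The main obstacle, in my view, is not any one of these ingredients individually but rather the compatibility of the coarse moduli construction with passage from the $\fl$-rigidified problem to the original moduli problem, because one must check that the Galois-type action of $G$ on $X^{\fn,\fl}$ is free enough (or at least tame enough) that the categorical quotient genuinely corepresents $\Ell^\sD$; this is where working with a stack, or alternatively with a judicious choice of $\fl$, streamlines the argument. Once this point is handled, properness, smoothness, and the dimension assertion are formal consequences of the corresponding properties for $X^{\fn,\fl}$, which are furnished by \cite{LRS}.
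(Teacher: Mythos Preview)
Your proposal is correct and takes essentially the same approach as the paper: the paper simply states that the theorem ``can be deduced from (4.1), (5.1) and (6.2) in \cite{LRS}'' without further elaboration, and your outline is precisely a fleshing-out of how those three ingredients (representability with auxiliary level structure, deformation theory for smoothness, and the valuative criterion for properness) combine to yield the claim.
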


For each prime $\fp\lhd A$ not in $R$, there is a Hecke correspondence $T_\fp$ on $X^\fn$; 
we refer to \cite[$\S$7]{LRS} for the definition. 

\subsection{Rigid-analytic uniformization}\label{ssRAUJ'} Let $\cD$ be a maximal $A$-order in $D$. 
Let $\G^\fn:=\cD^\times$ be the group of units of $\cD$. 
By fixing an isomorphism $D\otimes_F\Fi\cong \Mat_2(\Fi)$, we get an embedding $\G^\fn\hookrightarrow \GL_2(\Fi)$. 
Hence $\G^\fn$ acts on the Drinfeld half-plane $\Omega=\C_\infty-\Fi$. 
We have the following uniformization theorem 
\begin{equation}\label{eqMum}
\G^\fn\bs \Omega\cong (X^\fn_{\Fi})^\an, 
\end{equation}
which follows by applying Raynaud's ``generic fibre'' functor to Theorem 4.4.11 in \cite{BS}. 
The proof of this theorem is only 
outlined in \cite{BS}. Nevertheless, as is shown in \cite[Prop. 4.28]{Spiess}, (\ref{eqMum}) 
can be deduced from Hausberger's version \cite{Hausberger} of the Cherednik-Drinfeld theorem for $X^\fn$. 
An alternative proof of (\ref{eqMum}) is given in \cite[Thm. 4.6]{TaelmanArXiv}. 

Assume for simplicity that $\fn$ is divisible by a prime of even degree. In this case, the normal 
subgroup $\G^\fn_f$ of $\G^\fn$ generated by torsion elements is just the 
center $\F_q^\times$ of $\G^\fn$, cf. \cite[Thm. 5.6]{PapLocProp}. 
This implies that the image of $\G^\fn$ in $\PGL_2(\Fi)$ is a discrete, finitely generated free group, 
and (\ref{eqMum}) is a Mumford uniformization \cite{GvdP}, \cite{Mumford}. 

Denote $\G=\G^\fn/\F_q^\times$ 
and let $\bG$ be the abelianization of $\G$. The group $\G$ acts without inversions on the Bruhat-Tits tree $\sT$, 
and the quotient graph $\G^\fn\bs \sT=\G\bs \sT$ is finite; cf. \cite[Lem. 5.1]{PapLocProp}. Fix a vertex $v_0\in \sT$. 
For any $\gamma\in \G$ there is a unique path  
in $\sT$ without backtracking from $v_0$ to $\gamma v_0$. 
The map $\G\to H_1(\G\bs \sT, \Z)$ which sends $\gamma$ to the homology class of the image of this path in $\G\bs \sT$ 
does not depend on the choice of $v_0$, and induces an isomorphism
\begin{equation}\label{eq_piH}
\bG \cong H_1(\G\bs \sT, \Z).
\end{equation}
Since $\G$ is a free group and $\G\bs \sT$ is a finite graph, 
\begin{align*}
\cH(\fn, \Z)' &:=\cH_0(\sT, \Z)^{\G^\fn}=\cH_0(\sT, \Z)^\G=\cH(\sT, \Z)^\G \\ 
&\cong \cH(\G\bs \sT, \Z)\cong H_1(\G\bs \sT, \Z),   
\end{align*}
cf. Definitions \ref{defnHarmG} and \ref{defnHarmG0}. The space $\cH(\fn, \Z)'$ 
is equipped with a natural action of Hecke operators $T_\fm$ ($\fm\lhd A$), which generate 
a commutative $\Z$-algebra; cf. \cite[$\S$5.3]{Miyake}. 

The map $\langle \cdot, \cdot \rangle: E(\sT)\times E(\sT) \to \Z$ 
$$
\langle e, f\rangle= \left\{
         \begin{array}{ll}
           1 & \hbox{if $f=e$} \\
           -1 & \hbox{if $f=\bar{e}$}\\
           0 & \hbox{otherwise}
         \end{array}
       \right.
$$
induces a $\Z$-valued bilinear symmetric positive-definite pairing on $H_1(\G\bs \sT, \Z)$, 
which we denote by the same symbol. Using (\ref{eq_piH}) we get a pairing 
\begin{align}\label{eqMoPaQ}
\bG \times \bG &\to \Z\\ 
\nonumber \gamma, \delta &\mapsto \langle \gamma, \delta \rangle. 
\end{align}

Let $J^\fn$ denote the Jacobian variety of $X^\fn_F$, and $\cJ^\fn$ denote the N\'eron model of $J^\fn$ over $\cO_\infty$.  
Since $X^\fn$ is a Mumford curve over $\Fi$, $(\cJ^\fn)^0_{\F_\infty}$ is a split algebraic torus. Let 
$M:=\Hom((\cJ^\fn)^0_{\overline{\F}_\infty}, \gm_{m, \overline{\F}_\infty})$ be the 
character group of this torus. By the mapping property of N\'eron models, each endomorphism of $J^\fn$ 
defined over $\Fi$ canonically extends to $\cJ^\fn$, hence acts functorially on $M$. Since $J^\fn$ 
has purely toric reduction, $\End_{\Fi}(J^\fn)$ operates faithfully on $M$. 
By \cite[p. 132]{Mumford}, the graph $\G\bs \sT$ is the dual graph of the special fibre of the minimal regular model of $X^\fn$ 
over $\cO_\infty$. On the other hand, by \cite[p. 246]{NM}, there is a canonical isomorphism $M\cong H_1(\G\bs \sT, \Z)$. 
The Hecke correspondence $T_\fp$ induces an endomorphism of $J^\fn$ by Picard functoriality, 
which we denote by the same symbol. Via the isomorphisms mentioned in this paragraph 
and (\ref{eq_piH}), we get a canonical action of $T_\fp$ on $\bG$. This action agrees with the 
action of $T_\fp$ on $\cH(\fn, \Z)'$.

Using rigid-analytic theta functions, one constructs a symmetric bilinear pairing 
\begin{align*}
\bG\times \bG &\to \Fi^\times\\
\gamma, \delta &\mapsto [\gamma, \delta],
\end{align*}
such that $\ord_\infty[\gamma, \delta]=\langle \gamma, \delta\rangle$; see \cite[Thm. 5]{MD}. 
The main result of \cite{MD} gives a rigid-analytic uniformization of $J^\fn$:
\begin{equation}
0\to\bG\xrightarrow{\gamma\mapsto [\gamma, \cdot]}\Hom(\bG, \C_\infty)\to J^\fn(\C_\infty)\to 0. 
\end{equation}
This sequence is equivariant with respect to the action of $T_\fp$ ($\fp\not \in R$); this was proven by Ryan Flynn \cite{Flynn} 
following the method in \cite{GR}. 

Finally, we have the following quaternionic analogue of Theorem \ref{thmCGinf}. 
Let $\Phi_\infty'$ be the component group of $J^\fn$ at $\infty$.
After identifying $\bG$ with the character group $M$, the pairing (\ref{eqMoPaQ}) 
becomes Grothendieck's monodromy pairing on $M$, so there is an exact sequence (see \cite[$\S\S$11-12]{SGA7} and \cite{PapikianCM})
\begin{equation}\label{eqPhiJ'inf}
0\to \bG\xrightarrow{\gamma\mapsto \langle \gamma, \cdot\rangle}\Hom(\bG, \Z)\to \Phi_\infty'\to 0. 
\end{equation}
This sequence is equivariant with respect to the action of $T_\fp$. 

%-------------------------------------------

\subsection{Explicit Jacquet-Langlands isogeny conjecture}\label{ssGJL} Let 
$\fn\lhd A$ be a product of an even number of distinct primes. 
The space $\cH_0(\fn, \Q)$ can be interpreted as a space of automorphic forms on $\G_0(\fn)$; see \cite[$\S$4]{GR}. 
A similar argument can be used to show that $\cH(\fn, \Q)'$ has an interpretation as a space of automorphic 
forms on $\G^{\fn}$; cf. \cite[$\S$8]{PapikianCM}. The Jacquet-Langlands correspondence over $F$ implies that there is an isomorphism 
$\JL: \cH_0(\fn, \Q)^\new\cong \cH(\fn, \Q)'$ which is compatible with the action of Hecke operators $T_\fp$, $\fp\nmid \fn$; 
cf. \cite[Ch. III]{JL}. 
\begin{rem}\label{remJLpbad}
In fact, $\JL$ is $U_\fp$-equivariant also for $\fp\mid \fn$. The analogous statement over $\Q$ 
follows from the results of Ribet \cite[$\S$4]{RibetLL} (cf. \cite[Thm. 2.3]{Helm}), 
which rely on the geometry of the Jacobians of modular curves. The N\'eron models of $J_0(\fn)$ and $J^\fn$  
have the necessary properties for Ribet's arguments to apply. Since this fact will not be essential for our purposes, we do not discuss the details further. 
The $U_\fp$-equivariance of $\JL$ can also be deduced from \cite{JL}; the necessary input is 
contained in \cite[Thm. 4.2]{JL} and its proof. 
\end{rem}

Let $J_0(\fn)^\old$ be the abelian subvariety of $J_0(\fn)$ generated by the images of $J_0(\fm)$ 
under the maps $J_0(\fm)\to J_0(\fn)$ induced by the degeneracy morphisms $X_0(\fn)_F\to X_0(\fm)_F$ 
for all $\fm\supsetneq \fn$. Let $J_0(\fn)^\new$ be the quotient of $J_0(\fn)$ 
by $J_0(\fn)^\old$. 
Combining $\JL$ with the main results in \cite{Drinfeld} and \cite{LRS}, 
and Zarhin's isogeny theorem, one concludes that there is a $\T(\fn)^0$-equivariant isogeny $J_0(\fn)^\new\to J^\fn$ defined over $F$, 
which we call a \textit{Jacquet-Langlands isogeny}; 
cf. \cite[Cor. 7.2]{PapikianJNT}.  
(This isogeny is in fact $\T(\fn)$-equivariant by the previous remark; see \cite[Cor. 2.4]{Helm} 
for the corresponding statement over $\Q$.) Zarhin's isogeny theorem provides no information 
about the Jacquet-Langlands isogenies beyond their existence. One possible 
approach to making these isogenies more explicit is via the study of component groups. 
More precisely, since $J_0(\fn)^\new$ and $J^\fn$ 
have purely toric reduction at the primes dividing $\fn$ and at $\infty$, one can deduce restrictions on 
possible kernels of isogenies  $J_0(\fn)^\new\to J^\fn$ from the component groups of these abelian varieties 
using \cite[Thm. 4.3]{PapikianJNT}. Unfortunately, the explicit structure of these component groups 
is not known in general. 

From now on we restrict to the case when $\fn=\fp\fq$ is a product of two distinct primes and $\deg(\fp)\leq 2$.  
In this case, the structure of $X^{\fn}_{\F_\fq}$ 
is fairly simple and can be deduced from \cite[Prop. 6.2]{PapikianJNT}. 
Using this and Raynaud's theorem, one computes that the component group $\Phi_\fq'$ of $J^{\fn}$ at $\fq$ is  
given by Table \ref{tableCG1}, where 
$$
M(\fq)=\begin{cases} 
|\fq|+1, & \text{if $\deg(\fq)$ is even;}\\
\frac{|\fq|+1}{q+1},& \text{if $\deg(\fq)$ is odd.}
\end{cases}
$$
\begin{table}
\begin{tabular}{ |c | c | c | }
\hline
& $\deg(\fq)$ is even & $\deg(\fq)$ is odd \\
\hline
$\deg(\fp)=1$ &  $\Z/(q+1)M(\fq)\Z$ & $\Z/M(\fq)\Z$\\
\hline
$\deg(\fp)=2$ &  $\Z/M(\fq)\Z$ & $\Z/(q+1)M(\fq)\Z$\\
\hline
\end{tabular}
\caption{Component group $\Phi_\fq'$ of $J^{\fp\fq}$}\label{tableCG1}
\end{table}

\begin{comment}
By Theorem \ref{thmCG}, the component group of $J_0(\fn)$ at $\fq$ is given by the following table: 
\begin{table}[h]
\begin{tabular}{ |c | c | c | }
\hline
& $\deg(\fq)$ is even & $\deg(\fq)$ is odd \\
\hline
$\deg(\fp)=1$ &  $\Z/(q+1)N(\fq)\Z$ & $\Z/N(\fq)\Z$\\
\hline
$\deg(\fp)=2$ &  $\Z/(q^2+1)N(\fq)\Z$ & $\Z/(q^2+1)(q+1)N(\fq)\Z$\\
\hline
\end{tabular}
\caption{Component group $\Phi_\fq$ of $J_0(\fp\fq)$}
\end{table}
\end{comment}

The cuspidal divisor group of $J_0(\fq)$ is generated by $[1]-[\infty]$, which has order 
$$
N(\fq)=\begin{cases} 
\frac{|\fq|-1}{q^2-1}, & \text{if $\deg(\fq)$ is even;}\\
\frac{|\fq|-1}{q-1},& \text{if $\deg(\fq)$ is odd.}
\end{cases}
$$
Let $\alpha: X_0(\fp\fq)_F\to X_0(\fq)_F$ be the degeneracy morphism discussed in Section \ref{sDMC}. 
The image of $\cC(\fq)$ in $J_0(\fp\fq)$ under the induced map $J_0(\fq)\to J_0(\fp\fq)$ is generated by 
$$
c:=\alpha^\ast([1]-[\infty])=|\fp|[1]+[\fp]-|\fp|[\fq]-[\infty]. 
$$
By examining the specializations of the cusps in $X_0(\fp\fq)_{\F_\fq}$, we see that 
$$
\wp_\fq(c)=(|\fp|+1)z,
$$
where $z\in \Phi_\fq$ is the element from the proof of Proposition \ref{prop5.3}. Let $\tilde{\Phi}_\fq:=\Phi_\fq/\wp_\fq(c)$. 
The order of $z$ in $\Phi_\fq$ is given in \cite[Thm. 4.1]{PapikianJNT}, and the order of $\Phi_\fq$ itself 
is given in Proposition \ref{prop5.3}. From this one easily computes that 
$\tilde{\Phi}_\fq$ is the group in Table \ref{tableCG2}.  
\begin{table}
\begin{tabular}{ |c | c | c | }
\hline
& $\deg(\fq)$ is even & $\deg(\fq)$ is odd \\
\hline
$\deg(\fp)=1$ &  $\Z/(q+1)\Z$ & $0$\\
\hline
$\deg(\fp)=2$ &  $\Z/(q^2+1)\Z$ & $\Z/(q^2+1)(q+1)\Z$\\
\hline
\end{tabular}
\caption{$\tilde{\Phi}_\fq$}\label{tableCG2}
\end{table}

Since $c\in J_0(\fn)^\old$, the map of component groups $\Phi_\fq\to \Phi_\fq^{\new}$ induced by the quotient $J_0(\fn)\to J_0(\fn)^\new$ 
must factor through $\tilde{\Phi}_\fq$ (here $\Phi_\fq^{\new}$ denotes the component group of $J_0(\fn)^\new$ at $\fq$).

Assume $\deg(\fp)=1$. Then the cuspidal divisor $c_\fq:=[\fq]-[\infty]\in \cC(\fp\fq)$ has 
order $N(\fq)M(\fq)$ (see Theorem \ref{thm6.10}) and specializes to the connected 
component of identity $\cJ^0_{\F_\fq}$ of the N\'eron model of $J_0(\fn)$. 
Theorem 4.3 in \cite{PapikianJNT} describes how the component groups of abelian varieties 
with toric reduction over a local field change under isogenies, depending on the specialization of the 
kernel of the isogeny in the closed fibre. After comparing the groups $\tilde{\Phi}_\fq$ and $\Phi'_\fq$, 
and the orders of $c$ and $c_\fq$, this theorem suggests that there is an isogeny $J_0(\fn)^\new\to J^\fn$ 
whose kernel is isomorphic to $\Z/M(\fq)\Z$ and is generated by the image of $c_\fq$ in $J_0(\fn)^\new$. 

The case $\deg(\fp)=2$ can be analysed similarly. The order of $c_\fq$ is $(q+1)N(\fq)M(\fq)$. The 
image of $c_\fp$ in $\tilde{\Phi}_\fq$ generates its cyclic subgroup of order $q^2+1$. Hence 
there might be an isogeny $J_0(\fn)^\new\to J^\fn$  whose kernel is isomorphic to $\frac{\Z}{M(\fq)\Z}\times \frac{\Z}{(q^2+1)\Z}$. 

\begin{conj}\label{conjJL} Assume $\fn=\fp\fq$, where $\fp, \fq$ are distinct primes and 
$\deg(\fp)\leq 2$. There is an isogeny $J_0(\fp\fq)^\new\to J^{\fp\fq}$ whose 
kernel $K$ is generated by cuspidal divisors and 
$$
K\cong 
\begin{cases}
\frac{\Z}{M(\fq)\Z} & \text{if $\deg(\fp)=1$}; \\
\frac{\Z}{M(\fq)\Z}\times \frac{\Z}{(q^2+1)\Z} & \text{if $\deg(\fp)=2$}. 
\end{cases}
$$
\end{conj}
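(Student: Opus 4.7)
The plan is to adapt the strategy of Theorem \ref{thmMAIN2} to the general case with $\deg(\fp)\leq 2$, proceeding in three steps. First, I would exploit rigid-analytic uniformization: both $J_0(\fp\fq)^\new$ and $J^{\fp\fq}$ have split purely toric reduction at $\infty$, and their character lattices are canonically $\cH_0(\fp\fq,\Z)^\new$ and $\cH(\fp\fq,\Z)'$ respectively (via Theorem \ref{thmCGinf} and its quaternionic analogue in \S\ref{ssRAUJ'}). The Jacquet-Langlands correspondence, together with $U_\fp$- and $U_\fq$-equivariance (Remark \ref{remJLpbad}), produces a $\T(\fp\fq)$-equivariant $\Q$-linear isomorphism $\JL:\cH_0(\fp\fq,\Q)^\new\xrightarrow{\sim}\cH(\fp\fq,\Q)'$. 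Fixing a compatible integral embedding $\cH(\fp\fq,\Z)'\hookrightarrow \cH_0(\fp\fq,\Z)^\new$ respecting the monodromy pairings at $\infty$ yields, via Gerritzen's theorem, an $F$-rational isogeny $J_0(\fp\fq)^\new\to J^{\fp\fq}$.

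Second, I would pin down the kernel via a careful component-group analysis at the three bad places. Using Tables \ref{tableCG1}--\ref{tableCG2} and Proposition \ref{prop5.3}, one compares $\tilde\Phi_\fq$ with $\Phi'_\fq$ (and similarly at $\fp$), and applies \cite[Thm.~4.3]{PapikianJNT} to translate admissible changes of component groups into constraints on the kernel. The cuspidal divisor $c_\fq=[\fq]-[\infty]$ has order $M(\fq)N(\fq)$ by Theorem \ref{thm6.10}, specializes into $\cJ^0_{\F_\fq}$, and (via Lemma \ref{lem8.1} and Proposition \ref{propSisEis}) is annihilated by $\fE(\fp\fq)$; when $\deg(\fp)=2$ the divisor $c_\fp$ contributes an additional cyclic factor of order $q^2+1$ whose image in $\tilde\Phi_\fq$ is exactly right. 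These calculations show that the conjectured group is the unique candidate for the kernel of any Jacquet-Langlands isogeny that is compatible with all three component-group maps and annihilated by $\fE(\fp\fq)$.

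Third, to upgrade this from a numerical match to an actual identification as $F$-group schemes, I would need two structural inputs generalizing the hypotheses of Theorem \ref{thmMAIN2}: (i) an integral $\T(\fp\fq)$-equivariant isomorphism $\cH_0(\fp\fq,\Z)^\new\cong \cH(\fp\fq,\Z)'$, and (ii) the Gorenstein property of $\T(\fp\fq)_\fM$ for every maximal Eisenstein ideal $\fM$ of residue characteristic $\ell\mid M(\fq)(q^2+1)$. Granting (i) and (ii), the argument of Theorem \ref{thmJL2} identifies both uniformizing lattices with explicit ideals of $\T(\fp\fq)$, so that the isogeny kernel becomes a module-theoretic invariant of $\T(\fp\fq)/\fE(\fp\fq)$; combined with the specialization computations of step two and Theorem \ref{thm6.10}, this would identify $K$ with the conjectured cuspidal subgroup.

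The main obstacle lies squarely in (i) and (ii). The ramified portion of $J[\fE]$ identified in \S\ref{ssLast} shows that many of the relevant Eisenstein primes are ramified at $\fp$ or $\fq$, so Mazur's Eisenstein descent arguments used in \cite[\S\S 10--11]{Pal} for prime level do not generalize. The integral Hecke-module comparison is tied to the delicate problem of $\mathrm{GL}_n(\Z)$-conjugacy of integer matrices with a prescribed characteristic polynomial (cf.\ \cite{LM}), which is genuinely subtle even for small $n$. For $\fp\fq=xy$ both conditions were merely verified computationally in Section \ref{sComputations}; an unconditional proof will likely require either a direct geometric comparison of integral cohomology of $X_0(\fp\fq)$ and $X^{\fp\fq}$ bypassing Hecke-module gymnastics, or a level-raising argument producing the needed congruences modulo $\fE(\fp\fq)$ without passing through $R=\T$ in the ramified residual case.
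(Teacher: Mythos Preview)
The statement you are asked to prove is labeled a \emph{Conjecture} in the paper, and the paper does not prove it. What the paper does provide is (a) the component-group heuristic that motivates the precise shape of $K$ (your step two reproduces this accurately, including the use of Tables~\ref{tableCG1}--\ref{tableCG2}, the specialization of $c_\fq$, and \cite[Thm.~4.3]{PapikianJNT}), and (b) a conditional proof in the single case $\fn=xy$ (Theorems~\ref{thmJL1}--\ref{thmJL2}), under exactly the two hypotheses you isolate as (i) and (ii). Your proposal is therefore not a proof but a correct summary of the only known strategy, together with an honest assessment of why it falls short in general.

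Two small corrections to your outline. First, in step one you speak of embedding $\cH(\fp\fq,\Z)'$ into $\cH_0(\fp\fq,\Z)^\new$; the paper's argument in Theorem~\ref{thmJL1} goes the other way (it identifies both lattices with ideals $\fE\subset\fE'$ of $\T$ after assuming (i), so the isogeny runs $J_0\to J'$ with kernel $\fE'/\fE$). Second, in step two the component-group comparison alone does not \emph{force} the kernel to be cuspidal---it only shows numerical compatibility. Pinning $K$ down as a specific $G_F$-stable subgroup of $\cC(\fp\fq)$ genuinely requires the Gorenstein input (ii) to control $J[\fE]$ (see the end of \S\ref{ssLast}), exactly as you say in step three. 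So your diagnosis of where the difficulty lies is accurate: without (i) and (ii) the argument does not close, and the paper offers no way around them beyond the computational verification in \S\ref{sComputations} for $\fn=xy$.
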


This conjecture is the function field analogue of Ogg's conjectures about Jacquet-Langlands isogenies over $\Q$; see 
\cite[pp. 212-216]{Ogg}. 

There are only two cases when $\fn$ is square-free, $J_0(\fn)$ is non-trivial, and $J_0(\fn)^\new=J_0(\fn)$. 
The first case is $\fn=xy$; Conjecture \ref{conjJL} then specializes to the conjecture in \cite{PapikianJNT}. 
We will prove this conjecture in $\S$\ref{ssJLmain} under certain assumptions, and unconditionally for some small $q$. 
The second case is $\fn=\fp\fq$, where $\fp\neq \fq$ are primes of degree $2$. In this case the conjecture 
predicts that there is a Jacquet-Langlands isogeny whose kernel is isomorphic to $\Z/(q^2+1)\Z\times \Z/(q^2+1)\Z$; cf. 
Example \ref{exampleC22}. 
The method of this paper should be possible to adapt to this latter case, and prove the conjecture 
for some small $q$. 

%------------------------------------------

\subsection{Special case}\label{ssJLmain} Assume $\fn=xy$. To simplify the notation we 
put $$J:=J_0(xy),\ J':=J^{xy},\ \cH:=\cH_0(xy, \Z),\ \cH':=\cH(xy, \Z)',\ \T:=\T(xy).$$ 
First, we prove the analogue of Lemma \ref{lem5.5} for $J'$. 

\begin{lem}\label{lemEisPhi'}
Let $\Phi_\infty'$ be the component group of $J'$ at $\infty$. Let $\fp\lhd A$ be any prime not dividing $xy$. 
Then $T_\fp-|\fp|-1$ annihilates $\Phi_\infty'\cong \Z/(q+1)\Z$. 
\end{lem}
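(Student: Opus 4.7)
The plan is to use the rigid-analytic uniformization of $J'$ at $\infty$ in conjunction with the Jacquet-Langlands isogeny, so as to transfer the $\fE$-annihilation of $\Phi_\infty$ established in Lemma \ref{lem5.5} to $\Phi_\infty'$. The monodromy exact sequence (\ref{eqPhiJ'inf}) for $J'$ is $T_\fp$-equivariant for primes $\fp \nmid xy$, so $\T$ acts naturally on $\Phi_\infty'$, and the goal is to show this action factors through $\T/\fE \cong \Z/N\Z$.

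First, I would verify $\Phi_\infty' \cong \Z/(q+1)\Z$ by a direct computation with the finite quotient graph $\G^{xy}\bs\sT$. By $\S\ref{ssRAUJ'}$, $\Phi_\infty'$ is the cokernel of the map $\cH' \to \Hom(\cH',\Z)$ induced by the intersection pairing on $H_1(\G^{xy}\bs\sT, \Z) \cong \cH'$. Since $X^{xy}$ has genus $q$ (matching the dimension of $J_0(xy) = J_0(xy)^{\new}$ via Jacquet-Langlands), the first Betti number of the graph is $q$, and the Smith normal form of the Gram matrix of the intersection pairing then yields the group structure. An explicit description of $\G^{xy}\bs\sT$—obtainable via the algorithm in Section \ref{sComputations} or by direct analysis of the maximal $A$-order $\cD$ in the quaternion algebra $D$—pins down this computation and gives a preferred generator $\bar\phi \in \Phi_\infty'$.

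Next, for the $\fE$-annihilation, I would leverage the $\T$-equivariant $F$-rational Jacquet-Langlands isogeny $\phi: J \to J'$ (cf. $\S\ref{ssGJL}$). By the N\'eron mapping property, $\phi$ extends to a homomorphism $\cJ \to \cJ'$ of N\'eron models, inducing a $\T$-equivariant map $\phi_\ast: \Phi_\infty \to \Phi_\infty'$. By Lemma \ref{lem5.5}, $\fE$ annihilates $\Phi_\infty$, so $\fE$ annihilates the image of $\phi_\ast$. To conclude it suffices to show $\phi_\ast$ is surjective. Under the rigid uniformizations of $J$ and $J'$ at $\infty$, the isogeny $\phi$ corresponds to a $\T$-equivariant inclusion of character groups $\cH' \hookrightarrow \cH$ of finite index, and a snake-lemma diagram chase between the two monodromy sequences expresses $\Phi_\infty'/\phi_\ast(\Phi_\infty)$ as a subquotient of the finite cokernel $\cH/\phi^\vee(\cH')$. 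By comparing orders---$|\Phi_\infty| = N = (q+1)(q^2+1)$ versus the target $|\Phi_\infty'| = q+1$---the surjectivity follows once one checks that the composite map from the unique quotient $\Z/(q+1)\Z$ of $\Z/N\Z$ lands isomorphically onto $\Phi_\infty'$.

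The main obstacle is controlling the integral structures in the Jacquet-Langlands comparison: although $\JL: \cH \otimes \Q \xrightarrow{\sim} \cH' \otimes \Q$ is canonical, the integral comparison is subtle, as noted in the outline of Theorem \ref{thmMAIN2}. A more concrete alternative---probably the route that bypasses this subtlety---is to construct $\bar\phi$ directly from the graph $\G^{xy}\bs\sT$ in analogy with the construction of $\bar\psi_1$ in Lemma \ref{lem5.5}, and to compute the Hecke action on it by an explicit intersection-pairing argument, verifying that $T_\fp \bar\phi = (|\fp|+1)\bar\phi$ in $\Phi_\infty' \cong \Z/(q+1)\Z$ for every $\fp \nmid xy$. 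This direct verification reduces everything to combinatorics on the quaternionic quotient graph, parallel to the modular calculation already carried out in the preceding section.
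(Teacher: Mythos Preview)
Your primary approach via the Jacquet-Langlands isogeny has a genuine gap. An abstract $\T^0$-equivariant isogeny $\phi: J \to J'$ does exist independently of this lemma (by Zarhin), but the surjectivity of the induced map $\phi_\ast: \Phi_\infty \to \Phi_\infty'$ is precisely what is in question, and you have not established it. Your proposed verification---that ``the composite map from the unique quotient $\Z/(q+1)\Z$ of $\Z/N\Z$ lands isomorphically onto $\Phi_\infty'$''---is a restatement of the claim, not a proof. The effect of an isogeny on component groups depends on how its kernel specializes into the closed fibre (cf.\ \cite[Thm.~4.3]{PapikianJNT}); with only the abstract existence of $\phi$ you have no control over this. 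And the \emph{explicit} isogeny of Theorem~\ref{thmJL1} cannot be invoked here, since the present lemma is one of its inputs.

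The paper takes your second route---the direct combinatorial computation---and the point is that it is entirely elementary once one knows the graph. The quotient $\G^{xy}\bs\sT$ has exactly two vertices $v_\pm$ joined by $q+1$ edges $\{e_u : u\in\p^1(\F_q)\}$. With the basis $\varphi_u=e_u-e_\infty$ ($u\in\F_q$) of $\cH'$, the monodromy pairing sends $\varphi_u\mapsto \varphi_u^\ast+\sum_{w\in\F_q}\varphi_w^\ast$, so $\Phi_\infty'\cong\Z/(q+1)\Z$ is generated by the image of $\sum_{w\in\F_q}\varphi_w^\ast$. The key observation for the Hecke action is that any $\gamma\in\GL_2(\Fi)$ fixes or swaps $v_\pm$ according to the parity of $\ord_\infty(\det\gamma)$; consequently $T_\fp\sum_{u\in\p^1(\F_q)}e_u=\pm(|\fp|+1)\sum_u e_u$, the sign depending on $\deg(\fp)\bmod 2$. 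Since $q+1$ divides $|\fp|+1$ when $\deg(\fp)$ is odd, a short calculation modulo $q+1$ shows $T_\fp$ acts on the generator by multiplication by $|\fp|+1$. No Jacquet-Langlands comparison is needed at all.
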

\begin{proof}
\begin{figure}
\begin{tikzpicture}[->, >=stealth', semithick, node distance=1.5cm, inner sep=.5mm, vertex/.style={circle, fill=black}]
\node[vertex] [label=left:$v_{+}$](1){};
  \node (2) [right of=1] {$\vdots$};
  \node[vertex] (3) [right of=2, label=right:$v_{-}$] {};
  \draw (1) edge[bend right]  (3) (1) edge[bend right=70]   (3) (1) edge[bend left]  (3) (1) edge[bend left=70] node [above] {$e_u$} (3); 
\end{tikzpicture}
\caption{$\G^{xy}\bs \sT$}\label{Fig6}
\end{figure}

The quotient graph $\G^{xy}\bs \sT$ has two vertices joined by $q+1$ edges; see \cite[Prop. 6.5]{PapikianJNT}. 
We label the vertices by $v_+$ and $v_{-}$, and label the edges by the elements of $\p^1(\F_q)$; see Figure \ref{Fig6}. 
Let $\gamma\in \GL_2(\Fi)$ be an arbitrary 
element. Then $\gamma v_{\pm}\equiv v_{\pm}\ \mod\ \G^{xy}$ if $\ord_\infty(\det \gamma)$ is even, and 
$\gamma v_{\pm}\equiv v_{\mp}\ \mod\ \G^{xy}$ if $\ord_\infty(\det \gamma)$ is odd. 
Consider the free $\Z$-module with generators $\{e_u, \overline{e_u}\ |\ u\in \p^1(\F_q)\}$, modulo the relations 
$\overline{e_u}=-e_u$. The action of $T_\fp$ on this module satisfies 
$$
T_\fp \sum_{u\in \p^1(\F_q)} e_u = (|\fp|+1)
\begin{cases} \sum_{u\in \p^1(\F_q)} e_u & \text{if $\deg(\fp)$ is even,} \\ 
-\sum_{u\in \p^1(\F_q)}e_u & \text{if $\deg(\fp)$ is odd.}\end{cases}
$$

$\cH'$ is generated by the cycles $\varphi_u=e_u-e_\infty$, $u\in \F_q$. Let $\varphi^\ast_u$ be the dual basis 
of $\Hom(\cH', \Z)$. The map in (\ref{eqPhiJ'inf}) sends $\varphi_u$ to 
$\varphi^\ast_u + \sum_{w\in \F_q} \varphi_w^\ast$. It is easy to see from this that $\Phi_\infty'$ 
is cyclic of order $q+1$ and is generated by $\sum_{w\in \F_q} \varphi_w^\ast$. Note that $|\fp|+1\equiv 0\ \mod\ (q+1)$ 
if $\deg(\fp)$ is odd. Hence 
$$
T_\fp\sum_{w\in \F_q} \varphi_w^\ast =T_\fp \left(\sum_{u\in \p^1(\F_q)}e_u^\ast-(q+1)e_\infty^\ast\right)
=\pm (|\fp|+1)\sum_{u\in \p^1(\F_q)}e_u^\ast-(q+1)T_\fp e_\infty^\ast 
$$
$$
\equiv 
(|\fp|+1)\sum_{u\in \p^1(\F_q)}e_u^\ast-(q+1)(|\fp|+1) e_\infty^\ast= (|\fp|+1) \sum_{w\in \F_q} \varphi_w^\ast\ \mod\ (q+1). 
$$
This implies that $T_\fp$ acts by multiplication by $|\fp|+1$ on $\Phi_\infty'$. 
\end{proof}

\begin{thm}\label{thmJL1}
Assume $\cH\cong \cH'$ as $\T$-modules. There is an isogeny $J\to J'$ defined over $F$ whose 
kernel is cyclic of order $q^2+1$ and annihilated by the Eisenstein ideal.  
\end{thm}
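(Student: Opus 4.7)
The plan is to construct the isogeny via the rigid-analytic uniformizations of $J$ and $J'$ over $F_\infty$, identify the relevant lattices with explicit ideals in $\T$, and then apply Gerritzen's theorem to an inclusion of lattices inside a common torus.

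First, I would set up both tori uniformly. The uniformizations (\ref{eqGRs})--(\ref{eqj}) and the analogous construction in $\S$\ref{ssRAUJ'} realize $J(\C_\infty)$ and $J'(\C_\infty)$ as quotients of $\Hom(\cH,\C_\infty^\times)$ and $\Hom(\cH',\C_\infty^\times)$ by lattices embedded via the respective monodromy pairings. By Proposition \ref{propT(xy)}, the Hecke pairing $\T\times\cH\to\Z$ is perfect, so $\cH\cong\T^\vee$ as $\T$-modules; combined with the standing hypothesis $\cH\cong\cH'$ and a fixed $\T$-isomorphism $\phi:\cH\to\cH'$, both tori are canonically identified with $T_0:=\T\otimes_\Z\C_\infty^\times$, and passing to $\ord_\infty$ identifies the valuation groups $\Hom(\cH,\Z)\cong\Hom(\cH',\Z)$ with $V_0:=\T$.

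Second, I would identify the images $L,L'\subset V_0$ of $\cH$ and $\cH'$ as explicit ideals of $\T$. By Theorem \ref{thmCGinf}, $V_0/L\cong\Phi_\infty$; by Lemma \ref{lem5.5}, $\Phi_\infty$ is annihilated by $\fE$; and by Corollary \ref{corT/E} together with Corollary \ref{corPhiInf}, both $|\Phi_\infty|$ and $|\T/\fE|$ equal $N:=(q+1)(q^2+1)$. These facts force $L=\fE$ as an ideal of $\T$. The parallel analysis for $J'$, using Lemma \ref{lemEisPhi'} to show $\Phi_\infty'\cong\Z/(q+1)\Z$ is likewise annihilated by $\fE$, forces $L'=\fE+(q+1)\T$.

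Third, since $L=\fE\subseteq\fE+(q+1)\T=L'$ inside $V_0$, Gerritzen's theorem produces a $\T$-equivariant analytic isogeny $\pi:J\to J'$ over $F_\infty$ whose kernel is $L'/L$. A direct computation identifies $L'/L$ with the image of multiplication by $q+1$ on $\T/\fE\cong\Z/N\Z$, which is cyclic of order $q^2+1$. Moreover, $\fE\cdot L'=\fE^2+(q+1)\fE\subseteq\fE=L$, so $\fE$ annihilates $L'/L$, as required.

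The main obstacle will be the descent from $F_\infty$ to $F$. I plan to handle this by invoking the Jacquet-Langlands correspondence together with Drinfeld's reciprocity \cite{Drinfeld} and Zarhin's isogeny theorem, which guarantee a priori the existence of some $\T$-equivariant $F$-rational isogeny $\psi:J\to J'$. The analytic avatar of $\psi$ induces a $\T$-linear map $\cH\to\cH'$ which, because $\End_\T(\cH)\otimes\Q=\T\otimes\Q$, is of the form $t\cdot\phi$ for some $t\in\T\otimes\Q$; by composing $\psi$ with a suitable Hecke endomorphism we may assume $t$ is primitive, at which point $\psi$ coincides with the analytic $\pi$ and therefore realizes the desired $F$-rational isogeny with cyclic kernel of order $q^2+1$ annihilated by $\fE$.
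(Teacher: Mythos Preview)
Your approach is essentially identical to the paper's: both identify the uniformizing tori with $\T\otimes\C_\infty^\times$ via Proposition~\ref{propT(xy)}, identify the lattices $\cH,\cH'$ with the ideals $\fE$ and $\fE'$ (your $\fE+(q+1)\T$ is exactly the paper's $\fE'$, the annihilator of $\Phi_\infty'$), invoke Gerritzen to produce $\pi$ with kernel $\fE'/\fE\cong\Z/(q^2+1)\Z$, and then descend to $F$.

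There is, however, a genuine gap in your descent step. You write the $F$-rational Jacquet--Langlands isogeny $\psi$ analytically as $t\cdot\phi$ with $t\in\T\otimes\Q$, and then claim that after composing with a Hecke endomorphism to make $t$ \emph{primitive}, the resulting map coincides with $\pi$. But a primitive element of $\T$ need not be a unit (for instance, any $T_\fp-|\fp|-1$ is primitive but lies in the proper ideal $\fE$), so your modified $\psi$ has no reason to equal $\pi$; its kernel could be strictly larger than $\fE'/\fE$.

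The paper's descent is cleaner and avoids this: from Gerritzen one has $\Hom_{\T}(J_{F_\infty},J'_{F_\infty})\cong\T$, with $\pi$ corresponding to $1$. Hence the $F$-rational $\psi$ is $\pi\circ t$ for some $t\in\T$, necessarily an isogeny of $J$ since $\psi$ is. As $t\in\T\subseteq\End_F(J)$ is $F$-rational, one has $\pi=\psi\circ t^{-1}$ in $\Hom(J,J')\otimes\Q$; this is $G_F$-invariant, and since $\pi$ already lies in the torsion-free group $\Hom(J_{\bar F},J'_{\bar F})$, it is defined over $F$. You should replace your primitivity argument with this one.
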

\begin{proof} By (\ref{eqGRs}) and (\ref{eqj}), the rigid-analytic uniformization of $J$ over $\Fi$ is given by 
the $\T^0$-equivariant sequence 
$$
0\to \cH\to \Hom(\cH, \C_\infty^\times)\to J(\C_\infty)\to 0. 
$$
By Proposition \ref{propT(xy)}, $\Hom(\cH, \Z)\cong \T=\T^0$, so we can write this sequence as the 
$\T$-equivariant sequence 
$$
0\to \cH\to \T\otimes \C_\infty^\times \to J(\C_\infty)\to 0. 
$$
By Theorem \ref{thmCGinf}, the sequence derived from this using the valuation homomorphism $\ord_\infty$ is
$$
0\to \cH\to \T  \to \Phi_\infty\to 0,
$$
where $\Phi_\infty$ is the component group of the N\'eron model of $J$ at $\infty$. 
Now we can consider $\cH$ as ideal in $\T$. We know from Lemma \ref{lem5.5} 
that the Eisenstein ideal $\fE$ annihilates $\Phi_\infty\cong \Z/(q^2+1)(q+1)\Z$. Hence 
$\Phi_\infty$ is a quotient of $\T/\fE$. On the other hand, by Corollary \ref{corT/E}, $\T/\fE\cong \Z/(q^2+1)(q+1)\Z$. 
Comparing the orders of $\Phi_\infty$ and $\T/\fE$, we conclude that $\Phi_\infty\cong \T/\fE$ and $\cH\cong \fE$. 

From the discussion in $\S$\ref{ssRAUJ'}, if we assume $\cH\cong \cH'$ as $\T$-modules, the 
rigid-analytic uniformization of $J'$ over $\Fi$ is given by the $\T$-equivariant sequence 
$$
0\to \cH'\to \T\otimes \C_\infty^\times \to J'(\C_\infty)\to 0. 
$$
The argument in the previous paragraph allows us to 
identify $\cH'$ in the above sequence with the annihilator $\fE'\lhd \T$ of 
$\Phi_\infty'$ in $\T$. On the other hand, by Lemma \ref{lemEisPhi'}, $T_\fp-|\fp|-1\in \fE'$ 
for any $\fp\nmid xy$. Therefore, $\fE\subset \fE'$ and $$\T/\fE'\cong \Phi_\infty'\cong \Z/(q+1)\Z.$$ 
Hence $\fE'/\fE\cong \Z/(q^2+1)\Z$. 

We have identified the uniformizing tori of $J$ and $J'$ with $\T\otimes \C_\infty^\times$, 
and the uniformizing lattices $\cH$ and $\cH'$ with $\fE$ and $\fE'$, respectively. Now 
specializing a theorem of Gerritzen \cite{Gerritzen} to this situation, we get a natural 
bijection 
$$
\Hom_{\T}(\T\otimes\C_\infty^\times, \fE; \T\otimes\C_\infty^\times, \fE')\xrightarrow{\sim}\Hom_{\T}(J_{F_\infty}, J'_{F_\infty}),
$$
where on the left hand-side is the group of homomorphisms $\T\otimes\C_\infty^\times \to \T\otimes\C_\infty^\times$ 
which map $\fE$ to $\fE'$ and are compatible with the action of $\T$.  
It is clear that identity map on $\T\otimes\C_\infty^\times$
is in this set. The snake lemma applied to the resulting diagram 
$$
\xymatrix{
0\ar[r] & \fE \ar[r] \ar@{^{(}->}[d] &  \T\otimes \C_\infty^\times \ar[r]\ar@{=}[d] & J(\C_\infty)\ar[r]\ar[d]^{\pi} & 0 \\
0\ar[r] & \fE' \ar[r] & \T\otimes \C_\infty^\times \ar[r] & J'(\C_\infty) \ar[r]& 0 
}
$$
shows that there is an isogeny $\pi: J\to J'$ with $\ker(\pi)\cong \fE'/\fE$. Moreover, 
since $\Hom_{\T}(\T, \T)\cong\T$, every $\T$-equivariant homomorphism $J\to J'$ can be obtained as a composition 
of $\pi$ with an element of $\T$. We know that there is an isogeny $J\to J'$ defined over $F$. 
Since the endomorphisms of $J$ induced by the Hecke operators are also defined over $F$, we 
conclude that $\pi$ is defined over $F$.  
\end{proof}

\begin{thm}\label{thmJL2} In addition to the assumption in Theorem \ref{thmJL1}, 
assume $\T_\fM$ is Gorenstein for all maximal Eisenstein ideals $\fM$ of residue characteristic 
dividing $q^2+1$. Then there is an isogeny $J\to J'$ whose kernel is $\langle c_y\rangle$. 
\end{thm}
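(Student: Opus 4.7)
The plan is to show that $J/\langle c_y\rangle\cong J'$ over $F$, so that the natural quotient $J \to J/\langle c_y\rangle$ composed with this isomorphism gives the desired isogeny. First I would observe that $\langle c_y\rangle \subset J[\fE]$ is a $\T$-stable, $F$-rational cyclic subgroup of order $q^2+1$ by Theorem~\ref{thmCT} and Lemma~\ref{lem8.1}, so $J'':=J/\langle c_y\rangle$ is an abelian variety over $F$ fitting into a canonical $\T$-equivariant isogeny $\pi'' : J \to J''$ with $\ker(\pi'') = \langle c_y\rangle$.

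Next I would compare $J''$ with $J'$ using their rigid-analytic uniformizations over $\Fi$. From the proof of Theorem~\ref{thmJL1}, both $J$ and $J'$ are uniformized by the same torus $T = \T \otimes_{\Z} \C_\infty^\times$, with lattices $\cH \cong \fE$ and $\cH' \cong \fE'$ respectively, where $\fE \subset \fE' \subset \T$ and $\fE'/\fE$ is the unique subgroup of $\T/\fE \cong \Z/N\Z$ of order $q^2+1$, namely $(q+1)\Z/N\Z$. The quotient $J''$ has uniformization $T/\tilde\cH$, where $\tilde\cH \supset \cH$ and $\tilde\cH/\cH \cong \langle c_y\rangle$. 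Using the computation $\wp_\infty(c_y) = -q(q+1)$ in $\Phi_\infty$ from the proof of Theorem~\ref{thmCT}, which generates exactly the subgroup $(q+1)\Z/N\Z$, one obtains $\ord_\infty(\tilde\cH) = \fE' = \ord_\infty(\cH')$ as sublattices of $\T$.

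The crux of the argument is to promote this equality of monodromy images to the equality $\tilde\cH = \cH'$ of actual sublattices of $T$. Here the Gorenstein hypothesis enters decisively via Proposition~\ref{prop9.5}, which for each prime $\ell \mid q^2+1$ yields the explicit extension
$$0 \to (\Z_\ell/N\Z_\ell)^\ast \to J_\ell[\fE] \to \Z_\ell/N\Z_\ell \to 0$$
of $G_{\Fi}$-modules. Combined with the canonical filtration of Lemma~\ref{thm9.2}, in which the constant subgroup is $\cC_\ell$, this structural rigidity of $J[\fE]$ pins down the unit part of the lift $\hat{c}_y \in T$ of $c_y$ modulo $\cH$, forcing $\tilde\cH = \cH'$ as sublattices of $T$. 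Consequently, the uniformizations of $J''$ and $J'$ coincide, yielding a $\T$-equivariant isomorphism $J'' \cong J'$ over $\Fi$. By the Gerritzen-theoretic descent argument used in the proof of Theorem~\ref{thmJL1}, this isomorphism is defined over $F$. Composing $\pi''$ with it gives an $F$-rational isogeny $J \to J'$ with kernel $\langle c_y\rangle$.

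The main obstacle is the passage from monodromy equality to lattice equality in the third step: the map $\ord_\infty$ determines $\tilde\cH$ only up to the compact subgroup $\T \otimes \cO_\infty^\times$ of $T$, and without the Gorenstein hypothesis alternative lifts of $c_y$ could a priori produce quotients of $J$ not isomorphic to $J'$ (for instance, via the ramified $\mu$-type part of $J[\fE]$ described in Section~\ref{ssLast}). The Gorenstein assumption supplies exactly the structural control on $J[\fE]$ needed to exclude such alternatives and to make the identification of $\langle c_y\rangle$ with the "component-group part" of $J[\fE]$ rigorous.
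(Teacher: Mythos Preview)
Your approach has a genuine gap at the step you yourself flag as ``the main obstacle,'' and the resolution you propose does not work.

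You want to show that the lattice $\tilde{\cH}$ (the preimage of $\langle c_y\rangle$ in the torus $T$) coincides with $\cH'$ inside $T=\T\otimes\C_\infty^\times$. You correctly observe that $\ord_\infty(\tilde{\cH})=\fE'=\ord_\infty(\cH')$. But equality of monodromy images does \emph{not} force equality of lattices: two lattices with the same image under $\ord_\infty$ can differ by units in $\T\otimes\cO_\infty^\times$, and the resulting quotients need not be isomorphic (already for Tate elliptic curves, $q_1$ and $q_2$ with the same valuation give non-isomorphic curves). Your claim that the Gorenstein hypothesis ``pins down the unit part of the lift $\hat c_y$'' is a non sequitur: the lift of $c_y$ in $T$ is already completely determined by $c_y$ modulo $\cH$; what is \emph{not} determined is $\cH'$, which is data coming from the uniformization of $J'$, not from $J[\fE]$. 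Nothing in Proposition~\ref{prop9.5} or Lemma~\ref{thm9.2} relates $J[\fE]$ to the unit part of $\cH'$, so the argument is circular: to conclude $\tilde{\cH}=\cH'$ you would already need to know that the isogeny $\pi$ from Theorem~\ref{thmJL1} has kernel $\langle c_y\rangle$.

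The paper proceeds quite differently. It starts from the isogeny $\pi$ of Theorem~\ref{thmJL1}, whose kernel $H\subset J[\fE]$ is an $F$-rational cyclic group of order $q^2+1$, and pins down $H$ inside $J[\fE]$ using two ingredients you never invoke: (i) for odd $\ell\mid q^2+1$, the Gorenstein hypothesis combined with the discussion after Proposition~\ref{prop9.5} shows the inertia $I_y$ acts unipotently on $J_\ell[\fE]$ with $J_\ell[\fE]^{I_y}=\langle c_y\rangle_\ell$, forcing $H_\ell$ into $\langle c_y\rangle_\ell$; together with $J[2,\fE]=\cC[2]$ this confines $H$ to $\cC[2]+\langle c_y\rangle$; (ii) the remaining $2$-torsion ambiguity (when $q$ is odd) is eliminated by specializing to $\Phi_y$ and comparing with the known order of $\Phi_y'$ via \cite[Thm.~4.3]{PapikianJNT}. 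Both the ramification at $y$ and the component group of $J'$ at $y$ are essential, and your proposal uses neither.
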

\begin{proof} By Theorem \ref{thmJL1}, there is an isogeny $J\to J'$ defined over $F$ 
whose kernel $H\subset J[\fE]$ is cyclic of order $q^2+1$. 
Assume $\T_\fM$ is Gorenstein for all maximal Eisenstein ideals $\fM$ of residue characteristic 
dividing $q^2+1$. First, by Theorem \ref{thmCT} and Proposition \ref{prop9.5}, this implies $J[2, \fE]=\cC[2]$. 
Next, let $\ell|(q^2+1)$ be an odd prime. From the discussion after Proposition \ref{prop9.5} 
we know that the action of inertia at $y$ on $J_\ell[\fE]$ is unipotent and the $I_y$-invariant subgroup of $J_\ell[\fE]$ 
is $\cC_\ell=\langle c_y\rangle_\ell$.  Since $4$ does not divide $q^2+1$, these two 
observations imply that there is an isogeny $J\to J'$ whose kernel is cyclic of order $q^2+1$ and is contained in $\cC[2]+\langle c_y\rangle$. 

If $q$ is even, then $\cC[2]+\langle c_y\rangle=\langle c_y\rangle$, and we are done. 
If $q$ is odd, then the kernel is generated by $c_y$, or $c_y+\frac{q+1}{2}c_x$, or $2c_y+\frac{q+1}{2}c_x$. 
We know that $\wp_y(c_y)=0$ and $\wp_y(c_x)=1$ (cf. Proposition \ref{prop6.2}). 
If $H$ is generated by $c_y+\frac{q+1}{2}c_x$ or $2c_y+\frac{q+1}{2}c_x$, 
then the specialization map $\wp_y$ gives the exact sequence 
$$
0\to \frac{\Z}{\frac{q^2+1}{2}\Z}\to H\xrightarrow{\wp_y} \Z/2\Z\to 0. 
$$ 

It is a consequence of the uniformization theorem in \cite{Hausberger} that $X^\fn_{F_\fp}$ 
is a twisted Mumford curve for any $\fp | \fn$ (here $\fn$ is arbitrary). This implies that $J^\fn$ 
has toric reduction at $\fp$. In particular, $J'$ has toric reduction at $y$. Now we can apply  
Theorem 4.3 in \cite{PapikianJNT} to get an exact sequence 
$$
0\to \Z/2\Z\to \Phi_y\to \Phi_y'\to \frac{\Z}{\frac{q^2+1}{2}\Z}\to 0,  
$$
where $\Phi_y'$ is the component group of $J'$ at $y$. 
This implies that the order of $\Phi_y'$ is $(q+1)(q^2+1)/4$. But according to \cite[Thm. 6.4]{PapikianJNT}, 
$\Phi_y'\cong \Z/(q^2+1)(q+1)\Z$, which leads to a contradiction. 
\end{proof}

To be able to verify the assumptions in Theorem \ref{thmJL2} computationally, it is crucial to be able to 
compute the action of $\T$ on $\cH$ and $\cH'$. The methods for doing this will be discussed in Section \ref{sComputations}. 
Our calculations lead to the following:

\begin{prop}
The assumptions of Theorem \ref{thmJL2} hold for $q=2$, and for the 12 cases listed 
in Table \ref{table2}. In particular, in these cases 
there is an isogeny $J\to J'$ whose kernel is $\langle c_y\rangle$. 
\end{prop}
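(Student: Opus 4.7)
The plan is to verify the two hypotheses of Theorem \ref{thmJL2} — namely, (a) $\cH \cong \cH'$ as $\T$-modules, and (b) $\T_\fM$ is Gorenstein for every maximal ideal $\fM = (\fE,\ell)$ with $\ell \mid (q^2+1)$ — by direct machine computation in each of the cases at hand. The inputs are the explicit presentations of $\cH$ and $\cH'$ together with the action of sufficiently many Hecke operators.

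First I would compute the Hecke action on $\cH$ using Gekeler's algorithm (as recalled in Remark \ref{rem10.2}) and the Hecke action on $\cH'$ using the algorithm developed in Section \ref{sComputations}. By Proposition \ref{propT(xy)}, $\T = \T(xy)$ is generated as a $\Z$-module by $1$ and the operators $T_{x-\kappa}$ for $\kappa \in \F_q^\times$, so only a finite and small list of Hecke operators has to be computed in order to pin down the $\T$-module structure on both $\cH$ and $\cH'$. Once these actions are written as explicit integer matrices with respect to chosen $\Z$-bases, the ring $\T$ itself is presented explicitly as a finite $\Z$-algebra.

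Next, to verify (a), one searches for a $\Z$-linear isomorphism $\cH \to \cH'$ intertwining the action of each $T_{x-\kappa}$. By Jacquet--Langlands we already have $\cH \otimes \Q \cong \cH' \otimes \Q$ as $\T \otimes \Q$-modules, so both are locally free of rank one over $\T \otimes \Q$; concretely, the question reduces to checking that the classes of $\cH$ and $\cH'$ in the Picard set of locally free rank-one $\T$-modules coincide. This is a finite computation once $\T$ has been described explicitly, and in each of the cases listed it admits a direct search. To verify (b), after localizing $\T \otimes \Z_\ell$ at each $\fM = (\fE,\ell)$ with $\ell \mid (q^2+1)$, one checks the Gorenstein condition by confirming that $\dim_{\T/\fM}(\fM/\fM^2) \leq 1$, equivalently that $\T_\fM$ is a quotient of a discrete valuation ring; this is straightforward given the explicit finite presentation of $\T$.

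The main obstacle will be (a): a rational $\T$-module isomorphism is cheap from Jacquet--Langlands, but an integral one is much more delicate, and as mentioned in the discussion following Theorem \ref{thmMAIN2} it is tied to the classification of $\Z[\alpha]$-ideal classes attached to eigenvalues $\alpha$ of Hecke operators. In practice one sidesteps the general theory by exhibiting an explicit $\T$-equivariant isomorphism in each individual case; this is feasible because $\cH$ and $\cH'$ have small rank in the listed examples. Assumption (b) is expected to be verified without difficulty, since the obstruction to Gorensteinness manifests itself as a two-dimensional cotangent space, which one either rules out or sees at once from the Hecke matrices. Having checked (a) and (b) case by case for $q=2$ and for the twelve cases recorded in Table \ref{table2}, Theorem \ref{thmJL2} directly yields the conclusion that there is an isogeny $J \to J'$ with kernel $\langle c_y \rangle$.
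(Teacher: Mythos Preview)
Your overall plan is right: this is a finite computer verification of hypotheses (a) and (b) of Theorem \ref{thmJL2}, using the explicit Hecke action on $\cH$ (Remark \ref{rem10.2}) and on $\cH'$ (Corollary \ref{cor10.7}). Where you diverge from the paper is in the concrete tests used for each hypothesis.

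For (a), the paper explicitly rejects the strategy of searching for an integral intertwiner $C\in\GL_q(\Z)$ with $C^{-1}B'(x-s)C=G(x-s)$, remarking that the simultaneous integral conjugacy algorithm of \cite{Sarkisyan} is complicated and not implemented. Instead it uses that $\cH\cong\Hom(\T,\Z)$ (Proposition \ref{propT(xy)}) and then searches for an $\alpha=\sum_u a_u e_u^\ast\in\Hom(\cH',\Z)$ such that the pairing $(f,T)\mapsto (T\alpha)(f)$ is perfect, i.e.\ $\det\big(\langle f_u,T_{x-s}\alpha\rangle\big)_{u,s}=\pm1$. Table \ref{table2} records the successful $\alpha$'s, and this is exactly what defines the twelve listed cases. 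Your Picard-class suggestion would also work in principle, but it is not what is carried out.

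For (b), your proposed test --- checking $\dim_{\T/\fM}(\fM/\fM^2)\le 1$ --- is strictly stronger than Gorenstein: it forces $\T_\fM$ to be a quotient of a DVR, and in the Eisenstein situation one typically expects embedding dimension $2$, not $1$ (there is both the $\ell$-direction and a cuspidal direction in $\fM$). So your test could fail even when $\T_\fM$ is Gorenstein. The paper instead applies Lemma \ref{lemeta}: one searches for an $\eta\in\fE$ whose kernel on $\cH_{00}(xy,\F_\ell)$ is one-dimensional; this shows $\fM=(\ell,\eta)$ is two-generated, and Gorensteinness then follows from Mazur's Proposition 15.3. The paper reports that for each $q\le 7$ and each $\ell\mid(q+1)(q^2+1)$ a suitable $\eta$ (a $\Z$-combination of the $T_{x-s}-(q+1)$) was found by computer search on characteristic polynomials.
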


\begin{rem}
We believe that the assumptions in Theorem \ref{thmJL2} hold in general. 
Our method for verifying that $\cH$ and $\cH'$ are isomorphic $\T$-modules relies on finding a 
perfect $\T$-equivariant pairing $\T\times \cH'\to \Z$. Unlike the case of $\cH$, 
there is no natural pairing between these modules, so our method is by trial-and-error. 
We essentially construct some $\T$-equivariant pairings, and check if one of those 
is perfect (see the discussion after (\ref{eqPairH'T})). This method is very inefficient, and 
our computer calculations terminated in a reasonable time only in the cases listed in Table~\ref{table2}.
\end{rem}

%-------------------------------------------------------------------

\section{Computing the action of Hecke operators}\label{sComputations}

\subsection{Action on $\cH$} Assume $\fn=xy$. 
To simplify the notation, we denote $\cH=\cH_0(xy, \Z)$, $\cH'=\cH(xy, \Z)'$, $\T=\T(xy)$. 
Assume $x=T$. Theorem 6.8 in \cite{GekelerKleinem} gives a recipe for computing a  
matrix by which $T_{x-s}$ acts on $\cH$ for any $s\in \F_q^\times$. 
Since by Proposition \ref{propT(xy)} the operators $\{1, T_{x-s}\ |\ s\in \F_q^\times\}$ 
form a $\Z$-basis of $\T$, this essentially gives a complete description of the 
action of $\T$ on $\cH$. This also allows us to compute the discriminant of $\T$, an interesting invariant 
measuring the congruences between Hecke eigenforms. 
(Recall that $\disc(\T)$ is the determinant of the $q\times q$ matrix $\big(\text{Trace}(t_it_j)\big)_{1\leq i, j\leq q}$, 
where $\{t_1, \dots, t_q\}$ is a $\Z$-basis of $\T$.)  Table \ref{table1} lists the values of $\disc(\T)$ in some cases. 

\begin{table}
\begin{tabular}{|c|c|c|}
\hline
$q$ & $y$ & $\disc(\T)$\\
\hline
$2$ & $T^2+T+1$ & $4$ \\
\hline
$3$ & $T^2+1$ & $80$ \\
\hline
$3$ & $T^2+T+2$ & $68$ \\
\hline
$3$ & $T^2+2T+2$ & $68$ \\
\hline
$5$ & $T^2+T+1$ & $265216$ \\
\hline
$5$ & $T^2+T+2$ & $278800$ \\
\hline
$7$ & $T^2+1$ & $7372800000$ \\
\hline
$7$ & $T^2+T+4$ & $6567981056$ \\
\hline
\end{tabular}
\caption{Discriminant of $\T(xy)$}\label{table1}
\end{table}

\begin{rem}
The algebra $\T(\fn)\otimes\C_\infty$ 
is isomorphic to the Hecke algebra acting on doubly cuspidal Drinfeld 
modular forms of weight $2$ and type $1$ on $\G_0(\fn)$; see \cite[(6.5)]{GR}. The algebra  
$\T(\fn)\otimes \C_\infty$ has no nilpotent elements if and only if $p\nmid \disc(\T)$. 
Table \ref{table1} indicates that $p\nmid \disc(\T)$ for $q=3$ and arbitrary $y$, but 
for $q = 5$ or $7$, there exist $y_1$ and $y_2$ such that $p \mid \disc(\T(xy_1))$ and $p \nmid \disc(\T (xy_2))$.
It seems like an interesting problem to investigate the frequency with which $p$ 
divides $\disc(\T)$. 
\end{rem}

%\begin{comment}
\begin{rem} \label{rem10.2}
For the sake of completeness, and also because \cite{GekelerKleinem} is in German, 
we give Gekeler's method for computing a matrix $G(x-s)\in \Mat_q(\Z)$, $s\in \F_q^\times$, 
representing the action of $T_{x-s}$ on $\cH$. 
Let $y=T^2+aT+b$. Label the rows and columns of 
$G(x-s)$ by $u, w\in \F_q$. Then the $(u,w)$ entry of $G(x-s)$ is equal to 
$$
2-Q(u,w)-(q+1)\delta_{w, s}+ q\delta_{u, 0}\delta_{w, b/s},
$$ 
where $\delta$ is Kroneker's delta, and $Q(u,w)$ is the number of solutions $\beta\in \F_q$ (without multiplicities) 
of the equation 
$$
(u-\beta)(w-\beta)(s-\beta)+\beta(\beta^2+a\beta+b)=0
$$
plus $1$ if $u+w+s+a=0$.
\end{rem}
%\end{comment}

By comparing the discriminant of the characteristic polynomial of $T_{x-s}$ with $\disc(\T)$, 
one can deduce that in some cases $\T$ is monogenic, i.e., is generated by a single element as a $\Z$-algebra:
\begin{example}
For $q=2$ and $y=T^2+T+1$ 
$$
\T\cong \Z[T_{x-1}]\cong \Z[X]/X(X+2).
$$
For $q=3$ and $y=T^2 + T + 2$ 
$$
\T\cong \Z[T_{x-2}]\cong \Z[X]/(X+1)(X^2 - X - 4). 
$$
\end{example}

If $\T$ is monogenic, then its localization at any maximal ideal is Gorenstein; see \cite[Thm. 23.5]{Matsumura}. 
This is stronger than what we need for Theorem \ref{thmJL2}, but it is also computationally harder to 
establish. A simpler test is based on the following lemma: 

\begin{lem}\label{lemeta} 
Let $\ell$ be a prime number dividing $(q+1)(q^2+1)$. 
%Denote $\eta_\fp=T_\fp-|\fp|-1$. 
Suppose there is an element $\eta$ in $\fE$ such that
%prime $\fp\lhd A$, $\fp\nmid xy$, such that 
$$
\dim_{\F_\ell}\cH_{00}(xy, \F_\ell)[\eta]=1. 
$$ 
Then $\T_\fM$ is Gorenstein, where $\fM=(\fE, \ell)$.   
\end{lem}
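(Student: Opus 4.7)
The plan is to combine the perfect pairing of Proposition \ref{propT(xy)} between $\T$ and $\cH$ with a simple $\F_\ell$-linear duality to reduce Gorensteinness of $\T_\fM$ to showing that the maximal ideal of $\bar{\T}:=\T_\fM/\ell\T_\fM$ is principal. By Proposition \ref{propT(xy)} there is a $\T$-linear isomorphism $\cH\cong\Hom_\Z(\T,\Z)$, and $\T$ is finite free over $\Z$. The ideal $\fM=(\fE,\ell)$ is maximal: by Corollary \ref{corT/E} we have $\T/\fE\cong\Z/(q+1)(q^2+1)\Z$, so under the hypothesis $\ell\mid(q+1)(q^2+1)$ the quotient $\T/\fM$ is $\F_\ell$. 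The ring $\T\otimes\Z_\ell$ is finite semilocal and $\ell$-adically complete, hence decomposes as a product of the $\T_{\fM'}$ at its maximal ideals, so the isomorphism above localizes to $\cH_\fM\cong\Hom_{\Z_\ell}(\T_\fM,\Z_\ell)$, and after reducing modulo $\ell$,
$$
\cH_\fM\otimes_{\Z_\ell}\F_\ell\;\cong\;\Hom_{\F_\ell}(\bar{\T},\F_\ell)=:\bar{\T}^\vee
$$
as $\bar{\T}$-modules. Since $\T_\fM$ is finite flat over $\Z_\ell$ and $\ell$ is a regular element, $\T_\fM$ is Gorenstein iff $\bar{\T}$ is, and the latter will follow from $\bar{\fM}:=\fM\bar{\T}$ being principal (then $\bar{\T}\cong\F_\ell[X]/(X^n)$).

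To invoke the hypothesis, observe that $\cH_{00}(xy,\F_\ell)=\cH\otimes\F_\ell$ because $\cH$ is $\Z$-free, and that the Artinian $\F_\ell$-algebra $\T\otimes\F_\ell$ decomposes as a product over its maximal ideals, giving $\cH\otimes\F_\ell=\bigoplus_{\fM'}(\cH_{\fM'}\otimes\F_\ell)$, a decomposition that is preserved by taking $\eta$-torsion. For each $\fM'$ containing $\eta$ with $\cH_{\fM'}\otimes\F_\ell\neq 0$, the module $\cH_{\fM'}\otimes\F_\ell\cong(\T_{\fM'}/\ell)^\vee$ is the injective hull of the residue field over $\T_{\fM'}/\ell$, so its socle is one-dimensional and sits inside its $\eta$-torsion. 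The hypothesis $\dim_{\F_\ell}(\cH\otimes\F_\ell)[\eta]=1$ thus forces exactly one such $\fM'$ to contribute, and since $\eta\in\fE\subseteq\fM$ and $\cH_\fM\otimes\F_\ell\neq 0$ (because $\T_\fM\neq 0$), this unique $\fM'$ must be $\fM$. Consequently $\bar{\T}^\vee[\eta]=(\cH_\fM\otimes\F_\ell)[\eta]$ is one-dimensional over $\F_\ell$.

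Applying the exact contravariant functor $\Hom_{\F_\ell}(-,\F_\ell)$ to the right-exact sequence $\bar{\T}\xrightarrow{\eta}\bar{\T}\to\bar{\T}/\eta\bar{\T}\to 0$ yields a natural isomorphism $\bar{\T}^\vee[\eta]\cong(\bar{\T}/\eta\bar{\T})^\vee$, whence $\dim_{\F_\ell}\bar{\T}/\eta\bar{\T}=1$. This means $\bar{\T}=\F_\ell\cdot 1+\eta\bar{\T}$, so $\bar{\fM}=\eta\bar{\T}$ is principal, completing the argument. The step that requires genuine care is isolating the single one-dimensional contribution to the $\eta$-torsion inside $\cH\otimes\F_\ell$; this uses both the inclusion $\eta\in\fM$ and the non-vanishing $\T_\fM\neq 0$, ensuring that $\fM$ is the unique maximal ideal that actually contributes.
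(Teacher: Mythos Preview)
Your proof is correct and follows essentially the same approach as the paper's: both use the perfect pairing of Proposition~\ref{propT(xy)} to conclude that $\fM=(\ell,\eta)$, from which Gorensteinness follows. The paper's version is slightly more direct---working globally in $\T_\ell$, it observes that $\cH_{00}(xy,\F_\ell)[\eta]$ is $\F_\ell$-dual to $\T_\ell/(\ell,\eta)$, so the hypothesis immediately gives $(\ell,\eta)=\fM$, and then invokes Proposition~15.3 of \cite{Mazur}; you instead localize at $\fM$ first, isolate the $\fM$-component of the $\eta$-torsion, and verify the Gorenstein property by hand via principality of $\bar{\fM}$.
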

\begin{proof} 
Note that the dimension of $\cH_{00}(xy, \F_\ell)[\eta]$ is at least $1$ 
since $\cE_{00}(xy, \F_\ell)\cong \F_\ell$ is a subspace. Now 
consider the ideal $\fI=(\ell, \eta)\T_\ell$. We 
have $\cH_{00}(xy, \F_\ell)[\fI]=\cH_{00}(xy, \F_\ell)[\eta]$. On the other hand, 
$\cH_{00}(xy, \F_\ell)[\fI]$ is $\F_\ell$-dual to $\T_\ell/\fI$, cf. Proposition \ref{propT(xy)}. 
Hence, if $\dim_{\F_\ell}\cH_{00}(xy, \F_\ell)[\eta]=1$, 
then $\T_\ell/\fI\cong \F_\ell$. This implies $\T_\ell/\fM\cong \T_\ell/\fI$. Since 
$\fI\subset \fM$, we get $\fM=(\ell, \eta)$, which implies that $\T_\fM$ 
is Gorenstein by Proposition 15.3 in \cite{Mazur}.  
\end{proof}

Any $\Z$-linear combination $\eta$ of the operators $\{T_{x-s} - (q+1)\}_{s \in \F_q^{\times}}$ is in $\fE$. 
We can compute the characteristic polynomial of such $\eta$ acting on $\cH$ using 
Gekeler's method. Fix $\ell$ dividing $(q+1)(q^2+1)$. If we find $\eta$ 
whose characteristic polynomial modulo $\ell$ does not have $0$ as a multiple root, 
then we can apply Lemma \ref{lemeta} to conclude that $\T_\fM$ is Gorenstein for Eisenstein $\fM$ 
of residue characteristic $\ell$. Using this strategy, for each 
prime $q\leq 7$ we found by computer calculations an appropriate $\eta$ for any $\ell$ dividing $(q+1)(q^2+1)$. 

\begin{comment}
$q\leq 53$, odd $\ell$ dividing $q^2+1$, and all possible 
$y$, we found by computer calculations that there always exists $\eta \in \fE$ 
having the property required by Lemma \ref{lemeta}. 
More precisely, such $\eta$ can be chosen to be $\eta_{T-s}:= T_{T-s} - (q+1)$ 
for certain $s \in \F_q^{\times}$, except for the cases when $q=7$, $\ell = 5$, and $y= 
T^2-6$, $T^2-5$, or $T^2-3$; one can choose $\eta = \eta_{T-1}+\eta_{T-2}+\eta_{T-3}$ for these exceptional cases.
\end{comment}

\subsection{Action on $\cH'$}

Suppose $y = T^2+aT+b$. We denote the place $x$ by $\infty'$. 
Let $T' = T^{-1}$, $A' = \F_q[T']$, and $y' = T'^2+ab^{-1} T' + b^{-1}$. We have the correspondence of places of $F$:
$$\begin{tabular}{ccc}
$\infty$ & $\longleftrightarrow$ & $T'$ \\
$T$ & $\longleftrightarrow$ & $\infty'$ \\
$y$ & $\longleftrightarrow$ & $y'$
\end{tabular}$$

Let $D$ be the quaternion algebra over $F$ ramified precisely at $\infty'$ and $y'$ (i.e.\ ramified at $x$ and $y$).
Take an Eichler $A'$-order $\cD'$ in $D$ of level $T'$. More precisely, $\cD'_{\fp'} :=\cD' \otimes_{A'} \cO_{\fp'}$ is a maximal $\cO_{\fp'}$-order in $D_{\fp'}:= D\otimes_F F_{\fp'}$ for each prime $\fp'$ of $A'$ with $\fp' \neq T'$, and there exists an isomorphism $\iota_{T'}: D_{T'}:= D \otimes_F F_{T'} \cong \Mat_2(F_{T'})$ such that
$$\iota_{T'}(\cD'_{T'}) = \left\{ \begin{pmatrix}a&b\\c&d\end{pmatrix} \in \Mat_2 (\cO_{T'}) \ \bigg| \ c \equiv 0 \bmod T'\right\}.$$
Let $\cO_{D_{\infty'}}$ be the maximal $\cO_{\infty'}$-order in $D_{\infty'}$.
Consider the double coset spaces
$$\cG_{xy}':= D^{\times} \backslash D^{\times} (\A_F)/ \left(\widehat{\cD'}^{\times} \cdot (\cO_{D_{\infty'}}^{\times} F_{\infty'}^{\times})\right) \quad \text{ and } \quad  \text{Cl}(\cD'):= D^{\times} \backslash D^{\times}(\A_F^{\infty'})/\widehat{\cD'}^{\times},$$
where:
\begin{itemize}
\item $\A_F$ is the adele ring of $F$, i.e.\ $\A_F$ is the restricted direct product $\prod_v' F_v$;
\item $\A_F^{\infty'}$ is the finite (with respect to $\infty'$) adele ring of $F$, i.e.\ $\A_F^{\infty'} = \prod_{v \neq \infty'}F_v$;
\item $D^{\times}(\A_F)$ (resp.\ $D^{\times} (\A_F^{\infty'})$) denotes $(D\otimes_F \A_F)^{\times}$ (resp.\ $(D\otimes_F \A_F^{\infty'})^{\times}$);
\item $\widehat{\cD'} = \prod_{\fp' \lhd A'} \cD'_{\fp'}$.
\end{itemize}
Then the strong approximation theorem (with respect to $\{\infty\}$) shows that

\begin{lem}
The double coset space $\cG'_{xy}$ can be identified with the set of the oriented edges of the 
quotient graph $\Gamma^{xy}\backslash \sT$ in Figure \ref{Fig6}.
\end{lem}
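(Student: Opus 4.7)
The strategy is to invoke strong approximation for $D$ at the place $\infty$ of $F$, where $D$ splits. First, I would factor the open compact subgroup $K := \widehat{\cD'}^\times \cdot (\cO_{D_{\infty'}}^\times F_{\infty'}^\times)$ place-by-place. Under the correspondence between primes of $A'$ and places of $F$ distinct from $\infty'$, the $T'$-component of $\widehat{\cD'}^\times$ is, by the very definition of the Eichler order $\cD'$, identified via $\iota_{T'}$ with the Iwahori subgroup $\cI_\infty \subset \GL_2(F_\infty)$. Writing $K = K^\infty \cdot \cI_\infty$ with $K^\infty \subset D^\times(\A_F^\infty)$, one sees that at every finite place $v \neq \infty$ the local component of $K^\infty$ is the local unit group of a maximal order in $D_v$, augmented by the extra scalars $F_x^\times$ at the ramified place $x = \infty'$.

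Second, I would apply strong approximation for the reduced-norm-one group $D^1$ relative to the split place $\infty$. Since the central factor $F_{\infty'}^\times$ is absorbed in $K$ at the ramified place $\infty'$, the reduced-norm obstruction for lifting strong approximation from $D^1$ to $D^\times$ is annihilated, yielding
$$D^\times(\A_F) \;=\; D^\times \cdot D^\times(F_\infty) \cdot K^\infty.$$
This produces a bijection $\cG'_{xy} \cong \Gamma \backslash \GL_2(F_\infty)/\cI_\infty$, where $\Gamma$ is the image in $\GL_2(F_\infty)$ of the discrete subgroup $D^\times \cap (D^\times(F_\infty) \cdot K^\infty)$.

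Third, I would identify the resulting quotient with $E(\Gamma^{xy}\backslash \sT)$. Since $E(\sT) = \GL_2(F_\infty)/Z(F_\infty)\cI_\infty$ by definition, it remains to check that the image of $\Gamma$ in $\PGL_2(F_\infty)$ coincides with $\Gamma^{xy}/\F_q^\times$. This comes down to two local facts: at every prime $\fp$ of $A$ with $\fp \neq x$, one has $\cD'_\fp = \cD_\fp$, so the $\fp$-component of $K^\infty$ is $\cD_\fp^\times$; while at $x$, the factor $\cO_{D_{\infty'}}^\times F_{\infty'}^\times = \cD_x^\times \cdot F_x^\times$ differs from $\cD_x^\times$ only by scalars. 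Hence $\Gamma \cdot F^\times / F^\times$ is identified with $\cD^\times/\F_q^\times = \Gamma^{xy}/\F_q^\times$, completing the identification.

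The principal technical obstacle is the interplay between the center and strong approximation. Strong approximation fails for $D^\times$ itself—the failure is measured by the reduced norm map $D^\times(\A_F) \to \A_F^\times$—so one must argue via $D^1$ and then lift. Verifying that the central factor $F_{\infty'}^\times$ at the ramified place $\infty'$ exactly compensates this defect, and that the slight discrepancy between the maximal $A$-order $\cD$ used to define $\Gamma^{xy}$ and the Eichler $A'$-order $\cD'$ used to define $K$ is resolved cleanly by the extra scalars at $x$, is the delicate bookkeeping that drives the argument.
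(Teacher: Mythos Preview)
Your proposal is correct and follows precisely the approach the paper indicates: the paper's entire proof is the single clause ``the strong approximation theorem (with respect to $\{\infty\}$) shows that,'' and you have supplied the bookkeeping (factoring $K$ place-by-place, lifting from $D^1$ to $D^\times$ via the reduced norm, and reconciling $\Gamma$ with $\Gamma^{xy}$ modulo the center) that the paper leaves implicit. One minor sharpening: the discrepancy between $\Gamma$ and $\Gamma^{xy}$ is exactly the central subgroup $T^{\Z}$, and since $T$ is a uniformizer at $\infty$ this matches the $Z(F_\infty)$ quotient in $E(\sT) = \GL_2(F_\infty)/Z(F_\infty)\cI_\infty$ on the nose.
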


Note that $\text{Cl}(\cD')$ can be identified with the locally-principal right ideals of $\cD'$ in $D$, and $\#\text{Cl}(\cD') = q+1$.
Moreover, if we take $i_u \in \GL_2(F_{T'})$, $u \in \mathbb{P}^1(\F_q)$, to be 
$$i_u = \begin{cases} \begin{pmatrix} u & 1 \\ 1 &0 \end{pmatrix}, & \text{ if $u \in \F_q$,} \\
\begin{pmatrix}1&0 \\ 0 &1\end{pmatrix}, & \text{if $u = \infty$,}\end{cases}$$
then, via the natural embedding $\GL_2(F_{T'}) \cong D_{T'}^{\times} \hookrightarrow D^{\times} (\A_F^{\infty'})$, 
$\{i_u \mid u \in \mathbb{P}^1(\F_q)\}$ is a set of representatives of $\text{Cl}(\cD')$.
Take an element $\varpi_{\infty'} \in D_{\infty'}$ such that its reduced norm $\text{Nr}(\varpi_{\infty'}) = T$.
From the natural surjection from $\cG_{xy}'$ to $\text{Cl}(\cD')$, one observes that
$$\left\{(i_u, \varpi_{\infty'}^c) \in D^{\times}(\A_F^{\infty'}) \times D_{\infty'}^{\times}  = D^{\times} (\A_F) \mid u \in \mathbb{P}^1(\F_q),\ c = 0,1 \right\}$$
is a set of representatives of $\cG_{xy}'$.
We may take $e_u := [i_u,1] \in \cG_{xy}'$.
Then there exists a unique permutation $\gamma : \mathbb{P}^1(\F_q) \rightarrow \mathbb{P}^1(\F_q)$ of order $2$ so that
$$\overline{e_u}:= \left[i_u \begin{pmatrix}0&1\\ T'&0\end{pmatrix},1\right] = [i_{\gamma(u)},\varpi_{\infty'}].$$
Moreover, $\cH'$ can be viewed as the set of $\Z$-valued functions $f$ on $\cG'_{x,y}$ satisfying
$$f(e_u) + f(\overline{e_u}) = 0,\ \forall u \in \mathbb{P}^1(\F_q) \quad \text{ and } \quad
\sum_{u \in \mathbb{P}^1(\F_q)} f(e_u) = 0.$$

Let $I_u$ be the right ideal of $\cD'$ in $D$ corresponding to $i_u$, i.e., 
$$I_u:= D \cap i_u \cdot \widehat{\cD'}.$$
Then the reduced norm of $I_u$ is trivial, i.e., 
the ideal of $A'$ generated by the reduced norms of all elements in $I_u$ is $A'$.
For each ideal $\fm' \lhd A'$, the \text{\it $\fm'$-th Brandt matrix} $B(\fm') = \big(B_{u,u'}(\fm')\big)_{u,u'} \in \Mat_{q+1}(\Z)$ is defined by
$$B_{u,u'}(\fm'):= \frac{\displaystyle \#\{b \in I_u I_{u'}^{-1}: \text{Nr}(b) \cdot A' = \fm' \}}{\displaystyle q-1}.$$

In Section \ref{Brandt}, we give an explicit recipe of computing $B_{u,u'}(\fm')$ for $\deg(\fm') = 1$ 
when $q$ is odd and the constant term $b$ of $y$ is not a square in $\F_q^{\times}$, and also when $q=2$. 
By an analogue of Hecke's theory (cf. \cite{CLWY}) we obtain the following result:

\begin{prop}\label{prop10.6}\hfill
\begin{enumerate}
\item  Viewing $\gamma$ as a permutation matrix in $\Mat_{q+1}(\Z)$, one has
$$B(T') = 2\cdot J - \gamma,$$
where every entry of $J$ is $1$.\\
\item Identifying the place $x-s$ and $T'-s^{-1}$ of $F$, one has that for $s \in \F_q^{\times}$,
$$T_{x-s} e_u = \sum_{u' \in \mathbb{P}^1(\F_q)} B_{u,\gamma(u')}(T'-s^{-1}) \overline{e_{u'}}.$$
\end{enumerate}
\end{prop}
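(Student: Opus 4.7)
The plan is to identify $\cH'$ with the space of $\Z$-valued functions on the double coset space $\cG'_{xy}$ satisfying the harmonicity constraints, and then to express both $B(T')$ and the Hecke operator $T_{x-s}$ via direct analysis of elements of $D^\times$ with prescribed reduced norms in the local structures $\cD'_{T'}$ and $\cO_{D_{\infty'}}$. The underlying mechanism is the standard theory of Brandt matrices for Eichler orders, transported to the function field setting as in \cite{CLWY}. A key bookkeeping point will be the correspondence of places under $T \leftrightarrow (T')^{-1}$: the prime $x-s$ of $A$ matches the prime $T'-s^{-1}$ of $A'$ as places of $F$, but the chosen uniformizers differ by the unit-times-$(T')^{-1}$ factor $x-s = -s(T')^{-1}(T'-s^{-1})$, which has valuation $-1$ at $\infty'$.

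For part (2), I would first invoke the Brandt matrix formulation of the Hecke action for the Eichler order $\cD'$ at a prime coprime to its level: any element $b \in D^\times$ realizing the Hecke correspondence at the place $x-s$ has reduced norm lying in the $A$-ideal $(x-s)$, hence has $v_{\infty'}(\text{Nr}(b)) = 1$ by the identity above, so its class in $D^\times(\A_F)$ modulo $\cO_{D_{\infty'}}^\times F_{\infty'}^\times$ lies in the $\varpi_{\infty'}$-shifted component rather than the unit component. Applied to $e_u = [i_u, 1]$, this yields $T_{x-s}\, e_u = \sum_{v \in \p^1(\F_q)} B_{u,v}(T'-s^{-1}) [i_v, \varpi_{\infty'}]$, where the matrix entries count right $\cD'$-ideal classes with the prescribed reduced norm. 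Finally, the identity $[i_v, \varpi_{\infty'}] = \overline{e_{\gamma^{-1}(v)}} = \overline{e_{\gamma(v)}}$ (since $\gamma^2 = \mathrm{id}$) converts this sum into $\sum_{u'} B_{u,\gamma(u')}(T'-s^{-1})\,\overline{e_{u'}}$ after reindexing via $u' = \gamma(v)$.

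For part (1), I would compute $B_{u,u'}(T')$ directly from its definition. Since each $i_u$ (with $u \in \F_q$) has determinant $\pm 1$ and $i_\infty = 1$, all $i_u$ lie in $\GL_2(\cO_{T'})$, so $(I_u)_{T'} = i_u\cD'_{T'}$ and the global count of $b \in I_u I_{u'}^{-1}$ with $\mathrm{Nr}(b)\cdot A' = (T')$ reduces to a local count in $\Mat_2(\cO_{T'})$. Here $\cD'_{T'} = \bigl\{\begin{pmatrix} a & b \\ c & d\end{pmatrix} \in \Mat_2(\cO_{T'}) : c \equiv 0 \bmod T'\bigr\}$ is an Eichler order of level $T'$, sitting inside exactly two maximal orders, and its two-sided ideals of reduced norm $T'$ correspond to the two edges emanating from the Eichler-order vertex in the local building. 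The involution $\gamma$ on $\p^1(\F_q)$ is precisely the involution encoding which of these two maximal orders contains $i_u$ on the appropriate side; hence the count splits into the generic case (contributing $2$) and the matched case $u' = \gamma(u)$ (contributing $1$), with the $(q-1)$-factor absorbed by the left action of $\F_q^\times \subset \cD'_{T'}^\times$ on such elements.

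The hardest step will be the explicit local matrix bookkeeping at $T'$ for part (1): the enumeration of elements in $i_u\cD'_{T'}i_{u'}^{-1}$ of reduced norm $T'$-times-unit requires a case analysis over whether $i_u$ and $i_{u'}$ normalize the same maximal order containing $\cD'_{T'}$ or the opposite one, and the $(q-1)$-quotient by unit norms must be tracked exactly. A secondary subtlety is verifying that the $\infty'$-shift in part (2) occurs uniformly — i.e., that every element contributing to $T_{x-s}$ has reduced norm of valuation precisely $1$ (not higher) at $\infty'$ — which follows from $\deg(x-s) = 1$ together with the ramification of $D$ at $\infty'$.
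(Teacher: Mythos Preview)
For part~(2) your approach is correct and coincides with the paper's: both express $T_{x-s}$ via the Brandt matrix (the paper cites Proposition~II.4 of \cite{CLWY}), use the $\infty'$-valuation of the reduced norm to produce the shift $[i_v,\varpi_{\infty'}]=\overline{e_{\gamma(v)}}$, and reindex via $\gamma^2=\mathrm{id}$.

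For part~(1) the paper takes a much shorter route: it simply invokes Lemma~II.5 of \cite{CLWY}, which directly yields the identity $[i_{\gamma(u)}]+\sum_{u'}B_{u,u'}(T')\,[i_{u'}]=2\sum_{u''}[i_{u''}]$ in $\Z[\mathrm{Cl}(\cD')]$, i.e.\ $B(T')+\gamma=2J$. Your proposed direct local computation has a gap. The step ``the global count of $b\in I_uI_{u'}^{-1}$ with $\mathrm{Nr}(b)\cdot A'=(T')$ reduces to a local count in $\Mat_2(\cO_{T'})$'' is not justified: Brandt matrix entries are global theta coefficients, and while the row sum $\sum_{u'}B_{u,u'}(T')$ (the total number of norm-$T'$ right sub-ideals of $I_u$) is indeed a local quantity at $T'$, the individual entry $B_{u,u'}(T')$ records which ideal \emph{class} each such sub-ideal lands in --- a priori a global datum. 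Your intuition that this class is detected by the position of $i_u$ relative to the two maximal orders containing $\cD'_{T'}$, with the edge-flip $\bigl(\begin{smallmatrix}0&1\\T'&0\end{smallmatrix}\bigr)$ realizing $\gamma$, is in the right direction, but turning it into a proof requires strong approximation for $D^\times$ to identify $\mathrm{Cl}(\cD')$ with the local coset space at $T'$ compatibly with the chosen $i_u$; that identification, together with the Atkin-Lehner structure at the level prime, is exactly what the cited lemma packages. Note also that the row sum here is $2q+1$, not $q$ or $q+1$, so your ``generic $2$, matched $1$'' split does not fall out of the usual enumeration of norm-$\fp$ sublattices of a maximal order --- you would additionally need to say precisely which right $\cD'_{T'}$-ideals of reduced norm $T'$ you are counting and why there are $2q+1$ of them.
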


\begin{proof}
By Lemma II.5 in \cite{CLWY}, we observe that for every $u \in \F_q$,
$$[i_{\gamma(u)}] + \sum_{u' \in \F_q}B_{u,u'}(T') [i_{u'}]  = 2 \sum_{u'' \in \F_q} [i_{u''}] \in \Z[\text{Cl}(\cD')].$$
This shows $(1)$.
To prove $(2)$, notice that for every $g_v \in \GL_2(F_v)$ with $v \neq \infty'$ and $u \in \F_q$, there exists $u' \in \F_q$ such that
$$ [i_u g_v, 1] = [i_{u'}, \varpi_{\infty'}^c],$$
where $c = \ord_v(\det g_v) \cdot \deg v$.
By Proposition II.4 in \cite{CLWY} we have that for $s \in \F_q^{\times}$,
\begin{eqnarray}
T_{x-s} e_u &=& \sum_{u' \in \mathbb{P}^1(\F_q)} B_{u,u'}(T'-s^{-1}) [i_{u'}, \varpi_{\infty'}] \nonumber \\
&=& \sum_{u' \in \mathbb{P}^1(\F_q)} B_{u,u'}(T'-s^{-1}) \overline{e_{\gamma(u')}} \nonumber \\
&=& \sum_{u' \in \mathbb{P}^1(\F_q)} B_{u,\gamma(u')}(T'-s^{-1}) \overline{e_{u'}}. \nonumber
\end{eqnarray}
\end{proof}

For $u \in \F_q$, let $f_u \in \cH'$ such that $f_u(e_u') = \delta_{u,u'}$ for $u' \in \F_q$ and $f_u(e_{\infty}) = -1$. We immediately get the following.

\begin{cor}\label{cor10.7}
For $u,u' \in \F_q$ and $s \in \F_q^{\times}$, set
$$B_{u',u}'(x-s) := B_{u',\gamma(\infty)}(T'-s^{-1}) - B_{u',\gamma(u)}(T'-s^{-1}).$$
Then
$$ T_{x-s} f_u = \sum_{u' \in \F_q} B'_{u',u} (x-s) f_{u'}.$$
In other words, $B'(x-s) := \left(B'_{u',u}(x-s)\right)_{u',u} \in \Mat_q(\Z)$ is a 
matrix representation of $T_{x-s}$ acting on $\cH'$ with respect to the basis $\{ f_u \mid u \in \F_q\}$.
\end{cor}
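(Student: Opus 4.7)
The plan is to derive the corollary as a direct computation from Proposition \ref{prop10.6}(2) together with the defining properties of the basis $\{f_u\}_{u\in\F_q}$. The key observation is that for any $R$-valued function $f$ on $E(\sT)$ one has $(f|T_{x-s})(e) = f(T_{x-s}\cdot e)$, where $T_{x-s}\cdot e$ denotes the formal $\Z$-linear combination of edges produced by the Hecke correspondence. So the whole matter reduces to evaluating $T_{x-s}f_u$ on the finitely many edges $e_{u''}$ with $u''\in\F_q$, after which both sides of the claimed identity can be compared entry-by-entry.

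First I would fix $u''\in\F_q$ and compute
$$
(T_{x-s}f_u)(e_{u''}) \;=\; f_u(T_{x-s}\,e_{u''}) \;=\; \sum_{u'\in\p^1(\F_q)} B_{u'',\gamma(u')}(T'-s^{-1})\, f_u(\overline{e_{u'}}),
$$
using Proposition \ref{prop10.6}(2). Applying the alternating relation $f_u(\overline{e_{u'}})=-f_u(e_{u'})$ and the explicit values $f_u(e_{u'})=\delta_{u,u'}$ for $u'\in\F_q$ and $f_u(e_\infty)=-1$, only the summands with $u'=u$ and $u'=\infty$ survive, giving
$$
(T_{x-s}f_u)(e_{u''}) \;=\; B_{u'',\gamma(\infty)}(T'-s^{-1}) - B_{u'',\gamma(u)}(T'-s^{-1}) \;=\; B'_{u'',u}(x-s).
$$
On the other hand, for the proposed expansion $\sum_{u'\in\F_q} B'_{u',u}(x-s)\,f_{u'}$ evaluated at $e_{u''}$ one obtains $B'_{u'',u}(x-s)$ from $f_{u'}(e_{u''})=\delta_{u',u''}$. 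The two values agree on every $e_{u''}$ with $u''\in\F_q$.

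Finally, I would verify consistency on $e_\infty$, which is the only other independent edge. A direct computation as above yields
$$
(T_{x-s}f_u)(e_\infty) \;=\; B_{\infty,\gamma(\infty)}(T'-s^{-1}) - B_{\infty,\gamma(u)}(T'-s^{-1}),
$$
while the right-hand side of the claimed identity evaluates to $-\sum_{u'\in\F_q}B'_{u',u}(x-s)$. Matching these two expressions amounts to the identity
$$
\sum_{u'\in\p^1(\F_q)} \bigl(B_{u',\gamma(\infty)}(T'-s^{-1}) - B_{u',\gamma(u)}(T'-s^{-1})\bigr) \;=\; 0,
$$
which follows from the standard column-sum property of Brandt matrices: for the prime ideal $T'-s^{-1}$ of degree $1$, each column of $B(T'-s^{-1})$ sums to $|T'-s^{-1}|+1=q+1$, so the two column sums being subtracted are equal. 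Since $\{e_{u''}\}_{u''\in\F_q}\cup\{e_\infty\}$ exhausts the positively oriented edges of $\G^{xy}\bs\sT$ and the harmonicity of the right-hand side is automatic (as $T_{x-s}$ preserves $\cH'$), this completes the proof. There is no substantial obstacle here; the only point requiring care is the correct bookkeeping of the permutation $\gamma$ and the alternating sign, together with the cited column-sum identity for Brandt matrices.
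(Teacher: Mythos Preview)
Your argument is correct and is exactly the ``immediate'' computation the paper has in mind: evaluate $T_{x-s}f_u$ on the edges $e_{u''}$ using Proposition~\ref{prop10.6}(2), the alternating relation, and the defining values of $f_u$. One small simplification: the separate verification at $e_\infty$ is unnecessary, since both $T_{x-s}f_u$ and $\sum_{u'} B'_{u',u}(x-s)f_{u'}$ lie in $\cH'$ (the former because $T_{x-s}$ preserves $\cH'$, the latter as a $\Z$-linear combination of basis elements), and elements of $\cH'$ are determined by their values on $\{e_{u''}\}_{u''\in\F_q}$.
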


\begin{rem} From (\ref{eqS=-1}) we know that  $\sum_{s \in \F_q} T_{x-s} = -1$, as endomorphisms of $\cH$. 
On the other hand, by Remark \ref{remJLpbad}, $\JL: \cH\otimes \Q\cong\cH'\otimes \Q$ is $T_x$-equivariant. 
Hence the previous corollary allows us to obtain also  
the matrix representation of $T_x$ acting on $\cH'$. 
\begin{comment}
Since $\cH_0(x,\Z)=0$, we have $T_x=-W_x$; see Proposition \ref{prop3lem}. 
On the other hand, $\sum_{s \in \F_q} T_{x-s} = -1$; see (\ref{eqS=-1}). 
This allows as to compute the matrix with which $W_x$ acts $\cH$. 
Via Jacquet-Langlands correspondence, 
$T_x$ corresponds to the \lq\lq Atkin-Lehner involution\rq\rq\ with respect to $x$ on $\cH'$ (shown implicitly in \cite[Theorem 4.2]{JL}).
Since we have $\sum_{s \in \F_q} T_{x-s} = -1$,  is then obtained by Corollary \ref{cor10.7}.
\end{comment}
\end{rem}

Remark \ref{rem10.2} and Corollary \ref{cor10.7} give the matrices by which 
$T_{x-s}$ ($s \in \F_q^{\times}$) acts on $\cH$ and $\cH'$. Since $T_{x-s}$ 
generate $\T$, the condition that the $\T$-modules $\cH$ and $\cH'$ are isomorphic in Theorem \ref{thmJL1} 
is equivalent to the matrices $B'(x-s)$ being simultaneously $\Z$-conjugate to Gekeler's matrices $G(x-s)$, i.e.,  
to the existence of a single matrix $C \in \GL_q(\Z)$ such that 
\begin{equation}\label{eqMatC}
C^{-1} \cdot B'(x-s) \cdot C = G(x-s) \qquad \forall s \in \F_q^{\times}.
\end{equation}

\begin{rem}Due to Jacquet-Langlands correspondence, there does exist a matrix $C\in \GL_q(\Q)$ satisfying (\ref{eqMatC}), 
but the existence of an integral matrix is more subtle; cf. \cite{LM}.
\end{rem}

\begin{example}
Let $q = 2$ and $y = T^2+T+1$. By Remark \ref{rem10.2}
$$G(x-1) = \begin{pmatrix}0&0\\1&-2\end{pmatrix}.$$
On the other hand, with respect to the basis $\{i_{\infty}, i_0,i_1\}$ of $\Z[\text{Cl}(\cD')]$ we calculate that (see Remark \ref{10.1.1})
$$\gamma = \begin{pmatrix}0&1&0\\1&0&0\\0&0&1\end{pmatrix} \quad \text{ and } \quad B(T'-1) =  
\begin{pmatrix} 0&2&1\\2&0&1\\1&1&1\end{pmatrix}.$$
Therefore
$$B'(x-1) = \begin{pmatrix} -2&-1\\ 0&0\end{pmatrix}.$$
We can take $C = \begin{pmatrix}0&-1\\ 1&0\end{pmatrix}$. 

Note that the matrix 
$\begin{pmatrix}0&0\\0&-2\end{pmatrix}$ is conjugate to $G(x-1)$ over $\Q$, but not over $\Z$; in fact, 
there are exactly two conjugacy classes of matrices in $\Mat_2(\Z)$ with characteristic polynomial $X(X+2)$. 
\end{example}

There exists an algorithm for deciding whether, for two collections of integral matrices 
$\{X_1, \dots, X_m\}$ and $\{Y_1, \dots , Y_m\}$, there exists an integral and integrally
invertible matrix $C$ relating them via conjugation, i.e., such that $C^{-1}X_iC=Y_i$ for all $i$; see \cite{Sarkisyan}. 
%\cite{Grunewald} for $m=1$ and \cite{Sarkisyan} for $m\geq 1$. 
Unfortunately, this algorithm is complicated and does not seem to 
have been implemented into the standard computational programs, such as \texttt{Magma}. 
Instead of trying to find $C$, we take a different approach to proving that the $\T$-modules $\cH$ and $\cH'$ are isomorphic. 
Note that the pairing in Proposition \ref{propT(xy)} gives an isomorphism $\cH\cong \Hom(\T,\Z)$ of $\T$-modules. 
If one constructs a perfect $\T$-equivariant pairing 
\begin{equation}\label{eqPairH'T}
\cH'\times \T\to \Z,
\end{equation}
then the desired isomorphism $\cH'\cong \Hom(\T,\Z) \cong \cH$ follows. 
The absence of Fourier expansion in the quaternionic setting makes the construction of such a pairing ad hoc. 

Note that $\Hom(\cH',\Z) = \bigoplus_{u \in \F_q} \Z e_u^*$, where $\langle f_u,e_{u'}^*\rangle:= f_u(e_{u'}) = \delta_{u,u'}$.
One way to construct (\ref{eqPairH'T}) is to find a $\Z$-linear combination $\alpha = \sum_u a_u e_u^*$ such that
$$\det \left( \langle f_u, T_{x-s} \alpha\rangle \right)_{u,s \in \F_q}= \pm 1.$$
We were able to find such $\alpha$ in several cases; see Table \ref{table2} where $\alpha$ is given as $[a_0, a_1, \dots, a_{p-1}]$.

\begin{table}
\begin{tabular}{|c|c|c|}
\hline
$q$ & $y$ & $\alpha$\\
\hline
$3$ & $T^2+T+2$ & $[0,1,0]$ \\
\hline
$3$ & $T^2+2T+2$ & $[0,0,1]$ \\
\hline
$5$ & $T^2+T+2$ & $[-1, 1, 4, 5, 2]$ \\
\hline
$5$ & $T^2+2T+3$ & $[-1, -3, -6, -5, -2]$ \\
\hline
$5$ & $T^2+3T+3$ & $[-1, -6, -3, -2, -5]$ \\
\hline
$5$ & $T^2+4T+2$ & $[-1, 4, 1, 2, 5]$ \\
\hline
$7$ & $T^2+T+6$ & $[-8, 0, -6, -5, -8, -7, 5]$ \\
\hline
$7$ & $T^2+2T+3$ & $[-8, -7, -7, 2, 3, -6, -6]$ \\
\hline
$7$ & $T^2+3T+5$ & $[-5, -6, -6, -4, 2, 3, -5]$ \\
\hline
$7$ & $T^2+4T+5$ & $[-8, -8, 0, -7, -6, 5, -5]$ \\
\hline
$7$ & $T^2+5T+3$ & $[-5, -4, -5, -6, 3, -6, 2]$ \\
\hline
$7$ & $T^2+6T+6$ & $[-5, -6, 2, -5, -6, -4, 3]$ \\
\hline
\end{tabular}
\caption{Available choice of $\alpha$}\label{table2}
\end{table}

\subsection{Computation of Brandt matrices}\label{Brandt}
Recall that we denote $y = T^2+aT+b \in A$ and $y' = T'^2+ab^{-1}T+b^{-1}$, where  
$T' = 1/T$. 
Assume $q$ is odd and $b$ is not a square in $\F_q^{\times}$. Set $y_0 := bT'^2+aT'+1$. Let 
$$D:= F + Fi + Fj +Fij$$ 
where $i^2 = T'$, $j^2 = y_0$, $ij= -ji$.
Since we have the Hilbert quadratic symbols $(T',y_0)_{T} = (T',y_0)_{y} = -1$ and $(T',y_0)_v =1$ for every places $v \neq T$ or $y$,
the quaternion algebra $D$ is ramified precisely at $T$ and $y$.

Let $\cD' = A'+A'i+A'j+A'ij$, which is an Eichler $A'$-order in $D$ of level $T'y'$.
We choose the representatives of locally principal right ideal classes of $\cD'$ as follows.
\begin{eqnarray}
I_{\infty} & := & \cD' = A'+A'i + A'j +A'ij, \nonumber \\
I_u & := & A'(1-j) + A'(\frac{i+ij}{T'}+2uj) + A'T'j + A'ij, \quad u \in \F_q. \nonumber
\end{eqnarray}
Then the reduced norm of $I_u$ is trivial for every $u \in \mathbb{P}^1(\F_q)$. 
Moreover, for $u, u' \in \F_q$, $I_u I_{u'}^{-1}$ is equal to
$$ 
\begin{cases}
A'+A'T'i+A'(j-2ui) + A\left(\frac{i(1+j)^2}{T'}+4u^2 i\right), & \text{ if $u = u'$,}\\
A'T' + A\big(i- (u'-u)^{-1}\big) + A\left(j-\frac{u'+u}{u'-u}\right) + A\left(\frac{i(1+j)^2}{T'}+\frac{4uu'}{u'-u}\right), & \text{ if $u \neq u' $.}
\end{cases}$$
After tedious calculations, we obtain the following:

\begin{lem}\label{lem10.8}\hfill 
\begin{enumerate}
\item For $s \in \F_q$,
$$\frac{\#\{z \in I_{\infty} \mid \Nr(z)A' = (x-s)A'\}}{q-1} = 
\begin{cases}
2 & \text{ if $s \in (\F_q^{\times})^2$,}\\
1 & \text{ if $s = 0$,} \\
0 & \text{ otherwise.}
\end{cases}$$
\item For $s,u \in \F_q$, set 
$$\alpha_y(s,u):= 1+s(4u^2+a+sb) \quad \text{ and } \quad
\beta_y(s,u) = \big((a+4u^2)^2-4b\big)s+16u^2.$$
Then
$$\frac{\#\{z \in I_u \mid \Nr(z)A' = (x-s)A'\}}{q-1} = 
\begin{cases}
2 & \text{ if $\alpha_y(s,u) \in (\F_q^{\times})^2$,}\\
1 & \text{ if $\alpha_y(s,u) = 0$,} \\
0 & \text{ otherwise.}
\end{cases}$$
$$\frac{\#\{z \in I_u I_u^{-1} \mid \Nr(z)A' = (x-s)A'\}}{q-1} = 
\begin{cases}
2 & \text{ if $\beta_y(s,u) \in (\F_q^{\times})^2$,}\\
1 & \text{ if $\beta_y(s,u) = 0$,} \\
0 & \text{ otherwise.}
\end{cases}$$
\item For $s,u,u' \in \F_q$ with $u \neq u'$, set
\begin{align*} 
\xi_y(s,u,u'):=&  \big(2u^2+2u'^2+s(u'-u)a\big)^2 \\ 
&-\big(1-s(u'-u)^2\big)\big(16u^2u'^2-s(u'-u)^2(a^2-4b)\big).
\end{align*}
Then
$$\frac{\#\{z \in I_uI_{u'}^{-1} \mid \Nr(z)A' = (x-s)A'\}}{q-1} = 
\begin{cases}
2 & \text{ if $\xi_y(s,u) \in (\F_q^{\times})^2$,}\\
1 & \text{ if $\xi_y(s,u) = 0$,} \\
0 & \text{ otherwise;}
\end{cases}$$
\end{enumerate}
\end{lem}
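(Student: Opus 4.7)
The plan is to translate each counting problem into a one-variable Diophantine problem over $\F_q$ via the explicit form of the reduced norm. Writing $z=\alpha+\beta i+\gamma j+\delta ij$ in the basis $\{1,i,j,ij\}$, the relations $i^2=T'$, $j^2=y_0$, $ij=-ji$ give
\[
\Nr(z)=\alpha^{2}-T'\beta^{2}-y_{0}\gamma^{2}+T'y_{0}\delta^{2},\qquad y_{0}=bT'^{2}+aT'+1.
\]
For each of the three lattices $I_{\infty}=\cD'$, $I_{u}$, and $I_{u}I_{u'}^{-1}$, one uses the stated generators to parameterize elements by four free $A'$-parameters and expands the norm as a polynomial in $T'$ whose coefficients are $\F_q$-polynomials in those parameters.

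Next I would impose the condition $\Nr(z)A' = (x-s)A'$ and exploit the rigidity coming from degree constraints in $T'$. Since $y_0$ has $T'$-degree $2$, the requirement that $\Nr(z)$ generate a linear (or linear-equivalent) ideal forces tight degree bounds on the four $A'$-parameters, reducing them to elements of $\F_q$ (with at most one admitting a linear term). The counting then collapses to counting $\F_q$-solutions of a single quadratic equation $Q(\xi)=0$ in one free variable $\xi$, the other parameters being pinned down by the linear part of the norm-equation.

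A direct but careful expansion identifies the discriminant of $Q$ with exactly the quantity $\alpha_y(s,u)$, $\beta_y(s,u)$, or $\xi_y(s,u,u')$ written in the lemma: $Q(\xi)=0$ has two $\F_q$-solutions when its discriminant is a nonzero square, one when it vanishes, and none when it is a nonsquare, which is precisely the trichotomy displayed. The scaling action $z\mapsto \lambda z$ for $\lambda\in\F_q^{\times}$ is free on the solution set (it multiplies $\Nr(z)$ by the square $\lambda^2$, preserving the generated ideal), so dividing by $q-1$ replaces \emph{elements} by \emph{orbits}, which correspond bijectively to the $\xi$-solutions. Part (1) is the case where the discriminant simplifies to $s$ itself, with the exceptional value at $s=0$ coming from a single distinguished orbit in $\cD'$ at the place $\infty'$; parts (2) and (3) follow from the same template applied to the lattices $I_u$ and $I_uI_{u'}^{-1}$.

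The main obstacle is the bookkeeping in the product case $I_uI_{u'}^{-1}$ with $u\neq u'$: the four stated generators involve the denominators $T'$ and $u'-u$, so one must intersect their $F$-span with the natural $A'$-lattice $\cD'$ before the norm can be written cleanly as a quadratic form with $A'$-integer coefficients. Once the correct lattice description is in hand, the ensuing simplification of the discriminant of $Q$ into the closed-form expression $\xi_y(s,u,u')$ is a routine (if lengthy) manipulation. A secondary but lesser difficulty is to interpret the condition $\Nr(z)A' = (x-s)A'$ consistently when $s=0$, since the place $x$ of $F$ corresponds to $\infty'$ in $A'$-coordinates; here the count is controlled by the $T'$-valuation at $\infty'$ rather than by a prime of $A'$, but the same conic-counting principle applies.
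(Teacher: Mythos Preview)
Your approach is correct and is precisely what the paper means by ``after tedious calculations'' --- the paper offers no proof beyond that phrase, so your outline (parameterize by the given lattice generators, expand $\Nr(z)=\alpha^{2}-T'\beta^{2}-y_{0}\gamma^{2}+T'y_{0}\delta^{2}$, let degree constraints force the $A'$-parameters down to $\F_q$, and read off the discriminant of the residual one-variable quadratic) is exactly the intended computation. One clarification that dissolves your secondary worry: the ``$(x-s)$'' in the statement is almost certainly a slip for ``$(T'-s)$'' --- note the sentence immediately after the lemma says it computes the Brandt matrices $B(T'-s)$ for $s\in\F_q$ --- so the case $s=0$ is the ordinary prime $T'\lhd A'$, not the place $\infty'$, and no special interpretation is needed there (for part~(1) the constraint collapses to $\gamma=\delta=0$, $\alpha,\beta\in\F_q$, $\alpha^{2}=s\beta^{2}$, giving the single orbit $z=\beta i$ when $s=0$).
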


Since the Brandt matrices are symmetric under our settings, the above lemma gives us the 
recipe of computing the Brandt matrices $B(T'-s)$ for $s \in \F_q$. In particular, we can get
$$\gamma(\infty) = \infty,\quad \gamma(0) = 0, \quad \text{ and } \quad \gamma(u) = -u \quad \forall u \in \F_q^{\times}.$$

\begin{rem}\label{10.1.1} In characteristic $2$ the presentation of quaternion algebras 
has to be modified.  Consider the case when $q=2$ and $y=T^2+T+1$. 
In this case, we let $D = F+Fi+Fj+Fij$, where
$$i^2+i = T', \quad j^2 = y_0 = T'^2+T'+1, \quad ji = (i+1)j.$$
Let $\cD'_{\text{max}} = A+Ai+Aj+Aij$, which is a maximal $A'$-order in $D$.
We take the Eichler $A'$-order $\cD' = A'+A'i+A'T'j+A'ij$ and the representatives of ideal classes of $\cD'$ in the following:
\begin{eqnarray}
I_{\infty} &=& \cD' = A'+A'i+A'T'j+A'ij, \nonumber \\
I_0 & =& A'T'+A'(1+i)+A'j+A'ij,\nonumber \\
I_1 & = & A'T'+A'(1+i)+A'(1+j)+A'ij. \nonumber
\end{eqnarray}
Then
\begin{eqnarray}
I_0I_0^{-1} & = & A'+A'i+A'xj+A'(i+1)j, \nonumber\\
I_0I_1^{-1} & = & A'T'+A'i+A'(1+j)+A'(1+ij), \nonumber\\
I_1I_1^{-1} & = & A'+A'T'i+A'j+A'(i+ij). \nonumber
\end{eqnarray}
Since
$$\{z \in \cD'_{\text{max}} \mid \Nr(z) = T'\} = \{i,1+i,1+T'+j\}$$
and
$$\{z \in \cD'_{\text{max}} \mid \Nr(z) = 1+T'\} = \{T'+j,T'+i+j,1+T'+i+j\},$$
we can get
$$B(T') = \begin{pmatrix}2&1&2\\1&2&2\\2&2&1\end{pmatrix} \quad \text{ and } \quad B(T'-1) = \begin{pmatrix} 0&2&1\\2&0&1\\1&1&1\end{pmatrix}.$$
\end{rem}

\subsection*{Acknowledgements} Part of this work was carried out while the first 
 author was visiting Taida Institute for Mathematical Sciences in Taipei and 
 National Center for Theoretical Sciences in Hsinchu. He heartily thanks 
Professor Winnie Li and Professor Jing Yu for their invitation, and  
 the members at these institutions for creating a welcoming and productive atmosphere. 
 
 This research was completed while the second author was visiting Max Planck Institute for Mathematics. 
 He wishes to thank the institute for kind hospitality and good working conditions.

% ------------------------------------------------------------------------

%\renewcommand{\bibliofont}{\normalsize}
%\bibliographystyle{amsplain}
%\bibliography{HarEis7.bib}

\end{document}